\newtheorem{Thm}{Theorem}[section]
\newtheorem{Cor}[Thm]{Corollary}
\newtheorem{Lem}[Thm]{Lemma}
\newtheorem{Prop}[Thm]{Proposition}
\newtheorem{Conj}[Thm]{Conjecture}
\theoremstyle{remark}
\newtheorem{Rem}[Thm]{Remark}
\newtheorem{Def}[Thm]{Definition}
\newtheorem{Example}[Thm]{Example}
\newcommand{\cal}{\mathcal}
\newcommand{\Gr}{{{\cal G}{\frak r} }}
\newcommand{\la}{\lambda}
\newcommand\nc{\newcommand}
\nc{\HC}{{\mathcal{HC}}}
\nc{\on}{\operatorname}
\nc{\BA}{{\mathbb{A}}}
\nc{\BC}{{\mathbb{C}}}
\nc{\BG}{{\mathbb{G}}}
\nc{\BM}{{\mathbb{M}}}
\nc{\BN}{{\mathbb{N}}}
\nc{\BQ}{{\mathbb{Q}}}
\nc{\BP}{{\mathbb{P}}}
\nc{\BR}{{\mathbb{R}}}
\nc{\BZ}{{\mathbb{Z}}}
\nc{\BS}{{\mathbb{S}}}
\nc{\CA}{{\mathcal{A}}}
\nc{\CB}{{\mathcal{B}}}
\nc{\CalC}{{\mathcal C}}
\nc{\CalD}{{\mathcal D}}
\nc{\CE}{{\mathcal{E}}}
\nc{\CF}{{\mathcal{F}}}
\nc{\CG}{{\mathcal{G}}}
\nc{\CH}{{\mathcal{H}}}
\nc{\CL}{{\mathcal{L}}}
\nc{\CM}{{\mathcal{M}}}
\nc{\CMM}{{\mathcal{M}^{\operatorname{gen}}_\hbar(-\rho)}}
\nc{\CN}{{\mathcal{N}}}
\nc{\CO}{{\mathcal{O}}}
\nc{\CP}{{\mathcal{P}}}
\nc{\CQ}{{\mathcal{Q}}}
\nc{\CR}{{\mathcal{R}}}
\nc{\CS}{{\mathcal{S}}}
\nc{\CT}{{\mathcal{T}}}
\nc{\CU}{{\mathcal{U}}}
\nc{\CV}{{\mathcal{V}}}
\nc{\CW}{{\mathcal{W}}}
\nc{\CX}{{\mathcal{X}}}
\nc{\CY}{{\mathcal{Y}}}
\nc{\CZ}{{\mathcal{Z}}}
\nc{\gen}{{\operatorname{gen}}}
\nc{\cM}{{\check{\mathcal M}}{}}
\nc{\csM}{{\check{\mathcal A}}{}}
\nc{\obM}{{\overset{\circ}{\mathbf M}}{}}
\nc{\oCA}{{\overset{\circ}{\mathcal A}}{}}
\nc{\obA}{{\overset{\circ}{\mathbf A}}{}}
\nc{\ooM}{{\overset{\circ}{M}}{}}
\nc{\osM}{{\overset{\circ}{\mathsf M}}{}}
\nc{\vM}{{\overset{\bullet}{\mathcal M}}{}}
\nc{\nM}{{\underset{\bullet}{\mathcal M}}{}}
\nc{\obD}{{\overset{\circ}{\mathbf D}}{}}
\nc{\cp}{{\overset{\circ}{\mathbf p}}{}}
\nc{\ofZ}{{\overset{\circ}{\mathfrak Z}}{}}
\nc{\fb}{{\mathfrak{b}}}
\nc{\fg}{{\mathfrak{g}}}
\nc{\fgl}{{\mathfrak{gl}}}
\nc{\fh}{{\mathfrak{h}}}
\nc{\fj}{{\mathfrak{j}}}
\nc{\fn}{{\mathfrak{n}}}
\nc{\fu}{{\mathfrak{u}}}
\nc{\frr}{{\mathfrak{r}}}
\nc{\fs}{{\mathfrak{s}}}
\nc{\fT}{{\mathfrak{T}}}
\nc{\ofT}{{\overline{\mathfrak T}}}
\nc{\ofS}{{\overline{\mathfrak S}}}
\nc{\fsl}{{\mathfrak{sl}}}
\nc{\hsl}{{\widehat{\mathfrak{sl}}}}
\nc{\hgl}{{\widehat{\mathfrak{gl}}}}
\nc{\hg}{{\widehat{\mathfrak{g}}}}
\nc{\chg}{{\widehat{\mathfrak{g}}}{}^\vee}
\nc{\hn}{{\widehat{\mathfrak{n}}}}
\nc{\chn}{{\widehat{\mathfrak{n}}}{}^\vee}
\nc{\fA}{{\mathfrak{A}}}
\nc{\fB}{{\mathfrak{B}}}
\nc{\fD}{{\mathfrak{D}}}
\nc{\fE}{{\mathfrak{E}}}
\nc{\fF}{{\mathfrak{F}}}
\nc{\fG}{{\mathfrak{G}}}
\nc{\fI}{{\mathfrak{I}}}
\nc{\fJ}{{\mathfrak{J}}}
\nc{\fK}{{\mathfrak{K}}}
\nc{\fL}{{\mathfrak{L}}}
\nc{\fM}{{\mathfrak{M}}}
\nc{\fN}{{\mathfrak{N}}}
\nc{\frP}{{\mathfrak{P}}}
\nc{\fQ}{{\mathfrak Q}}
\nc{\fS}{{\mathfrak S}}
\nc{\fU}{{\mathfrak{U}}}
\nc{\fZ}{{\mathfrak{Z}}}
\nc{\ba}{{\mathbf{a}}}
\nc{\bb}{{\mathbf{b}}}
\nc{\bc}{{\mathbf{c}}}
\nc{\bd}{{\mathbf{d}}}
\nc{\be}{{\mathbf{e}}}
\nc{\bi}{{\mathbf{i}}}
\nc{\bj}{{\mathbf{j}}}
\nc{\bn}{{\mathbf{n}}}
\nc{\bp}{{\mathbf{p}}}
\nc{\br}{{\mathbf{r}}}
\nc{\bv}{{\mathbf{v}}}
\nc{\bx}{{\mathbf{x}}}
\nc{\by}{{\mathbf{y}}}
\nc{\bw}{{\mathbf{w}}}
\nc{\bA}{{\mathbf{A}}}
\nc{\bB}{{\mathbf{B}}}
\nc{\bC}{{\mathbf{C}}}
\nc{\bD}{{\mathbf{D}}}
\nc{\bE}{{\mathbf{E}}}
\nc{\bK}{{\mathbf{K}}}
\nc{\bH}{{\mathbf{H}}}
\nc{\bM}{{\mathbf{M}}}
\nc{\bN}{{\mathbf{N}}}
\nc{\bO}{{\mathbf{O}}}
\nc{\bQ}{{\mathbf Q}}
\nc{\bS}{{\mathbf{S}}}
\nc{\bV}{{\mathbf{V}}}
\nc{\bW}{{\mathbf{W}}}
\nc{\bX}{{\mathbf{X}}}
\nc{\bZ}{{\mathbf{Z}}}
\nc{\sA}{{\mathsf{A}}}
\nc{\sB}{{\mathsf{B}}}
\nc{\sC}{{\mathsf{C}}}
\nc{\sD}{{\mathsf{D}}}
\nc{\sF}{{\mathsf{F}}}
\nc{\sK}{{\mathsf{K}}}
\nc{\sM}{{\mathsf{M}}}
\nc{\sO}{{\mathsf{O}}}
\nc{\sQ}{{\mathsf{Q}}}
\nc{\sP}{{\mathsf{P}}}
\nc{\sV}{{\mathsf{V}}}
\nc{\sW}{{\mathsf{W}}}
\nc{\sZ}{{\mathsf{Z}}}
\nc{\sfp}{{\mathsf{p}}}
\nc{\sr}{{\mathsf{r}}}
\nc{\st}{{\mathsf{t}}}
\nc{\sfb}{{\mathsf{b}}}
\nc{\sfc}{{\mathsf{c}}}
\nc{\sd}{{\mathsf{d}}}
\nc{\sg}{{\mathsf{g}}}
\nc{\sk}{{\mathsf{k}}}
\nc{\sfl}{{\mathsf{l}}}
\nc{\tA}{{\widetilde{\mathbf{A}}}}
\nc{\tB}{{\widetilde{\mathcal{B}}}}
\nc{\tg}{{\widetilde{\mathfrak{g}}}}
\nc{\tG}{{\widetilde{G}}}
\nc{\TM}{{\widetilde{\mathbb{M}}}{}}
\nc{\tN}{{\widetilde{\mathcal{N}}}{}}
\nc{\tO}{{\widetilde{\mathsf{O}}}{}}
\nc{\tU}{{\widetilde{\mathfrak{U}}}{}}
\nc{\TZ}{{\tilde{Z}}}
\nc{\tZ}{\widetilde{Z}{}}
\nc{\tx}{{\tilde{x}}}
\nc{\tbv}{{\tilde{\bv}}}
\nc{\tfP}{{\widetilde{\mathfrak{P}}}{}}
\nc{\tz}{{\tilde{\zeta}}}
\nc{\tmu}{{\tilde{\mu}}}
\nc{\td}{\ddot{\underline{d}}{}}
\nc{\tzeta}{\widetilde{\zeta}{}}
\nc{\hd}{{\widehat{\underline{d}}}}
\nc{\hG}{{\widehat{G}}}
\nc{\hBP}{\widehat{\mathbb P}{}}
\nc{\hQ}{{\widehat{Q}}}
\nc{\hsM}{\widehat{\mathsf M}{}}
\nc{\hfM}{\widehat{\mathfrak M}{}}
\nc{\hCP}{\widehat{\mathcal P}{}}
\nc{\hCR}{\widehat{\mathcal R}{}}
\nc{\hCS}{{\widehat{\mathcal S}}}
\nc{\hfZ}{\widehat{\mathfrak Z}{}}
\nc{\hZ}{\widehat{Z}{}}
\nc{\urho}{\underline{\rho}}
\nc{\uB}{\underline{B}}
\nc{\uC}{{\underline{\mathbb{C}}}}
\nc{\ui}{\underline{i}}
\nc{\ofP}{{\overline{\mathfrak{P}}}}
\nc{\hrho}{{\hat{\rho}}}
\nc{\unl}{\underline}
\nc{\ol}{\overline}
\nc{\one}{{\mathbf{1}}}
\nc{\two}{{\mathbf{t}}}
\nc{\Sym}{{\mathop{\operatorname{Sym}}}}
\nc{\Tot}{{\mathop{\operatorname{\normalshape Tot}}}}
\nc{\Hilb}{{\mathop{\operatorname{\normalshape Hilb}}}}
\nc{\Hom}{{\mathop{\operatorname{Hom}}}}
\nc{\CHom}{{\mathop{\operatorname{{\mathcal{H}}\it om}}}}
\nc{\defi}{{\mathop{\operatorname{\normalshape def}}}}
\nc{\length}{{\mathop{\operatorname{\normalshape length}}}}
\nc{\Cliff}{{\mathsf{Cliff}}}
\nc{\Fib}{{\mathsf{Fib}}}
\nc{\Coh}{{\mathsf{Coh}}}
\nc{\FCoh}{{\mathsf{FCoh}}}
\nc{\reg}{{\text{\normalshape reg}}}
\nc{\res}{{\operatorname{res}}}
\nc{\cplus}{{\mathbf{C}_+}}
\nc{\cminus}{{\mathbf{C}_-}}
\nc{\cthree}{{\mathbf{C}_*}}
\nc{\Qbar}{{\bar{Q}}}
\nc{\bh}{{\bar{h}}}
\nc{\bOmega}{{\overline{\Omega}}}
\nc\tGr{\widetilde{\Gr}}
\nc{\seq}[1]{\stackrel{#1}{\sim}}
\nc\ogu{\overline{G/U}}
\nc\chlam{\check{\lam}}
\nc{\oZ}{{\overset{\circ}{Z}}}
\nc{\tF}{\widetilde{\mathcal F}}
\nc\uS{\underline{S}}
\nc\QM{\mathcal{QM}}
\nc{\chmu}{\check{\mu}}
\newcommand\iso{\,\vphantom{j^{X^2}}\smash{\overset{\sim}{\vphantom{\rule{0pt}{0.20em}}\smash{\longrightarrow}}}\,}
\nc{\ul}{\underline}
\nc{\Mvd}{\mathfrak{M}(\underline{v},\underline{d})}
\nc{\MvdT}{\mathfrak{M}(\underline{v}^{\dagger},\underline{d}^{\dagger})}
\nc{\MVD}{\mathfrak{M}(V,D)}
\nc{\mt}{\mapsto}
\nc{\sm}{\setminus}
\nc{\ra}{\rightarrow}
\nc{\lar}{\leftarrow}
\nc{\hr}{\hookrightarrow}
\nc{\Lap}{\Lambda^{+}}
\nc{\oZal}{\overset{\circ}{Z^{\alpha}}}
\nc{\sig}{\sigma}
\nc{\al}{\alpha}
\nc{\is}{\simeq}
\nc{\ip}{\iota^{+}_{\la, \mu}}
\nc{\im}{\iota^{-}_{\la, \mu}}
\nc{\jp}{j^{+}_{\la, \mu}}
\nc{\jm}{j^{-}_{\la, \mu}}
\nc{\pip}{\pi^{+}_{\la, \mu}}
\nc{\pim}{\pi^{-}_{\la, \mu}}
\nc{\s}{\star}
\nc{\fpt}{[A^{\la},B^{\la},\gamma^{\la},\delta^{\la}]}
\nc{\ulfpt}{[\ul{A}^{\la},\ul{B}^{\la},\ul{\gamma}^{\la},\ul{\delta}^{\la}]}
\nc{\lvee}{\!\scriptscriptstyle\vee}
\nc{\rra}{\twoheadrightarrow}
\nc{\End}{\on{End}}
\nc{\RHom}{R\mathcal{H}om}
\nc{\yg}{Y(\fg)}
\nc{\yvg}{Y_V(\fg)}
\nc{\CAg}{\CA_{\fg}}
\nc{\Ag}{A_{\fg}}
\nc{\Sgt}{S^{\bullet}(\fg[t])}
\nc{\Sg}{S^{\bullet}(\fg)}
\nc{\Ugt}{U(\fg[t])}
\nc{\Spec}{\operatorname{Spec}}
\title[Subregular orbits and explicit character formulas]{Subregular nilpotent orbits and explicit character formulas for modules over affine Lie algebras}
\author{Roman Bezrukavnikov}
\address{Department of Mathematics
Massachusetts Institute of Technology
\newline
77 Massachusetts Avenue,
Cambridge, MA 02139,
USA
}
\email{bezrukav@math.mit.edu}
\author{Victor Kac}
\address{Department of Mathematics
Massachusetts Institute of Technology
\newline
77 Massachusetts Avenue,
Cambridge, MA 02139,
USA
}
\email{kac@math.mit.edu}
\author{Vasily Krylov}
\address{Department of Mathematics
Massachusetts Institute of Technology
\newline
77 Massachusetts Avenue,
Cambridge, MA 02139,
USA;
\newline National Research University Higher School of Economics, Russian Federation\newline
Department of Mathematics, 6 Usacheva st., Moscow 119048;
}
\email{krvas@mit.edu, krylovasya@gmail.com}
\begin{document}
	\begin{abstract}
	Let $\mathfrak{g}$ be a simple finite dimensional complex Lie algebra
 and let $\widehat{\mathfrak{g}}$ be the corresponding affine Lie algebra. Kac and Wakimoto observed 
	that in some cases the coefficients in the character formula for a simple highest weight $\widehat{\mathfrak{g}}$-module are either bounded or are given by a linear function of the weight. We explain and generalize this observation using Kazhdan-Lusztig theory, by
	 computing values at $q=1$ of certain  (parabolic) affine inverse Kazhdan-Lusztig polynomials.
In particular, we obtain explicit character formulas for some $\widehat{\mathfrak{g}}$-modules  of negative integer level $k$ when $\mathfrak g$ is of type $D_n$, $E_6$, $E_7$, $E_8$ and $k \geqslant -2, -3, -4, -6$ respectively, as conjectured by Kac and Wakimoto.
  
  The calculation relies on the explicit description of the canonical basis in the cell quotient
	 of the anti-spherical module over the affine Hecke algebra  corresponding to the subregular cell. We also present an explicit description of the corresponding objects in the derived category of equivariant coherent sheaves on the Springer resolution, they 
	 correspond to irreducible objects in the heart of a certain $t$-structure related to the so called non-commutative Springer resolution. 
	\end{abstract}

\maketitle

\centerline{{\em To Corrado De Concini, with admiration }}

\section{Introduction}

\subsection{}

Let $\mathfrak{g}$ be a simple finite dimensional Lie algebra over
${\mathbb{C}}$ and let
$\widehat{\mathfrak{g}}=\mathfrak{g}[t^{\pm 1}] \oplus {\mathbb{C}}K
\oplus {\mathbb{C}}d$ be the corresponding affine Lie algebra.

Computing characters of highest weight $\widehat{\mathfrak{g}}$-modules
is a classical problem of representation theory. For example, for integrable
modules the answer is given by the Kac character formula which can be viewed
as a direct generalization of the Weyl character formula for characters
of finite dimensional representations of $\mathfrak{g}$. In the general
case, the character formula involves the affine Kazhdan-Lusztig polynomials
(or rather their evaluation at $q=1$). While conceptually deep and algorithmically
computable, this answer is much more complicated than the explicit expression
appearing in the Kac character formula. It is unlikely that an essential
simplification is possible in general, however, it is interesting to explore
special cases when a simple character formula exists. In particular, in
\cite{KaW} the second author and Wakimoto identified (partly conjecturally)
cases when the coefficients in the sum appearing in the character formula
either take values $0$, $\pm 1$ (for ${\frak g}$ of type $A_{n}$,
$n\geqslant 2$) or depend {\em linearly} on the indexing weight (for
${\frak g}$ of type $D$ or $E$). In the present paper we partly prove their
conjecture and extend their results, while connecting it to the Kazhdan-Lusztig
theory. Instead of working with the (parabolic) Kazhdan-Lusztig polynomials
directly, we analyze them using their relation to the Grothendieck group
of equivariant coherent sheaves on the Springer resolution constructed
from the Langlands dual group. Elementary geometric properties of the Springer
resolution provide a transparent explanation for algebraic properties of
Kazhdan-Lusztig polynomials and allow one to compute some of them effectively.

We should mention that most information about the canonical bases we use
is already contained in Lusztig's work \cite{lu_subreg,lu_notes_aff}. The new result in this direction we provide is the
realization of the basis elements as classes of explicit objects in the
derived category of coherent sheaves, which arise as irreducible objects
in the heart of a certain $t$-structure.

In order to present the content of this work in more detail we introduce
further notation. Let $\mathfrak{g}^{\vee}$ be the Langlands dual Lie algebra.
Let ${\mathcal{N}}\subset \mathfrak{g}^{\vee}$ be the nilpotent cone. Let
$\mathfrak{h} \subset \mathfrak{g}$ be a Cartan subalgebra,
$Q^{\vee} \subset \mathfrak{h}$ the coroot lattice, and
$W \subset \operatorname{End}(\mathfrak{h})$ the Weyl group of
$\mathfrak{g}$. Let $G^{\vee}$ be the adjoint group with Lie algebra
$\mathfrak{g}^{\vee}$.

Let $\widehat{\mathfrak{h}} \subset \widehat{\mathfrak{g}}$ be the Cartan
subalgebra of $\widehat{\mathfrak{g}}$, containing $\mathfrak{h}$. Let
$\widehat{W}=W \ltimes Q^{\vee}$ be the affine Weyl group. For
$\gamma \in Q^{\vee}$ we denote by $t_{\gamma}$ the corresponding element
of $\widehat{W}$ and by $w_{\gamma} \in \widehat{W}$ the shortest element
of the coset $t_{\gamma}W \subset \widehat{W}$.

\subsection{Characters of certain irreducible $\widehat{\mathfrak{g}}$-modules}
\label{appl_repres_th}

Recall the notion of two-sided cells in $\widehat{W}$, which are certain
subsets in $\widehat{W}$ (the main reference is \cite{Lu1}, see also
\cite{hum} for a short exposition). There exists a canonical bijection
between the set of two-sided cells in $\widehat{W}$ and $G^{\vee}$-orbits
on ${\cal N}$ (see \cite{Lu}). We denote by
${\mathbb{O}}_{c} \subset {\mathcal{N}}$ the nilpotent orbit, corresponding
to a cell $c \subset \widehat{W}$.

Let $c=c_{\mathrm{subreg}} \subset \widehat{W}$ be the cell, corresponding
to the subregular nilpotent orbit. For $\nu \in Q^{\vee}$ let
$c_{\nu }\subset \widehat{W}$ be the two-sided cell that contains
$w_{\nu}$.

We can now describe the main results of this paper. Pick
$\Lambda \in \widehat{\mathfrak{h}}^{*}$. Let $L(\Lambda )$ be the irreducible
$\widehat{\mathfrak{g}}$-module with highest weight $\Lambda $. Assume
that the level of $L(\Lambda )$ is greater than $-h^{\vee}$ and that
$\Lambda +\widehat{\rho}$ is integral quasi-dominant (these notions are
defined in Section~\ref{recall_irred_hw_po_level} and Definition~\ref{def_la_reg_int_dom_quasid}). Let $w \in \widehat{W}$ be the longest
element such that $w(\Lambda +\widehat{\rho})$ is dominant.

It follows from \cite[Section 0.3]{KT} (see Theorem~\ref{canon_in_Hecke} and Equation (\ref{char_our_L_via_repres_short_long}))
that the character of $L(\Lambda )$ can be expressed in terms of values
at $q=1$ of affine inverse Kazhdan-Lusztig polynomials
${\mathbf{m}}^{w_{\gamma}}_{w}(q)$ for $\gamma \in Q^{\vee}$ (see
\cite[Section 2]{kl01} or Appendix~\ref{basic_KL} for the definitions).
Using this observation, we derive explicit formulas for characters of
$L(\Lambda )$ such that the corresponding $w$ lies in $c$ and is equal
to $w_{\nu}$ for some $\nu \in Q^{\vee}$ (see Theorems~\ref{main_th_formulation},~\ref{main_th_formulation_A}). We describe such
$\Lambda $ explicitly and compare formulas that we obtain with the results
of \cite{KaW}, partly proving \cite[Conjecture 3.2]{KaW} (see Propositions~\ref{sing_our_char},~\ref{reg_our_char},~\ref{sing_our_char_A},~\ref{reg_our_char_A}). Let us now describe the approach that we use to
compute the values
${\bf{m}}^{w_{\gamma}}_{w_{\nu}}(1)={\bf{m}}^{w_{\gamma}}_{w_{\nu}}$.

Consider the $\widehat{W}$-module
$ M:={\mathbb Z}\widehat{W} \otimes _{{\mathbb Z}W} \mathbb{Z}_{
\mathrm{sign}} $, called the anti-spherical $\widehat{W}$-module. This
module contains a {\textit{standard}} basis
$\{T_{\gamma}\mid  \gamma \in Q^{\vee}\}$ and a {\textit{canonical}} basis
$\{C_{\gamma}\mid  \gamma \in Q^{\vee}\}$ in the sense of Kazhdan-Lusztig
(see Appendix~\ref{basic_KL} for details). By definition,
%
\begin{equation}
\label{detrm_m_first}
t_{\gamma }\cdot 1=T_{\gamma }= \sum _{\nu \in Q^{\vee}} {\bf{m}}_{w_{
\nu}}^{w_{\gamma}} C_{\nu}.
\end{equation}
We explicitly describe a certain quotient of the module $M$ and then consider
the image of (\ref{detrm_m_first}) in this quotient to determine the numbers
${\bf{m}}_{w_{\nu}}^{w_{\gamma}}$ for $w_{\nu }\in c$ (see the next section
for more details).

\subsection{Modules over $\widehat{W}$ via Springer theory}

Let us recall the ``coherent'' realization of the module $M$ and then describe
the approach that we use to compute
${\bf{m}}_{w_{\nu}}^{w_{\gamma}}$ for $w_{\nu }\in c$.

Let $\pi \colon \widetilde{{\mathcal{N}}} \rightarrow {\mathcal{N}}$ be
the Springer resolution. For $\gamma \in Q^{\vee}$ we denote by
${\mathcal{O}}_{{\cal B}}(\gamma )$ the corresponding line bundle on the
flag variety ${\cal B}$ of $G^{\vee}$ and by
${\mathcal{O}}_{\widetilde{{\mathcal{N}}}}(\gamma )$ its pull back to
$\widetilde{{\mathcal{N}}}$. For every $G^{\vee}$-invariant locally closed
subvariety $X \subset {\mathcal{N}}$ there is a natural action
$\widehat{W} \curvearrowright K^{G^{\vee}}(\pi ^{-1}(X))$ (see
\cite{lu_bases_K}), where by $ K^{G^{\vee}}(\pi ^{-1}(X))$ we denote the
Grothendieck group of $G^{\vee}$-equivariant coherent sheaves on
$\pi ^{-1}(X)$.

\begin{Rem}
This action comes from the identification
${\mathbb{Z}}\widehat{W} \iso K^{G^{\vee}}(\widetilde{{\mathcal{N}}}
\times _{{\mathcal{N}}} \widetilde{{\mathcal{N}}})$ (see
\cite[Sections 7, 8]{lu_bases_K} or \cite{CG,kl}), and the algebra
structure on
$K^{G^{\vee}}(\widetilde{{\mathcal{N}}} \times _{{\mathcal{N}}}
\widetilde{{\mathcal{N}}})$ is given by convolution. The algebra
$K^{G^{\vee}}(\widetilde{{\mathcal{N}}} \times _{{\mathcal{N}}}
\widetilde{{\mathcal{N}}})$ acts naturally on
$K^{G^{\vee}}(\pi ^{-1}(X))$.
\end{Rem}

It is known (see, for example, \cite{CG}) that
$K^{G^{\vee}}(\widetilde{{\mathcal{N}}})$ is isomorphic to the anti-spherical
$\widehat{W}$-module $M$. The standard basis of
$M \simeq K^{G^{\vee}}(\widetilde{{\mathcal{N}}})$ can be described explicitly:
it consists of classes of line bundles
${\mathcal{O}}_{\widetilde{{\mathcal{N}}}}(\gamma )$,
$\gamma \in Q^{\vee}$. The canonical basis does not have any explicit description.
It can be shown that the canonical basis consists of classes of irreducible
objects of the heart of the ``exotic'' $t$-structure on the derived category
$D^{b}(\operatorname{Coh}^{G^\vee}(\widetilde{{\mathcal{N}}}))$ (see
\cite{BHum} and \cite{BHe} for details).

Let $U \subset {\mathcal{N}}$ be an open $G^{\vee}$-invariant subvariety.
Set $\widetilde{U}:=\pi ^{-1}(U)$. It follows from
\cite[\S 11.3]{BHe} that the kernel of the restriction homomorphism
$K^{G^{\vee}}(\widetilde{{\mathcal{N}}}) \twoheadrightarrow K^{G^{
\vee}}(\widetilde{U})$ is freely generated (as a module over
${\mathbb{Z}}$) by elements $C_{\nu}$ such that
${\mathbb{O}}_{c_{\nu}} \not \subset U$. In particular,
$K^{G^{\vee}}(\widetilde{U})$ admits a canonical basis parametrized by
$\{\nu \in Q^{\vee}\mid  {\mathbb{O}}_{c_{\nu}} \subset U\}$.

Recall now that our goal is to compute the numbers
${\bf{m}}^{w_{\gamma}}_{w_{\nu}}$ for $w_{\nu }\in c$, where
$c \subset \widehat{W}$ is the cell, corresponding to the subregular nilpotent.
The numbers ${\bf{m}}^{w_{\gamma}}_{w_{\nu}}$ are determined by equation
(\ref{detrm_m_first}) as follows.

Consider
$U={\mathbb{O}}_{e} \cup {\mathbb{O}}^{\mathrm{reg}} \subset {
\mathcal{N}}$, where ${\mathbb{O}}_{e} \subset {\mathcal{N}}$ is the
$G^{\vee}$-orbit of a subregular nilpotent element
$e \in {\mathcal{N}}$. It follows from the above that the canonical basis
in $K^{G^{\vee}}(\widetilde{U})$ is parametrized by
$\{1\} \cup \{\nu \in Q^{\vee}\mid  w_{\nu} \in c\}$. For
$\gamma , \nu \in Q^{\vee}$ such that $w_{\nu }\in c \cup \{1\}$ let
$\bar{T}_{\gamma}$ and $\bar{C}_{\nu}$ be the images of $T_{\gamma}$ and
$C_{\nu}$ in $K^{G^{\vee}}(\widetilde{U})$. Taking the image of (\ref{detrm_m_first})
in $K^{G^{\vee}}(\widetilde{U})$, we conclude that
\begin{equation*}
t_{\gamma }\cdot \bar{1}=\bar{T}_{\gamma}=\sum _{\nu \in Q^{\vee},\, w_{
\nu }\in c \cup \{1\}} {\bf{m}}^{w_{\gamma}}_{w_{\nu}} \bar{C}_{\nu}.
\end{equation*}
So, to compute the coefficients ${\bf{m}}^{w_{\gamma}}_{w_{\nu}}$ as above,
it is enough to describe the $\widehat{W}$-module structure on
$K^{G^{\vee}}(\widetilde{U})$ and the canonical basis
$\{\bar{C}_{\nu}\mid  w_{\nu }\in c \cup \{1\}\}$ in it. Let us describe
the answer.

The cell $c$ has an explicit description (see
\cite[Proposition 3.8]{lu_subreg_crit}): it consists of elements
$w \in \widehat{W}$ with {\textit{unique}} reduced decomposition. The subset
$\{w \in c\mid  w=w_{\nu}~\text{for some}~\nu \in Q^{\vee}\}$ consists of
elements $w \in c$ such that the reduced decomposition of $w$ ends by
$s_{0}$ (simple reflection, corresponding to the $0$th vertex of the Dynkin
diagram of $\widehat{\mathfrak{g}}$). This set can be described explicitly
(see Corollary~\ref{descr_nu_i_subreg} and Lemma~\ref{descr_subreg_cell_type_A}). It turns out that for
$\mathfrak{g}$ of type $D$, $E$ it is parametrized by the set
$\widehat{I}$ of vertices of the Dynkin diagram of
$\widehat{\mathfrak{g}}$, and for $\mathfrak{sl}_{n}$ ($n \geqslant 3$)
it is parametrized by ${\mathbb{Z}}$ (that should be considered as the
set of vertices of the Dynkin diagram $A_{\infty}$).

We prove (see Proposition~\ref{ident_K_U_DE}) that in types $D$, $E$ the
$\widehat{W}$-module\break
$K^{G^{\vee}}(\widetilde{U}) \otimes {\mathbb{Z}}_{\mathrm{sign}}$ is isomorphic
to the integral form $\widehat{\mathfrak{h}}_{{\mathbb{Z}}}$ of the Cartan
subalgebra $\widehat{\mathfrak{h}} \subset \widehat{\mathfrak{g}}$, i.e.
that
\begin{equation*}
K^{G^{\vee}}(\widetilde{U}) \simeq ({\mathbb{Z}}Q^{\vee }\oplus {
\mathbb{Z}}K \oplus {\mathbb{Z}}d) \otimes {\mathbb{Z}}_{
\mathrm{sign}}
\end{equation*}
as $\widehat{W}$-modules. After this identification the canonical basis
consists of $d$ and minus simple coroots of $\widehat{\mathfrak{g}}$.

In type $A$ we identify $K^{\operatorname{PGL}_{n}}({\cal B}_{e})$ with
$(\mathfrak{h}_{\infty ,{\mathbb{Z}}} \oplus {\mathbb{Z}}d) \otimes {
\mathbb{Z}}_{\mathrm{sign}}$ and describe explicitly the
$\widehat{W}$-action on the latter (see Section~\ref{ext_h_infty} and Proposition~\ref{descr_K_U_A}). After this identification the canonical basis consists
of $d$ and minus simple coroots of $\mathfrak{sl}_{\infty}$.

\subsection{Structure of the paper}

The paper is organized as follows. In
Section~\ref{sect_affine_Lie_alg_rep_theory} we recall the structure theory
and representation theory of affine Lie algebras (see
Section~\ref{prelim_affine_lie}), we then give character formulas for certain
$L(\Lambda )$ (see Theorem~\ref{char_KW_thm} for types $D$, $E$ and
Theorem~\ref{main_th_formulation_A} for type $A$) and rewrite them in more
explicit terms (see Propositions~\ref{sing_our_char},~\ref{reg_our_char} for
types $D$, $E$ and Propositions~\ref{sing_our_char_A},~\ref{reg_our_char_A}
for type~$A$). In Section~\ref{sect_cat_O_and_KL} we recall categories
${\mathcal{O}}$ for $\widehat{\mathfrak{g}}$ and describe characters of
irreducible $\widehat{\mathfrak{g}}$-modules via values at $q=1$ of (affine)
inverse Kazhdan-Lusztig polynomials. In Section~\ref{sect_geom_springer_KL}
we recall the Springer resolution and the geometric realization of the
anti-spherical $\widehat{W}$-module $M$, we also recall some information
about the canonical basis, in particular, we describe explicitly the
canonical basis of $K^{G^{\vee}}(\widetilde{U})$. In
Section~\ref{subreg_section} we describe $\widehat{W}$-module $K^{G^{\vee}}(\widetilde{U})$
explicitly for $\mathfrak{g}$ of type $D$, $E$ (see
Proposition~\ref{ident_K_U_DE}). We then compute
${\bf{m}}^{w_{\gamma}}_{w_{\nu}}$ for $w_{\nu }\in c$ and derive
Theorem~\ref{main_th_formulation} (see Section~\ref{comp_KL_values_DE}). In
Section~\ref{sect_subreg_A} we describe $\widehat{W}$-module 
$K^{\operatorname{PGL}_{n}}(\widetilde{U})$ explicitly (see
Proposition~\ref{descr_K_U_A}). We then compute
${\bf{m}}^{w_{\gamma}}_{w_{\nu}}$ for $w_{\nu }\in c$ and derive
Theorem~\ref{main_th_formulation_A} (see Section~\ref{m_th_A}). In
Section~\ref{poss_gener} we discuss possible generalizations.
Appendix~\ref{basic_KL} contains the information about Kazhdan-Lusztig
bases that we use.

\section{Affine Lie algebras and their representation theory}
\label{sect_affine_Lie_alg_rep_theory}

\subsection{Affine Lie algebras: structure theory and irreducible highest weight modules \cite{ka}}
\label{prelim_affine_lie}

\subsubsection{Simple Lie algebra $\mathfrak{g}$: notations}

Let $\mathfrak{g}$ be a simple finite dimensional Lie algebra over
${\mathbb{C}}$. We fix a Catan subalgebra
$\mathfrak{h} \subset \mathfrak{g}$ and denote by $\Delta $ the set of
roots of $(\mathfrak{h},\mathfrak{g})$ and by $W$ the Weyl group of
$(\mathfrak{h},\mathfrak{g})$. Let $Q$ be the root lattice of
$\mathfrak{g}$. Let $\alpha _{1},\ldots ,\alpha _{r} \in \Delta $ be a
set of simple roots, and $\theta \in \Delta $ be the highest root. We denote
by $\Delta _{+} \subset \Delta $ the set of positive roots and set
$\rho :=\frac{1}{2}\sum _{\alpha \in \Delta _{+}}\alpha $. We fix the nondegenerate
invariant symmetric bilinear form $(\,,\,)$ on $\mathfrak{g}$ normalized
by $(\theta ,\theta )=2$. Let
$\alpha ^{\vee}_{1},\ldots ,\alpha ^{\vee}_{r} \in \mathfrak{h}$ be the
simple coroots defined by
\begin{equation*}
\langle \alpha _{j},\alpha ^{\vee}_{i} \rangle = a_{ij},
\end{equation*}
where $A=(a_{ij})_{i,j=1,\ldots ,r}$ is the Cartan matrix of
$\mathfrak{g}$. Let $\Delta ^{\vee}$ be the $W$-orbit of
$\{\alpha ^{\vee}_{1},\ldots ,\alpha ^{\vee}_{r}\}$. We also denote by
$\theta ^{\vee} \in \mathfrak{h}$ the highest coroot of
$\Delta ^{\vee}$.

\subsubsection{Affine Lie algebra $\widehat{\mathfrak{g}}$}

We denote by $\widehat{\mathfrak{g}}$ the affine Lie algebra, corresponding
to $\mathfrak{g}$. Recall that
\begin{equation*}
\widehat{\mathfrak{g}}:=\mathfrak{g}[t^{\pm 1}] \oplus {\mathbb{C}}K
\oplus {\mathbb{C}}d
\end{equation*}
with the bracket defined as follows ($a,b \in \mathfrak{g}$,
$m,n \in {\mathbb{Z}}$):
\begin{equation*}
[at^{m},bt^{n}]=[a,b]t^{m+n}+m(a,b)\delta _{m,-n}K,\, [d,at^{n}]=nat^{n},
\, [K,\widehat{\mathfrak{g}}]=0.
\end{equation*}

The Lie algebra $\widehat{\mathfrak{g}}$ has a nondegenerate invariant
symmetric bilinear form $(\,,\,)$ defined by
\begin{gather*}
(a t^{m},b t^{n})=\delta _{m,-n}(a,b),\, ({\mathbb{C}}K \oplus {
\mathbb{C}}d, \mathfrak{g}[t^{\pm 1}])=0,\,%
\\
(K,K)=(d,d)=0,\, (K,d)=1.%
\end{gather*}
This bilinear form restricts to a nondegenerate bilinear form on the Cartan
subalgebra of $\widehat{\mathfrak{g}}$:
\begin{equation*}
\widehat{\mathfrak{h}}:=\mathfrak{h} \oplus {\mathbb{C}}K \oplus {
\mathbb{C}}d.
\end{equation*}

We extend every $\gamma \in \mathfrak{h}^{*}$ to the linear function on
$\widehat{\mathfrak{h}}$ by setting
$\langle \gamma , {\mathbb{C}}K \oplus {\mathbb{C}}d \rangle =0$. Let
$\delta \in \widehat{\mathfrak{h}}^{*}$ be the linear function given by
$\langle \delta ,\mathfrak{h} \oplus {\mathbb{C}}K \rangle =0$,
$\langle \delta ,d \rangle = 1$. Set
$\alpha _{0}:=\delta -\theta \in \mathfrak{h}^{*}$,
$\alpha ^{\vee}_{0}:=K-\theta ^{\vee} \in \mathfrak{h}$. Then
$\{\alpha _{0},\alpha _{1},\ldots ,\alpha _{r}\}$ are simple roots of
$\widehat{\mathfrak{g}}$ and
$\{\alpha _{0}^{\vee},\alpha _{1}^{\vee},\ldots ,\alpha _{r}^{\vee}\}$
are simple coroots. Define the fundamental weights
$\Lambda _{i} \in \widehat{\mathfrak{h}}^{*}$ by
\begin{equation*}
\langle \Lambda _{i}, \alpha _{j}^{\vee }\rangle := \delta _{i,j},\, i,j=0,1,
\ldots ,r.
\end{equation*}

We denote by
$\eta \colon \widehat{\mathfrak{h}} \iso \widehat{\mathfrak{h}}^{*}$ the
identification induced by the bilinear form $(\,,\,)$.

From now on, we assume for simplicity that $\mathfrak{g}$ is of type
$A, D, E$. So we have
\begin{equation*}
\eta (\alpha _{i}^{\vee})=\alpha _{i}, \,\eta (\theta ^{\vee})=
\theta ,\,\eta (\alpha ^{\vee}_{0})=\alpha _{0}, \,\eta (K)=\delta ,
\, \eta (d)=\Lambda _{0},\, i=1,\ldots ,r.
\end{equation*}

\subsubsection{Root system of $\widehat{\mathfrak{g}}$}

Let $\widehat{\Delta}$ be the root system of
$\widehat{\mathfrak{g}}$. Recall that $\widehat{\Delta}$ can be decomposed
as a disjoint union of the sets of {\textit{real}} and {\textit{imaginary}} roots:
\begin{equation*}
\widehat{\Delta}=\widehat{\Delta}^{\mathrm{re}} \cup \widehat{\Delta}^{
\mathrm{im}},
\end{equation*}
where
\begin{equation*}
\widehat{\Delta}^{\mathrm{re}}=\{\alpha +n\delta \mid  \alpha \in
\Delta ,\, n \in {\mathbb{Z}}\},\, \widehat{\Delta}^{\mathrm{im}}=\{n
\delta \mid  n \in {\mathbb{Z}}_{\neq 0}\}.
\end{equation*}

A root $\alpha \in \widehat{\Delta}$ is called {\textit{positive}} if it can
be obtained as a {\textit{nonnegative}} linear combination of simple roots
$\alpha _{i} \in \widehat{\Delta}$, $i=0,1,\ldots ,r$. The subset of
$\widehat{\Delta}$, consisting of positive roots, will be denoted
$\widehat{\Delta}_{+} \subset \widehat{\Delta}$ and can be described as
follows:
\begin{equation*}
\widehat{\Delta}_{+}=\widehat{\Delta}^{\mathrm{re}}_{+} \cup
\widehat{\Delta}^{\mathrm{im}}_{+},
\end{equation*}
where
\begin{equation*}
\widehat{\Delta}^{\mathrm{re}}_{+}=\Delta _{+} \cup \{\alpha +n
\delta \mid  \alpha \in \Delta ,\, n \in {\mathbb{Z}}_{\geqslant 1}\},
\, \widehat{\Delta}^{\mathrm{im}}_{+}=\{n\delta \mid  n \in {
\mathbb{Z}}_{\geqslant 1}\}.
\end{equation*}

To every $\gamma \in \widehat{\Delta}^{\mathrm{re}}$ we associate the corresponding
coroot $\gamma ^{\vee} \in \widehat{\mathfrak{h}}$, defined by
$\gamma ^{\vee}:=\alpha ^{\vee}+nK \in \widehat{\mathfrak{h}}$ if
$\gamma =\alpha +n\delta \in \widehat{\mathfrak{h}}^{*}$.

\subsubsection{Weyl group of $\widehat{\mathfrak{g}}$}
\label{aff_Weyl_grp_g}

Let $\widehat{W}$ be the Weyl group of $\widehat{\mathfrak{g}}$. The group
$\widehat{W}$ is the subgroup of
$\operatorname{Aut}(\widehat{\mathfrak{h}})$ generated by the reflections
$s_{i}$ defined by
%
\begin{equation}
\label{act_on_cartan}
s_{i}(x):=x-\langle \alpha _{i},x\rangle \alpha _{i}^{\vee},\, x \in
\widehat{\mathfrak{h}},\, i=0,1,\ldots ,r.
\end{equation}
For $\gamma \in {Q}^{\vee}$ define the operator
$t_{\gamma} \in \operatorname{Aut}(\widehat{\mathfrak{h}})$ by
\begin{equation*}
t_{\gamma}(x)=x+\langle \delta ,x \rangle \gamma - ((x,\gamma )+
\frac{1}{2}|{\gamma}|^{2}\langle \delta ,x\rangle )K,
\end{equation*}
where $|\gamma |^{2}=(\gamma ,\gamma )$.

The group $\widehat{W}$ is generated by $s_{i}$, $i=1,\ldots ,r$, and
$t_{\gamma}$, $\gamma \in Q^{\vee}$, so that we obtain the identification
\begin{equation*}
\widehat{W} = Q^{\vee} \rtimes W.
\end{equation*}

\begin{Rem}
Note that $\widehat{\mathfrak{h}}$ contains ${\mathbb{C}}K$ as the trivial
subrepresentation of $\widehat{W}$. Note also that
${\mathbb{C}}K \oplus \mathfrak{h} \subset \widehat{\mathfrak{h}}$ is a
subrepresentation: for $x \in {\mathbb{C}}K \oplus \mathfrak{h}$ the action
of $t_{\gamma}$ is given by
\begin{equation*}
t_{\gamma}(x)=x-(x,\gamma )K,\, \gamma \in Q^{\vee}.
\end{equation*}
\end{Rem}

The group $\widehat{W}$ is a Coxeter group generated by reflections
$s_{0},s_{1},\ldots ,s_{r}$ and so is equipped with the {\textit{length}} function
\begin{equation*}
\ell \colon \widehat{W} \rightarrow {\mathbb{Z}}_{\geqslant 0},
\end{equation*}
where $\ell (w)$ is the length of a shortest expression of $w$ in terms
of the $s_{i}$.

Recall that
%
\begin{equation}
\label{length_lattice_even}
\varepsilon (w):=(-1)^{\ell (w)}=\operatorname{det}_{
\widehat{\mathfrak{h}}}w~\text{and} ~\varepsilon (t_{\gamma})=1~
\text{for all}~\gamma \in Q^{\vee}.
\end{equation}

\subsubsection{Irreducible highest weight representations of $\widehat{\mathfrak{g}}$ and their characters}
\label{recall_irred_hw_po_level}

Let
$\mathfrak{b}=\mathfrak{h} \oplus \mathfrak{n}_{+} \subset
\mathfrak{g}$ be the Borel subalgebra, corresponding to our choice of simple
roots $\alpha _{1},\ldots ,\alpha _{r}$. One defines the corresponding
Borel subalgebra of $\widehat{\mathfrak{g}}$:
\begin{equation*}
\widehat{\mathfrak{b}}:=\widehat{\mathfrak{h}} \oplus \mathfrak{n}_{+}
\oplus \bigoplus _{n>0}\mathfrak{g} t^{n}.
\end{equation*}

Given $\Lambda \in \widehat{\mathfrak{h}}^{*}$ one extends it to the character
of $\widehat{\mathfrak{b}}$ by zero on all other summands. Then there exists
a unique irreducible $\widehat{\mathfrak{g}}$-module $L(\Lambda )$ with
highest weight $\Lambda $. Let us recall the construction of
$L(\Lambda )$. Consider the Verma module
\begin{equation*}
M(\Lambda ):=U(\widehat{\mathfrak{g}}) \otimes _{U(
\widehat{\mathfrak{b}})} {\mathbb{C}}_{\Lambda },
\end{equation*}
where ${\mathbb{C}}_{\Lambda }$ is the one dimensional representation of
$\widehat{\mathfrak{b}}$ given by the character $\Lambda $. Then
$L(\Lambda )$ is the unique (nonzero) irreducible quotient of the module
$M(\Lambda )$. Let $\kappa =\Lambda (K)$ be the scalar by which
$K \in \widehat{\mathfrak{g}}$ acts on $L(\Lambda )$ (and
$M(\Lambda )$). This scalar is called the {\textit{level}} of
$L(\Lambda )$ (and the level of $\Lambda $), and is denoted by
$\kappa (\Lambda )$.

For $\mu \in \widehat{\mathfrak{h}}^{*}$ and a
$\widehat{\mathfrak{g}}$-module $M$ let $M_{\mu} \subset M$ be the (generalized)
weight space of $M$ with weight $\mu $. The characters of
$M=L(\Lambda )$ or $M(\Lambda )$ are defined as the following (formal)
series:
\begin{equation*}
\operatorname{ch}M:=\sum _{\mu \in \widehat{\mathfrak{h}}^{*}}(
\operatorname{dim}M_{\mu}) \cdot e^{\mu},
\end{equation*}
here $e^{\mu}$ are formal exponentials such that
$e^{\mu _{1}} \cdot e^{\mu _{2}}=e^{\mu _{1}+\mu _{2}}$ (note that
$\operatorname{dim}M_{\mu }< \infty $).

\begin{Rem}
Note that one can consider $e^{\mu}$ as a function on
$\widehat{\mathfrak{h}}$ whose value on $h \in
\widehat{\mathfrak{h}}$ is equal to $e^{\mu (h)}$. Then
$\operatorname{ch}M$ can be considered as the series $(\operatorname{ch}
M)(h)=\operatorname{tr}_{M} e^{h}$. This series is convergent in the domain
$\{h \in \widehat{\mathfrak{h}}\mid\break  {\operatorname{Re}}(\alpha _{i}(h))>0,\, i=0,1,\ldots ,r\}$.
\end{Rem}

We set
\begin{equation*}
\widehat{\rho}:=\rho +h^{\vee}\Lambda _{0},
\end{equation*}
where $h^{\vee}$ is the dual Coxeter number ($= \frac{1}{2}$ the eigenvalue
on $\mathfrak{g}$ of the Casimir element). Recall that
$\langle \widehat{\rho}, \alpha ^{\vee}_{i} \rangle =1$ for every
$i=0,1,\ldots ,r$, and $\widehat{\rho}=\sum _{i=0}^{r} \Lambda _{i}$.

Set
\begin{equation*}
\widehat{R}:=e^{\widehat{\rho}} \prod _{\alpha \in \widehat{\Delta}_{+}}
(1-e^{-\alpha })^{\operatorname{mult}(\alpha )}.
\end{equation*}
Then
\begin{equation*}
\widehat{R}\operatorname{ch}M(\Lambda )=e^{\Lambda +\widehat{\rho}}.
\end{equation*}

The main result of this note are explicit formulas for characters of modules
$L(\Lambda )$ for certain $\Lambda $ of integer level
$\kappa (\Lambda )>-h^{\vee}$.

\begin{Rem}
Note that the condition that the level of $\Lambda $ is greater than
$-h^{\vee}$ is equivalent to the fact that the level of
$\Lambda +\widehat{\rho}$ is {\textit{positive}}.
\end{Rem}

\subsection{Motivation and main result}

\subsubsection{Motivation and Kac-Wakimoto conjecture}

We start with the following definition.
%
\begin{Def}%
\label{def_la_reg_int_dom_quasid}
An element $\Lambda \in \widehat{\mathfrak{h}}^{*}$ is called {\textit{regular}}
if it has trivial stabilizer w.r.t.
$\widehat{W} \curvearrowright \widehat{\mathfrak{h}}^{*}$; this condition
is equivalent to $\langle \Lambda , \alpha ^{\vee} \rangle \neq 0$ for
all $\alpha \in \widehat{\Delta}^{\mathrm{re}}_{+}$. Element
$\Lambda $ is called {\textit{singular}} if it is not regular (i.e. has nontrivial
stabilizer in $\widehat{W}$). An element
$\Lambda \in \widehat{\mathfrak{h}}^{*}$ is called {\textit{integral}} if
$\langle \Lambda ,\alpha _{i}^{\vee} \rangle \in {\mathbb{Z}}$,
$i=0,1,\ldots ,r$; $\Lambda $ is called {\textit{dominant}} (resp. {\textit{quasi-dominant}})
if
$\langle \lambda ,\alpha _{i}^{\vee} \rangle \in {\mathbb{Z}}_{
\geqslant 0}$ for $i=0,1,\ldots ,r$ (resp. for $i=1,\ldots ,r$).
\end{Def}

Consider the following (shifted) action of $\widehat{W}$ on
$\widehat{\mathfrak{h}}^{*}$:
\begin{equation*}
w \circ \Lambda :=w(\Lambda +\widehat{\rho})-\widehat{\rho}.
\end{equation*}
For $\lambda \in \widehat{\mathfrak{h}}^{*}$ we denote by
$\widehat{W}_{\lambda }\subset \widehat{W}$ the stabilizer of
$\lambda $ w.r.t. the shifted action.

Recall that by \cite{KaKa} for any
$\Lambda \in \widehat{\mathfrak{h}}^{*}$ of level
$\kappa (\Lambda ) > -h^{\vee}$ one has

\begin{equation}
\label{gen_char_integr_comb}
\widehat{R} \operatorname{ch}L(\Lambda ) = \sum _{w \in \widehat{W}} c(w)
e^{w(\Lambda +\widehat{\rho})}~\text{for some}~c(w) \in {\mathbb{Z}}.
\end{equation}

In addition, if $\Lambda $ is quasi-dominant integral, one has (use
$W$-invariance of $\operatorname{ch}L(\Lambda )$)
\begin{equation*}
c(ut_{\gamma})=\varepsilon (u)c(t_{\gamma}),~u \in W, ~\gamma \in Q^{
\vee}.
\end{equation*}

The motivation for this work is the following theorem proven in
\cite[Section 3]{KaW}.

\begin{Thm}%
\label{char_KW_thm}
Let $\mathfrak{g}$ be a simple Lie algebra of type $D_{n}$ ($n
\geqslant 4$) or $E_{6}$, $E_{7}$, $E_{8}$ and $\Lambda $ be a weight of
$\widehat{\mathfrak{g}}$ of negative integral level $\kappa $ such that
the following conditions hold:
\begin{enumerate}
\item[$(i)$] $\Lambda $ is quasi-dominant integral,

\item[$(ii)$] there exists a root $\alpha \in \Delta _{+}$, such that
$(\Lambda +\widehat{\rho},\delta -\alpha )=0$, and if
$\beta \in \widehat{\Delta}_{+}$ is orthogonal to
$\Lambda +\widehat{\rho}$, then $\beta =\delta -\alpha $,

\item[$(iii)$] (extra hypothesis) in (\ref{gen_char_integr_comb}) one has:
$c(t_{\gamma})$ is a linear function in $\gamma \in Q^{\vee}$ plus constant.
\end{enumerate}
Then
%
\begin{equation}
\label{character_spec_La}
\widehat{R} \operatorname{ch}L(\Lambda )=\frac{1}{2}\sum _{u \in W}
\varepsilon (u)\Big(\sum _{\gamma \in Q^{\vee}}(\langle \alpha ,
\gamma \rangle +1)e^{ut_{\gamma}(\Lambda +\widehat{\rho})}\Big).
\end{equation}
\end{Thm}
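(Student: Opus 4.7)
The plan is to exploit the Kac--Kazhdan expansion \eqref{gen_char_integr_comb} together with the three hypotheses of the theorem, by regrouping the terms according to the stabilizer of $\nu := \La+\widehat\rho$. By hypothesis (ii), the stabilizer $\widehat W_\nu$ (for the shifted action) is exactly $\{1, s_{\delta-\alpha}\}$, so $e^{w s_{\delta-\alpha}\nu}=e^{w\nu}$ for every $w\in\widehat W$, and we may symmetrize:
\begin{equation*}
\widehat{R}\operatorname{ch}L(\La) \;=\; \frac{1}{2}\sum_{w\in\widehat W} \bigl(c(w)+c(w\, s_{\delta-\alpha})\bigr)\,e^{w\nu}.
\end{equation*}

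Next, I would compute $c(w\, s_{\delta-\alpha})$ explicitly using the semidirect product structure $\widehat W = W\ltimes Q^\vee$. The standard identity $s_{\delta-\alpha} = t_{\alpha^\vee}s_\alpha$, combined with the commutation rule $t_\beta v = v\, t_{v^{-1}\beta}$ for $v\in W$, gives $t_\gamma\, s_{\delta-\alpha} = s_\alpha\, t_{s_\alpha\gamma-\alpha^\vee}$. Together with the $W$-invariance consequence $c(ut_\gamma)=\varepsilon(u)c(t_\gamma)$ coming from (i), this yields
\begin{equation*}
c(ut_\gamma)+c(ut_\gamma\, s_{\delta-\alpha}) \;=\; \varepsilon(u)\bigl[c(t_\gamma)-c(t_{s_\alpha\gamma-\alpha^\vee})\bigr].
\end{equation*}

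Now I would invoke hypothesis (iii), writing $c(t_\gamma)=\ell(\gamma)+b$ for a linear functional $\ell\colon Q^\vee\to\BC$ and a constant $b$. The elementary identity $(1-s_\alpha)\gamma=\langle\alpha,\gamma\rangle\alpha^\vee$ gives $\gamma-(s_\alpha\gamma-\alpha^\vee)=(\langle\alpha,\gamma\rangle+1)\alpha^\vee$, so the bracketed difference above equals $(\langle\alpha,\gamma\rangle+1)\,\ell(\alpha^\vee)$. Setting $C:=\ell(\alpha^\vee)$, we obtain
\begin{equation*}
\widehat{R}\operatorname{ch}L(\La) \;=\; \frac{C}{2}\sum_{u\in W}\varepsilon(u)\sum_{\gamma\in Q^\vee}(\langle\alpha,\gamma\rangle+1)\,e^{ut_\gamma\nu}.
\end{equation*}

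Finally, the constant $C$ is pinned down by the normalization that the coefficient of the leading term $e^\nu=e^{\La+\widehat\rho}$ in $\widehat R\operatorname{ch}L(\La)$ must equal $1$, since the highest weight space of $L(\La)$ is one-dimensional. The only pairs $(u,\gamma)$ with $ut_\gamma \nu = \nu$ are $(1,0)$ and $(s_\alpha,-\alpha^\vee)$, corresponding precisely to the two elements of $\widehat W_\nu$; a short check shows both contribute $C/2$, forcing $C=1$. The main obstacle is largely bookkeeping: carefully matching conventions for $\widehat W = W\ltimes Q^\vee$ and the dual action on $\widehat{\mathfrak h}^*$, and verifying that the summand $\varepsilon(u)(\langle\alpha,\gamma\rangle+1)\,e^{ut_\gamma\nu}$ is invariant under the substitution $(u,\gamma)\mapsto(us_\alpha,\,s_\alpha\gamma-\alpha^\vee)$ --- this invariance is exactly what justifies the factor $\tfrac12$ in \eqref{character_spec_La}, since the sum is effectively over $\widehat W/\widehat W_\nu$ while being indexed by $W\times Q^\vee\simeq\widehat W$.
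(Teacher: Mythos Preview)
Your proof is correct and self-contained. Note, however, that the paper does not give its own proof of this theorem: it is quoted verbatim from \cite[Section 3]{KaW}, where it is proved by essentially the argument you outline (symmetrize over the two-element stabilizer $\{1,s_{\delta-\alpha}\}$, use $c(ut_\gamma)=\varepsilon(u)c(t_\gamma)$, then exploit linearity of $c(t_\gamma)$ to reduce the symmetrized coefficient to a multiple of $\langle\alpha,\gamma\rangle+1$, and finally normalize via the leading term). The paper's own contribution lies elsewhere: rather than \emph{assuming} hypothesis (iii), it \emph{establishes} the explicit form of $c(t_\gamma)$ in specific cases via the coherent realization of the anti-spherical module on the Springer resolution (Theorem \ref{main_th_formulation} and Proposition \ref{sing_our_char}), thereby proving instances of the Kac--Wakimoto conjecture that (iii) holds automatically.
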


\begin{Example}%
\label{ex_type_D_4_good_and_bad}
Let us give examples of $\Lambda $, satisfying conditions $(i)$,
$(ii)$ of Theorem~\ref{char_KW_thm} for $\mathfrak{g}$ of type
$D_{4}$. To $\Lambda \in \widehat{\mathfrak{h}}^{*}$ we associate the element
$w \in \widehat{W}$ that is the longest element such that
$\lambda + \widehat{\rho} :=w(\Lambda+\widehat{\rho})$ is dominant. Then the following is a {\textit{partial}}
list of $\Lambda $ of level $-1$, satisfying conditions $(i)$ and
$(ii)$ of Theorem~\ref{char_KW_thm} (we label the branching node of the
Dynkin diagram of $D_{4}$ by $2$), and the corresponding
$\alpha \in \Delta _{+}$ and $w \in \widehat{W}$,
$\lambda \in \widehat{\mathfrak{h}}^{*}$:
\begin{enumerate}
\item[1)] $\Lambda =-\Lambda _{0},\, \alpha =\theta =\alpha _{1}+2\alpha _{2}+
\alpha _{3}+\alpha _{4},\, w=s_{0}, \, \lambda =-\Lambda _{0} $,

\item[2)] $\Lambda =-2\Lambda _{0}+\Lambda _{k},\, \alpha =\theta -\alpha _{2},\, w=s_{2}s_{0},\, \lambda =\Lambda _{k}-\Lambda _{2} $,

\item[3)] $\Lambda =-3\Lambda _{0}+\Lambda _{2},\, \alpha =\theta -\alpha _{2}
,\, w=s_{2}s_{0},\, \lambda =\Lambda _{0}-\Lambda _{2} $,

\item[4)] $\Lambda =-3\Lambda _{0}+\Lambda _{k}+\Lambda _{l},\, \alpha =
\theta -\alpha _{2}-\alpha _{p} ,\, w=s_{p}s_{2}s_{0},\, \lambda =-
\Lambda _{p} $,

\item[5)] $\Lambda =-4\Lambda _{0}+2\Lambda _{k}+\Lambda _{l},\, \alpha =
\theta -\alpha _{2}-\alpha _{l} ,\, w=s_{l} s_{p}s_{2}s_{0},\,
\lambda =-\Lambda _{l} $,

\item[6)] $\Lambda =-5\Lambda _{0}+2\Lambda _{k}+\Lambda _{l}+\Lambda _{p},
\, \alpha =\theta -\alpha _{2}-\alpha _{k} ,\, w=s_{k}s_{p}s_{l}s_{2}s_{0},
\, \lambda =-\Lambda _{k} $,

\item[7)] $\Lambda =-4\Lambda _{0}+3\Lambda _{k},\,\alpha =\theta -\alpha _{2}-
\alpha _{p}-\alpha _{l} ,\,w=s_{2}s_{l}s_{p}s_{2}s_{0} ,\,\lambda =
\Lambda _{k}-\Lambda _{2} $,

where $k,l$ are distinct elements of the set $\{1,3,4\}$ and
$p \in \{1,3,4\} \setminus \{k,l\}$.
\end{enumerate}
Examples $1)$--$4)$ are given in \cite[Example 3.5]{KaW}.
\end{Example}

\begin{Example}%
\label{ex_low_level}
Let $\mathfrak{g}$ be of type $D_{n}$ ($n \geqslant 4$), $E_{6}$,
$E_{7}$, $E_{8}$ and let $b$ be the maximal among the coefficients
$a_{i}$ in $\theta =\sum _{i=1}^{r} a_{i}\alpha _{i}$. Then there is no
$\Lambda $, satisfying conditions $(i)$ and $(ii)$ of Theorem~\ref{char_KW_thm} if $\kappa (\Lambda )<-b$, and there is only one,
$\Lambda =-b\Lambda _{0}$, if $\kappa (\Lambda )=-b$. Furthermore,
$\Lambda =-\kappa \Lambda _{0}$ for $\kappa \in {\mathbb{Z}}$,
$1 \leqslant \kappa \leqslant b$ always satisfies conditions $(i)$ and
$(ii)$ of Theorem~\ref{char_KW_thm}, and then
$\alpha =\theta -\sum _{i=1}^{\kappa -1}\alpha _{i}$,
$w=s_{\kappa -1}\ldots s_{0}$, where we label vertices of the Dynkin diagram
of $\widehat{\mathfrak{g}}$ in such a way that the affine vertex has label
$0$ and $i$, $i+1$ are adjacent for $i \in \{0,1,\ldots ,b-2\}$ (so, in
particular, the label of the branching point is $b-1$).
\end{Example}

It is conjectured in \cite[Conjecture 3.2]{KaW} that if $\mathfrak{g}$ is of
type $D_{4}, E_{6}, E_{7}, E_{8}$, then the extra hypothesis $(iii)$ holds
(so we obtain the character formula (\ref{character_spec_La}) for $L(\Lambda
)$ such that $\Lambda $ satisfies conditions $(i), (ii)$ of
Theorem~\ref{char_KW_thm}). The main motivation of this work was to prove
this conjecture in some cases. For example, in the case of
$\mathfrak{g}=D_{4}$ we prove the conjecture in
Examples~\ref{ex_type_D_4_good_and_bad} $1)$--$4)$ (but not for $5)$--$7)$). We
don't know how to prove the conjecture in general.

We will actually compute (under some conditions) the numbers
$c(t_{\gamma})$ explicitly that will allow us to compute characters of
$L(\Lambda )$ for certain $\Lambda $ that appear in Theorem~\ref{char_KW_thm} and also of certain other $\Lambda $ (see Theorem~\ref{main_th_formulation} and Propositions~\ref{sing_our_char},~\ref{reg_our_char} below).

\subsubsection{Main result: types $D$ and $E$}

Let $\mathfrak{g}$ be of type $D_{n}$ ($n \geqslant 4$), $E_{6}$,
$E_{7}$, $E_{8}$. Before formulating the main result for types $D$ and
$E$ we need to introduce some notation.

Let $I$ be the set of vertices of the Dynkin diagram of
$\mathfrak{g}$. We fix a labeling of $I$ by the numbers
$1,\ldots ,r$. Let $\widehat{I}=I \cup \{0\}$ be the set of vertices of
the Dynkin diagram of $\widehat{\mathfrak{g}}$. For
$i \in \widehat{I}$ consider the unique segment in $\widehat{I}$, connecting
$i$ and $0$. Let $l$ be the length of this segment (i.e. this segment consists
of $l+1$ vertices). Let
\begin{equation*}
0=j_{0},\,j_{1},\ldots ,\,j_{l-1},\,j_{l}=i
\end{equation*}
be the set of vertices that form the segment above. We set
\begin{equation*}
w_{i}:=s_{i}s_{j_{l-1}}\ldots s_{j_{1}}s_{0}.
\end{equation*}

\begin{Rem}
Note that for $i=0$ we have $w_{0}=s_{0}$.
\end{Rem}

We are now ready to describe the main result. The following theorem holds
(see Section~\ref{comp_KL_values_DE} for the proof).

\begin{Thm}%
\label{main_th_formulation}
Let $\mathfrak{g}$ be of type $D_{n}$ ($n \geqslant 4$) or $E_{6}$,
$E_{7}$, $E_{8}$. Pick $i \in \{0,1,\ldots ,r\}$ and let
$\lambda \in \widehat{\mathfrak{h}}^{*}$ be an integral weight, such that
$\lambda +\widehat{\rho}$ is dominant,
$\Lambda =w_{i}^{-1} \circ \lambda $ is quasi-dominant and $w_{i}$ is the
longest element in the coset $\widehat{W}_{\lambda }w_{i}$. Then
%
\begin{equation}
\label{main_char_form_eq}
\widehat{R} \operatorname{ch}L(\Lambda )= \sum _{u \in W}\varepsilon (uw_{i})
\sum _{\gamma \in Q^{\vee}} \biggl\langle \Lambda _{i}, \gamma + \frac{|\gamma |^{2}}{2} K\biggr\rangle e^{ut_{\gamma }w_{i}(\Lambda +
\widehat{\rho})}.
\end{equation}
\end{Thm}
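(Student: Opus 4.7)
The plan is to reduce Theorem~\ref{main_th_formulation} to an explicit linear-algebra calculation inside the subregular cell quotient of the anti-spherical module, using two main inputs developed earlier in the paper: the Kazhdan--Lusztig character formula recalled in Section~\ref{sect_cat_O_and_KL}, and the explicit description of $K^{G^\vee}(\widetilde U)$ provided by Proposition~\ref{ident_K_U_DE}.

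\medskip

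\textbf{Step 1 (character formula in KL form).} Under the hypotheses on $\La$ (quasi-dominance of $\La=w_i^{-1}\circ\la$ and $w_i$ longest in $\widehat W_\la w_i$), Theorem~\ref{canon_in_Hecke} together with equation~(\ref{char_our_L_via_repres_short_long}) expresses $\widehat R\on{ch}L(\La)$ in terms of the values $\mathbf{m}^{w_\gamma}_{w_i}$ of the parabolic affine inverse Kazhdan--Lusztig polynomials at $q=1$. Combined with the $W$-invariance of $\on{ch}L(\La)$ (which forces the coefficient of $e^{ut_\gamma w_i(\La+\widehat\rho)}$ to be $\varepsilon(uw_i)\mathbf{m}^{w_\gamma}_{w_i}$, up to a possible relabelling $\gamma\mapsto -\gamma$), this reduces the theorem to the identity
\begin{equation*}
\mathbf{m}^{w_\gamma}_{w_i} \;=\; \bigl\langle \La_i,\, \gamma + \tfrac{|\gamma|^2}{2} K \bigr\rangle, \qquad \gamma \in Q^\vee,\ i \in \widehat I.
\end{equation*}

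\medskip

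\textbf{Step 2 (pass to the subregular quotient).} Set $U=\mathbb O_e \cup \mathbb O^{\on{reg}}$ and $\widetilde U=\pi^{-1}(U)$. By \cite[\S11.3]{BHe}, the canonical basis of $K^{G^\vee}(\widetilde U)$ is indexed by $\{1\}\cup\{\nu\in Q^\vee:w_\nu\in c\}$; by Corollary~\ref{descr_nu_i_subreg}, the latter set is in bijection with $\widehat I$ via $i\mapsto \nu_i$, with $w_{\nu_i}=w_i$. Projecting (\ref{detrm_m_first}) to $K^{G^\vee}(\widetilde U)$ gives
\begin{equation*}
\bar T_\gamma \;=\; \mathbf{m}^{w_\gamma}_{1}\,\bar C_1 + \sum_{i\in \widehat I} \mathbf{m}^{w_\gamma}_{w_i}\,\bar C_{\nu_i},
\end{equation*}
so the coefficients $\mathbf{m}^{w_\gamma}_{w_i}$ are precisely the coordinates of $\bar T_\gamma$ in the canonical basis of the subregular quotient.

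\medskip

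\textbf{Step 3 (apply the explicit identification).} Proposition~\ref{ident_K_U_DE} provides an isomorphism of $\widehat W$-modules
\begin{equation*}
K^{G^\vee}(\widetilde U)\otimes\BZ_{\on{sign}} \;\iso\; \widehat{\mathfrak h}_\BZ = \BZ Q^\vee \oplus \BZ K \oplus \BZ d
\end{equation*}
sending $\bar 1 \mapsto d$ and $\bar C_{\nu_i} \mapsto -\al_i^\vee$ for $i\in\widehat I$. Since $\varepsilon(t_\gamma)=1$ by (\ref{length_lattice_even}), the sign twist does not affect the translation operator, so $\bar T_\gamma$ corresponds to $t_\gamma(d)$. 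The explicit formula of Section~\ref{aff_Weyl_grp_g} gives
\begin{equation*}
t_\gamma(d) \;=\; d + \gamma - \tfrac{|\gamma|^2}{2}K.
\end{equation*}
Using the dualities $\langle\La_i,\al_j^\vee\rangle=\delta_{ij}$ and $\langle\La_i,K\rangle=a_i^\vee$ (with $a_0^\vee=1$), one expands this expression in the basis $\{d\}\cup\{-\al_i^\vee:i\in\widehat I\}$ and reads off the coordinate of $-\al_i^\vee$ as $\langle \La_i,\gamma+\tfrac{|\gamma|^2}{2}K\rangle$. Substituting into the formula from Step~1 yields (\ref{main_char_form_eq}).

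\medskip

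The genuine content of the theorem is packaged in Proposition~\ref{ident_K_U_DE}, the proof of which (to be given in Section~\ref{subreg_section}) will rest on the geometry of the subregular Springer fibre (a chain of $\mathbb P^1$'s forming the affine Dynkin diagram) and the characterization of the canonical basis via the exotic $t$-structure of \cite{BHe}. Once this input is in place, the proof of Theorem~\ref{main_th_formulation} is the direct computation above; the only technical point requiring care is the sign bookkeeping between the $\BZ_{\on{sign}}$-twist, the orientation ``minus simple coroots'' of the canonical basis, and the factor $\varepsilon(w_i)$ produced by the KL character formula in Step~1, all of which conspire to produce the prefactor $\varepsilon(uw_i)$ in (\ref{main_char_form_eq}).
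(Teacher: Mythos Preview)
Your approach is essentially identical to the paper's: reduce to computing $\mathbf{m}^{w_\gamma}_{w_i}$ via equation~(\ref{our_kl_form_convenient}), pass to the subregular quotient $K^{G^\vee}(\widetilde U)$, apply Proposition~\ref{ident_K_U_DE}, and read off the coefficients from $t_\gamma(d)$.

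There is, however, a genuine sign slip that you should fix rather than wave away. In Step~3, writing $t_\gamma(d)-d=\gamma-\tfrac{|\gamma|^2}{2}K=\sum_i c_i(-\al_i^\vee)$ gives $c_i=\langle\La_i,-\gamma+\tfrac{|\gamma|^2}{2}K\rangle$, not $\langle\La_i,\gamma+\tfrac{|\gamma|^2}{2}K\rangle$ as you state; this is exactly the paper's equation~(\ref{comp_m}). Correspondingly, in Step~1 the character formula~(\ref{our_kl_form_convenient}) has $e^{ut_{-\gamma}(\la+\widehat\rho)}$, so after the substitution $\gamma\mapsto -\gamma$ the coefficient of $e^{ut_\gamma w_i(\La+\widehat\rho)}$ is $\varepsilon(uw_i)\mathbf{m}^{w_{-\gamma}}_{w_i}$, and what you actually need is $\mathbf{m}^{w_{-\gamma}}_{w_i}=\langle\La_i,\gamma+\tfrac{|\gamma|^2}{2}K\rangle$. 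Your two sign errors cancel, so the conclusion is correct, but the ``up to a possible relabelling'' in Step~1 is hiding a real mismatch with your Step~3 computation; you should track this explicitly rather than rely on the cancellation.
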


\begin{Rem}
Note that we are not assuming in Theorem~\ref{main_th_formulation} that
the level of $\Lambda $ is negative. Actually, Lemma~\ref{class_our_La} below implies that
$\kappa (\Lambda ) \geqslant 0$ in case $(a)$ and
$\kappa (\Lambda ) \geqslant -b$ in case $(b)$ of this lemma (here
$b$ is as in Example~\ref{ex_low_level} above).
\end{Rem}

The following lemma describes explicitly all the pairs $\lambda $,
$i$, satisfying the conditions of Theorem~\ref{main_th_formulation}.
%
\begin{Lem}%
\label{class_our_La}
Let $\mathfrak{g}$ be of type $D_{n}$ ($n \geqslant 4$) or $E_{6}$,
$E_{7}$, $E_{8}$. Elements $\lambda \in \widehat{\mathfrak{h}}^{*}$,
$i \in \{0,1,\ldots ,r\}$ as in Theorem~\ref{main_th_formulation} are described
as follows. There are two possibilities:
\begin{enumerate}
\item[$(a)$] $\lambda +\widehat{\rho}$ is regular dominant integral and $i$ is
an arbitrary element of $\widehat{I}$,

\item[$(b)$]
$\lambda =-\Lambda _{i}+\sum _{k \neq i}m_{k}\Lambda _{k}+x\delta $ for
some $i \in \{0,1,\ldots ,r\}$, $x \in {\mathbb{C}}$, and
$m_{k} \in {\mathbb{Z}}_{\geqslant 0}$.
\end{enumerate}
\end{Lem}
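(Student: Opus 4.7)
The plan is to pin down the stabilizer $\widehat{W}_\la$ using the very special descent structure of $w_i$, then read off the description of $\la$ from the two possibilities for $\widehat{W}_\la$. The first observation is that, by the explicit description of the subregular cell (see \cite{lu_subreg_crit} and the discussion in Section~\ref{appl_repres_th}), each $w_i$ lies in $c_{\mathrm{subreg}}$ and therefore admits a unique reduced expression, which must then be $w_i=s_is_{j_{l-1}}\cdots s_{j_1}s_0$. An immediate consequence is that the simple left descent set of $w_i$ equals the singleton $\{s_i\}$.

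For the forward direction, I would use that $\la+\widehat{\rho}=w_i(\La+\widehat{\rho})$ is dominant integral; this is built into the setup of Theorem~\ref{main_th_formulation}, where $w_i$ plays the role of the longest element moving $\La+\widehat{\rho}$ into the dominant chamber. Consequently $\widehat{W}_\la$ is the standard parabolic subgroup of $\widehat{W}$ generated by those simple reflections $s_k$ with $\langle\la+\widehat{\rho},\alpha_k^\vee\rangle=0$. The hypothesis that $w_i$ is longest in the coset $\widehat{W}_\la w_i$ then forces every such $s_k$ to be a left descent of $w_i$, and by the previous paragraph $s_k=s_i$. Hence $\widehat{W}_\la\subseteq\{1,s_i\}$. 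If $\widehat{W}_\la=\{1\}$, then $\la+\widehat{\rho}$ is regular dominant integral, giving case~(a). If $\widehat{W}_\la=\{1,s_i\}$, then $\langle\la+\widehat{\rho},\alpha_i^\vee\rangle=0$ while $\langle\la+\widehat{\rho},\alpha_k^\vee\rangle\geqslant 1$ for $k\ne i$, and unpacking via $\widehat{\rho}=\sum_k\La_k$ produces exactly the form $\la=-\La_i+\sum_{k\ne i}m_k\La_k+x\delta$ of case~(b).

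For the converse I would verify each case separately. The longest-in-coset condition is immediate in case~(a) (the coset is a singleton), and in case~(b) it follows because $s_i$ is the initial letter of the unique reduced expression of $w_i$, so $\ell(s_iw_i)=\ell(w_i)-1$. Quasi-dominance of $\La=w_i^{-1}\circ\la$ reduces to verifying $\langle\la+\widehat{\rho},w_i\alpha_k^\vee\rangle\geqslant 1$ for $k=1,\ldots,r$. Because $w_i$ equals some $w_\nu$ (the shortest element of $t_\nu W$), it sends the simple coroots of $\mathfrak{g}$ to positive real coroots of $\widehat{\mathfrak{g}}$. In case~(a) the inequality follows from regularity of $\la+\widehat{\rho}$. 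In case~(b) the only positive real coroot that pairs to zero with $\la+\widehat{\rho}$ is $\alpha_i^\vee$, and the equality $w_i\alpha_k^\vee=\alpha_i^\vee$ is ruled out because $w_i^{-1}\alpha_i^\vee$ is a negative coroot (again, $s_i$ is the unique left descent of $w_i$), whereas $\alpha_k^\vee$ is positive for $k\geqslant 1$.

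The main obstacle I anticipate is the forward-direction passage from ``$w_i$ longest in $\widehat{W}_\la w_i$'' to ``$\widehat{W}_\la\subseteq\{1,s_i\}$'', which rests on $\widehat{W}_\la$ being a \emph{standard} parabolic --- available from the dominance of $\la+\widehat{\rho}$ built into the theorem's setup. Once this is granted, the entire argument is a direct consequence of the uniqueness of the reduced expression of $w_i$, which is the defining combinatorial feature of the subregular cell in types $D$ and $E$.
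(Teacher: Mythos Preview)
Your proposal is correct and follows the same logical skeleton as the paper's proof, but packaged more conceptually. Both arguments hinge on $\widehat{W}_\la$ being a standard parabolic (from dominance of $\la+\widehat{\rho}$, which the paper also takes as given) and on the special structure of $w_i$. Where the paper verifies case by case that $w_i^{-1}(\al_{i_d})$ is a positive root for each $i_d\neq i$ (splitting into the cases $i_d\notin\{j_0,\dots,j_{l-1}\}$ and not adjacent to $i$, $i_d$ adjacent to $i$, and $i_d=j_p$), you deduce this in one stroke from the observation that an element with a unique reduced expression has a single left descent. Similarly, for the converse the paper computes the coefficients of $w_i^{-1}(\la+\widehat{\rho})$ in the fundamental weights explicitly, whereas you argue via $w_i\in\widehat{W}^f$ (so $w_i\al_k^\vee>0$ for $k\geqslant 1$) together with $w_i^{-1}\al_i^\vee<0$. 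Your route is cleaner and makes the role of the subregular cell more transparent; the paper's explicit computations avoid invoking $w_i=w_{\nu_i}\in\widehat{W}^f$ and the descent characterizations, at the cost of a small case analysis.
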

\begin{proof}
Let us first of all show that if $\lambda $, $i$ are as in Theorem~\ref{main_th_formulation}, then either $\lambda +\widehat{\rho}$ is regular
or
$\lambda =-\Lambda _{i}+\sum _{k \neq i}m_{k}\Lambda _{k}+x\delta $.

Indeed, assume that $\lambda +\widehat{\rho}$ is not regular. Recall that
$\lambda +\widehat{\rho}$ is dominant, so its stabilizer in
$\widehat{W}$ (with respect to the standard non shifted action of
$\widehat{W}$) is generated by some simple reflections
$s_{i_{1}},\ldots ,s_{i_{t}}$. Our goal is to show that $t=1$ and
$i_{1}=i$. Otherwise there exists $d \in \{1,\ldots ,t\}$ such that
$i_{d} \neq i$. Consider the element
\begin{equation*}
w':=s_{i_{d}}w_{i}=s_{i_{d}}s_{i}s_{j_{l-1}}\ldots s_{j_{1}}s_{0}
\end{equation*}
and note that ${w'}^{-1} \circ \lambda =w_{i}^{-1} \circ \lambda $ is quasi-dominant
(by our assumptions). We claim that $\ell (w')=\ell (w_{i})+1$ or equivalently
$\ell ({w'}^{-1})=\ell (w_{i}^{-1})+1$. To see this we need to show that
$w_{i}^{-1}(\alpha _{i_{d}})$ is a positive root. If $i_{d}$ does not lie
in the set $\{0,j_{1},\ldots ,j_{l-1}\}$ and $i_{d}$ is not adjacent to
$i$, then $w_{i}^{-1}(\alpha _{i_{d}})=\alpha _{i_{d}}$ is clearly positive.
If $\alpha _{i_{d}}$ is adjacent to $i=j_{l}$ and is not equal to
$j_{l-1}$, then
$w_{i}^{-1}(\alpha _{i_{d}})=\alpha _{i_{d}}+\alpha _{i}$ is positive.
Finally, if $i_{d}=j_{p}$ for some $p \in \{0,1,\ldots ,l-1\}$, then we
have $w_{i}^{-1}(\alpha _{i_{d}})=\alpha _{j_{p+1}}$ is positive.

So we have shown that $\ell (w')=\ell (w_{i})+1$,
$w_{i}^{-1} \circ \lambda $ is quasi-dominant and
$w'=s_{i_{d}}w_{i} \in \widehat{W}_{\lambda }w_{i}$. This contradicts to
our assumptions (we assumed that $w_{i}$ is the longest element in
$\widehat{W}_{\lambda }w_{i}$). So we conclude that
$\widehat{W}_{\lambda }=\{1,s_{i}\}$ and $\lambda $ must be of the form
$-\Lambda _{i}+\sum _{k \neq i}m_{k} \Lambda _{k}$ with $m_{k}$ being nonnegative.

It remains to show that if $\lambda $, $i$ satisfy assumption $(a)$ or
$(b)$ of Lemma~\ref{class_our_La}, then they satisfy the assumptions of
Theorem~\ref{main_th_formulation}. The only nontrivial part is to check
that
$w_{i}^{-1} \circ \lambda =w^{-1}_{i}(\lambda +\widehat{\rho})-
\widehat{\rho}$ is quasi-dominant.

Let us decompose $\lambda =\sum _{k}m_{k} \Lambda _{k}+x\delta $. We have
$\lambda +\widehat{\rho}=\sum _{k}(m_{k}+1) \Lambda _{k}+x\delta $. We
need to apply $w_{i}^{-1}=s_{0}s_{j_{1}}\ldots s_{j_{l-1}}s_{i}$ to
$\lambda +\widehat{\rho}$ and show that after substituting
$\widehat{\rho}$ we get quasi-dominant element. Indeed, recall that if
$j \in \widehat{I}$, then the action of $s_{j}$ on some
$\Lambda _{k}$ is equal to $\Lambda _{k}$ if $k \neq j$, the action of
$s_{j}$ on $\Lambda _{j}$ is equal to $-\Lambda _{j}$ plus the sum of
$\Lambda _{j'}$, where $j' \in \widehat{I}$ runs through all vertices that
are adjacent to $j$. We easily conclude that the coefficient of
$w_{i}^{-1}(\lambda +\widehat{\rho})$ in front of some
$\Lambda _{j_{p}}$ ($p \in \{1,\ldots ,l\}$) is equal to
$m_{j_{p-1}}+1>0$, the coefficient in front of $\Lambda _{0}$ is equal
to $-(m_{0}+m_{j_{1}}+\cdots +m_{j_{l-1}}+m_{i}+l+1)$ and coefficients
in front of other $\Lambda _{k}$ are at least $m_{k}+1$, so are positive.
We have shown that $w_{i}^{-1} \circ \lambda $ is quasi-dominant.
\end{proof}

\begin{Rem}
The last paragraph of the proof of Lemma~\ref{class_our_La} can be omitted
since it follows from Lemma~\ref{class_irr_R_gen} below.
\end{Rem}

The following proposition shows how to deduce formulas for characters of
(certain) modules appearing in \cite[Conjecture 3.2]{KaW} from Theorem~\ref{main_th_formulation} (note that we get the same character formula
as in \cite[Theorem 3.1]{KaW}).

\begin{Prop}%
\label{sing_our_char}
Let $\mathfrak{g}$ be of type $D_{n}$ ($n \geqslant 4$) or $E_{6}$,
$E_{7}$, $E_{8}$. Assume that
$\lambda =-\Lambda _{i}+\sum _{k \neq i}m_{k}\Lambda _{k}+x\delta $ for
some $i \in \{0,1,\ldots ,r\}$, $x \in {\mathbb{C}}$ and
$m_{k} \in {\mathbb{Z}}_{\geqslant 0}$ for $k \neq i$. Let
$\Lambda =w_{i}^{-1} \circ \lambda $ and
$\alpha =w_{i}^{-1}(\alpha _{i})+\delta$. Then we have
%
\begin{equation}
\label{our_form_KW}
\widehat{R}\operatorname{ch}L(\Lambda )=\frac{1}{2} \sum _{u \in W}
\varepsilon (u) \bigg(\sum _{\gamma \in Q^{\vee}}(\langle \alpha ,
\gamma \rangle +1)e^{ut_{\gamma}(\Lambda +\widehat{\rho})}\bigg).
\end{equation}
\end{Prop}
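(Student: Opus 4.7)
My plan is to deduce \eqref{our_form_KW} from Theorem \ref{main_th_formulation} by rewriting both sides as sums over $\widehat{W}$ with a common exponential $e^{w(\la+\widehat{\rho})}$ and then matching coefficients using $\widehat{W}_{\la+\widehat{\rho}} = \{1,s_i\}$. Writing $w_i = u_i t_{\gamma_i}$ in the decomposition $\widehat{W} = W \ltimes Q^\vee$ and using $w_i(\La+\widehat{\rho}) = \la+\widehat{\rho}$, I would first rewrite the right-hand side of Theorem \ref{main_th_formulation} as $\varepsilon(w_i)\sum_{w \in \widehat{W}}\varepsilon(w) X(w)\, e^{w(\la+\widehat{\rho})}$, where $X(w) := \langle\La_i,\, \gamma(w) + \tfrac{|\gamma(w)|^2}{2}K\rangle$ and $\gamma(w) \in Q^\vee$ denotes the translation part of $w$. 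Substituting $w \mapsto w w_i$ in the right-hand side of \eqref{our_form_KW}, and using the $W$-invariance of the pairing to get $\langle\al, u_i^{-1}\gamma + \gamma_i\rangle = \langle u_i\al, \gamma\rangle + \langle\al,\gamma_i\rangle$, recasts it into the parallel form $\tfrac{\varepsilon(w_i)}{2}\sum_{w \in \widehat{W}}\varepsilon(w) Y(w)\, e^{w(\la+\widehat{\rho})}$ with $Y(w) := \langle u_i\al, \gamma(w)\rangle + \langle\al,\gamma_i\rangle + 1$.

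Since $s_i$ stabilizes $\la+\widehat{\rho}$, the pairs $\{w, ws_i\}$ contribute to the same exponential; grouping the two sums accordingly, their equality reduces to the pointwise identity $2\bigl(X(w) - X(ws_i)\bigr) = Y(w) - Y(ws_i)$ for all $w \in \widehat{W}$. The key input is the observation that $u_i\al = \al_i + c_i\delta$ for some $c_i \in \BZ$, which gives both $\langle u_i\al, \al_i^\vee\rangle = 2$ and $u_i\al - s_i u_i\al = 2\al_i$. For $i \ne 0$ this is immediate from $u_i w_i^{-1} = t_{-u_i\gamma_i}$ combined with the fact that $t_\eta$ acts on a finite root by adding a multiple of $\delta$; for $i = 0$ one has $u_0 = s_\theta$, $\gamma_0 = -\theta^\vee$, $\al = \theta$, so $u_0\al = -\theta = \al_0 - \delta$ directly.

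For $i \ne 0$ the verification is then routine: $\gamma(ws_i) = s_i\gamma(w)$, so $\gamma(w) - \gamma(ws_i) = \langle\al_i,\gamma(w)\rangle\al_i^\vee$, and both sides of the required pointwise identity reduce to $2\langle\al_i, \gamma(w)\rangle$. The main obstacle will be the case $i = 0$: since $s_0 \notin W$, one instead has $\gamma(ws_0) = s_\theta\gamma(w) - \theta^\vee$, and the extra translation $-\theta^\vee$ is precisely what produces the constant ``$+1$'' in $\langle\al,\gamma\rangle + 1$ appearing in \eqref{our_form_KW}. Tracking this additional term carefully through the difference $Y(w) - Y(ws_0)$ and confirming that it matches $2(X(w) - X(ws_0))$ is the main calculation.
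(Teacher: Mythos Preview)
Your proposal is correct and follows essentially the same route as the paper: both deduce \eqref{our_form_KW} from Theorem~\ref{main_th_formulation} by using $\widehat{W}_{\la+\widehat{\rho}}=\{1,s_i\}$ to pair contributions to the same exponential and then checking that the resulting coefficients agree. The only difference is organizational---the paper treats $i=0$ and $i\neq 0$ by separate direct computations (for $i\neq 0$ via the auxiliary element $w_i'=w_it_\theta$ and the identity $(\al,\theta)=1$), whereas your observation $u_i\al=\al_i+c_i\delta$ handles both cases in a single stroke.
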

\begin{proof}
It follows from Lemma~\ref{class_our_La} that $\lambda $, $i$ satisfy
conditions of Theorem~\ref{main_th_formulation}. So the character of
$L(\Lambda )$ is computed using the formula (\ref{main_char_form_eq}). Pick
$u \in W$, $\gamma \in Q^{\vee}$ and let us compute the coefficient of
(\ref{main_char_form_eq}) in front of $e^{ut_{\gamma}(\lambda
+\widehat{\rho})}$. Assume first that $i=0$. It follows that $\lambda
=\Lambda $, $i=0$ and $w_{i}=s_{0}$. Since $s_{0}=s_{\theta }t_{-\theta}$ and
$s_{0}(\Lambda +\widehat{\rho})=\Lambda +\widehat{\rho}$, we conclude that
$us_{\theta}t_{s_{\theta}(\gamma )-\theta}(\Lambda +\widehat{\rho})=ut_{
\gamma}(\Lambda +\widehat{\rho})$, so the coefficient in front of
$e^{ut_{\gamma}(\Lambda +\widehat{\rho})}$ in (\ref{main_char_form_eq}) is
equal to
\begin{align*}
&-\varepsilon (u)\biggl\langle \Lambda _{0}, \gamma +\frac{|\gamma |^{2}}{2} K
\biggr\rangle +\varepsilon (u)\biggl\langle \Lambda _{0},s_{\theta}(\gamma )-
\theta + \frac{|s_{\theta}(\gamma )-\theta |^{2}}{2} K\biggr\rangle
\\
&\quad = \varepsilon (u)\bigg(\frac{|s_{\theta}(\gamma )-\theta |^{2}}{2}-
\frac{|\gamma |^{2}}{2}\bigg)=\varepsilon (u)(\langle \theta ,\gamma
\rangle +1).
\end{align*}

Assume now that $i \neq 0$. Note that
$\lambda +\widehat{\rho}=s_{i}(\lambda +\widehat{\rho})$, hence,
$us_{i}t_{s_{i}(\gamma )}(\lambda +\widehat{\rho})=ut_{\gamma}(
\lambda +\widehat{\rho})$, so the coefficient in front of
$e^{ut_{\gamma}(\lambda +\widehat{\rho})}$ is equal to
\begin{align*}
&\varepsilon (uw_{i})\biggl\langle \Lambda _{i}, \gamma + \frac{|\gamma |^{2}}{2} K \biggr\rangle - \varepsilon (uw_{i})
\biggl\langle \Lambda _{i}, s_{i}(\gamma )+\frac{|\gamma |^{2}}{2} K \biggr\rangle
\\
&\quad =\varepsilon (uw_{i})\langle \Lambda _{i},\alpha _{i}^{\vee}\rangle
\langle \alpha _{i},\gamma \rangle =\varepsilon (uw_{i})\langle w_{i}(
\alpha -\delta ),\gamma \rangle =\varepsilon (uw_{i})\langle \alpha ,w_{i}^{-1}
\gamma \rangle .
\end{align*}

Consider the element
$w_{i}':=w_{i}t_{\theta}=s_{i}s_{j_{l-1}}\ldots s_{j_{1}}s_{\theta}$. We
have shown that the coefficient in front of
\begin{equation*}
e^{ut_{\gamma}(\lambda +\widehat{\rho})}=e^{ut_{\gamma}w_{i}(\Lambda +
\widehat{\rho})}=e^{uw_{i}' t_{{w_{i}'}^{-1}(\gamma )-\theta}(
\Lambda +\widehat{\rho})}
\end{equation*}
is equal to
\begin{align*}
\varepsilon (uw_{i})\langle \alpha ,w_{i}^{-1}\gamma \rangle
&= \varepsilon (uw_{i}')\langle \alpha ,t_{\theta}({w_{i}'}^{-1}\gamma ) \rangle
\\
&= \varepsilon (uw_{i}')\langle \alpha , {w_{i}'}^{-1}\gamma -(\theta ,{w_{i}'}^{-1}
\gamma )K\rangle =\varepsilon (uw_{i}')\langle \alpha ,{w_{i}'}^{-1}
\gamma \rangle
\\
&= \varepsilon (uw_{i}')(\langle \alpha ,{w_{i}'}^{-1}\gamma -\theta \rangle +\langle \alpha ,\theta \rangle ),
\end{align*}
so it remains to check that $(\alpha ,\theta )=1$. Indeed, recall that
$\alpha =w_{i}^{-1}(\alpha _{i})+\delta $, so
\begin{align*}
(\alpha ,\theta )
&=(w_{i}^{-1}(\alpha _{i}),\theta )=(\alpha _{i},w_{i}( \theta ))=-(\alpha _{i},w_{i}(\delta -\theta ))
\\
&=-(\alpha _{i},w_{i}(\alpha _{0}))=(\alpha _{i},s_{i}s_{j_{l-1}} \ldots s_{j_{1}}\alpha _{0})
\\
&=(\alpha _{i},\alpha _{0}+\alpha _{j_{1}}+ \cdots +\alpha _{j_{l-1}}+\alpha _{i})=1.
\end{align*}\vskip-22pt
\end{proof}

\begin{Rem}
In order to identify formulas (\ref{character_spec_La}) and (\ref{our_form_KW}),
we need to show that if $\alpha =w_{i}^{-1}(\alpha _{i})+\delta $, then
$(\delta -\alpha ,\Lambda +\widehat{\rho})=0$ and
$\alpha \in \Delta _{+}$. Note that
$(\alpha _{i},\lambda +\widehat{\rho})=0$ and
$\lambda +\widehat{\rho}=w_{i}(\Lambda +\widehat{\rho})$. It follows that
$(w_{i}^{-1}(\alpha _{i}),\Lambda +\widehat{\rho})=0$, so indeed
$(\delta -\alpha ,\Lambda +\widehat{\rho})=0$. Note also that
\begin{align*}
w_{i}^{-1}(\alpha _{i})
&=s_{0}s_{j_{1}}\ldots s_{j_{l-1}}s_{i}(\alpha _{i})
\\
&=-\alpha _{i}-\alpha _{j_{l-1}}-\cdots -\alpha _{j_{1}}-\alpha _{0}=-
\delta +\theta -\alpha _{j_{1}}-\cdots -\alpha _{j_{l-1}}-\alpha _{i},
\end{align*}
so
$\alpha =\theta -\alpha _{j_{1}}-\cdots -\alpha _{j_{l-1}}-\alpha _{i}
\in \Delta _{+}$.
\end{Rem}

The following proposition gives formulas for characters of certain
$L(\Lambda )$ for regular $\Lambda +\widehat{\rho}$ of nonnegative level.
%
\begin{Prop}%
\label{reg_our_char}
Let $\mathfrak{g}$ be of type $D_{n}$ ($n \geqslant 4$) or $E_{6}$,
$E_{7}$, $E_{8}$. Pick $i \in \{0,1,\ldots ,r\}$ and let
$\lambda \in \widehat{\mathfrak{h}}^{*}$ be integral such that
$\lambda +\widehat{\rho}$ is regular dominant. Set
$\Lambda =w_{i}^{-1} \circ \lambda $, then
\begin{equation*}
\widehat{R} \operatorname{ch}L(\Lambda )= \sum _{u \in W}\varepsilon (uw_{i})
\sum _{\gamma \in Q^{\vee}} \biggl\langle \Lambda _{i},\gamma +
\frac{|\gamma |^{2}}{2}K \biggr\rangle e^{ut_{\gamma }w_{i}(\Lambda +
\widehat{\rho})}.
\end{equation*}
\end{Prop}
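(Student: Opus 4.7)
The plan is to deduce this proposition as an immediate corollary of Theorem \ref{main_th_formulation} via case $(a)$ of Lemma \ref{class_our_La}. Concretely, the proposition's hypothesis (that $\la+\widehat{\rho}$ is regular dominant integral and $i \in \widehat{I}$ is arbitrary) is precisely case $(a)$ of Lemma \ref{class_our_La}, and that lemma asserts that such pairs $(\la, i)$ satisfy the hypotheses of Theorem \ref{main_th_formulation}. So the formula in the proposition is simply the formula $(\ref{main_char_form_eq})$ applied verbatim.

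To make the reduction explicit, I would verify the two hypotheses of Theorem \ref{main_th_formulation} directly. First, the coset condition ``$w_i$ is the longest element in $\widehat{W}_\la w_i$'' is automatic: regularity of $\la + \widehat{\rho}$ means that the ordinary stabilizer $\on{Stab}_{\widehat{W}}(\la+\widehat{\rho})$ is trivial, hence the shifted stabilizer $\widehat{W}_\la$ is trivial, so $\widehat{W}_\la w_i = \{w_i\}$ and there is nothing to check. Second, I need to verify that $\La = w_i^{-1}\circ\la$ is quasi-dominant. Writing $\la = \sum_k m_k \La_k + x\delta$ with $m_k \geqslant 0$ for all $k \in \widehat I$ (which follows from dominance of $\la+\widehat{\rho}$), one applies $w_i^{-1} = s_0 s_{j_1}\cdots s_{j_{l-1}} s_i$ to $\la+\widehat{\rho}$; the explicit reflection action on the $\La_k$ (recalled in the last paragraph of the proof of Lemma \ref{class_our_La}) shows that the coefficient of $\La_{j_p}$ in $w_i^{-1}(\la+\widehat{\rho})-\widehat{\rho}$ is $m_{j_{p-1}}$ for $p = 1,\ldots,l$ (positive) while all other coefficients of $\La_k$ with $k \neq 0$ are at least $m_k \geqslant 0$. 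Hence $w_i^{-1}\circ\la$ is quasi-dominant.

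Both hypotheses verified, Theorem \ref{main_th_formulation} yields exactly the stated formula. There is no real obstacle here; the proposition is an ``unpacking'' of Theorem \ref{main_th_formulation} in the regular case, completely parallel to how Proposition \ref{sing_our_char} handles the singular case $(b)$ of Lemma \ref{class_our_La} (the difference being that in the regular case no further rewriting of the summation is needed, as there is no stabilizer that would let one fold the sum over $W$ in half).
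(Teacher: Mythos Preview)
Your proposal is correct and takes essentially the same approach as the paper, which simply says ``Follows from Theorem \ref{main_th_formulation} together with Lemma \ref{class_our_La}.'' Your additional explicit verification of the two hypotheses is fine (and essentially reproduces the last paragraph of the proof of Lemma \ref{class_our_La}); the only minor imprecision is the parenthetical ``(positive)'' after $m_{j_{p-1}}$, since these coefficients are only guaranteed to be $\geqslant 0$, but that is exactly what quasi-dominance requires.
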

\begin{proof}
Follows from Theorem~\ref{main_th_formulation} together with Lemma~\ref{class_our_La}.
\end{proof}

\subsubsection{Main result: type $A$}

Assume that $\mathfrak{g}=\mathfrak{sl}_{n}$. Recall that $Q^{\vee}$ is
the sublattice of ${\mathbb{Z}}^{\oplus n}$, consisting of
$(a_{1},\ldots ,a_{n})$ such that $\sum _{k=1}^{n} a_{k}=0$. Let
$\epsilon _{1},\ldots ,\epsilon _{n}$ be the standard basis of
${\mathbb{Z}}^{\oplus n}$. For $i \in {\mathbb{Z}}$ let
$[i] \in \{0,1,\ldots ,n-1\}$ be the class of $i$ modulo $n$. We set
\begin{equation*}
w_{i}=
\begin{cases}
s_{[i]}s_{[i-1]}\ldots s_{1}s_{0}& \text{for}~i > 0,
\\
s_{0}&\text{for}~i=0,
\\
s_{[-i]}s_{[-i+1]}\ldots s_{[-1]}s_{0}&\text{for}~i < 0,
\end{cases}
\end{equation*}
and for $i \in {\mathbb{Z}}$, $k=1,\ldots ,n$, and $a \in {\mathbb{Z}}$ we set
\begin{equation*}
z_{i}(a\epsilon _{k}):=
\begin{cases}
|{\mathbb{Z}}_{\leqslant i} \cap [k,k+{(a-1)n}] \cap (k+n{\mathbb{Z}})|&
\text{for}~ a \in {\mathbb{Z}}_{\geqslant 0},
\\
-|{\mathbb{Z}}_{\leqslant i} \cap [k+an,k-n] \cap (k+n{\mathbb{Z}})|&
\text{for}~a \in {\mathbb{Z}}_{\leqslant 0}.
\end{cases}
\end{equation*}

We are now ready to describe the main result for type $A$. The following
theorem holds (see Section~\ref{m_th_A} for the proof).

\begin{Thm}%
\label{main_th_formulation_A}
Let $\mathfrak{g}$ be $\mathfrak{sl}_{n}$ ($n \geqslant 3$). Pick
$i \in {\mathbb{Z}}$ and let
$\lambda \in \widehat{\mathfrak{h}}^{*}$ be an integral weight, such that
$\lambda +\widehat{\rho}$ is dominant,
$\Lambda =w_{i}^{-1} \circ \lambda $ is quasi-dominant and $w_{i}$ is the
longest element in the coset $\widehat{W}_{\lambda }w_{i}$. Then
\begin{equation*}
\widehat{R} \operatorname{ch}L(\Lambda )=-\sum _{u \in W}\varepsilon (uw_{i})
\sum _{\gamma \in Q^{\vee}} \Big( \sum _{k=1}^{n} z_{i}(-\langle
\epsilon _{k},\gamma \rangle \epsilon _{k})\Big) e^{ut_{\gamma }w_{i}(
\Lambda +\widehat{\rho})}.
\end{equation*}
\end{Thm}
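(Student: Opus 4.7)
The plan is to follow the strategy used in Section \ref{comp_KL_values_DE} to prove Theorem \ref{main_th_formulation}, adapted to type $A$. First I would invoke the Kashiwara--Tanisaki character formula (Section \ref{sect_cat_O_and_KL}) together with the assumption that $w_i$ is the longest element of $\widehat{W}_\la w_i$ and that $\La$ is quasi-dominant; this lets one average the resulting affine KL expansion over the finite Weyl group $W$ and write
\begin{equation*}
\widehat{R}\on{ch}L(\La) = \sum_{u \in W} \varepsilon(uw_i) \sum_{\gamma \in Q^\vee} {\bf{m}}^{w_\gamma}_{w_i}\, e^{u t_\gamma w_i(\La+\widehat{\rho})},
\end{equation*}
where ${\bf{m}}^{w_\gamma}_{w_i}={\bf{m}}^{w_\gamma}_{w_i}(1)$ is the value at $q=1$ of the parabolic inverse Kazhdan--Lusztig polynomial. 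The task thus reduces to computing ${\bf{m}}^{w_\gamma}_{w_i}$ for every $i \in \BZ$ and $\gamma \in Q^\vee$.

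Since each $w_i$ lies in $c \cup \{1\}$, these coefficients can be read off from the image of $T_\gamma$ in the quotient $K^{\on{PGL}_n}(\widetilde{U})$, namely from the expansion
\begin{equation*}
\bar{T}_\gamma = \sum_{\nu \in Q^\vee,\, w_\nu \in c \cup \{1\}} {\bf{m}}^{w_\gamma}_{w_\nu}\, \bar{C}_\nu
\end{equation*}
coming from the image of equation \eqref{detrm_m_first}. By Proposition \ref{descr_K_U_A}, the $\widehat{W}$-module $K^{\on{PGL}_n}(\widetilde{U}) \otimes \BZ_{\mathrm{sign}}$ is isomorphic to $\mathfrak{h}_{\infty,\BZ} \oplus \BZ d$, and under this isomorphism the canonical basis $\{\bar{C}_\nu\,|\, w_\nu \in c \cup \{1\}\}$ corresponds to $\{d\} \cup \{-\alpha_i^\vee\,|\, i \in \BZ\}$, where the $\alpha_i^\vee$ are the simple coroots of $\mathfrak{sl}_\infty$.

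It then remains to compute $\bar{T}_\gamma = t_\gamma \cdot \bar{1}$ inside $\mathfrak{h}_{\infty,\BZ} \oplus \BZ d$ using the explicit formulas for the $\widehat{W}$-action given in Section \ref{ext_h_infty}. Writing $\gamma = \sum_{k=1}^n \langle \epsilon_k,\gamma \rangle \epsilon_k$, the translation $t_\gamma$ shifts the $k$-th strand of the $A_\infty$ Dynkin diagram by $\langle \epsilon_k,\gamma\rangle$ steps of length $n$; a direct combinatorial count of the simple coroots of $\mathfrak{sl}_\infty$ swept past the $i$-th vertex by these shifts identifies the coefficient of $-\alpha_i^\vee$ in $\bar{T}_\gamma$ with $\sum_{k=1}^n z_i(-\langle \epsilon_k,\gamma\rangle \epsilon_k)$, so that
\begin{equation*}
{\bf{m}}^{w_\gamma}_{w_i} = -\sum_{k=1}^n z_i(-\langle \epsilon_k,\gamma\rangle \epsilon_k).
\end{equation*}
Substituting this into the character formula above yields the theorem.

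The main obstacle is the third step: one must keep careful track of sign conventions and of the $\BZ/n\BZ$-equivariance coming from the covering $\mathfrak{sl}_\infty \twoheadrightarrow \widehat{\mathfrak{sl}}_n$ in order to identify the lattice points counted by the translation formula with the two-sided sums appearing in the definition of $z_i(a\epsilon_k)$ (the split between the $a \geq 0$ and $a \leq 0$ cases is exactly what produces the sign in $z_i$). Once this combinatorial bookkeeping is verified, Proposition \ref{descr_K_U_A} and the Kashiwara--Tanisaki formula supply all remaining ingredients.
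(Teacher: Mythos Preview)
Your approach is exactly the paper's: reduce to computing ${\bf m}^{w_\gamma}_{w_i}$ via equation~(\ref{our_kl_form_convenient}), then read these off from the image of $T_\gamma$ in $K^{\on{PGL}_n}(\widetilde{U})\simeq \widehat{\mathfrak{h}}_{\infty,\BZ}\otimes\BZ_{\mathrm{sign}}$ using Proposition~\ref{descr_K_U_A} and the explicit formula for $t_\gamma(d)$ in Section~\ref{ext_h_infty}.

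One bookkeeping warning: you have two compensating sign slips. The character formula from (\ref{our_kl_form_convenient}) reads
\[
\widehat{R}\on{ch}L(\La)=\sum_{u\in W}\varepsilon(uw_i)\sum_{\gamma\in Q^\vee}{\bf m}^{w_\gamma}_{w_i}\,e^{ut_{-\gamma}w_i(\La+\widehat{\rho})},
\]
with $t_{-\gamma}$, not $t_\gamma$ as you wrote. And from Section~\ref{ext_h_infty} one has $t_\gamma(d)-d=\sum_{i}\big(\sum_k z_i(\langle\epsilon_k,\gamma\rangle\epsilon_k)\big)\al_i^\vee$, so the coefficient of $-\al_i^\vee$ gives ${\bf m}^{w_\gamma}_{w_i}=-\sum_k z_i(\langle\epsilon_k,\gamma\rangle\epsilon_k)$, without the minus sign in the argument. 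After the change of variable $\gamma\mapsto-\gamma$ in the sum these combine to yield the theorem, so your conclusion is correct, but each intermediate formula is off by this reindexing.
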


Similarly to the $D$, $E$ case (see Lemma~\ref{class_our_La}) the pairs
$\lambda $, $i$, satisfying the conditions of Theorem~\ref{main_th_formulation_A} can be described explicitly.

\begin{Lem}%
\label{class_our_La_A}
Let $\mathfrak{g}$ be $\mathfrak{sl}_{n}$ ($n \geqslant 3$). Elements
$\lambda \in \widehat{\mathfrak{h}}^{*}$, $i \in {\mathbb{Z}}$ as in Theorem~\ref{main_th_formulation_A} are described as follows. There are two possibilities:

$(a)$ $\lambda +\widehat{\rho}$ is regular dominant and $i$ is an arbitrary
element of ${\mathbb{Z}}$,

$(b)$
$\lambda =-\Lambda _{[i]}+\sum _{k \neq [i]} m_{k}\Lambda _{k}+x
\delta $ for some $i \in {\mathbb{Z}}$, $x \in {\mathbb{C}}$, and
$m_{k} \in {\mathbb{Z}}_{\geqslant 0}$.
\end{Lem}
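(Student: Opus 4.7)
My plan is to follow almost verbatim the strategy of the proof of Lemma \ref{class_our_La}, adapting it to the cyclic affine Dynkin diagram of $\widehat{\mathfrak{sl}}_n$ and the doubly-infinite parameter set $i\in\BZ$. There are two implications to establish: if $(\la,i)$ satisfies the hypotheses of Theorem \ref{main_th_formulation_A}, then case (a) or (b) holds, and conversely.

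For the forward direction, I would split on whether $\la+\widehat{\rho}$ is regular. If it is, case (a) holds immediately for any $i\in\BZ$. If not, then since $\la+\widehat{\rho}$ is dominant, $\widehat{W}_\la$ is generated by a nonempty set of simple reflections $s_{i_1},\ldots,s_{i_t}$. The goal is to force $t=1$ and $i_1=[i]$, which yields case (b). As in the $D,E$ proof, it suffices to show that if some $i_d\neq [i]$, then $w':=s_{i_d}w_i$ satisfies ${w'}^{-1}\circ\la=w_i^{-1}\circ\la$ (quasi-dominant by hypothesis) while being longer than $w_i$, contradicting maximality. This reduces to proving that $w_i^{-1}(\al_{i_d})\in\widehat{\Delta}_+$.

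The heart of the argument is the case analysis for $w_i^{-1}(\al_{i_d})$ using the explicit reduced expression for $w_i$. For $i>0$ we have $w_i^{-1}=s_0s_1\cdots s_{[i-1]}s_{[i]}$; applying this to $\al_{i_d}$, I would distinguish the cases (i) $i_d$ is not adjacent to $[i]$ and does not lie in the sequence $[0],[1],\ldots,[i]$, giving $w_i^{-1}(\al_{i_d})=\al_{i_d}$; (ii) $i_d$ is adjacent to $[i]$ in the affine Dynkin diagram but does not lie in the sequence, giving a sum of positive simple roots; (iii) $i_d=[p]$ for some $p\in\{0,1,\ldots,i-1\}$, where successive reflections propagate the root to $\al_{[p+1]}$ (or, when the path wraps around the cycle, a positive sum of simple roots). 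The case $i<0$ is symmetric, with the roles of $s_1$ and $s_{n-1}$ swapped; the case $i=0$ is trivial.

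For the converse, the nontrivial check is that $w_i^{-1}\circ\la$ is quasi-dominant. Writing $\la=\sum_k m_k\La_k+x\delta$ (with all $m_k\geqslant 0$ except possibly $m_{[i]}=-1$) and expanding $w_i^{-1}(\la+\widehat{\rho})-\widehat{\rho}$ in the basis of fundamental weights via the same recursive action of $s_j$ on $\La_k$ that is used in the $D,E$ proof, one reads off the coefficients in front of $\La_j$ for $j\neq 0$ and verifies they are nonnegative. The main obstacle, and the only genuine difference from the $D,E$ setting, is that for $|i|\geqslant n$ the word $w_i$ wraps around the affine Dynkin cycle, so the same vertex can appear several times along the ``path''; the bookkeeping for both the positivity claim in step (iii) above and the quasi-dominance computation must therefore sum contributions over each wrap, which I expect to be straightforward but is the step requiring the most care. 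Alternatively, the converse may follow from a structural statement analogous to the remark after Lemma \ref{class_our_La}, circumventing the direct computation.
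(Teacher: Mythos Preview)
Your proposal is correct and follows exactly the approach of the paper, whose proof reads in its entirety ``Same as the proof of Lemma \ref{class_our_La}.'' Your identification of the one genuine adaptation---tracking the wraparound for $|i|\geqslant n$ in both the positivity check $w_i^{-1}(\al_{i_d})\in\widehat{\Delta}_+$ and the quasi-dominance verification---is accurate, and the alternative route via Lemma \ref{class_irr_R_gen} that you mention is precisely what the paper's remark after Lemma \ref{class_our_La} points to.
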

\begin{proof}
Same as the proof of Lemma~\ref{class_our_La}.
\end{proof}

The following lemma will be useful.
%
\begin{Lem}%
\label{diff_spec_z}
For $i \in {\mathbb{Z}}\setminus n{\mathbb{Z}}$,
$a \in {\mathbb{Z}}$ we have
\begin{align*}
z_{i}(-a\epsilon _{[i]})-z_{i}(-a\epsilon _{[i]+1})
&=
\begin{cases}
0&\text{for}~a \geqslant 0,~i \notin [[i]-an,[i]-n]
\\
-1&\text{for}~a \geqslant 0,~ i \in [[i]-an,[i]-n]
\\
0&\text{for}~a \leqslant 0,~ i \notin [[i],[i]+(-a-1)n]
\\
1&\text{for}~a \leqslant 0,~i \in [[i],[i]+(-a-1)n]
\end{cases}
\\
&=
\begin{cases}
1&\text{for}~i \in [[i],[i]+(-a-1)n]
\\
0&\text{for}~i \notin [[i]-an,[i]+(-a-1)n]
\\
-1&\text{for}~i \in [[i]-an,[i]-n].
\end{cases}
\end{align*}
\end{Lem}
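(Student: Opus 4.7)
The plan is to reduce both $z_i(-a\epsilon_{[i]})$ and $z_i(-a\epsilon_{[i]+1})$ to counts of integers in one-parameter families of intervals, and then observe that their difference isolates a single indicator. Since $i \in \BZ \setminus n\BZ$, write $i = [i] + mn$ with $m \in \BZ$ (so $[i] \in \{1,\ldots,n-1\}$). Elements of the coset $[i]+n\BZ$ are of the form $[i]+kn$, $k\in\BZ$, and $[i]+kn \leq i$ iff $k \leq m$; elements of the coset $[i]+1+n\BZ$ are of the form $[i]+1+kn$, and $[i]+1+kn \leq i$ iff $kn \leq mn-1$, i.e.\ iff $k \leq m-1$. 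This one-step shift of the threshold is the only mechanism behind the lemma.

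Now I would unfold the definition of $z_i$ in the two sign regimes. For $a \geq 0$, both $z_i(-a\epsilon_{[i]})$ and $z_i(-a\epsilon_{[i]+1})$ come from the second branch of the definition; they equal $-|\{k \in \{-a,\ldots,-1\} : k \leq m\}|$ and $-|\{k \in \{-a,\ldots,-1\} : k \leq m-1\}|$ respectively, so their difference is
\begin{equation*}
-|\{k \in \{-a,\ldots,-1\} : k = m\}| = -\mathbf{1}[{-a \leq m \leq -1}].
\end{equation*}
For $a \leq 0$, both come from the first branch and give $|\{k \in \{0,\ldots,-a-1\} : k \leq m\}|$ and $|\{k \in \{0,\ldots,-a-1\} : k \leq m-1\}|$, whose difference equals $\mathbf{1}[0 \leq m \leq -a-1]$. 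Translating back via $m = (i-[i])/n$, the inequality $-a \leq m \leq -1$ is equivalent to $i \in [[i]-an,[i]-n]$, and $0 \leq m \leq -a-1$ is equivalent to $i \in [[i],[i]+(-a-1)n]$. This establishes the first four-case identity.

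For the second, three-case identity, note that the intervals $[[i],[i]+(-a-1)n]$ and $[[i]-an,[i]-n]$ are automatically empty in the opposite sign regimes: $-a-1 < 0$ when $a \geq 0$, and $-an \leq 0 < n$ when $a \leq 0$. Hence the ``$a \geq 0$'' rows and ``$a \leq 0$'' rows of the first formula merge to the three stated cases, with the middle case covering the union of the two ``$0$'' subcases. The argument is a careful bookkeeping of signs and thresholds; the only potential pitfall is the off-by-one shift $k\leq m$ versus $k\leq m-1$ induced by replacing the starting point $[i]$ by $[i]+1$, and once that is in place the rest is immediate.
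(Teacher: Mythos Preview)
Your proof is correct and is precisely the straightforward unfolding of the definition that the paper omits (the paper's proof reads ``Straightforward''). The key observation you isolate --- that passing from the coset $[i]+n\BZ$ to $[i]+1+n\BZ$ shifts the threshold from $k\le m$ to $k\le m-1$ --- is exactly the mechanism, and your translation back to the interval conditions via $m=(i-[i])/n$ is clean.
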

\begin{proof}
Straightforward.
\end{proof}

\begin{Prop}%
\label{sing_our_char_A}
Let $\mathfrak{g}=\mathfrak{sl}_{n}$, $n \geqslant 3$. Pick
$i \in {\mathbb{Z}}$ and assume that
$\lambda =-\Lambda _{[i]}+\sum _{k \neq [i]}m_{k}\Lambda _{k}+x
\delta $ for some $m_{k} \in {\mathbb{Z}}_{\geqslant 0}$,
$x \in {\mathbb{C}}$. Set $\Lambda =w_{i}^{-1} \circ \lambda $. Then we
have
%
\begin{gather}
\label{form_+_type_A_KW}
\widehat{R}\operatorname{ch}L(\Lambda )= \sum _{u \in W}\varepsilon (u)
\sum _{\gamma \in Q^{\vee},\, \langle \overline{\Lambda }_{n-1},
\gamma \rangle \geqslant 0}e^{u t_{\gamma}(\Lambda )}~\text{for}~i
\geqslant 0,%
\\
\label{form_-_type_A_KW}
\widehat{R}\operatorname{ch}L(\Lambda ) = \sum _{u \in W}\varepsilon (u)
\sum _{\gamma \in Q^{\vee},\, \langle \overline{\Lambda }_{1},\gamma
\rangle \geqslant 0}e^{u t_{\gamma}(\Lambda )}~\text{for}~i \leqslant 0,%
\end{gather}
where
$\overline{\Lambda }_{1},\, \overline{\Lambda }_{n-1} \in
\mathfrak{h}^{*}$ are the corresponding fundamental weights of
$\mathfrak{sl}_{n}$.
\end{Prop}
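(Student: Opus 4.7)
The plan is to deduce the formula from Theorem \ref{main_th_formulation_A} by exploiting the singularity of $\la+\widehat{\rho}$, mirroring the derivation of Proposition \ref{sing_our_char} from Theorem \ref{main_th_formulation} in types $D$ and $E$.

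By Lemma \ref{class_our_La_A}(b), the hypothesis on $\la$ places us within the scope of Theorem \ref{main_th_formulation_A}, so
\[
\widehat{R}\on{ch}L(\La)=-\sum_{u\in W}\varepsilon(uw_i)\sum_{\gamma\in Q^\vee}C_i(\gamma)\,e^{ut_\gamma(\la+\widehat{\rho})},\qquad C_i(\gamma):=\sum_{k=1}^n z_i\bigl(-\langle\epsilon_k,\gamma\rangle\epsilon_k\bigr),
\]
with $\la+\widehat{\rho}=w_i(\La+\widehat{\rho})$. The form of $\la$ forces $\widehat{W}_\la=\{1,s_{[i]}\}$, so $\la+\widehat{\rho}$ is fixed by the simple reflection $s_{[i]}$ attached to node $[i]$ of the affine Dynkin diagram.

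Suppose first that $[i]\neq 0$. The substitution $(u,\gamma)\mapsto(us_{[i]},s_{[i]}\gamma)$ preserves $e^{ut_\gamma(\la+\widehat{\rho})}$ but flips the sign $\varepsilon(uw_i)$, so averaging with the original sum yields
\[
\widehat{R}\on{ch}L(\La)=\tfrac12\sum_{u\in W}\varepsilon(uw_i)\sum_{\gamma\in Q^\vee}\bigl[C_i(s_{[i]}\gamma)-C_i(\gamma)\bigr]e^{ut_\gamma(\la+\widehat{\rho})}.
\]
Since $s_{[i]}$ swaps $\epsilon_{[i]}$ with $\epsilon_{[i]+1}$ and fixes the other $\epsilon_k$, the terms with $k\notin\{[i],[i]+1\}$ in $C_i(s_{[i]}\gamma)-C_i(\gamma)$ cancel, leaving an expression built from two instances of the difference $D(a):=z_i(-a\epsilon_{[i]})-z_i(-a\epsilon_{[i]+1})$ which is controlled explicitly by Lemma \ref{diff_spec_z}. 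A direct telescoping over admissible $\gamma\in Q^\vee$, combined with a reindexing that absorbs $\varepsilon(w_i)$ into $\varepsilon(u)$, then collapses the coefficient to the announced indicator $\langle\ol{\La}_{n-1},\gamma\rangle\ge 0$ for $i>0$ (using the identity $\langle\ol{\La}_{n-1},\gamma\rangle=-\langle\epsilon_n,\gamma\rangle$ on $Q^\vee$) and $\langle\ol{\La}_1,\gamma\rangle\ge 0$ for $i<0$.

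The case $[i]=0$ (i.e.\ $i\in n\BZ$) requires the analogous substitution $(u,\gamma)\mapsto(us_\theta,s_\theta\gamma-\theta^\vee)$ coming from the factorization $s_0=s_\theta t_{-\theta^\vee}$, which again preserves the exponential and flips the sign; the remaining analysis of $z_i$ through Lemma \ref{diff_spec_z} yields the same global conclusion. The principal difficulty lies in this combinatorial last step: Lemma \ref{diff_spec_z} supplies only the \emph{local} differences of $z_i$-values, and one must verify that these local contributions telescope \emph{uniformly in $i$} to the single global cutoff $\langle\ol{\La}_{n-1},\gamma\rangle\ge 0$ (respectively $\langle\ol{\La}_1,\gamma\rangle\ge 0$) claimed in the statement.
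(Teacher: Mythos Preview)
Your overall strategy coincides with the paper's: start from Theorem \ref{main_th_formulation_A}, use the $s_{[i]}$-invariance (respectively $s_0$-invariance when $i\in n\BZ$) of $\la+\widehat\rho$ to pair terms, and invoke Lemma \ref{diff_spec_z} to control the resulting differences of $z_i$-values. That skeleton is correct.

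The gap is in the last step, which you yourself flag as ``the principal difficulty'' but do not actually carry out. The phrase ``a direct telescoping \dots\ collapses the coefficient to the announced indicator'' is not a proof, and in fact the paper does \emph{not} telescope: it computes the two sides independently and matches coefficients. Concretely, the paper writes $ut_\gamma w_i=u't_{\gamma'}$ with $u'=uw_i'$, where $w_i'$ is obtained from $w_i$ by replacing $s_0$ by $s_\theta$; it then gives an explicit formula for $\gamma'$ and computes $\langle\ol\La_{n-1},\gamma'\rangle=-a_{[i]+1}-k-1$ (with $i=kn+[i]$, $\gamma=\sum a_j\epsilon_j$). This value is matched term by term against the output of Lemma \ref{diff_spec_z}, which gives the coefficient on the Theorem \ref{main_th_formulation_A} side as the difference of two $\{0,1\}$-valued expressions in $a_{[i]},a_{[i]+1},k$. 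Without this explicit bookkeeping you cannot see why the cutoff is governed by $\ol\La_{n-1}$ rather than some other functional, nor why the dependence on $i$ disappears.

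Two further points. First, the paper handles $i\le 0$ by applying the involution of the affine Dynkin diagram of $\widehat{\mathfrak{sl}}_n$ fixing the $0$th node, reducing \eqref{form_-_type_A_KW} to \eqref{form_+_type_A_KW}; this halves the work and explains the $\ol\La_1\leftrightarrow\ol\La_{n-1}$ swap conceptually. Second, your identity $\langle\ol\La_{n-1},\gamma\rangle=-\langle\epsilon_n,\gamma\rangle$ is correct on $Q^\vee$, but it is $\gamma'$ (not $\gamma$) that must be evaluated against $\ol\La_{n-1}$, and the passage $\gamma\mapsto\gamma'$ involves both ${w_i'}^{-1}$ and an additive shift depending on $i$; this is where the explicit computation is unavoidable.
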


\begin{proof}
We have
%
\begin{equation}
\label{our_char_form_A}
\widehat{R}\operatorname{ch}L(\Lambda )=\sum _{u \in W}-\varepsilon (uw_{i})
\sum _{\gamma \in Q^{\vee}} \Big(\sum _{k=1}^{n} z_{i}(-\langle
\epsilon _{k},\gamma \rangle \epsilon _{k})\Big) e^{ut_{\gamma }w_{i}(
\Lambda +\widehat{\rho})}.
\end{equation}
Recall that $\lambda +\widehat{\rho}=w_{i}(\Lambda +\widehat{\rho})$ and
$s_{[i]}(\lambda +\widehat{\rho})=\lambda +\widehat{\rho}$. Assume first
that $i \notin n{\mathbb{Z}}$. Decompose
$\gamma =\sum _{k=1}^{n} a_{k}\epsilon _{k}$. We have
$e^{ut_{\gamma }(\lambda +\widehat{\rho})}=e^{us_{[i]}t_{s_{[i]}(
\gamma )}(\lambda +\widehat{\rho})}$, so the coefficient in front of
$e^{ut_{\gamma}(\lambda +\widehat{\rho})}$ is equal to the sum
\begin{align*}
&{-}\varepsilon (uw_{i})\Big(\sum _{k=1}^{n}z_{i}(-\langle \epsilon _{k},
\gamma \rangle \epsilon _{k})\Big)+\varepsilon (uw_{i})\Big(\sum _{k=1}^{n}z_{i}(-
\langle \epsilon _{k},s_{[i]}(\gamma ) \rangle \epsilon _{k})\Big)
\\
&\quad =-\varepsilon (uw_{i})\Big(z_{i}(-a_{[i]}\epsilon _{[i]})+z_{i}(-a_{[i]+1}
\epsilon _{[i]+1})-z_{i}(-a_{[i]+1}\epsilon _{[i]})\!-\!z_{i}(-a_{[i]}\epsilon _{[i]+1})\Big)
\\
&\quad =\varepsilon (uw_{i})\Big((z_{i}(-a_{[i]+1}\epsilon _{[i]})\!-\!z_{i}(-a_{[i]+1}
\epsilon _{[i]+1}))\!-\!(z_{i}(-a_{[i]}\epsilon _{[i]})\!-\!z_{i}(-a_{[i]}
\epsilon _{[i]+1}))\Big).
\end{align*}
Decompose $i=kn+[i]$, $k \in {\mathbb{Z}}_{\geqslant 0}$. Using Lemma~\ref{diff_spec_z}, we conclude that
%
\begin{align}
\label{diff_1}
z_{i}(-a_{[i]}\epsilon _{[i]})-z_{i}(-a_{[i]}\epsilon _{[i]+1})
&=
\begin{cases}
1&\text{for}~i \in [[i],[i]+(-a_{[i]}-1)n]
\\
0&\text{for}~ i \notin [[i],[i]+(-a_{[i]}-1)n]
\end{cases}
\\
&=
\begin{cases}
1&\text{for}~k \in [0,-a_{[i]}-1]
\\
0&\text{for}~ k \notin [0,-a_{[i]}-1]
\end{cases}
\nonumber \\
&=
\begin{cases}
1&\text{for}~{-}a_{[i]}-k-1 \geqslant 0
\\
0&\text{for}~ {-}a_{[i]}-k-1 < 0,
\end{cases}
\nonumber \\
\label{diff_2}
z_{i}(-a_{[i]+1}\epsilon _{[i]})-z_{i}(-a_{[i]+1}\epsilon _{[i]+1})
&=
\begin{cases}
1&\text{for}~i \in [[i],[i]+(-a_{[i]+1}-1)n]
\\
0&\text{for}~ i \notin [[i],[i]+(-a_{[i]+1}-1)n]
\end{cases}
\\
&=
\begin{cases}
1&\text{for}~ {-}a_{[i]+1}-k-1 \geqslant 0
\\
0&\text{for}~ {-}a_{[i]+1}-k-1 < 0.
\end{cases}
\nonumber
\end{align}

It follows that the coefficient in front of
$e^{ut_{\gamma}(\lambda +\widehat{\rho})}$ is equal to
$\varepsilon (uw_{i})$ times the difference of (\ref{diff_2}) and (\ref{diff_1}).

Let us now compute the coefficient in front of
$e^{ut_{\gamma }(\lambda +\widehat{\rho})}$ in (\ref{form_+_type_A_KW}).
We first should write $ut_{\gamma }w_{i} = u' t_{\gamma '}$,
$u' \in S_{n}$, $\gamma ' \in Q^{\vee}$. Let us compute
$u', \gamma '$. Recall that
$w_{i}=s_{[i]}s_{[i-1]}\ldots s_{1}s_{0}$ and let $w_{i}'$ be the element,
obtained from $w_{i}$ by replacing $s_{0}$ with $s_{\theta}$ in the decomposition
above. It is clear that $u'=uw_{i}'$. Let us now compute $\gamma '$. Recall
that $i=kn+[i]$, $k \in {\mathbb{Z}}_{\geqslant 0}$ and decompose
$k+1=l(n-1)+p$, $l \in {\mathbb{Z}}_{\geqslant 0}$,
$1 \leqslant p \leqslant n-1$.

We have
\begin{equation*}
\gamma '={w_{i}'}^{-1}(\gamma )+p\epsilon _{n}-\sum _{j=1}^{p}
\epsilon _{j}+l\Big((n-1)\epsilon _{n}-\sum _{j=1}^{n-1}\epsilon _{j}
\Big),
\end{equation*}
so
\begin{equation*}
\langle \overline{\Lambda }_{n-1},\gamma ' \rangle = \langle
\overline{\Lambda }_{n-1},{w_{i}'}^{-1}(\gamma ) \rangle - k-1=
\langle w_{i}'(\overline{\Lambda }_{n-1}),\gamma \rangle -k-1=-a_{[i]+1}-k-1.
\end{equation*}

Now $e^{u't_{\gamma '}(\Lambda +\widehat{\rho})}$ gives
%
\begin{equation}
\label{KW_form_in_1}
\varepsilon (u w_{i}) \cdot
\begin{cases}
1&\text{for}~{-}a_{[i]+1}-k-1 \geqslant 0
\\
0&\text{for}~{-}a_{[i]+1}-k-1 < 0
\end{cases}
\end{equation}
to the coefficient in front of
$e^{u't_{\gamma '}(\Lambda +\widehat{\rho})}$ in (\ref{form_+_type_A_KW}).
Another part comes from
$e^{us_{[i]}t_{s_{[i]}(\gamma )}w_{i}(\Lambda +\widehat{\rho})}$, it is
easy to see that the corresponding coefficient is equal to
%
\begin{equation}
\label{KW_form_in_2}
-\varepsilon (u w_{i}) \cdot
\begin{cases}
1&\text{for}~{-}a_{[i]}-k-1 \geqslant 0
\\
0&\text{for}~{-}a_{[i]}-k-1 < 0,
\end{cases}
\end{equation}
so the total coefficient of (\ref{form_+_type_A_KW}) in front of
$e^{u' t_{\gamma '}(\Lambda +\widehat{\rho})}$ is equal to the sum of (\ref{KW_form_in_1})
and (\ref{KW_form_in_2}). This sum is clearly equal to the difference of
(\ref{diff_2}) and (\ref{diff_1}) times $\varepsilon (uw_{i})$.

Assume now that $i \in n{\mathbb{Z}}$. Let us identify (\ref{our_char_form_A})
and (\ref{form_+_type_A_KW}) in this case. We have
$e^{ut_{\gamma }(\lambda +\widehat{\rho})}=e^{us_{\theta}t_{s_{\theta}(
\gamma )-\theta}(\lambda +\widehat{\rho})}$, so the coefficient in front
of $e^{ut_{\gamma }(\lambda +\widehat{\rho})}$ is equal to
\begin{align*}
&-\varepsilon (uw_{i})\Big(\sum _{k=1}^{n} z_{i}(-\langle \epsilon _{k},
\gamma \rangle \epsilon _{k})\Big)+\varepsilon (uw_{i})\Big(\sum _{k=1}^{n}
z_{i}(-\langle \epsilon _{k},s_{\theta}(\gamma )-\theta \rangle
\epsilon _{k})\Big)
\\
&\quad = -\varepsilon (uw_{i})\Big( z_{i}(-a_{1}\epsilon _{1})+z_{i}(-a_{n}
\epsilon _{n})-z_{i}(-(a_{n}-1)\epsilon _{1})-z_{i}(-(a_{1}+1)
\epsilon _{n}) \Big)
\\
&\quad =\varepsilon (uw_{i})\Big((z_{i}(-(a_{n}-1)\epsilon _{1})-z_{i}(-a_{n}
\epsilon _{n}))-(z_{i}(-a_{1}\epsilon _{1})\!-\!z_{i}(-(a_{1}+1)\epsilon _{n}))\Big).
\end{align*}

Decompose $i=kn$, $k \in {\mathbb{Z}}_{\geqslant 0}$. We have
\begin{gather*}
z_{i}(-(a_{n}-1)\epsilon _{1})-z_{i}(-a_{n}\epsilon _{n}) =
\begin{cases}
1,&\text{if}~a_{n}+k-1 \geqslant 0
\\
0,&\text{if}~a_{n}+k-1<0,
\end{cases}
\\
z_{i}(-a_{1}\epsilon _{1})-z_{i}(-(a_{1}+1)\epsilon _{n})=
\begin{cases}
1,&\text{if}~k+a_{1} \geqslant 0
\\
0,&\text{if}~k+a_{1}<0.
\end{cases}
\end{gather*}

Let us now compute the (total) coefficient in front of
$e^{ut_{\gamma }(\lambda +\widehat{\rho})}$ in (\ref{form_+_type_A_KW}).
Decompose $k+1=l(n-1)+p$, $l \in {\mathbb{Z}}_{\geqslant 0}$,
$1 \leqslant p \leqslant n-1$. We have
$ut_{\gamma }w_{i} = u't_{\gamma '}$, where $u'=uw_{i}'$,
\begin{equation*}
\gamma '={w_{i}'}^{-1}(\gamma )+p\epsilon _{n}-\sum _{j=1}^{p}
\epsilon _{j} + l\Big((n-1)\epsilon _{n}-\sum _{j=1}^{n-1}\epsilon _{j}
\Big).
\end{equation*}
It follows that
$\langle \overline{\Lambda }_{n-1},\gamma ' \rangle = -a_{1}-k-1$, so
$e^{u't_{\gamma '}(\Lambda +\widehat{\rho})}$ gives
\begin{equation*}
\varepsilon (uw_{i}) \cdot
\begin{cases}
1&\text{for}~{-}a_{1}-k-1 \geqslant 0
\\
0&\text{for}~{-}a_{1}-k-1<0.
\end{cases}
\end{equation*}
Recall also that
$e^{ut_{\gamma }(\lambda +\widehat{\rho})}=e^{us_{\theta }t_{s_{
\theta}(\gamma )-\theta}(\lambda +\widehat{\rho})}$, so the corresponding
coefficient is
\begin{equation*}
-\varepsilon (uw_{i}) \cdot
\begin{cases}
1&\text{for}~{-}a_{n}-k \geqslant 0
\\
0&\text{for}~{-}a_{n}-k < 0.
\end{cases}
\end{equation*}

We see that the (total) coefficient of both (\ref{our_char_form_A}) and
(\ref{form_+_type_A_KW}) in front of
$e^{ut_{\gamma}(\lambda +\widehat{\rho})}$ is equal to
\begin{equation*}
\varepsilon (uw_{i}) \cdot
\begin{cases}
1&\text{for}~a_{n}+k>0>a_{1}+k
\\
0&\text{for}~a_{n}+k \leqslant 0,\, a_{1}+k<0~\text{or}~a_{n}+k>0,\,a_{1}+k
\geqslant 0
\\
-1&\text{for}~a_{1}+k \geqslant 0 \geqslant a_{n}+k.
\end{cases}
\end{equation*}

Formula (\ref{form_-_type_A_KW}) follows from (\ref{form_+_type_A_KW}),
using the involution of the Dynkin diagram of
$\widehat{\mathfrak{sl}}_{n}$, which keeps the $0$th node fixed.
\end{proof}

Restricting attention to the negative level cases in Theorem~\ref{main_th_formulation_A} yields the following corollary, proven in
\cite[Theorem 1.1]{KaW} by different methods.
%
\begin{Cor}
For $\Lambda =-(1+i)\Lambda _{0}+i\Lambda _{n-1}$,
$i \in {\mathbb{Z}}_{\geqslant 0}$, we have
\begin{equation*}
\widehat{R}\operatorname{ch}L(\Lambda )=\sum _{u \in W}\varepsilon (u)
\sum _{\gamma \in Q^{\vee},\, \langle \overline{\Lambda }_{n-1},
\gamma \rangle \geqslant 0}e^{ut_{\gamma}(\Lambda )}.
\end{equation*}

For $\Lambda =-(1+i)\Lambda _{0}+i\Lambda _{1}$,
$i \in {\mathbb{Z}}_{\geqslant 0}$ we have
\begin{equation*}
\widehat{R}\operatorname{ch}L(\Lambda )=\sum _{u \in W}\varepsilon (u)
\sum _{\gamma \in Q^{\vee},\, \langle \overline{\Lambda }_{1},\gamma
\rangle \geqslant 0}e^{ut_{\gamma}(\Lambda )}.
\end{equation*}
\end{Cor}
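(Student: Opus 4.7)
The plan is to deduce this corollary directly from Proposition \ref{sing_our_char_A}. To avoid a clash with the corollary's variable $i$, I refer to the index appearing in Proposition \ref{sing_our_char_A} as $j$. For the first family, I will take $j=i$ (so $j\geqslant 0$) and $\la:=-\La_{[i]}+i\delta$; this $\la$ is of the singular form of case $(b)$ in Lemma \ref{class_our_La_A} with all $m_k=0$. The claim is that under this choice, $w_i^{-1}\circ\la=-(1+i)\La_0+i\La_{n-1}$, so that Proposition \ref{sing_our_char_A}, formula (\ref{form_+_type_A_KW}), applies verbatim. (The $i\delta$ shift in $\la$ does not affect the right-hand side of the formula, since $t_\gamma$ and $u\in W$ fix $\delta$.)

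The identification $\La^{(i)}=w_i^{-1}\circ\la^{(i)}$, with $\la^{(i)}:=-\La_{[i]}+i\delta$ and $\La^{(i)}:=-(1+i)\La_0+i\La_{n-1}$, will be proved by induction on $i$. For the base case $i=0$, $w_0=s_0$ and the coefficient of $\La_0$ in $\la^{(0)}+\widehat{\rho}=\sum_{k\neq 0}\La_k$ vanishes, so $s_0$ fixes $\la^{(0)}+\widehat{\rho}$ and thus $w_0^{-1}\circ\la^{(0)}=\la^{(0)}=-\La_0=\La^{(0)}$. For the inductive step, I would use $w_i=s_{[i]}w_{i-1}$; since the coefficient of $\La_{[i]}$ in $\la^{(i)}+\widehat{\rho}$ vanishes, $s_{[i]}$ fixes $\la^{(i)}+\widehat{\rho}$, so $w_i^{-1}(\la^{(i)}+\widehat{\rho})=w_{i-1}^{-1}(\la^{(i)}+\widehat{\rho})$. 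Writing
\begin{equation*}
(\la^{(i)}+\widehat{\rho})-(\la^{(i-1)}+\widehat{\rho})=\La_{[i-1]}-\La_{[i]}+\delta
\end{equation*}
and invoking the inductive hypothesis, the step reduces to the affine Weyl group identity
\begin{equation*}
w_{i-1}^{-1}(\La_{[i-1]}-\La_{[i]})=-\La_0+\La_{n-1}-\delta.
\end{equation*}
This identity is a routine (if slightly tedious) calculation using $\al_0=\delta+2\La_0-\La_1-\La_{n-1}$ and $\al_k=2\La_k-\La_{k-1}-\La_{k+1}$ (for $k=1,\ldots,n-1$, cyclically); it may itself be verified by a secondary induction on $i$ based on $w_{i-1}^{-1}=w_{i-2}^{-1}s_{[i-1]}$ and the relation $s_{[i-1]}(\La_{[i-1]})=-\La_{[i-1]}+\La_{[i-2]}+\La_{[i]}$.

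The second formula is handled symmetrically: either by repeating the argument with $j=-i$ and $\la:=-\La_{[-i]}+i\delta$ and invoking formula (\ref{form_-_type_A_KW}), or by applying the automorphism of the affine Dynkin diagram of $\widehat{\mathfrak{sl}}_n$ that fixes the $0$-th node and exchanges $k\leftrightarrow n-k$; this involution sends $-(1+i)\La_0+i\La_{n-1}$ to $-(1+i)\La_0+i\La_1$ and $\ol{\La}_{n-1}$ to $\ol{\La}_1$, converting the first formula into the second. I expect the main technical obstacle to be the verification of the affine Weyl group identity displayed above, as it requires careful bookkeeping through the long reflection $w_{i-1}^{-1}$; once it is established the rest of the proof is unwinding of definitions.
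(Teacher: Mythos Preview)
Your approach is correct and matches the paper's: the corollary is stated there as the ``negative level'' specialization of Proposition \ref{sing_our_char_A}, and your explicit choice $\la=-\La_{[i]}+i\delta$ (all $m_k=0$) together with the inductive verification that $w_i^{-1}\circ\la=-(1+i)\La_0+i\La_{n-1}$ is exactly the omitted computation. The secondary induction via $s_{[i-1]}(\La_{[i-1]}-\La_{[i]})=\La_{[i-2]}-\La_{[i-1]}$ reducing to the base case $s_0(\La_0-\La_1)=-\La_0+\La_{n-1}-\delta$ is a clean way to establish the displayed identity.
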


\begin{Prop}%
\label{reg_our_char_A}
Let $\mathfrak{g}=\mathfrak{sl}_{n}$ ($n \geqslant 3$). Pick
$i \in {\mathbb{Z}}$ and let
$\lambda \in \widehat{\mathfrak{h}}^{*}$ be integral such that
$\lambda +\widehat{\rho}$ is regular dominant. Set
$\Lambda :=w_{i}^{-1} \circ \lambda $, then
\begin{equation*}
\widehat{R}\operatorname{ch}L(\Lambda )=-\sum _{u \in W}\varepsilon (uw_{i})
\sum _{\gamma \in Q^{\vee}}\sum _{k=1}^{n} z_{i}(-\langle \epsilon _{k},
\gamma \rangle \epsilon _{k}) e^{ut_{\gamma }w_{i}(\Lambda +
\widehat{\rho})}.
\end{equation*}
\end{Prop}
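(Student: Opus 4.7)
The plan is to deduce Proposition \ref{reg_our_char_A} directly from Theorem \ref{main_th_formulation_A} together with Lemma \ref{class_our_La_A}, mirroring the way Proposition \ref{reg_our_char} was obtained from Theorem \ref{main_th_formulation} and Lemma \ref{class_our_La}. The essential point is that the hypothesis that $\la+\widehat{\rho}$ is regular dominant places the pair $(\la,i)$ into case $(a)$ of Lemma \ref{class_our_La_A}, and that case of the lemma is precisely the statement that $(\la,i)$ meets the hypotheses of Theorem \ref{main_th_formulation_A} for every $i\in\BZ$.

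First I would observe that regularity of $\la+\widehat{\rho}$ forces the shifted stabilizer $\widehat{W}_\la$ to be trivial. Hence the coset $\widehat{W}_\la w_i=\{w_i\}$ is a singleton, so $w_i$ is automatically the longest element in it. Second, integrality of $\la$ together with the explicit computation of $w_i^{-1}(\la+\widehat{\rho})$ carried out at the end of the proof of Lemma \ref{class_our_La} (invoked in the proof of Lemma \ref{class_our_La_A}) shows that $\La = w_i^{-1}\circ\la$ is integral and quasi-dominant: the pairings $\langle w_i^{-1}(\la+\widehat{\rho}),\alpha_j^{\vee}\rangle$ for $j\in\widehat{I}\setminus\{0\}$ come out strictly positive, with the only potentially non-positive pairing occurring at $\alpha_0^{\vee}$, which is irrelevant for quasi-dominance.

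With both hypotheses of Theorem \ref{main_th_formulation_A} verified, its conclusion is literally the formula asserted in Proposition \ref{reg_our_char_A}, completing the reduction. There is no genuine obstacle: this proposition is a bookkeeping corollary of the two earlier results, and the only mild subtlety lies in checking quasi-dominance simple-root by simple-root for the affine Dynkin diagram $A_{n-1}^{(1)}$, which proceeds just as in the $D,E$ case despite the different combinatorics of the minimal reduced expressions for $w_i$ given in the definition preceding Theorem \ref{main_th_formulation_A}.
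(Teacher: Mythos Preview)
Your proposal is correct and follows essentially the same approach as the paper, which also deduces the proposition directly from Theorem \ref{main_th_formulation_A} together with Lemma \ref{class_our_La_A}. Your added verification that regularity forces $\widehat{W}_\la=\{1\}$ and that quasi-dominance of $\La$ follows from the explicit simple-root computation is just an unpacking of what Lemma \ref{class_our_La_A}~$(a)$ asserts.
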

\begin{proof}
Follows from Theorem~\ref{main_th_formulation_A} together with Lemma~\ref{class_our_La_A}.
\end{proof}

\subsubsection{Main steps of the proof of Theorem~\ref{main_th_formulation}}

Let us describe the main steps of the proof of Theorem~\ref{main_th_formulation}. Proof of Theorem~\ref{main_th_formulation_A} is similar. We use notations of Theorem~\ref{main_th_formulation}.

The first observation is that
%
\begin{equation}
\label{form_our_La_KL_type}
\widehat{R}\operatorname{ch}L(\Lambda )=\sum _{w \in \widehat{W}}
\varepsilon (ww_{i}^{-1}){\bf{m}}_{w_{i}}^{w}e^{w^{-1}(\lambda +
\widehat{\rho})},
\end{equation}
where ${\bf{m}}_{w_{i}}^{w}={\bf{m}}_{w_{i}}^{w}(1)$ are values at
$q=1$ of inverse Kazhdan-Lusztig polynomials
${\bf{m}}_{w_{i}}^{w}(q)$ for $\widehat{W}$ (see
\cite[Section 0.3]{KT} or Theorem~\ref{char_dom_via_inverse} below).

Using that $\Lambda $ is integral quasi-dominant, we conclude from (\ref{form_our_La_KL_type})
that
\begin{align*}
\widehat{R}\operatorname{ch}L(\Lambda )
&=\sum _{\gamma \in Q^{\vee}}
\varepsilon (w_{\gamma }w_{i}^{-1}){\bf{m}}_{w_{i}}^{w_{\gamma}}\sum _{u
\in W}\varepsilon (u)e^{uw_{\gamma}^{-1}(\lambda +\widehat{\rho})}
\\
&=\varepsilon (w_{i})\sum _{\gamma \in Q^{\vee}} \sum _{u \in W}
\varepsilon (w_{\gamma }u) {\bf{m}}^{w_{\gamma}}_{w_{i}} e^{uw_{
\gamma}^{-1}(\lambda +\widehat{\rho})}
\\
&=\varepsilon (w_{i})\sum _{\gamma \in Q^{\vee}}\sum _{u \in W}
\varepsilon (u){\bf{m}}^{w_{\gamma}}_{w_{i}} e^{ut_{-\gamma}(\lambda +
\widehat{\rho})},
\end{align*}
where $w_{\gamma} \in t_{\gamma}W$ is the shortest element of the coset
$t_{\gamma}W$.

By the definitions together with Proposition~\ref{eq_m_m_prime} below,
the values ${\bf{m}}^{w_{\gamma}}_{w_{i}}(1)$ are computed as follows.
Consider the group algebra ${\mathbb{Z}}\widehat{W}$. This algebra admits
two bases $H_{w}$ and $C_{w}$, $w \in \widehat{W}$, called, respectively,
the standard and the canonical basis (see Appendix~\ref{basic_KL} for the
definitions). Recall the anti-spherical $\widehat{W}$-module
\begin{equation*}
M={\mathbb Z}\widehat{W} \otimes _{{\mathbb Z}W} \mathbb{Z}_{\mathrm{sign}}.
\end{equation*}
Taking images of $H_{w}$, $C_{w}$ under the natural surjection
$\widehat{W} \twoheadrightarrow M$, we obtain standard and canonical bases
in $M$ to be denoted $H_{w}'$, $C'_{w}$ respectively.

Then we have
%
\begin{equation}
H_{w_{\gamma}}'= \sum _{\nu \in Q^{\vee}}\varepsilon (w_{\gamma }w_{
\nu}^{-1}){\bf{m}}_{w_{\nu}}^{w_{\gamma}}C'_{w_{\nu}},
\end{equation}
and our goal is to compute the numbers
${\bf{m}}_{w_{i}}^{w_{\gamma}}$. Set
\begin{equation*}
T_{\gamma}:=\varepsilon (w_{\gamma})H'_{w_{\gamma}},~ C_{\nu}:=
\varepsilon (w_{\nu})C'_{w_{\nu}}.
\end{equation*}
We see that
%
\begin{equation}
\label{decomp_that_we_use_VTEX1}
T_{\gamma }= \sum _{\nu \in Q^{\vee}} {\bf{m}}_{w_{\nu}}^{w_{\gamma}} C_{
\nu}.
\end{equation}

The module $M$ has a ``coherent'' realization as the equivariant $K$-theory
$K^{G^{\vee}}(\widetilde{{\mathcal{N}}})$ of the Springer resolution for
the Langlands dual group $G^{\vee}$ (see \cite{CG}). It follows from
\cite{AB,BHe} that after the identification
$M \simeq K^{G^{\vee}}(\widetilde{{\mathcal{N}}})$ elements
$T_{\gamma}$ correspond to the classes of natural line bundles
${\mathcal{O}}_{\widetilde{{\mathcal{N}}}}(\gamma )$ on
$\widetilde{{\mathcal{N}}}$. Elements of the canonical basis
$C_{\nu}$ correspond to classes of irreducible objects in the ``exotic''
$t$-structure on
$D^{b}(\operatorname{Coh}^{G}(\widetilde{{\mathcal{N}}}))$ defined and
studied in \cite{BHum,BHe}.

Recall that our goal is to compute the numbers
${\bf{m}}_{w_{i}}^{w_{\gamma}}$. It turns out that all these numbers are
already ``contained'' in a certain quotient of the $\widehat{W}$-module
$K^{G^{\vee}}(\widetilde{{\mathcal{N}}})$. Let us describe this quotient
in our situation.

Let $e \in {\mathcal{N}}$ be a subregular nilpotent element of
$\mathfrak{g}^{\vee}$ and recall that
${\mathbb{O}}_{e} \subset {\mathcal{N}}$ is the corresponding nilpotent
orbit. Recall also that
${\mathbb{O}}^{\mathrm{reg}} \subset {\mathcal{N}}$ is the regular nilpotent
orbit, so that $U={\mathbb{O}}^{\mathrm{reg}} \cup {\mathbb{O}}_{e}$ is
an open $G^{\vee}$-invariant subvariety of ${\mathcal{N}}$. Recall that
$\widetilde{U} \subset \widetilde{{\mathcal{N}}}$ is the preimage of
$U$. We have the natural surjection of $\widehat{W}$-modules
$K^{G^{\vee}}(\widetilde{{\mathcal{N}}}) \twoheadrightarrow K^{G^{
\vee}}(\widetilde{U})$. It follows from the results of the first author
(see \cite[\S 11.3]{BHe}) that the kernel of this surjection is spanned over $\mathbb{Z}$
by $C_{\nu}$ for $w_{\nu }\notin \{1,w_{0},w_{1},\ldots ,w_{r}\}$.

Taking the image of the equality (\ref{decomp_that_we_use_VTEX1}) in
$K^{G^{\vee}}(\widetilde{U})$, we conclude that
%
\begin{equation}
\label{eq_in_U}
\bar{T}_{\gamma}=\bar{C}_{0}+\sum _{i=0,1,\ldots ,r}{\bf{m}}^{w_{
\gamma}}_{w_{i}}\bar{C}_{\nu _{i}},
\end{equation}
where by $\bar{x}$ we mean the image of the element
$x \in M \simeq K^{G^{\vee}}(\widetilde{{\mathcal{N}}})$ in
$K^{G^{\vee}}(\widetilde{U})$ and $\nu _{i}$ is the image of $w_{i}$ in
$Q^{\vee }\simeq \widehat{W}/W$.

It turns out (see Proposition~\ref{ident_K_U_DE}) that the
$\widehat{W}$-module $K^{G^{\vee}}(\widetilde{U})$ is isomorphic to
$\widehat{\mathfrak{h}}_{{\mathbb{Z}}} \otimes {\mathbb{Z}}_{
\mathrm{sign}}$ (with the $\widehat{W}$-action on
$\widehat{\mathfrak{h}}_{{\mathbb{Z}}}$ given by (\ref{act_on_cartan})).
The canonical basis elements $\bar{C}_{0}$ and $\bar{C}_{\nu _{i}}$ are
$d \otimes 1$ and $-\alpha ^{\vee}_{i} \otimes 1$. Using the equation (\ref{eq_in_U}),
this allows us to compute the numbers
${\bf{m}}^{w_{\gamma}}_{w_{i}}$ explicitly.

\section{Categories ${\mathcal{O}}$ for $\widehat{\mathfrak{g}}$ and characters of irreducible modules via Kazhdan-Lusztig polymomials}
\label{sect_cat_O_and_KL}

\subsection{Category ${\mathcal{O}}_{\kappa}$ for $\widehat{\mathfrak{g}}$ and its decomposition into blocks}

Consider a module $L(\Lambda )$ of integer level
$\kappa (\Lambda )>-h^{\vee}$,
$\Lambda \in \widehat{\mathfrak{h}}^{*}$. The module $L(\Lambda )$ is an
object of the (affine) category ${\mathcal{O}}$, denoted by
${\mathcal{O}}_{\kappa}$ and defined as follows (see, for example,
\cite[Section 3]{KT0}). Set
$\widehat{Q}_{+}:=\sum _{i \in \hat{I}}{\mathbb{Z}}_{\geqslant 0}
\alpha _{i}$.

\begin{Def}
The category ${\mathcal{O}}_{\kappa}$ is the full subcategory of the category
of $\widehat{\mathfrak{g}}$-modules of level $\kappa $, consisting of
$\widehat{\mathfrak{g}}$-modules $N$ such that
\begin{enumerate}
\item[$(a)$] $N=\bigoplus _{\mu \in \widehat{\mathfrak{h}}^{*}} N_{\mu}$, where
$N_{\mu}$ is a generalized $\mu $-weight space for
$\widehat{\mathfrak{h}}$,

\item[$(b)$] $\operatorname{dim}N_{\mu }< \infty $,

\item[$(c)$] for any $\mu \in \widehat{\mathfrak{h}}^{*}$ there exists only finitely
many $\beta \in \mu +\widehat{Q}^{+}$ such that $N_{\beta }\neq 0$.
\end{enumerate}
\end{Def}

Let us recall the block decomposition of the category
${\mathcal{O}}_{\kappa}$. Irreducible objects of this category are precisely
$L(\Lambda )$ such that $\kappa =\kappa (\Lambda )$. The category
${\mathcal{O}}_{\kappa}$ can be decomposed into blocks
${\mathcal{O}}_{\kappa ,\xi}$ as follows. Let
$\widehat{\mathfrak{h}}^{*}/_{\circ} \widehat{W}$ be the set of equivalence
classes with respect to the shifted $\widehat{W}$-action. For
$\Lambda \in \widehat{\mathfrak{h}}^{*}$ let
$\overline{\Lambda } \in \widehat{\mathfrak{h}}^{*}/_{\circ}
\widehat{W}$ be the class of $\Lambda $.

\begin{Def}
For $\xi \in \widehat{\mathfrak{h}}^{*}/_{\circ} \widehat{W}$ let
${\mathcal{O}}_{\kappa ,\xi} \subset {\mathcal{O}}_{\kappa}$ be the full
subcategory of ${\mathcal{O}}_{\kappa}$, consisting of modules, all of
whose composition factors are $L(\Lambda )$ with
$\overline{\Lambda }=\xi $.
\end{Def}

\begin{Prop}
The category ${\mathcal{O}}_{\kappa}$ decomposes as a direct sum as follows:
\begin{equation*}
{\mathcal{O}}_{\kappa}=\bigoplus _{\xi \in \widehat{\mathfrak{h}}^{*}/_{
\circ} \widehat{W}} {\mathcal{O}}_{\kappa ,\xi}.
\end{equation*}
\end{Prop}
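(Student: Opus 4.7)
My plan is the standard two-step argument for block decompositions of highest-weight categories: first, show that $\Hom$ and $\Ext^1$ in $\CO_\kappa$ vanish between simple objects $L(\La)$, $L(\mu)$ with $\bar\La \neq \bar\mu$, and then assemble the decomposition formally.

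For the separation of blocks, I would use two ingredients in tandem. The first is the Casimir/Sugawara operator $T_0$, which at non-critical level $\kappa + h^\vee \neq 0$ acts on $M(\La)$ (hence on $L(\La)$) by the scalar
\[
c(\La) = (\La+\widehat{\rho},\La+\widehat{\rho})-(\widehat{\rho},\widehat{\rho}),
\]
and this scalar is manifestly invariant under the shifted $\widehat{W}$-action. Decomposing any $N \in \CO_\kappa$ into generalized eigenspaces of $T_0$ already yields a coarse decomposition indexed by values of $c$. The second ingredient is the Kac--Kazhdan theorem, which states that any composition factor $L(\mu)$ of a Verma module $M(\La)$ satisfies $\mu \in \widehat{W} \circ \La$. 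Together with the existence of projective covers (or Bernstein--Gelfand--Gelfand-style resolutions) carrying Verma filtrations inside suitable truncations of $\CO_\kappa$, this implies $\Ext^1_{\CO_\kappa}(L(\La), L(\mu)) = 0$ whenever $\bar\La \neq \bar\mu$, refining the coarse $T_0$-decomposition to the one indexed by $\widehat{\mathfrak{h}}^*/\!\circ\!\widehat{W}$.

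The assembly is then routine. For $N \in \CO_\kappa$, let $N_\xi \subseteq N$ be the (maximal) submodule whose composition factors all lie in $\CO_{\kappa,\xi}$. The vanishing of $\Ext^1$ across distinct blocks forces every short exact sequence of the form $0 \to N_\xi \to N \to N/N_\xi \to 0$ to split, yielding $N = \bigoplus_\xi N_\xi$; the finite-weight conditions $(b)$ and $(c)$ in the definition of $\CO_\kappa$ ensure that on each generalized weight space only finitely many $\xi$ contribute, so the direct sum is well-defined. The main obstacle I anticipate is the subtle nature of ``the center'' of $U(\widehat{\mathfrak{g}})$ at non-critical level — $T_0$ only lies in a topological completion — and one must check that the resulting eigenspace decomposition is compatible with the category $\CO_\kappa$ structure; the cleanest way is probably to bypass completions entirely and argue via Kac--Kazhdan linkage together with a truncation argument on weight filtrations, which only uses honest elements of $U(\widehat{\mathfrak{g}})$.
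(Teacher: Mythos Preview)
The paper's own proof is the single line ``Follows from \cite[Theorem 5.7]{DGK}'', and your sketch --- Casimir/Sugawara separation refined by Kac--Kazhdan linkage, assembled via truncated weight filtrations --- is essentially the content of that theorem's proof in Deodhar--Gabber--Kac. Your final caveat is apt: the $\Ext^1$-between-simples formulation does not by itself suffice for infinite-length objects, but the truncation/weight-filtration argument you point to at the end is exactly the right fix and is what DGK do.
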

\begin{proof}
Follows from \cite[Theorem 5.7]{DGK}.
\end{proof}

The subcategory ${\mathcal{O}}_{\kappa ,\overline{\Lambda }}$, corresponding
to an integral $\Lambda $, is called an {\textit{integral block}}. Let
${\mathcal{O}}_{\kappa ,\xi}$ be an integral block; then there exists the
unique $\lambda \in \xi $ such that $\lambda +\widehat{\rho}$ is dominant
integral (since $\kappa (\Lambda )>-h^{\vee}$).

Recall that $\widehat{W}_{\lambda }\subset \widehat{W}$ is the stabilizer
of $\lambda $ (w.r.t. the $\circ $ action). Fixing an integral
$\lambda \in \widehat{\mathfrak{h}}^{*}$, such that
$\lambda +\widehat{\rho}$ is dominant, we obtain the identification
\begin{equation*}
\widehat{W}_{\lambda }\backslash \widehat{W} \iso \operatorname{Irr}({
\mathcal{O}}_{\kappa ,\xi}),\, \overline{w} \mapsto L(w^{-1} \circ
\lambda ).
\end{equation*}
For $\overline{w} \in \widehat{W}_{\lambda }\backslash \widehat{W}$ let
$L_{\overline{w}}$ and
$M_{\overline{w}} \in {\mathcal{O}}_{\kappa ,\xi}$ be simple and Verma
modules, corresponding to the coset $\overline{w}$ i.e.:
\begin{equation*}
L_{\overline{w}}:=L(w^{-1} \circ \lambda ),\, M_{\overline{w}} := M(w^{-1}
\circ \lambda ).
\end{equation*}

\begin{Def}
For an integral $\lambda \in \widehat{\mathfrak{h}}^{*}$ such that
$\lambda +\widehat{\rho}$ is dominant let ${}^{\lambda }\widehat{W}$ be
the subset of $\widehat{W}$, consisting of {\textit{maximal}} length representatives
of {\textit{left}} $\widehat{W}_{\lambda }$-cosets
$\widehat{W}_{\lambda }\backslash \widehat{W}$.
\end{Def}

We have the bijection
${}^{\lambda }\widehat{W} \iso \widehat{W}_{\lambda }\backslash
\widehat{W}$ that sends $w$ to ${\widehat{W}}_{\lambda }w$. For
$w \in {}^{\lambda }\widehat{W}$ let
$L_{w},\,M_{w} \in {\mathcal{O}}_{\kappa ,\xi}$ be simple and Verma modules,
corresponding to the coset ${\widehat{W}}_{\lambda }w$:
\begin{equation*}
L_{w}:=L_{\overline{w}},\, M_{w} := M_{\overline{w}}.
\end{equation*}

\subsection{Classes of irreducible objects of a regular integral block ${\mathcal{O}}_{\kappa ,\xi}$}

Let ${\mathcal{O}}_{\kappa ,\xi}$ be a regular integral block. Consider
the Grothendieck $K$-group $K_{0}({\mathcal{O}}_{\kappa ,\xi})$ of the
category ${\mathcal{O}}_{\kappa ,\xi}$. For an object
$N \in {\mathcal{O}}_{\kappa ,\xi}$ we denote by
$[N] \in K_{0}({\mathcal{O}}_{\kappa ,\xi})$ the corresponding class. For
$v\in \widehat{W}$ and $w\in \widehat{W}$ let ${\bf{m} }_{v}^{w}$ be determined
by:
%
\begin{equation}
\label{mult_pos}
[L_{v}]=\sum _{w \in \widehat{W}} \varepsilon (wv^{-1}){\bf{m} }_{v}^{w}
[M_{w}].
\end{equation}

\begin{Rem}
The numbers ${\bf{m} }_{v}^{w}$ are given by the affine inverse Kazhdan-Lusztig
polynomials ${\bf{m}}^{w}_{v}(q)$ evaluated at $q=1$ (see Theorem~\ref{char_dom_via_inverse} or \cite[Section 0.3]{KT}).
\end{Rem}

\subsection{Classes of irreducible objects of integral blocks ${\mathcal{O}}_{\kappa ,\xi}$}

Let ${\mathcal{O}}_{\kappa ,\xi}$ be an integral block of level
$\kappa > -h^{\vee}$ (we are not assuming that $\xi $ is regular).

\begin{Prop}%
\label{char_sing}
Pick $v \in {}^{\lambda }\widehat{W}$. We have
\begin{equation*}
[L_{v}]=\sum _{w \in \widehat{W}} \varepsilon (wv^{-1}){\bf{m}}^{w}_{v}
[M_{\overline{w}}]=\sum _{w \in {}^{\lambda }\widehat{W}}
\varepsilon (wv^{-1})\Big(\sum _{u \in \widehat{W}_{\lambda }}
\varepsilon (u){\bf{m} }^{u w}_{v} \Big)[M_{w}].
\end{equation*}
\end{Prop}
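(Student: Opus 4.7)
The plan is to deduce Proposition \ref{char_sing} from the regular-block formula (\ref{mult_pos}) by translation functors. Pick an auxiliary regular integral weight $\la' \in \widehat{\mathfrak{h}}^*$ of the same level $\kappa$ with $\la'+\widehat{\rho}$ dominant, and let $T \colon \CO_{\kappa,\overline{\la'}} \to \CO_{\kappa,\xi}$ be the translation functor obtained by tensoring with a suitable finite-dimensional level-zero $\widehat{\mathfrak{g}}$-module and projecting to $\CO_{\kappa,\xi}$. For affine category $\CO$ at noncritical level $\kappa > -h^{\vee}$ this is an exact functor (see \cite{KT}). The two properties I will use are: $T[M^{\mathrm{reg}}_w] = [M_{\overline{w}}]$ for every $w \in \widehat{W}$, and $T[L^{\mathrm{reg}}_v] = [L_v]$ whenever $v \in {}^\la\widehat{W}$, i.e.\ $v$ is of maximal length in the coset $\widehat{W}_\la v$ (and $T[L^{\mathrm{reg}}_v] = 0$ otherwise).

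Applying $T$ to the regular identity
\[
[L^{\mathrm{reg}}_v] = \sum_{w \in \widehat{W}} \varepsilon(wv^{-1}) \bm^w_v [M^{\mathrm{reg}}_w]
\]
for our fixed $v \in {}^\la\widehat{W}$, and using exactness to pass $T$ through the sum, yields
\[
[L_v] = \sum_{w \in \widehat{W}} \varepsilon(wv^{-1}) \bm^w_v [M_{\overline{w}}],
\]
which is the first equality in the statement.

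For the second equality, partition $\widehat{W} = \bigsqcup_{w \in {}^\la\widehat{W}} \widehat{W}_\la w$. Since any $u \in \widehat{W}_\la$ fixes $\la+\widehat{\rho}$, we have $(uw)^{-1} \circ \la = w^{-1}(u^{-1}(\la+\widehat{\rho})) - \widehat{\rho} = w^{-1} \circ \la$, so $[M_{\overline{uw}}] = [M_w]$ in $K_0(\CO_{\kappa,\xi})$ for all $u \in \widehat{W}_\la$ and $w \in {}^\la\widehat{W}$. Using multiplicativity $\varepsilon(uwv^{-1}) = \varepsilon(u)\varepsilon(wv^{-1})$ and regrouping the sum gives
\[
[L_v] = \sum_{w \in {}^\la\widehat{W}} \varepsilon(wv^{-1}) \Big( \sum_{u \in \widehat{W}_\la} \varepsilon(u) \bm^{uw}_v \Big) [M_w],
\]
as required.

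The main obstacle is justifying the translation-functor picture in the affine setting: one needs the exactness of $T$, its compatibility with the block decomposition, and in particular the dichotomy $T[L^{\mathrm{reg}}_v] \in \{[L_v], 0\}$ according to whether $v$ is of maximal length in $\widehat{W}_\la v$. In finite type this is classical (Jantzen, Humphreys); in the affine case at level $\kappa > -h^{\vee}$ it is available within the Kashiwara--Tanisaki framework \cite{KT}. Once these inputs are granted, everything else is formal bookkeeping with $\varepsilon$-signs and cosets.
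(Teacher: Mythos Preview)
Your proof is correct and follows essentially the same approach as the paper: the paper's proof simply says the result follows from equation (\ref{mult_pos}) via translation functors, citing \cite[Theorem 5.13]{DGK} and \cite[Section 3]{KT0}, which is exactly the argument you have spelled out in detail. The second equality is, as you note, just a coset regrouping.
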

\begin{proof}
The claim can be deduced from \cite[Theorem 1.1]{KT0}. It also follows
from the equality (\ref{mult_pos}) by using translation functors (see
\cite[Theorem 5.13]{DGK} or \cite[Section 3]{KT0}).
\end{proof}

\subsection{Subcategory ${\mathcal{R}}_{\kappa }\subset {\mathcal{O}}_{\kappa}$ and irreducible objects in integral blocks ${\mathcal{R}}_{\kappa ,\xi}$}

We will be interested in integral quasi-dominant $\Lambda $ (see Definition~\ref{def_la_reg_int_dom_quasid}). The last condition corresponds to the
fact that $L(\Lambda )$ lies in the subcategory
${\mathcal{R}}_{\kappa }\subset {\mathcal{O}}_{\kappa}$, defined as follows.

\begin{Def}
The category ${\mathcal{R}}_{\kappa}$ is the full subcategory of
${\mathcal{O}}_{\kappa}$, consisting of modules
$N \in {\mathcal{O}}_{\kappa}$ such that the action of
$\mathfrak{g}[t]$ on $N$ is locally finite. For a block
${\mathcal{O}}_{\kappa ,\xi}$ we denote by
${\mathcal{R}}_{\kappa ,\xi} \subset {\mathcal{O}}_{\kappa ,\xi}$ the full
subcategory of ${\mathcal{O}}_{\kappa ,\xi}$ 
by objects in
${\mathcal{R}}_{\kappa}$.
\end{Def}

Let us now describe irreducible objects of the category
${\mathcal{R}}_{\kappa ,\xi}$. We start from the case when
$\xi \in \widehat{\mathfrak{h}}^{*}/_{\circ} \widehat{W}$ is regular (see
Definition~\ref{def_la_reg_int_dom_quasid}).

\begin{Def}
Let $\widehat{W}^{f}$ be the set of {\textit{minimal}} length representatives
of {\textit{right}} $W$-cosets in $\widehat{W}$. We have
$\widehat{W}^{f}\iso \widehat{W}/W\simeq Q^{\vee}$ and let
$\nu \mapsto w_{\nu}$ be the inverse bijection.
\end{Def}

\begin{Lem}%
\label{R_in_O_for_reg}
For integral regular $\xi $ the category
${\mathcal{R}}_{\kappa ,\xi}$ is the Serre subcategory of
${\mathcal{O}}_{\kappa ,\xi}$ whose irreducible objects are $L_{v}$,
$v \in \widehat{W}^{f}$.
\end{Lem}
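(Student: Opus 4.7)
The plan is to show that for a simple $L_v = L(v^{-1}\circ \la) \in \CO_{\kappa,\xi}$, the defining condition of $\CR_{\kappa,\xi}$ is equivalent to $\mathfrak{g}$-integrability of $L_v$, and then to convert this into the minimal-coset-representative condition on $v$.

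First I verify that $\CR_{\kappa,\xi}$ is a Serre subcategory of $\CO_{\kappa,\xi}$: local nilpotence of $\mathfrak{g}[t]$ is inherited by sub-modules and quotients, and if $0\to N\to M\to M/N\to 0$ with $N$ and $M/N$ both in $\CR_\kappa$, then for $m\in M$ and $x\in\mathfrak{g}[t]$ one picks $k$ with $x^k m\in N$ and then $\ell$ with $x^{k+\ell}m=0$. Hence $\CR_{\kappa,\xi}$ is determined by its irreducible objects, and the lemma reduces to identifying for which $v$ the module $L_v$ lies in $\CR_\kappa$.

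Next I reduce $\mathfrak{g}[t]$-local nilpotence on an object of $\CO_\kappa$ to local nilpotence of the horizontal part $\mathfrak{g}\subset\mathfrak{g}[t]$. For $n\geqslant 1$, elements of $t^n\mathfrak{g}$ shift the $d$-eigenvalue by $+n$, and the $d$-eigenvalues of any $L(\Lambda)\in\CO_\kappa$ are bounded above by $\Lambda(d)$ since all weights of $L(\Lambda)$ lie in $\Lambda-Q^+$ (where $Q^+$ is the positive root cone of $\widehat{\mathfrak{g}}$). Hence the currents $t^n\mathfrak{g}$ act locally nilpotently for free, and only the action of the horizontal $\mathfrak{g}$ is in question. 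On an irreducible highest-weight module, local nilpotence of the root vectors of $\mathfrak{g}$ is equivalent to $\mathfrak{g}$-integrability of $L(\Lambda)$, i.e.\ to $\langle\Lambda,\alpha_i^\vee\rangle\in\BZ_{\geqslant 0}$ for $i=1,\dots,r$.

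Finally I translate the integrality condition on $\Lambda=v^{-1}\circ\la$ into a condition on $v$. The identity $v^{-1}\circ\la+\widehat{\rho}=v^{-1}(\la+\widehat{\rho})$ together with $\langle\widehat{\rho},\alpha_i^\vee\rangle=1$ yields
\[
\langle v^{-1}\circ\la,\alpha_i^\vee\rangle+1 \;=\; \langle\la+\widehat{\rho},\,v(\alpha_i^\vee)\rangle.
\]
Regularity of $\xi$ ensures that $\la+\widehat{\rho}$ is regular dominant, so the right-hand side is nonzero and is positive iff the real coroot $v(\alpha_i^\vee)$ is a positive coroot, iff the real root $v(\alpha_i)$ is positive. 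Therefore $\langle v^{-1}\circ\la,\alpha_i^\vee\rangle\geqslant 0$ for all $i=1,\dots,r$ iff $v(\alpha_i)\in\widehat{\Delta}_+$ for all $i=1,\dots,r$, which is the standard characterization of the minimal length element of the coset $vW$, i.e.\ $v\in\widehat{W}^f$. I do not foresee any real obstacle; the main care needed is to use the regularity of $\xi$ to pass cleanly between the nonnegativity of $\langle v^{-1}\circ\la,\alpha_i^\vee\rangle$ and the strict positivity of $\langle\la+\widehat{\rho},v(\alpha_i^\vee)\rangle$, and to invoke the matching between positivity of real roots and of their coroots (a real root $\beta=\alpha+n\delta$ is positive iff the real coroot $\beta^\vee=\alpha^\vee+nK$ is positive).
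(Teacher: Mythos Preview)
Your proof is correct and follows essentially the same strategy as the paper: reduce to showing $L(\Lambda) \in \CR_\kappa$ iff $\Lambda$ is quasi-dominant, then translate quasi-dominance of $v^{-1}\circ\lambda$ into $v(\alpha_i) \in \widehat{\Delta}_+$ for all $i \in I$ using regular dominance of $\lambda+\widehat{\rho}$. The only minor variation is that for the implication ``$\Lambda$ quasi-dominant $\Rightarrow L(\Lambda)\in\CR_\kappa$'' the paper exhibits $L(\Lambda)$ as a quotient of the generalized Weyl module $\on{Ind}^{\widehat{\mathfrak{g}}}_{\widehat{\mathfrak{g}}_+} V(\Lambda)$, whereas you invoke directly the standard integrability criterion (local nilpotence of $f_i$ on an irreducible highest weight module iff $\langle\Lambda,\alpha_i^\vee\rangle\in\BZ_{\geqslant 0}$).
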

\begin{proof}
It is clear that
${\mathcal{R}}_{\kappa ,\xi} \subset {\mathcal{O}}_{\kappa ,\xi}$ is the
Serre subcategory. It remains to show that
$L_{v} \in {\mathcal{R}}_{\kappa ,\xi}$ iff $v \in \widehat{W}^{f}$. Indeed,
let us first of all note that $v \in \widehat{W}^{f}$ iff
$v^{-1}(\lambda +\widehat{\rho})$ is quasi-dominant (indeed, if
$v \in \widehat{W}^{f}$ and $v^{-1}(\lambda +\widehat{\rho})$ is not quasi-dominant
then there exists $i \in \{1,\ldots ,r\}$ such that
$\langle v^{-1}(\lambda +\widehat{\rho}),\alpha _{i}^{\vee} \rangle <0$,
so
$\langle \lambda +\widehat{\rho},v(\alpha _{i}^{\vee})\rangle < 0$ i.e.
$v(\alpha _{i}^{\vee})$ is negative, hence,
$\ell (vs_{i})=\ell (v)-1$ that contradicts to
$v \in \widehat{W}^{f}$, similarly if
$v^{-1}(\lambda +\widehat{\rho})$ is quasi-dominant but
$v \notin \widehat{W}^{f}$ then there exists $i \in \{1,\ldots ,r\}$ such
that $v(\alpha _{i}^{\vee})$ is negative that contradicts to
$\langle v^{-1}(\lambda +\widehat{\rho}),\alpha _{i}^{\vee }\rangle > 0$).

It remains to show that $L(\Lambda ) \in {\mathcal{R}}_{\kappa}$ iff
$\Lambda $ is quasi-dominant. Assume that
$L(\Lambda ) \in {\mathcal{R}}_{\kappa}$ and consider the
$\mathfrak{g}$-submodule of $L(\Lambda )$ generated by the highest weight
vector of $L(\Lambda )$. This is a finite dimensional module with highest
weight~$\Lambda |_{\mathfrak{h}}$. It follows that
$\Lambda |_{\mathfrak{h}}$ is dominant i.e. $\Lambda $ is quasi-dominant.

Assume now that $\Lambda $ is quasi-dominant. Let $V(\Lambda )$ be the
irreducible (finite dimensional) representation of $\mathfrak{g}$ with
highest weight $\Lambda |_{\mathfrak{h}}$. Consider $V(\Lambda )$ as a
module over
$\widehat{\mathfrak{g}}_{+}:=\mathfrak{g}[t] \oplus {\mathbb{C}}K
\oplus {\mathbb{C}}d$, letting $t\mathfrak{g}[t]$ act via zero, $K$ act
via the multiplication by $\Lambda (K)$, and $d$ act via the multiplication
by $\Lambda (d)$. Consider the induced module
$\operatorname{Ind}^{\widehat{\mathfrak{g}}}_{\widehat{\mathfrak{g}}_{+}}V(
\Lambda ):=U(\widehat{\mathfrak{g}}) \otimes _{U(
\widehat{\mathfrak{g}}_{+})} V(\Lambda )$. It is easy to see that
$\operatorname{Ind}^{\widehat{\mathfrak{g}}}_{\widehat{\mathfrak{g}}_{+}}V(
\Lambda ) \in {\mathcal{R}}_{\kappa}$. Since $L(\Lambda )$ is a quotient
of
$\operatorname{Ind}^{\widehat{\mathfrak{g}}}_{\widehat{\mathfrak{g}}_{+}}V(
\Lambda )$, we conclude that
$L(\Lambda ) \in {\mathcal{R}}_{\kappa}$.
\end{proof}

In general (for singular $\xi $) irreducible objects of
${\mathcal{R}}_{\kappa ,\xi} \subset {\mathcal{O}}_{\kappa ,\xi}$ can be
described as follows. Recall that irreducible objects of
${\mathcal{O}}_{\kappa ,\xi}$ are in bijection with~${}^{\lambda }\widehat{W}$.

\begin{Def}
Let ${}^{\lambda }\widehat{W}^{f}$ be the intersection
${}^{\lambda }\widehat{W} \cap \widehat{W}^{f} \subset \widehat{W}$. Using
the identification $\widehat{W}^{f} \iso Q^{\vee}$, we can identify
${}^{\lambda }\widehat{W} \cap \widehat{W}^{f}$ with the subset of
$Q^{\vee}$ to be denoted ${}^{\lambda }Q^{\vee}$.
\end{Def}

\begin{Lem}%
\label{class_irr_R_gen}
For integral $\xi $ the category ${\mathcal{R}}_{\kappa ,\xi}$ is the Serre
subcategory of ${\mathcal{O}}_{\kappa ,\xi}$ whose irreducible objects
are $L_{v}$, $v \in {}^{\lambda }{\widehat{W}}^{f}$, where
$\lambda \in \xi $ is such that $\lambda +\widehat{\rho}$ is dominant integral.
\end{Lem}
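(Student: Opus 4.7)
The plan is to reduce this statement to the regular case (Lemma \ref{R_in_O_for_reg}) by characterizing, among $v \in {}^\la\widehat{W}$, those for which $v^{-1}\circ\la$ is quasi-dominant.

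First, I would note that $\CR_{\kappa,\xi}$ is automatically Serre in $\CO_{\kappa,\xi}$: the condition that $\mathfrak{g}[t]$ acts locally nilpotently is preserved under submodules, quotients, and extensions, so the argument of Lemma \ref{R_in_O_for_reg} carries over verbatim. Moreover, the same argument as in the second half of the proof of Lemma \ref{R_in_O_for_reg} (using the finite-dimensional $\mathfrak{g}$-submodule generated by the highest weight vector, together with induction from $\widehat{\mathfrak{g}}_+$) shows that for an \emph{arbitrary} $\La\in\widehat{\mathfrak{h}}^*$ of level $\kappa>-h^\vee$, we have $L(\La)\in\CR_\kappa$ if and only if $\La$ is quasi-dominant. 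Thus for $v\in{}^\la\widehat{W}$, we have $L_v\in\CR_{\kappa,\xi}$ iff $v^{-1}\circ\la$ is quasi-dominant, i.e. iff $\langle\la+\widehat{\rho}, v(\al_i^\vee)\rangle\geqslant 1$ for every $i=1,\ldots,r$. The whole problem therefore reduces to proving the combinatorial equivalence
\begin{equation*}
v\in{}^\la\widehat{W}^f \ \Longleftrightarrow\ \langle \la+\widehat{\rho}, v(\al_i^\vee)\rangle\geqslant 1 \text{ for all } i=1,\ldots,r,
\end{equation*}
under the standing hypothesis $v\in{}^\la\widehat{W}$.

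The $(\Leftarrow)$ direction is straightforward: if $\langle \la+\widehat{\rho}, v(\al_i^\vee)\rangle\geqslant 1$ then in particular the integer is positive, and since $\la+\widehat{\rho}$ is dominant, $v(\al_i^\vee)$ must be a positive coroot; hence $v(\al_i)>0$ for all $i=1,\ldots,r$, which is the definition of $v\in\widehat{W}^f$. The main (though still short) point is the converse. Assume $v\in{}^\la\widehat{W}\cap\widehat{W}^f$; then $v(\al_i^\vee)$ is a positive coroot, so $\langle\la+\widehat{\rho}, v(\al_i^\vee)\rangle\geqslant 0$ by dominance of $\la+\widehat{\rho}$. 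The claim that this pairing is actually $\geqslant 1$ is where the maximality of $v$ in $\widehat{W}_\la v$ enters: if it were zero, then the reflection $s_{v(\al_i)}=vs_iv^{-1}$ would fix $\la+\widehat{\rho}$ and hence lie in $\widehat{W}_\la$, giving $vs_i=(vs_iv^{-1})v\in\widehat{W}_\la v$, i.e. $vs_i$ lies in the same left $\widehat{W}_\la$-coset as $v$. But $v\in\widehat{W}^f$ forces $\ell(vs_i)=\ell(v)+1$, contradicting maximality of $v$ in its coset. This contradiction closes the argument, and combined with the quasi-dominance characterization of $\CR_\kappa$ establishes the lemma.

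I expect the main (and only) obstacle to be the subtlety in the converse direction above, namely the careful use of the interplay between the minimality condition in $\widehat{W}^f$ (controlled by right multiplication by $s_i$) and the maximality condition in ${}^\la\widehat{W}$ (controlled by left multiplication by reflections in $\widehat{W}_\la$); the identity $vs_i=(vs_iv^{-1})v$ is the bridge between the two. Everything else is a direct appeal to Lemma \ref{R_in_O_for_reg} and the general formalism of the block decomposition.
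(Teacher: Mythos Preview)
Your proposal is correct and follows essentially the same approach as the paper's proof: both reduce to the quasi-dominance criterion from Lemma~\ref{R_in_O_for_reg}, then establish the combinatorial equivalence by showing that a zero value of $\langle\la+\widehat{\rho},v(\al_i^\vee)\rangle$ forces $vs_i\in\widehat{W}_\la v$, which is incompatible with $v$ being simultaneously longest in its $\widehat{W}_\la$-coset and shortest in its $W$-coset. The only cosmetic difference is that the paper derives $\ell(vs_i)=\ell(v)-1$ from maximality and contradicts $v\in\widehat{W}^f$, whereas you derive $\ell(vs_i)=\ell(v)+1$ from $v\in\widehat{W}^f$ and contradict maximality; these are the same contradiction read in opposite directions.
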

\begin{proof}
Clearly ${\mathcal{R}}_{\kappa ,\xi}$ is a Serre subcategory of
${\mathcal{O}}_{\kappa ,\xi}$. It follows from the proof of Lemma~\ref{R_in_O_for_reg} that an irreducible object
$L_{v} \in {\mathcal{O}}_{\kappa ,\xi}$ ($v \in {}^{\lambda }
\widehat{W}$) lies in ${\mathcal{R}}_{\kappa ,\xi}$ iff the element
$v^{-1} \circ \lambda $ is quasi-dominant.

It remains to show that $v \in {}^{\lambda }\widehat{W}^{f}$ iff
$v^{-1} \circ \lambda $ is quasi-dominant. Indeed, assume that
$v \in {}^{\lambda }\widehat{W}$ is such that
$\Lambda :=v^{-1} \circ \lambda $ is quasi-dominant. It follows that
$\Lambda +\widehat{\rho}=v^{-1}(\lambda +\widehat{\rho})$ pairs with all
$\alpha _{i}^{\vee}$, $i=1,2,\ldots ,r$, by {\textit{positive}} numbers. If
$v \notin W^{f}$, then there exists $i \in \{1,2,\ldots ,r\}$ such that
$\ell (vs_{i})=\ell (v)-1$. This is equivalent to
$v(\alpha _{i}) \in \widehat{\Delta}_{-}$. On the other hand we have
%
\begin{equation}
\label{ineq_pairing_al_i}
\langle \lambda +\widehat{\rho},v(\alpha _{i}^{\vee}) \rangle =
\langle v^{-1}(\lambda +\widehat{\rho}),\alpha _{i}^{\vee} \rangle =
\langle \Lambda +\widehat{\rho}, \alpha _{i}^{\vee}\rangle >0.
\end{equation}
Recall now that $\lambda +\widehat{\rho}$ is quasi-dominant, so the pairing
$\langle \lambda +\widehat{\rho},v(\alpha _{i}^{\vee}) \rangle $ must be
nonpositive, contradicting to (\ref{ineq_pairing_al_i}).

Assume now that $v \in {}^{\lambda }\widehat{W}^{f}$. Then
$v \in \widehat{W}^{f}$, and since $\lambda +\widehat{\rho}$ is dominant,
it is clear that
$\Lambda +\widehat{\rho}=v^{-1}( \lambda +\widehat{\rho})$ is quasi-dominant
(same argument as in the proof of Lemma~\ref{R_in_O_for_reg}). It remains
to show that there is no $i \in \{1,\ldots ,r\}$ such that
$\langle \Lambda +\widehat{\rho},\alpha _{i}^{\vee} \rangle = 0$. Assume
that such $i$ exists. Then
$s_{i}(\Lambda +\widehat{\rho})=\Lambda +\widehat{\rho}$, i.e.
$(vs_{i})^{-1} \circ \lambda =v^{-1} \circ \lambda $, hence,
$vs_{i} \in \widehat{W}_{\lambda }v$. Recall that $v$ is the longest element
of $\widehat{W}_{\lambda }v$, so $\ell (vs_{i})=\ell (v)-1$, and that contradicts
the fact that $v$ is the shortest element of $vW$ (since
$vs_{i} \in vW$ and $vs_{i}$ is shorter than $v$).
\end{proof}

\subsection{Classes of irreducibles of regular integral block ${\mathcal{R}}_{\kappa ,\xi}$}

Assume that
$\xi \in \widehat{\mathfrak{h}}^{*}/_{\circ} \widehat{W}$ is regular integral
and consider the corresponding category
${\mathcal{R}}_{\kappa ,\xi}$.

\begin{Prop}%
\label{char_int}
Suppose that $v \in \widehat{W}^{f}$. Then we have
%
\begin{equation}
\label{class_simple_in_R_reg}
[L_{v}]=\sum _{w \in \widehat{W}^{f}}\varepsilon (wv^{-1})\sum _{u
\in W}\varepsilon (u){\bf{m}}^{w}_{v}[M_{wu}].
\end{equation}
\end{Prop}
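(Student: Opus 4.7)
\medskip

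\noindent\textbf{Proof proposal.} The plan is to derive Proposition~\ref{char_int} directly from Equation~(\ref{mult_pos}), which already gives
\[
[L_v]=\sum_{w\in \widehat{W}}\varepsilon(wv^{-1})\bm^{w}_{v}[M_w],
\]
and then to rewrite the sum after decomposing $\widehat{W}=\widehat{W}^{f}\cdot W$ (each $w'\in\widehat{W}$ admits a unique expression $w'=wu$ with $w\in\widehat{W}^{f}$ and $u\in W$, giving $\varepsilon(w'v^{-1})=\varepsilon(wv^{-1})\varepsilon(u)$). To match (\ref{class_simple_in_R_reg}) it therefore suffices to establish the symmetry
\begin{equation}\label{prop_sym_aim}
\bm^{wu}_{v}=\bm^{w}_{v}\qquad\text{for all }w\in\widehat{W}^{f},\ u\in W.
\end{equation}

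I would prove (\ref{prop_sym_aim}) by a character argument. Because $v\in\widehat{W}^{f}$, Lemma~\ref{R_in_O_for_reg} gives $L_v\in\CR_{\kappa,\xi}$, so $L_v$ is integrable over $\mathfrak{g}$ and hence $\on{ch}L_v$ is $W$-invariant under the linear action of $W\subset\widehat{W}$ on $\widehat{\mathfrak{h}}^{*}$. Using $\widehat{R}\on{ch}M(\La)=e^{\La+\widehat{\rho}}$ from Section~\ref{recall_irred_hw_po_level}, the formula (\ref{mult_pos}) translates into
\[
\widehat{R}\on{ch}L_v\;=\;\sum_{w\in\widehat{W}}\varepsilon(wv^{-1})\bm^{w}_{v}\,e^{w^{-1}(\la+\widehat{\rho})}.
\]
The Weyl denominator $\widehat{R}$ is $W$-anti-invariant (this is the usual sign character computation using that $u\in W$ permutes $\widehat{\Delta}_+$ except for flipping exactly $\ell(u)$ finite positive roots), so the left-hand side is $W$-anti-invariant. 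Applying $u\in W$ to the right-hand side and relabeling $w\mapsto wu$ gives
\[
\sum_{w\in\widehat{W}}\varepsilon(wv^{-1})\bm^{wu}_{v}\,e^{w^{-1}(\la+\widehat{\rho})}\;=\;\sum_{w\in\widehat{W}}\varepsilon(wv^{-1})\bm^{w}_{v}\,e^{w^{-1}(\la+\widehat{\rho})}.
\]
Since $\la+\widehat{\rho}$ is regular dominant, its $\widehat{W}$-orbit is in bijection with $\widehat{W}$, so the exponentials $e^{w^{-1}(\la+\widehat{\rho})}$ are pairwise distinct; comparing coefficients yields (\ref{prop_sym_aim}).

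Once (\ref{prop_sym_aim}) is in hand, substituting $w'=wu$ in (\ref{mult_pos}) produces
\[
[L_v]=\sum_{w\in\widehat{W}^{f}}\sum_{u\in W}\varepsilon(wv^{-1})\varepsilon(u)\,\bm^{wu}_{v}\,[M_{wu}]=\sum_{w\in\widehat{W}^{f}}\varepsilon(wv^{-1})\sum_{u\in W}\varepsilon(u)\,\bm^{w}_{v}\,[M_{wu}],
\]
which is exactly (\ref{class_simple_in_R_reg}).

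The only delicate point I anticipate is the justification of (\ref{prop_sym_aim}): one must be careful that the formal character of $L_v$ is indeed a well-defined element of a space where $W$ acts and where coefficient comparison against the $e^{w^{-1}(\la+\widehat{\rho})}$ is legitimate. This is where regularity of $\la+\widehat{\rho}$ enters decisively; without it the orbit map $w\mapsto w^{-1}(\la+\widehat{\rho})$ is not injective and the conclusion would require an averaging argument over $\widehat{W}_\la$ rather than a direct coefficient comparison. Everything else is a bookkeeping reindexing via the Bruhat decomposition $\widehat{W}=\widehat{W}^{f}\cdot W$ and multiplicativity of $\varepsilon$.
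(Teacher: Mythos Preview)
Your proof is correct and follows exactly the approach the paper indicates: the paper's one-line proof reads ``This follows from $W$-invariance of $\on{ch}L_v$,'' and your argument simply unpacks that remark by translating (\ref{mult_pos}) into characters, using the $W$-anti-invariance of $\widehat{R}\on{ch}L_v$ together with regularity of $\la+\widehat{\rho}$ to deduce $\bm^{wu}_v=\bm^{w}_v$, and then reindexing over $\widehat{W}=\widehat{W}^f\cdot W$.
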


\begin{proof}
This follows from $W$-invariance of $\operatorname{ch}L_{v}$.
\end{proof}

Recall now that we have the bijection
$\widehat{W}^{f} \iso Q^{\vee}$ and the inverse bijection sends
$\nu $ to $w_{\nu}$. Then the equality (\ref{class_simple_in_R_reg}) can
be rewritten as follows: for $v=w_{\nu}$ we have
\begin{equation*}
[L_{v}]=\sum _{\gamma \in Q^{\vee}}\varepsilon (w_{\gamma }v^{-1})
\sum _{u \in W}\varepsilon (u){\bf{m}}^{w_{\gamma}}_{v}[M_{w_{\gamma }u}].
\end{equation*}

\subsection{Classes of irreducibles of integral block ${\mathcal{R}}_{\kappa ,\xi}$}

Assume now that ${\mathcal{R}}_{\kappa ,\xi}$ is an arbitrary (possibly
singular) integral block.

\begin{Thm}
For $v \in {}^{\lambda }\widehat{W}^{f}$ we have
%
\begin{align}
\label{irreg_mult}
[L_{v}]
&=\sum _{w \in \widehat{W}^{f}} \varepsilon (wv^{-1})\sum _{u
\in W}\varepsilon (u){\bf{m}}^{w}_{v} [M_{\overline{wu}}]
\\
&=\sum _{w \in {}^{\lambda }\widehat{W}^{f}}\varepsilon (wv^{-1})\sum _{u
\in W}\varepsilon (u)\Big(\sum _{\sigma \in \widehat{W}_{\lambda }}
\varepsilon (\sigma ){\bf{m}}_{v}^{\sigma w}\Big)[M_{wu}].
\nonumber
\end{align}
\end{Thm}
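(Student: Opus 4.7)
The plan is to derive both equalities from Proposition \ref{char_sing} by exploiting the $W$-invariance of $\on{ch} L_v$; this invariance is available because $v \in {}^\la\widehat{W}^f$ implies $L_v \in \CR_{\kappa,\xi}$ by Lemma \ref{class_irr_R_gen}, whence $L_v$ is a sum of finite-dimensional $\mathfrak{g}$-modules and its $\mathfrak{h}$-weights are permuted by $W$.

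For the first equality, I would start from
\[
[L_v] = \sum_{\tilde w \in \widehat{W}} \varepsilon(\tilde w v^{-1}) {\bf m}^{\tilde w}_v [M_{\ol{\tilde w}}]
\]
(Proposition \ref{char_sing}) and uniquely decompose each $\tilde w \in \widehat W$ as $\tilde w = wu$ with $w \in \widehat{W}^f$ and $u \in W$. The key input is the identity ${\bf m}^{\tilde w u}_v = {\bf m}^{\tilde w}_v$ for $v \in \widehat W^f$, any $\tilde w \in \widehat W$, and any $u \in W$, derived exactly as in the proof of Proposition \ref{char_int}: matching coefficients of $e^{\tilde w^{-1}(\la+\widehat\rho)}$ in the identity $w' \cdot (\widehat R\,\on{ch}L_v) = \varepsilon(w')\widehat R\,\on{ch}L_v$ for $w' \in W$ (which holds because $w'\cdot \widehat R = \varepsilon(w')\widehat R$ and $\on{ch}L_v$ is $W$-invariant) and reindexing forces this. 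Factoring $\varepsilon(wuv^{-1}) = \varepsilon(wv^{-1})\varepsilon(u)$ then produces the first equality.

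For the second equality, I would regroup the sum over $\widehat W^f \times W$ by the $\widehat W_\la$-orbits of the index $w \in \widehat W^f$, using ${}^\la\widehat W^f$ as distinguished representatives. For each fixed $w \in {}^\la\widehat W^f$ and $u \in W$, every $\sigma \in \widehat W_\la$ with $\sigma w \in \widehat W^f$ contributes the term $\varepsilon(\sigma)\varepsilon(wv^{-1})\varepsilon(u){\bf m}^{\sigma w}_v [M_{\ol{wu}}]$ to the first equality, yielding the restricted inner sum $\sum_{\sigma:\,\sigma w \in \widehat W^f}\varepsilon(\sigma){\bf m}^{\sigma w}_v$. To extend this to the full sum $\sum_{\sigma \in \widehat W_\la}\varepsilon(\sigma){\bf m}^{\sigma w}_v$ appearing in the statement, one uses ${\bf m}^{\sigma w}_v = {\bf m}^{(\sigma w)^*}_v$ for the $\widehat W^f$-projection $(\sigma w)^*$ of $\sigma w$ (again from the first-paragraph identity) together with a sign cancellation across the fibers of $\sigma \mapsto (\sigma w)^*$ whenever $\sigma w \notin \widehat W^f$.

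The main obstacle is this extension step: verifying that the contributions from $\sigma \in \widehat W_\la$ with $\sigma w \notin \widehat W^f$ cancel out in pairs inside the inner sum. This is the parabolic analog (for the left $\widehat W_\la$-action) of the anti-sphericity identity used for the right $W$-action in the first paragraph, and once it is in place the second equality is simply the reorganization that mirrors the passage from the first to the second equation of Proposition \ref{char_sing}.
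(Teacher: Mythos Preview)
Your approach matches the paper's exactly: both equalities follow from Proposition~\ref{char_sing} together with the $W$-invariance of $\on{ch}L_v$, and your derivation of the first equality is correct.

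Your worry about the ``extension step'' in the second equality is, however, misplaced. For $w \in {}^\la\widehat W^f$ and $\sigma \in \widehat W_\la$ one \emph{always} has $\sigma w \in \widehat W^f$; this is recorded as the Remark immediately following the theorem. Indeed, since $w$ is longest in $\widehat W_\la w$ and shortest in $wW$, each $w\al_i$ (for $i=1,\dots,r$) is a positive root with $s_{w\al_i} \notin \widehat W_\la$ (otherwise $ws_i = s_{w\al_i}w \in \widehat W_\la w$ would be longer than $w$), so any $\sigma \in \widehat W_\la$ keeps it positive. Hence your restricted sum already coincides with the full sum over $\widehat W_\la$, and no cancellation is needed there. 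If a subtlety remains, it is the converse: the map $(\sigma,w) \mapsto \sigma w$ from $\widehat W_\la \times {}^\la\widehat W^f$ to $\widehat W^f$ need not be surjective. But for a missed left coset $C$ one checks that $(w')^{-1}\widehat W_\la w' \cap W \neq \{1\}$ for $w' \in C$, which forces $\sum_{u\in W}\varepsilon(u)[M_{\ol{w'u}}] = 0$; so the omitted contribution vanishes and the regrouping goes through, again by $W$-invariance but via the outer $W$-sum rather than the inner $\widehat W_\la$-sum you proposed.
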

\begin{proof}
Follows from Proposition~\ref{char_sing} and the $W$-invariance of
$\operatorname{ch}L_{v}$.
\end{proof}

Equivalently the equality (\ref{irreg_mult}) can be rewritten as follows.
Pick $v \in {}^{\lambda }\widehat{W}^{f}$, then
%
\begin{align}
\label{char_r_sing_mu_nu}
[L_{v}]
&=\sum _{\gamma \in Q^{\vee}}\varepsilon (w_{\gamma }v^{-1}) \sum _{u \in W}\varepsilon (u){\bf{m}}_{v}^{w_{\gamma}}[M_{w_{\gamma }u}]
\\
&=\sum _{\gamma \in {}^{\lambda }Q^{\vee}} \varepsilon (w_{\gamma }v^{-1})
\sum _{u \in W} \varepsilon (u)\Big(\sum _{\sigma \in \widehat{W}_{
\lambda }}\varepsilon (\sigma ) {\bf{m}}_{v}^{\sigma w_{\gamma}}\Big)[M_{w_{\gamma }u}].
\nonumber
\end{align}

\begin{Rem}
Note that if $w \in {}^{\lambda }\widehat{W}^{f}$ and
$\sigma \in \widehat{W}_{\lambda }$, then
$\sigma w \in \widehat{W}^{f}$.
\end{Rem}

\subsection{Characters of $L(\Lambda ) \in {\mathcal{R}}_{\kappa ,\xi}$}

Recall that we are assuming that $\Lambda $ is integral and quasi-dominant.
This corresponds to the fact that
$L(\Lambda ) \in {\mathcal{R}}_{\kappa ,\xi}$, where
$\xi =\widehat{W} \circ \Lambda $. Let $\lambda \in \xi $ be the (unique)
element such that $\lambda +\widehat{\rho}$ is dominant. Let
$v \in {}^{\lambda }\widehat{W}^{f}$ be the element such that
$\Lambda =v \circ \lambda $.

The equality (\ref{char_r_sing_mu_nu}) can be obviously rewritten in the
following way:
\begin{equation*}
[L(\Lambda )]=\sum _{\gamma \in Q^{\vee}}\varepsilon (w_{\gamma }v^{-1})
\sum _{u \in W}\varepsilon (u){\bf{m}}^{w_{\gamma}}_{v}[M((u^{-1}w_{
\gamma}^{-1})\circ \lambda )].
\end{equation*}
Hence the formula for the character of $L(\Lambda )$ is
\begin{equation*}
\label{char_our_L_via_repres_short_long}
\widehat{R}\operatorname{ch}L(\Lambda )=\sum _{\gamma \in Q^{\vee}}
\varepsilon (w_{\gamma }v^{-1})\sum _{u \in W}\varepsilon (u){\bf{m}}^{w_{
\gamma}}_{v}e^{uw_{\gamma}^{-1}(\lambda +\widehat{\rho})}.
\end{equation*}

Let $\nu \in Q^{\vee}$ be such that $v=w_{\nu}$. We conclude that
%
\begin{align}
\label{our_kl_form_convenient}
\widehat{R} \operatorname{ch}L(\Lambda )
&=\sum _{\gamma \in Q^{\vee}}
\sum _{u \in W}\varepsilon (uw_{\gamma }w_{\nu}) {\bf{m}}_{w_{\nu}}^{w_{
\gamma}} e^{u(w_{\gamma}^{-1}t_{\gamma})t_{-\gamma}(\lambda +
\widehat{\rho})}
\\
&= \sum _{\gamma \in Q^{\vee}} \sum _{u \in W} \varepsilon (u w_{\nu}) {
\bf{m}}_{w_{\nu}}^{w_{\gamma}} e^{ut_{-\gamma}(\lambda +
\widehat{\rho})}.
\nonumber
\end{align}
So all the information about the character of $L(\Lambda )$ is contained
in the numbers ${\bf{m}}_{w_{\nu}}^{w_{\gamma}}$ for
$\gamma \in Q^{\vee}$.

\subsection{Description of ${\bf{m}}^{w_{\gamma}}_{w_{\nu}}$ via anti-spherical module $M$}

In this section we recall some results of Appendix~\ref{basic_KL}. Consider
the group algebra ${\mathbb{Z}}\widehat{W}$. Then
${\mathbb{Z}}\widehat{W}$ admits two bases $H_{w}$ and $C_{w}$, indexed
by $\widehat{W}$, and called, respectively, the standard and the canonical
basis (whose definition involves deformation of
${\mathbb Z}\widehat{W}$ to the Hecke algebra of $\widehat{W}$, see Appendix~\ref{basic_KL} for details). Recall the anti-spherical module
$M={\mathbb Z}\widehat{W}\otimes _{{\mathbb Z}W} \mathbb{Z}_{\mathrm{sign}}$.
Module $M$ admits the standard basis $H'_{w}$ and canonical basis
$C_{w}'$ indexed by $w \in \widehat{W}^{f}$ and defined as the image of
$H_{w}$ and $C_{w}$ under the natural surjection
${\mathbb{Z}}\widehat{W} \twoheadrightarrow M$.

By Theorem~\ref{char_dom_via_inverse} we have
%
\begin{equation}
\label{mult_neg}
H'_{w_{\gamma}}=\sum _{\nu \in Q^{\vee}}\varepsilon (w_{\gamma }w_{
\nu}^{-1}) {\bf{m} }_{w_{\nu}}^{w_{\gamma}} C'_{w_{\nu}}.
\end{equation}

We set
\begin{equation*}
T_{\gamma}:=\varepsilon (w_{\gamma})H_{w_{\gamma}}',~C_{\nu}=
\varepsilon (w_{\nu})C_{w_{\nu}}'.
\end{equation*}
For $\nu , \gamma \in Q^{\vee}$ we see that
%
\begin{equation}
\label{mult_neg_t}
T_{\gamma}=\sum _{\nu \in Q^{\vee}} {\bf{m}}_{w_{\nu}}^{w_{\gamma}} C_{
\nu}.
\end{equation}

\begin{Rem}%
\label{normal_coeff}
Note that ${\bf{m}}^{w_{\nu}}_{w_{\nu}}=1$.
\end{Rem}

\section{Geometry of Springer resolution and realization of $M$}
\label{sect_geom_springer_KL}

We will analyze ${\bf{m} }_{w_{\nu}}^{w_{\gamma}}$ based on the ``coherent''
realization of $M$ (as the equivariant $K$-theory of the Springer resolution
for the Langlands dual group $G^{\vee}$). Let us first of all recall basic
things about the Springer resolution.

\subsection{Springer resolution}

Recall that $G^{\vee}$ is the adjoint group with Lie algebra
$\mathfrak{g}^{\vee}$, $\mathcal{B}$ is the flag variety of
$\mathfrak{g}^{\vee}$ and ${\cal N}\subset \mathfrak{g}^{\vee}$ is the
variety of nilpotent elements. Recall also that
$\widetilde{{\mathcal{N}}}=T^{*}{\mathcal{B}}$ and
$\pi \colon \widetilde{{\mathcal{N}}}\to {\cal N}$ is the projection (Springer)
map.

The lattice $Q^{\vee}$ is the root lattice of $G^{\vee}$ that identifies
with the weight lattice
$\operatorname{Hom}(T,{\mathbb{C}}^{\times})$ of characters of a maximal
torus $T \subset G^{\vee}$ (here we use that $G^{\vee}$ is adjoint). For
$\gamma \in Q^{\vee}$ we denote by
${\cal O}_{{\mathcal{B}}}(\gamma ):=G^{\vee} \times ^{B} {\mathbb{C}}_{-
\gamma}$ the corresponding $G^{\vee}$-equivariant line bundle on
${\mathcal{B}}$, and ${\cal O}_{\widetilde{{\mathcal{N}}}}(\gamma )$ is
the pull back of ${\cal O}_{{\mathcal{B}}}(\gamma )$ to
$\widetilde{{\cal N}}$.

The variety $\widetilde{{\mathcal{N}}}$ contains an open $G^{\vee}$-orbit
${\mathbb{O}}^{\mathrm{reg}}$ (we identify
${\mathbb{O}}^{\mathrm{reg}} \subset {\mathcal{N}}$ with its preimage in
$\widetilde{{\mathcal{N}}}$). The complement
$\widetilde{{\mathcal{N}}} \setminus {\mathbb{O}}^{\mathrm{reg}}$ is the
divisor in $\widetilde{{\mathcal{N}}}$. It is a standard fact that the
irreducible components of this divisor are parametrized by simple coroots
$\alpha ^{\vee}=\alpha _{i}^{\vee}$, $i=1,\ldots ,r$, as follows. Let
$B \in {\mathcal{B}}$ be a Borel subgroup and let
$P_{\alpha ^{\vee}} \supset B$ be the minimal parabolic, corresponding
to $\alpha ^{\vee}$. Let
$\pi _{\alpha ^{\vee}}\colon {\mathcal{B}}=G^{\vee}/B
\twoheadrightarrow G/P_{\alpha ^{\vee}}$ be the projection. Set
$\widetilde{{\mathcal{N}}}_{\alpha ^{\vee}}:=T^{*}(G^{\vee}/P_{
\alpha ^{\vee}}) \times _{G^{\vee}/P_{\alpha ^{\vee}}} {\mathcal{B}}$.
The differential of $\pi _{\alpha ^{\vee}}$ provides the closed embedding
$i_{\alpha ^{\vee}} \colon \widetilde{{\mathcal{N}}}_{\alpha ^{\vee}}
\subset \widetilde{{\mathcal{N}}}$. Subvarieties
$\widetilde{{\mathcal{N}}}_{\alpha ^{\vee}} \subset
\widetilde{{\mathcal{N}}}$ are precisely the irreducible components of
$\widetilde{{\mathcal{N}}} \setminus {\mathbb{O}}^{\mathrm{reg}}$.

The following exact sequence is standard (see, for example,
\cite[Equation~(13)]{BHum} or \cite[Lemma 5.3]{a}).
%
\begin{Lem}%
\label{short_exact_div_on_tilde_N}
There is a canonical (in particular, $G^{\vee}$-equivariant) exact sequence
of (coherent) sheaves on $\widetilde{{\mathcal{N}}}$:
%
\begin{equation}
\label{ex_seq_O_al_N_tilde}
0 \rightarrow {\mathcal{O}}_{\widetilde{{\mathcal{N}}}}(\alpha ^{\vee})
\rightarrow {\mathcal{O}}_{\widetilde{{\mathcal{N}}}} \rightarrow i_{
\alpha ^{\vee}*}{\mathcal{O}}_{\widetilde{{\mathcal{N}}}_{\alpha ^{
\vee}}} \rightarrow 0.
\end{equation}
\end{Lem}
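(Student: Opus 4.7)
The assertion is a specialization of the standard short exact sequence associated to a smooth effective Cartier divisor,
$$
0\to\CO_{\widetilde{\CN}}(-\widetilde{\CN}_{\al^\vee})\to\CO_{\widetilde{\CN}}\to i_{\al^\vee*}\CO_{\widetilde{\CN}_{\al^\vee}}\to 0,
$$
so the content of the lemma is the identification of line bundles
$$
\CO_{\widetilde{\CN}}(-\widetilde{\CN}_{\al^\vee})\simeq \CO_{\widetilde{\CN}}(\al^\vee)
$$
on $\widetilde{\CN}$. My plan is to compute this line bundle in two steps: first realize $\widetilde{\CN}_{\al^\vee}$ as the scheme-theoretic zero locus of an explicit section of a line bundle pulled back from $\CB$, and then pin down that line bundle via its $B$-weight at the base point.

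For the first step, let $p\colon \widetilde{\CN}=T^*\CB\to \CB$ be the projection, and consider the relative cotangent sequence of the smooth $\BP^1$-fibration $\pi_{\al^\vee}\colon \CB\to G^\vee/P_{\al^\vee}$:
$$
0\to \pi_{\al^\vee}^*T^*(G^\vee/P_{\al^\vee})\to T^*\CB\to T^*_{\pi_{\al^\vee}}\to 0,
$$
whose rightmost term is a line bundle on $\CB$. Pulling this back along $p$ and applying the resulting surjection to the tautological section of $p^*T^*\CB$ produces a section $\bar s$ of $p^*T^*_{\pi_{\al^\vee}}$. By construction $\bar s$ vanishes at a covector $\xi\in T^*_x\CB$ exactly when $\xi$ annihilates the vertical tangent line of $\pi_{\al^\vee}$ at $x$, i.e.\ when $\xi$ is pulled back from $T^*(G^\vee/P_{\al^\vee})$; this describes $\widetilde{\CN}_{\al^\vee}$ scheme-theoretically, since $\bar s$ meets the zero section transversally (the zero locus is already of the correct codimension one). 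Hence $\CO_{\widetilde{\CN}}(\widetilde{\CN}_{\al^\vee})\simeq p^*T^*_{\pi_{\al^\vee}}$, and dually $\CO_{\widetilde{\CN}}(-\widetilde{\CN}_{\al^\vee})\simeq p^*T_{\pi_{\al^\vee}}$.

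For the second step, the fiber of $T_{\pi_{\al^\vee}}$ at $B/B$ is $\mathfrak{p}^\vee_{\al^\vee}/\mathfrak{b}^\vee$, which is one-dimensional and spanned by a root vector for the negative simple root $-\al^\vee$ of $\mathfrak{g}^\vee$; consequently it carries $B$-weight $-\al^\vee$. Thus $T_{\pi_{\al^\vee}}=G^\vee\times^B\BC_{-\al^\vee}=\CO_{\CB}(\al^\vee)$ in the paper's convention, and therefore
$$
\CO_{\widetilde{\CN}}(-\widetilde{\CN}_{\al^\vee})=p^*T_{\pi_{\al^\vee}}=p^*\CO_{\CB}(\al^\vee)=\CO_{\widetilde{\CN}}(\al^\vee),
$$
which substituted into the divisor sequence above yields \eqref{ex_seq_O_al_N_tilde}. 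The only subtle point is the sign convention in this $B$-weight calculation---namely that $\mathfrak{p}^\vee_{\al^\vee}/\mathfrak{b}^\vee$ is the $(-\al^\vee)$-root space rather than the $(+\al^\vee)$-root space (which already lies inside $\mathfrak{b}^\vee$)---but once this is fixed, the lemma reduces to bookkeeping with line bundles on the $\BP^1$-fibration $\pi_{\al^\vee}$ and presents no substantial obstacle.
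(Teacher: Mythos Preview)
Your argument is correct. The paper does not supply its own proof of this lemma: it simply declares the sequence ``standard'' and cites \cite[Equation~(13)]{BHum} and \cite[Lemma~5.3]{a}. What you have written is precisely the standard argument those references have in mind---realize $\widetilde{\CN}_{\al^\vee}$ as the zero locus of the tautological section projected to the relative cotangent line $p^*T^*_{\pi_{\al^\vee}}$, then identify $T_{\pi_{\al^\vee}}$ with $\CO_\CB(\al^\vee)$ by reading off the $B$-weight $-\al^\vee$ on $\mathfrak{p}^\vee_{\al^\vee}/\mathfrak{b}^\vee$---so there is nothing to compare beyond noting that you have unpacked what the paper leaves to the literature. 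Your handling of the sign convention matches the paper's (stated explicitly in the remark after Theorem~\ref{NC_S}: dominant weights go to semi-ample bundles, i.e.\ $\CO_\CB(\gamma)=G^\vee\times^B\BC_{-\gamma}$).
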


\subsection{Realization of $M$ via the equivariant $K$-theory of $\widetilde{{\mathcal{N}}}$ and canonical basis}

For a group $H$, acting on an algebraic variety $X$, we let
$K^{H}(X)$ denote the Grothendieck group of $H$-equivariant coherent sheaves
on $X$.

\begin{Thm}[{See e.g. \cite{CG}}]%
\label{CGThm}
We have a canonical isomorphism $M \simeq \break K^{G^{\vee}}(\widetilde{{\mathcal{N}}})$, such that the element $T_{\gamma}$
is sent to $[{\cal O}_{\widetilde{{\mathcal{N}}}}(\gamma )]$, the action of
$\gamma \in Q^{\vee}\subset \widehat{W}$ corresponds to the automorphism
induced by the functor ${\cal F}\mapsto {\cal F}\otimes _{{\cal O}_{
\widetilde{{\mathcal{N}}}}} {\cal O}_{\widetilde{{\mathcal{N}}}}( \gamma )$.
\end{Thm}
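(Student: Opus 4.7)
The plan is to realize the isomorphism via the standard convolution algebra $K^{G^\vee}(\widetilde{\CN}\times_\CN\widetilde{\CN})\simeq \BZ\widehat{W}$ and then identify the cyclic vector of $M$ with $[\CO_{\widetilde{\CN}}]$, following the route of Chriss--Ginzburg and Kazhdan--Lusztig.

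\medskip

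\emph{Step 1: both sides are free $\BZ$-modules on $Q^\vee$.} For the left side this is built into the definition $M=\BZ\widehat{W}\otimes_{\BZ W}\BZ_{\mathrm{sign}}$ together with the identification $\widehat{W}/W\simeq Q^\vee$. For the right side I use that $\pi\colon\widetilde{\CN}=T^*\CB\to\CB$ is a $G^\vee$-equivariant vector bundle, so pull-back along the zero section gives $K^{G^\vee}(\CB)\iso K^{G^\vee}(\widetilde{\CN})$ by homotopy invariance. Since $G^\vee$ is adjoint, its character lattice is $Q^\vee$ and $K^{G^\vee}(\CB)\simeq K^B(\mathrm{pt})\simeq R(T)\simeq\BZ[Q^\vee]$, with $\BZ$-basis $\{[\CO_\CB(\gamma)]\}_{\gamma\in Q^\vee}$. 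Pulling back, $\{[\CO_{\widetilde{\CN}}(\gamma)]\}_{\gamma\in Q^\vee}$ is a $\BZ$-basis of $K^{G^\vee}(\widetilde{\CN})$.

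\medskip

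\emph{Step 2: identify how $t_\gamma\in\widehat{W}$ acts.} The action comes from convolution with the class in $K^{G^\vee}(\widetilde{\CN}\times_\CN\widetilde{\CN})$ corresponding to $t_\gamma$ under the isomorphism $\BZ\widehat{W}\iso K^{G^\vee}(\widetilde{\CN}\times_\CN\widetilde{\CN})$ (see \cite{CG}, \cite{lu_bases_K}, \cite{kl}). Under this isomorphism the kernel attached to $t_\gamma$ is the structure sheaf of the diagonal twisted by the line bundle $\CO(\gamma)$ on the first factor; convolution with such a kernel is simply tensor product with $\CO_{\widetilde{\CN}}(\gamma)$. Consequently $t_\gamma\cdot[\CO_{\widetilde{\CN}}]=[\CO_{\widetilde{\CN}}(\gamma)]$.

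\medskip

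\emph{Step 3: check that $[\CO_{\widetilde{\CN}}]$ is a ``sign-cyclic'' vector for $W$.} Because $M$ is obtained from $\BZ\widehat{W}$ by imposing $s_i\cdot v=-v$ for $i=1,\dots,r$, what remains is the identity $s_i\cdot[\CO_{\widetilde{\CN}}]=-[\CO_{\widetilde{\CN}}]$ for every finite simple reflection. This is where Lemma~\ref{short_exact_div_on_tilde_N} enters: the exact sequence
\begin{equation*}
0\to\CO_{\widetilde{\CN}}(\alpha_i^\vee)\to\CO_{\widetilde{\CN}}\to i_{\alpha_i^\vee *}\CO_{\widetilde{\CN}_{\alpha_i^\vee}}\to 0
\end{equation*}
together with a direct computation of the Demazure--type convolution attached to $s_i$ (using that $\widetilde{\CN}_{\alpha_i^\vee}$ is the fiber product governing the correspondence for $s_i$) yields $(s_i+1)\cdot[\CO_{\widetilde{\CN}}]=0$.

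\medskip

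\emph{Step 4: conclude.} By the universal property of the induced module $M$, steps 2--3 produce a well-defined $\widehat{W}$-linear map $\Phi\colon M\to K^{G^\vee}(\widetilde{\CN})$ with $\Phi(1\otimes 1)=[\CO_{\widetilde{\CN}}]$ and hence $\Phi(T_\gamma)=[\CO_{\widetilde{\CN}}(\gamma)]$. Since both the source and target are free $\BZ$-modules with bases $\{T_\gamma\}$ and $\{[\CO_{\widetilde{\CN}}(\gamma)]\}$ respectively (Step 1), $\Phi$ is an isomorphism. The claim about the action of $Q^\vee\subset\widehat{W}$ is then exactly the content of Step 2.

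\medskip

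The main obstacle is the careful bookkeeping in Steps 2 and 3 for the identification $\BZ\widehat{W}\iso K^{G^\vee}(\widetilde{\CN}\times_\CN\widetilde{\CN})$: one must verify that the generators $t_\gamma$ and the finite $s_i$ are represented by the specific convolution kernels described above (with correct signs, since the Hecke specialization at $q=1$ distinguishes $s_i\mapsto s_i$ from $s_i\mapsto -s_i$). Fortunately these computations are classical \cite{CG,kl,lu_bases_K} and can be quoted rather than reproved.
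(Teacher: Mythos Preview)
The paper does not give its own proof of this theorem: it is stated with the parenthetical ``(see e.g.\ \cite{CG})'' and treated as a known result to be quoted from Chriss--Ginzburg (with the convolution-algebra picture recalled in the Remark immediately preceding the theorem). So there is no in-paper argument to compare against.

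Your outline is a faithful sketch of the standard Chriss--Ginzburg/Kazhdan--Lusztig proof and is essentially correct. A few remarks. Step~1 is clean and exactly the right reduction. Step~2 is correct as stated; this is precisely the content of the second clause of the theorem and is standard from the description of the lattice part of the affine Hecke algebra inside $K^{G^\vee}(\widetilde{\CN}\times_\CN\widetilde{\CN})$. Step~3 is the one place where your write-up is a bit hand-wavy: invoking Lemma~\ref{short_exact_div_on_tilde_N} alone does not immediately yield $(s_i+1)[\CO_{\widetilde{\CN}}]=0$; you still need to identify the convolution kernel for $s_i$ (the structure sheaf of the component of the Steinberg variety lying over the $\BP^1$-fibration $\CB\to G^\vee/P_{\alpha_i^\vee}$, suitably normalized) and carry out the push--pull. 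That computation is indeed classical and appears in \cite{CG,kl,lu_bases_K}, so your closing caveat is well placed; but if you intend this as a self-contained proof rather than a pointer to the literature, Step~3 is where the actual work hides. Step~4 is fine once Steps~1--3 are in place.
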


We will need some information about the image of $C_{\nu}$ in
$K^{G^{\vee}}(\widetilde{{\mathcal{N}}})$. Recall the notion of a two-sided
cell in $\widehat{W}$ (see \cite{Lu1}). These are certain subsets in
$\widehat{W}$, the set of two-sided cells comes equipped with a partial
order. There exists a canonical bijection between the set of two-sided
cells and $G^{\vee}$-orbits on ${\cal N}$ (see \cite{Lu}). The order on
two-sided sets corresponds to the adjunction order on nilpotent orbits
(see \cite[Theorem 4(b)]{BHe0}).

We will write ${\mathbb{O}}_{c}$ for the orbit, corresponding to the two
sided cell of $c$, and ${\mathbb{O}}_{\leqslant c}$ for its closure. Let
$\widetilde{{\mathbb{O}}}_{c}$,
$\widetilde{{\mathbb{O}}}_{\leqslant c}$ be the (reduced) preimages of
${\mathbb{O}}_{c}$, ${\mathbb{O}}_{\leqslant c}$ in
$\widetilde{{\cal N}}$.

Let
$K^{G^{\vee}}_{\leqslant c}(\widetilde{{\cal N}})\subset K^{G^{\vee}}(
\widetilde{{\cal N}})$ denote the subgroup generated by classes of sheaves
supported on $\widetilde{{\mathbb{O}}}_{\leqslant c}$. Let
$K^{G^{\vee}}_{< c}(\widetilde{{\mathcal{N}}})\subset K^{G^{\vee}}(
\widetilde{{\mathcal{N}}})$ denote the subgroup generated by classes of
sheaves supported on
$\widetilde{{\mathbb{O}}}_{\leqslant c}\setminus
\widetilde{\mathbb{O}}_{c}$.

\begin{Thm}[{\cite[\S 11.3]{BHe}}]\label{cells}
The ${\mathbb{Z}}$-module $K^{G^{\vee}}_{\leqslant c}({\widetilde{\cal N}})$
is spanned by the elements of the canonical basis $C_{\nu}$ such that $w_{\nu}\!\in\! c'\!\leqslant\! c$. The quotient $K^{G^{\vee}}_{\leqslant
c}(\widetilde{{\mathcal{N}}})/K^{G^{\vee}}_{< c}(\widetilde{{\mathcal{N}}})$
has a ${\mathbb{Z}}$-basis, consisting of classes of $C_{\nu},\, w_{\nu }\in
c$.
\end{Thm}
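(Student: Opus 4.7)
\medskip

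\noindent\emph{Proof proposal.} The plan is to deduce the theorem from the identification of the canonical basis $\{C_\nu\}$ with classes of irreducible objects in the heart $\mathcal{A}\subset D^b(\mathrm{Coh}^{G^\vee}(\widetilde{\CN}))$ of the exotic $t$-structure (as in \cite{BHum}, \cite{BHe}). Write $\mathcal{I}_\nu\in\mathcal{A}$ for the irreducible object with $[\mathcal{I}_\nu]=C_\nu$ in $K^{G^\vee}(\widetilde{\CN})\simeq M$. Since $\{C_\nu\mid \nu\in Q^\vee\}$ is a $\BZ$-basis of $M$ and $\mathcal{A}$ is a noetherian (in fact, artinian after passing to the appropriate graded version and ungrading) abelian category whose simples are precisely the $\mathcal{I}_\nu$, every class in $K^{G^\vee}(\widetilde{\CN})$ can be written uniquely as a $\BZ$-linear combination of $[\mathcal{I}_\nu]$'s, with the coefficients read off from Jordan--Hölder multiplicities.

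The central input is the geometric characterization of cells via supports: for $w_\nu \in c$ one has
\[
\mathrm{supp}(\mathcal{I}_\nu) \;=\; \widetilde{\mathbb{O}}_{\leqslant c},
\]
where by support I mean the (reduced) union of supports of the cohomology sheaves. This is the content of Bezrukavnikov's theorem identifying two-sided cells with $G^\vee$-orbits on $\CN$ via the non-commutative Springer resolution (see \cite{Lu}, \cite{BHe0}); the supports of the exotic simples realize this bijection geometrically. Granting this, if $w_\nu\in c'\leqslant c$, then $\mathcal{I}_\nu$ is supported on $\widetilde{\mathbb{O}}_{\leqslant c'}\subset \widetilde{\mathbb{O}}_{\leqslant c}$, so $C_\nu\in K^{G^\vee}_{\leqslant c}(\widetilde{\CN})$. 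Conversely, any coherent sheaf $\mathcal{F}$ supported on $\widetilde{\mathbb{O}}_{\leqslant c}$, viewed as an object of $\mathcal{A}$ (possibly after shifting), has finite length with simple constituents of the form $\mathcal{I}_\nu$; each constituent's support must also lie in $\widetilde{\mathbb{O}}_{\leqslant c}$, forcing $w_\nu\in c'\leqslant c$ by the support characterization and the fact that the order on cells matches the adjunction order on orbits. This gives the first assertion.

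For the second assertion, the same dévissage applied to sheaves supported on $\widetilde{\mathbb{O}}_{\leqslant c}\setminus \widetilde{\mathbb{O}}_c$ shows that $K^{G^\vee}_{<c}(\widetilde{\CN})$ is spanned by $\{C_\nu\mid w_\nu\in c', c'<c\}$. Since the $C_\nu$ form a $\BZ$-basis of $K^{G^\vee}(\widetilde{\CN})$, their images in the quotient $K^{G^\vee}_{\leqslant c}/K^{G^\vee}_{<c}$ are linearly independent, and the cosets $\{\bar C_\nu\mid w_\nu\in c\}$ form a $\BZ$-basis, as claimed.

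The main obstacle is of course the support equality $\mathrm{supp}(\mathcal{I}_\nu)=\widetilde{\mathbb{O}}_{\leqslant c}$ for $w_\nu\in c$. The inclusion $\supseteq$ (i.e.\ that $\mathcal{I}_\nu$ does not vanish on $\widetilde{\mathbb{O}}_c$) is the subtle direction: it requires knowing that the cell-to-orbit map is realized by taking the open orbit in the support of the associated simple, which ultimately relies on the compatibility between the asymptotic Hecke algebra, Ginzburg's convolution realization of $K^{G^\vee}(\widetilde{\CN}\times_\CN\widetilde{\CN})$, and the noncommutative Springer resolution picture. The inclusion $\subseteq$ is more formal, following from a vanishing/upper-triangularity argument for the action of elements of $\BZ\widehat{W}$ attached to cells $c''\not\leqslant c$ on $\mathcal{I}_\nu$.
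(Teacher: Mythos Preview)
The paper does not prove this theorem; it is quoted verbatim as a result of \cite[\S 11.3]{BHe}, and the surrounding Theorem~\ref{NC_S} records precisely the ingredients you invoke (identification of the canonical basis with classes of exotic irreducibles, and the support characterization of those irreducibles via the pair $({\mathbb{O}},M)$). Your outline is therefore the correct reconstruction of the argument behind the citation, and there is nothing to compare against in the paper itself.

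One small technical wrinkle: a $G^\vee$-equivariant coherent sheaf $\CF$ supported on $\widetilde{{\mathbb{O}}}_{\leqslant c}$ is not in general an object of the heart $\CA$ ``after shifting''; rather, one takes its exotic cohomology sheaves ${}^{\mathrm{ex}}\!H^i(\CF)\in\CA$, each of which is still supported on $\widetilde{{\mathbb{O}}}_{\leqslant c}$ (support only shrinks under truncation), and then $[\CF]=\sum_i(-1)^i[{}^{\mathrm{ex}}\!H^i(\CF)]$ in $K$-theory. Finite length of each ${}^{\mathrm{ex}}\!H^i(\CF)$ in $\CA$ is what lets you expand in the $[\CI_\nu]$; this holds because under the equivalence of Theorem~\ref{NC_S}(1) the heart is $\bA\text{-}mod^{G^\vee}$ with $\bA$ finite over $\CO_\CN$, so objects with proper support have finite length. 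With that adjustment your d\'evissage is clean, and your honest flagging of the support equality $\mathrm{supp}(\CI_\nu)=\widetilde{{\mathbb{O}}}_{\leqslant c}$ as the real content (imported from \cite{BHe0}, \cite{BHe}) is exactly right.
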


\begin{Cor}%
\label{ker_open}
For every open $G^{\vee}$-invariant locally closed subvariety
$U \subset {\mathcal{N}}$ and $\widetilde{U}=\pi ^{-1}(U)$, the kernel
of the surjection
$K^{G^{\vee}}(\widetilde{{\mathcal{N}}}) \twoheadrightarrow K^{G^{
\vee}}(\widetilde{U})$ is spanned over ${\mathbb{Z}}$ by
$\{C_{\nu}\mid  {\mathbb{O}}_{w_{\nu}} \not \subset U\}$.
\end{Cor}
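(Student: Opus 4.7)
The plan is to read off the kernel from the $G^\vee$-equivariant localization exact sequence together with Theorem \ref{cells}. Put $Z := \CN \setminus U$, a closed $G^\vee$-invariant subvariety, and $\widetilde{Z} := \pi^{-1}(Z)$. The standard localization sequence in equivariant $K$-theory of coherent sheaves gives
\begin{equation*}
K^{G^\vee}(\widetilde{Z}) \xrightarrow{i_*} K^{G^\vee}(\widetilde{\CN}) \to K^{G^\vee}(\widetilde{U}) \to 0,
\end{equation*}
so the kernel we are after equals $\on{Im}(i_*)$, which is by definition the subgroup of $K^{G^\vee}(\widetilde{\CN})$ spanned by classes of $G^\vee$-equivariant coherent sheaves set-theoretically supported on $\widetilde{Z}$.

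Next I would observe that because $U$ is $G^\vee$-invariant, each nilpotent orbit ${\mathbb{O}}_c$ lies either entirely in $U$ or entirely in $Z$; in the latter case $Z$ being closed forces $\overline{{\mathbb{O}}_c} \subset Z$, and hence $\widetilde{{\mathbb{O}}}_{\leqslant c} \subset \widetilde{Z}$. Writing $\Sigma := \{c : {\mathbb{O}}_c \not\subset U\}$ and recalling that the cell order corresponds to the adjunction order on orbits, the set $\Sigma$ is downward closed. For every $c \in \Sigma$, the inclusion $K^{G^\vee}_{\leqslant c}(\widetilde{\CN}) \subset \on{Im}(i_*)$ is immediate from the definition of $K^{G^\vee}_{\leqslant c}(\widetilde{\CN})$, as the latter is generated by classes of sheaves supported on $\widetilde{{\mathbb{O}}}_{\leqslant c} \subset \widetilde{Z}$. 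Conversely, a standard noetherian d\'evissage applied to the decomposition of $\widetilde{Z}$ as a finite union of closed subvarieties $\widetilde{{\mathbb{O}}}_{\leqslant c}$ with $c$ maximal in $\Sigma$ shows that every equivariant coherent sheaf on $\widetilde{Z}$ has class in $\sum_{c \in \Sigma} K^{G^\vee}_{\leqslant c}(\widetilde{\CN})$. Combining the two directions gives $\on{Im}(i_*) = \sum_{c \in \Sigma} K^{G^\vee}_{\leqslant c}(\widetilde{\CN})$.

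Finally, I would apply Theorem \ref{cells}: each $K^{G^\vee}_{\leqslant c}(\widetilde{\CN})$ is the $\BZ$-span of $\{C_\nu : w_\nu \in c' \leqslant c\}$. Summing over $c \in \Sigma$ and using downward closure of $\Sigma$, the total span becomes $\{C_\nu : c_\nu \in \Sigma\} = \{C_\nu : {\mathbb{O}}_{c_\nu} \not\subset U\}$, which is exactly the description of the kernel asserted by the corollary. Linear independence of this subset of the canonical basis comes for free, since the entire family $\{C_\nu\}$ is a $\BZ$-basis of $K^{G^\vee}(\widetilde{\CN})$ by Theorems \ref{CGThm} and \ref{cells}. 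The only mildly subtle ingredient is the d\'evissage step that reduces arbitrary sheaves on $\widetilde{Z}$ to pieces supported on single orbit closures $\widetilde{{\mathbb{O}}}_{\leqslant c}$; beyond that the argument is pure bookkeeping in the cell/orbit dictionary.
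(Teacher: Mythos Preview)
Your proposal is correct and follows essentially the same approach as the paper, which simply writes ``Follows from Theorem \ref{cells}.'' You have made explicit the localization sequence and the d\'evissage that the paper leaves to the reader; the key input in both cases is Theorem \ref{cells}.
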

\begin{proof}
Follows from Theorem~\ref{cells}.
\end{proof}

In fact, \cite{AB,BHe} provide more information on the images of
$C_{\nu}$ in $K^{G^{\vee}}({\widetilde{\cal N}})$: these are exactly the
classes of irreducible objects in the heart of a certain $t$-structure
on $D^{b}(\operatorname{Coh}^{G^{\vee}}(\widetilde{{\cal N}}))$, the so
called exotic $t$-structure, introduced in \cite{BHum} (and called ``perversely
exotic'' in \cite{bm}). Recall their explicit description.

\begin{Thm}\label{NC_S}
There exist a $G^{\vee}$-equivariant vector bundle ${\mathcal{E}}$ on
${\widetilde{\cal N}}$ with the following properties.
\begin{enumerate}
\item[0)] The structure sheaf ${\cal O}$ is a direct summand in
${\mathcal{E}}$. Also, ${\mathcal{E}}^{*}$ is globally generated.

\item[1)] Let ${\mathbf{A}}=\operatorname{End}({\mathcal{E}})^{op}$. Then the
functor ${\mathcal{F}}\mapsto RHom({\mathcal{E}},{\mathcal{F}})$ provides
equivalences
\begin{gather*}
D^{b}(\operatorname{Coh}({\widetilde{\cal N}}))\cong D^{b}({
\mathbf{A}}-mod),%
\\
D^{b}(\operatorname{Coh}^{G^{\vee}}({\widetilde{\cal N}}))\cong D^{b}({
\mathbf{A}}-mod^{G^{\vee}}),%
\end{gather*}
where ${\mathbf{A}}-mod$ is the category of finitely generated
${\mathbf{A}}$-modules.

\item[2)] Irreducible objects in the heart of the exotic $t$-structure are in
bijection with pairs $({\mathbb{O}},M)$ where ${\mathbb{O}}$ is a
$G^{\vee}$-orbit in ${\cal N}$ and $M$ is an irreducible equivariant module
for ${\mathbf{A}}|_{\mathbb{O}}$.

More precisely, given $({\mathbb{O}},M)$ as above there exists an object
${\bf L}_{{\mathbb{O}},M}\in D^{b}({\mathbf{A}}-mod^{G^{\vee}})$ uniquely
characterized by the following properties: ${\bf L}_{{\mathbb{O}},M}$ is
supported on the closure of ${\mathbb{O}}$, its restriction to the open
subset ${\mathbb{O}}$ of its support is isomorphic to
$M\Big[-\frac{\operatorname{codim}({\mathbb{O}})}{2}\Big]$; for every orbit
${\mathbb{O}}'\ne {\mathbb{O}}$ the object $i_{{\mathbb{O}}'}^{*}({\bf
L}_{{\mathbb{O}},M})$ is concentrated in cohomological degrees less than
$\frac{\operatorname{codim}({\mathbb{O}}')}{2}$ and the object
$i_{\mathbb{O}'}^{!}({\bf L}_{{\mathbb{O}},M})$ is concentrated in
cohomological degrees greater than
$\frac{\operatorname{codim}({\mathbb{O}}')}{2}$. The object ${\bf
L}_{{\mathbb{O}},M}$ is irreducible in the heart of the exotic
$t$-structure and every such irreducible is isomorphic to ${\bf
L}_{{\mathbb{O}},M}$ for some $({\mathbb{O}},M)$.

\item[3)] The classes of ${\bf L}_{{\mathbb{O}},M}$ form the canonical basis in
$M$, where we identified
$K^{G^{\vee}}({\widetilde{\cal N}}) \simeq M$ using the map
$[{\mathcal{O}}(\gamma )] \mapsto T_{-\gamma}$.
\end{enumerate}
\end{Thm}

\begin{proof}
The vector bundle ${\mathcal{E}}$ is introduced in
\cite[Theorem 1.5.1]{bm}, which asserts that ${\mathcal{E}}$ is a tilting
generator, i.e. statement 1) holds. It contains ${\mathcal{O}}$ as a direct
summand by \cite[Theorem 1.8.2 (a,1)]{bm}. Statements 2), 3) follow from
\cite[Theorem 6.2.1]{bm}.

It remains to show that ${\mathcal{E}}^{*}$ is globally generated. Recall
from \cite[\S 1.8]{bm} a collection of tilting vector bundles on
${\widetilde{\cal N}}$ parametrized by alcoves, here ${\mathcal{E}}$ corresponds
to the fundamental alcove and ${\mathcal{E}}^{*}$ is the tilting bundle
corresponding to the anti-fundamental one. A vector bundle
${\mathcal{V}}$ is globally generated iff for every morphism from
$f\colon {\mathcal{V}}\to k_{x}$, where $k_{x}$ is a skyscraper sheaf, there
exists a morphism $\phi\colon {\mathcal{O}}\to {\mathcal{V}}$ such that
$f\circ \phi \ne 0$. It is easy to see that when ${\mathcal{V}}$ is a dilation
equivariant vector bundle on ${\widetilde{\cal N}}$, it suffices to consider
$x$ in the zero section
$G^{\vee}/B^{\vee }\subset {\widetilde{\cal N}}$. Moreover, it is enough
to prove a similar statement over a field $k$ of a large positive characteristic.
In that case, we can apply localization functor corresponding to the point
$-2\rho $ in the anti-fundamental alcove to translate this statement into
one in the representation theory of the Lie algebra
${\frak g}^{\vee}$ over $k$. The localization equivalence relates the functor
of global sections to translation to the singular central character
$-\rho $, a skyscraper sheaf is identified with a module $M_{0}^{*}$ where
$M_{0}$ is a baby Verma module with highest weight zero, while
${\mathcal{E}}^{*}$ (pulled back to the formal neighborhood of the zero
section) is identified with a projective generator in the corresponding
category of ${\frak g}^{\vee}$-modules. Thus the statement reduces to showing
that translation functor $T_{-2\rho \to -\rho}$ does not kill any nonzero
submodule of $M_{0}^{*}$. This follows from the standard fact that the
adjunction arrow
$M_{0}^{*}\to T_{-\rho \to -2\rho} T_{-2\rho \to \rho}M_{0}^{*}$ is injective.
\end{proof}

\begin{Rem}
Notice that the isomorphisms between
$K^{G^{\vee}}({\widetilde{\cal N}})$ and the anti-spherical module
$M$ in Theorem~\ref{CGThm} and in Theorem~\ref{NC_S} are different: one
sends $[{\cal O}(\gamma )]$ to $T_{\gamma}$ while the other sends it to
$T_{-\gamma}$. Both are natural from some perspective and both appear in
the literature. A related issue is the choice of the isomorphism between
the weight lattice and the Picard group of the flag variety: we use the
one sending a dominant weight to a semi-ample line bundle, thus the weights
of the action of a Borel subgroup on the nilpotent radical of its Lie algebra
correspond to negative roots, while some authors prefer the opposite convention
(see \cite[Section 6.1.11]{CG}). We will work with the isomorphism of Theorem~\ref{CGThm}, see below.
\end{Rem}

For technical reasons we prefer to work with the globally generated tilting
bundle ${\mathcal{E}}^{*}$ rather than with ${\mathcal{E}}$. Thus we set
$A=End({\mathcal{E}}^{*})^{op}={\mathbf{A}}^{op}$ and consider the equivalence
$D^{b}(Coh({\widetilde{\cal N}}))\cong D^{b}(A-mod)$,
${\cal F}\mapsto RHom({\mathcal{E}}^{*},{\cal F})$. In this approach it
is more natural to use the isomorphism between
$K^{G^{\vee}}({\widetilde{\cal N}})$ and the anti-spherical module
$M$ sending $[{\cal O}(\gamma )]$ to $T_{\gamma}$, see Theorem~\ref{CGThm}. With this identification, elements of the canonical basis
correspond to classes ${\mathcal{L}}_{{\mathbb{O}},M}$ where
${\mathbb{O}}$ is a $G^{\vee}$-orbit in ${\cal N}$ and $M$ is an irreducible
equivariant module for $A|_{\mathbb{O}}$ characterized as in Theorem~\ref{NC_S} (2).

For $e\in {\cal N}$ let $A_{e}$ denote the corresponding specialization
of $A$.

We recall an explicit description of (complexes of) coherent sheaves corresponding
to some irreducible $A$-modules. Let ${\mathcal{A}}=A-mod$, the category
of finitely generated $A$-modules. We identify ${\mathcal{A}}$ with the
corresponding full subcategory in
$D^{b}(\operatorname{Coh}({\widetilde{\cal N}}))$. For
$e\in {\cal N}$ let ${\mathcal{A}}_{e}$ be the full subcategory in
${\mathcal{A}}$, consisting of objects set-theoretically supported on
$\pi ^{-1}(e)$

Recall that $e$ is a subregular nilpotent.

Irreducible components of ${\cal B}_{e}$ are parametrized by
$\tilde I$; here $\tilde I = I$ if the Dynkin diagram of ${\frak g}$ is
simply laced (see Sections~\ref{subsec_str_B_e_DE} and~\ref{subsec_str_B_e_A} below) and $\tilde I$ is the set of vertices of
the unfolding of the Dynkin graph in general (see \cite{sl}). Each irreducible
component $\Pi _{i}$, $i \in \tilde I$, is isomorphic to
${\mathbb{P}}^{1}$.
%
\begin{Lem}[{cf. \cite[Example 5.3.3]{bmr0}}]%
\label{MacKay}
 Let $e\in {\cal N}$ be a subregular nilpotent.
The irreducible objects in ${\mathcal{A}}_{e}$ are:
${\mathcal{O}}_{\Pi _{i}}(-1)[1]$, ${\mathcal{O}}_{\pi ^{-1}(e)}$ where
$\pi ^{-1}(e)$ is the schematic fiber of the Springer map $\pi $.
\end{Lem}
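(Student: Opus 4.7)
The strategy is to localize the problem at the Springer fiber $\pi^{-1}(e)$ and then reduce to the classical McKay correspondence for the minimal resolution of a Du Val singularity. \textbf{First,} since $R\pi_*\CO_{\widetilde{\CN}} = \CO_\CN$ and $\CE^*$ is $G^\vee$-equivariant, the center of $A$ is $\BC[\CN]$; hence a finitely generated $A$-module is set-theoretically supported on $\pi^{-1}(e)$ if and only if it is annihilated by some power of the maximal ideal $\mathfrak{m}_e \subset \BC[\CN]$. Consequently $\CA_e$ identifies with the category of finitely generated modules over the completion $\widehat{A}_e := A \otimes_{\BC[\CN]} \widehat{\BC[\CN]}_e$, and its irreducible objects are the simple $\widehat{A}_e$-modules.

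\textbf{Second,} by Slodowy's transverse slice theorem, the transverse slice $S$ to $\mathbb{O}_e$ at the subregular nilpotent $e$ is, in a formal neighborhood of $e$, a Kleinian (Du Val) singularity whose type is the (possibly unfolded) Dynkin diagram on $\tilde I$. The preimage $\widetilde S := \pi^{-1}(S)$ is the minimal resolution of $S$, and $\pi^{-1}(e) \subset \widetilde S$ is its exceptional fiber, a tree of $\BP^1$'s $\Pi_i$ indexed by $i \in \tilde I$, whose dual graph is the Dynkin diagram on $\tilde I$.

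\textbf{Third,} restricted to a formal neighborhood of $\pi^{-1}(e)$ in $\widetilde S$, the bundle $\CE^*$ becomes a tilting generator on the minimal resolution of a Kleinian singularity. By the noncommutative McKay correspondence (Kapranov--Vasserot, Van den Bergh, cf.\ \cite{bm}), the completed endomorphism algebra $\widehat{A}_e$ is Morita equivalent to the completion at the augmentation ideal of the preprojective algebra of the corresponding \emph{affine} Dynkin diagram. The simple modules of the latter are indexed by the vertices of the affine diagram, yielding exactly $|\tilde I|+1$ isomorphism classes; hence $\CA_e$ has precisely $|\tilde I|+1$ irreducibles.

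\textbf{Finally,} one matches each abstract simple with an explicit sheaf. The $|\tilde I|$ ``finite'' vertices should correspond to $\CO_{\Pi_i}(-1)[1]$: since $\CE^*$ is globally generated, one has $\Hom(\CE^*, \CO_{\Pi_i}(-1)) = 0$, so $R\Hom(\CE^*, \CO_{\Pi_i}(-1)[1])$ is concentrated in degree $0$, and a short local computation on the resolution identifies it with a ``vertex simple'' of the preprojective algebra. The remaining (``affine vertex'') simple corresponds to $\CO_{\pi^{-1}(e)}$: because $\CO$ is a direct summand of $\CE$ (Theorem \ref{NC_S}(0)), $\Hom(\CE^*, \CO_{\pi^{-1}(e)})$ is nonzero, and the higher $\on{Ext}$'s vanish by standard vanishing of higher cohomology on the minimal resolution. \textbf{The main obstacle} is this last identification, especially for $\CO_{\pi^{-1}(e)}$: the scheme-theoretic Springer fiber may carry nontrivial multiplicities (most notably in the non-simply laced case), and one must verify directly that the resulting $\widehat{A}_e$-module is simple and corresponds to the affine vertex, which requires a careful analysis of the restriction of $\CE^*$ along $\pi^{-1}(e)$.
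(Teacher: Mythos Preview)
Your overall strategy---reduce to the Slodowy slice and invoke McKay---is natural, but the crucial Step~3 is not justified and is essentially circular. You assert that the completed endomorphism algebra $\widehat{A}_e$ is Morita equivalent to the completed preprojective algebra, citing Kapranov--Vasserot and Van den Bergh. But those results apply to a \emph{specific} tilting bundle on the minimal resolution (the sum of tautological bundles), whereas here $\CE^*$ is the restriction of the exotic tilting bundle from \cite{bm}. A priori there are many tilting bundles on a Kleinian resolution, yielding different $t$-structures and different sets of simple modules; the fact that the exotic $t$-structure on the slice coincides with the McKay $t$-structure is precisely the content of the lemma (indeed, the paper's Remark immediately following the lemma states this coincidence as a \emph{consequence}, not an input). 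So your argument presupposes what it is meant to prove.

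The paper's route is more direct and avoids this circularity. It uses only two structural facts about $\CE$: that $\CO$ is a direct summand, and that $\CE^*$ is globally generated. From global generation plus $H^1(\CE)=0$ one pins down $\CE|_{\Pi_i}$ enough to see that $\CO_{\Pi_i}(-1)[1]$ lies in the heart. The key step---which replaces your McKay appeal---is that since $\CO$ is a summand of $\CE$, the condition $R\Gamma(\F)=0$ cuts out a \emph{Serre} subcategory $\CA_e^0\subset\CA_e$; then \cite[Theorem~2.3]{KV} (a classification of acyclic sheaves on ADE trees of $\BP^1$'s) shows every object of $\CA_e^0$ is an iterated extension of the $\CO_{\Pi_i}(-1)[1]$, hence these are irreducible. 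The count $|\tilde I|+1$ then comes from $K$-theory, not from the preprojective algebra. Finally, the identification of the remaining simple with $\CO_{\pi^{-1}(e)}$---which you correctly flag as the hard point---is settled by explicit Hom-vanishing in both directions (using the exact sequence on the Slodowy variety) together with $\dim\on{End}(\CO_{\pi^{-1}(e)})=1$; your sketch shows only that $\CO_{\pi^{-1}(e)}$ lies in the heart, which is not enough.
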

\begin{proof}
Property $1)$ in Theorem~\ref{NC_S} shows that ${\mathcal{E}}^*$ is tilting,
i.e. $Ext^{>0}({\mathcal{E}}^*,{\mathcal{E}}^*)=0$. Since ${\cal O}$ is a direct
summand of ${\mathcal{E}}^*$, it follows that
$H^{1}({\mathcal{E}})=H^{1}({\mathcal{E}}^{*})=0$. Using that
${\mathcal{E}}^*$ is globally generated we conclude that
${\mathcal{E}}|_{\Pi _{i}}$ is a sum of copies of
${\mathcal{O}}_{\Pi _{i}}$ and ${\mathcal{O}}_{\Pi _{i}}(1)$ (note that
$R\Gamma $ has homological dimension $1$ so $H^{1}$ is right exact, thus
$H^{1}({\mathcal{E}})=0$ implies $H^{1}(E|_{\Pi _{i}})=0$).

It is then clear that ${\mathcal{O}}_{\Pi _{i}}(-1)[1]$ lies in the heart.

Since ${\mathcal{O}}$ is a direct summand in ${\mathcal{E}}^*$ it follows
that the objects ${\cal F}$ such that $R\Gamma ({\cal F})=0$ form a Serre
subcategory ${\mathcal{A}}_{e}^{0}$ in ${\mathcal{A}}_{e}$.

For an object ${\cal F}$ supported on $\pi ^{-1}(e)$ with
$R\Gamma ({\cal F})=0$, each cohomology sheaf of ${\cal F}$ is an extension
of sheaves of the form ${\cal O}_{\Pi _{i}}(-1)$, see, for example,
\cite[Theorem 2.3]{KV}. It follows that ${\mathcal{A}}_{e}^{0}$ consists
of objects of the form ${\cal F}[1]$ where ${\cal F}$ is an extension of
${\cal O}_{\Pi _{i}}(-1)$ and that ${\cal O}_{\Pi _{i}}(-1)[1]$ are irreducible.

We know that the classes of irreducible objects form a basis in the Grothen\-dieck
group $K(\operatorname{Coh}(\pi ^{-1}(e))$ which is isomorphic to homology
of $\pi ^{-1}(e)$ and has dimension $|\tilde I|+1$. Thus there exists a
unique irreducible $L_{0}$ not isomorphic to
${\cal O}_{\Pi_{i}}(-1)[1]$. It remains to show that
$L_{0}\cong {\mathcal{O}}_{\pi ^{-1}(e)}$. In the usual $t$-structure there
exists a filtration of ${\mathcal{O}}_{\pi ^{-1}(e)}$ starting with
$\BC_{p}$ for some point $p \in {\cal B}_{e}$ with the other subquotients being
${\mathcal{O}}_{\Pi_{i}}(-1)$. Let $F_j$ be the $j$'th quotient with respect to this filtration ($F_0=\BC_p$). Let us prove by the induction on $j$ that ${\mathcal{O}}_{\pi ^{-1}(e)}$ is in the heart, and is a subobject
of $\BC_{p}$.  The base of the induction follows from the fact that  $\BC_{p}$ clearly lies in the
heart of our $t$-structure. To prove the induction step, consider the exact triangle $F_{j+1} \rightarrow F_{j} \rightarrow \mathcal{O}_{\Pi_j}(-1)[1] \rightarrow $. We already know that $F_j,\, \mathcal{O}_{\Pi_j}(-1)[1] \in \mathcal{A}_e$, moreover, we have already proved that $\mathcal{O}_{\Pi_j}(-1)[1]$ is simple. We conclude that (nonzero) morphism $F_{j} \rightarrow \mathcal{O}_{\Pi_j}(-1)[1]$ must be surjective. It follows that $F_{j+1} \in \mathcal{A}_e$ and is a subobject of $F_j$.

It is clear that
$\operatorname{Hom}({\cal O}_{\Pi _{i}}(-1)[1],{\cal O}_{\pi ^{-1}}(e))$
vanishes. We check that Hom vanishing in the other direction also holds.
To this end, consider the Slodowy variety $\widetilde{S}_{e}$, resolving
the Slodowy slice $S_{e}$ to $e \in {\mathcal{N}}$ (see \cite{sl}). We
have an exact sequence
%
\begin{equation}
\label{ex_pi_e}
0 \rightarrow {\mathcal{O}}_{\widetilde{S}^{(1)}_{e}}(-\pi ^{-1}(e))
\rightarrow {\mathcal{O}}_{\widetilde{S}^{(1)}_{e}} \rightarrow {
\mathcal{O}}_{\pi ^{-1}(e)} \rightarrow 0.
\end{equation}
Since $S_{e}$ is affine,
${\mathcal{O}}_{\widetilde{S}^{(1)}_{e}}(-\pi ^{-1}(e))$ is globally generated,
which yields
\begin{equation*}
\operatorname{Hom}({\mathcal{O}}_{\pi ^{-1}(e)}, {\mathcal{O}}_{\Pi _{i}}(-1)[1])=0.
\end{equation*}
It follows that both socle and cosocle of
${\mathcal{O}}_{\pi ^{-1}(e)}$ is the sum of copies of $L_{0}$. If
${\mathcal{O}}_{\pi ^{-1}(e)} \neq L_{0}$, then we get a nonconstant endomorphism
of ${\mathcal{O}}_{\pi ^{-1}(e)}$ but again using the exact sequence (\ref{ex_pi_e})
we see that $\Gamma ({\mathcal{O}}_{\pi ^{-1}(e)})$ is one dimensional.
\end{proof}

\begin{Rem}
According to \cite{bm}, base change of $A$ to a slice to a nilpotent orbit
is derived equivalent to the resolution of the slice, this yields a
$t$-structure on the resolution of the slice. When $e$ is subregular, the
resolution of the slice coincides with the minimal resolution of a rational
(Kleinian) singularity. Comparing Lemma~\ref{MacKay} with
\cite[Theorem 2.3]{KV} one sees that in this case the above $t$-structure
coincides with one arising from the McKay equivalence between the derived
category of the resolution and the derived category of the orbifold.
\end{Rem}

Assume now ${\frak g}$ is of type $D$ or $E$. Then the centralizer
$Z_{G^{\vee}}(e)$ is unipotent. It is clear each of the above irreducibles
carries a unique equivariant $Z_{G^{\vee}}(e)$ structure. For
$\mathfrak{g}=\mathfrak{sl}_{n}$ ($n \geqslant 3$) we have
$Z_{e} \simeq {\mathbb{C}}^{\times}$ (recall that
$Z_{e} \subset Z_{G^{\vee}}(e)$ is the reductive part); for a
${\mathbb{C}}^{\times}$-equivariant sheaf ${\mathcal{F}}$ and
$k \in {\mathbb{Z}}$ we denote by ${\mathcal{F}}\langle k\rangle $ the
same sheaf but with the ${\mathbb{C}}^{\times}$-equivariant structure twisted
by the character $t \mapsto t^{k}$ of ${\mathbb{C}}^{\times}$. It is clear
that each of the above irreducibles carries unique (up to a shifting by
$k \in {\mathbb{Z}}$) $Z_{G^{\vee}}(e)$-equivariant structure.

We will use the same notation for the resulting $Z_{G^{\vee}}(e)$-equivariant
sheaves, as well as for the corresponding $G^{\vee}$-equivariant sheaves
on the schematic preimage of the orbit $\pi ^{-1}({\mathbb{O}}_{e}) \xrightarrow{\iota} \widetilde{\mathcal{N}}$.

Consider $U={\mathbb{O}}_{e} \cup {\mathbb{O}}^{\mathrm{reg}}$. We record
a description of the canonical basis in
$K^{G^{\vee}}(\widetilde{U})$ stemming from Theorem~\ref{NC_S} and Lemma~\ref{MacKay}.

\begin{Prop}%
\label{descr_canon}
For $\mathfrak{g}$ of type $D_{n}$ ($n \geqslant 4$) or $E_{6}$,
$E_{7}$, $E_{8}$ the canonical basis of
$K^{G^{\vee}}(\widetilde{U})$ consists of classes of
\begin{equation*}
{\mathcal{O}}_{\widetilde{U}},\, \iota _{*}{\mathcal{O}}_{\pi ^{-1}({
\mathbb{O}}_{e})}[-1],~\text{and}~\iota _{*} {\mathcal{O}}_{\Pi _{i}}({-1}),~i=1,
\ldots ,r.
\end{equation*}

For $\mathfrak{g}=\mathfrak{sl}_{n}$ ($n \geqslant 3$) the canonical basis
of $K^{G^{\vee}}(\widetilde{U})$ consists of classes of
\begin{equation*}
{\mathcal{O}}_{\widetilde{U}},\, \iota _{*}{\mathcal{O}}_{\pi ^{-1}({
\mathbb{O}}_{e})}[-1]\langle k \rangle ,~\text{and}~\iota _{*} {
\mathcal{O}}_{\Pi _{i}}({-1})\langle k \rangle ,~i=1,\ldots ,n-1,\, k
\in {\mathbb{Z}}.
\end{equation*}
\end{Prop}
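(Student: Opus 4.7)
The plan is to combine three ingredients already in place: the kernel description from Corollary \ref{ker_open}, the parametrization of the exotic canonical basis by pairs $({\mathbb{O}},M)$ in Theorem \ref{NC_S}(2), and the enumeration of irreducibles supported on a single subregular Springer fiber in Lemma \ref{MacKay}.

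First I would apply Corollary \ref{ker_open} to identify the canonical basis of $K^{G^\vee}(\widetilde{U})$ with the image of $\{C_\nu \mid {\mathbb{O}}_{w_\nu} \subset U\}$. Since $U = {\mathbb{O}}^{\mathrm{reg}} \cup {\mathbb{O}}_e$, Theorem \ref{NC_S}(2) then parametrizes this basis by pairs $({\mathbb{O}},M)$ with ${\mathbb{O}} \in \{{\mathbb{O}}^{\mathrm{reg}},{\mathbb{O}}_e\}$ and $M$ an irreducible equivariant $A|_{{\mathbb{O}}}$-module. For the open orbit $\pi$ is an isomorphism, so $A|_{{\mathbb{O}}^{\mathrm{reg}}}$ is Morita-trivial: the unique irreducible corresponds to the structure sheaf, and since $\on{codim}({\mathbb{O}}^{\mathrm{reg}}) = 0$ no cohomological shift is introduced, giving the basis element $[\mathcal{O}_{\widetilde{U}}]$.

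For the subregular orbit one has $\on{codim}({\mathbb{O}}_e) = 2$, so Theorem \ref{NC_S}(2) prescribes the cohomological shift $[-1]$. The plan is to take each irreducible of $\mathcal{A}_e$ from Lemma \ref{MacKay} --- namely $\mathcal{O}_{\Pi_i}(-1)[1]$ for $i \in \widetilde{I}$ and $\mathcal{O}_{\pi^{-1}(e)}$ --- promote it to a $G^\vee$-equivariant object on $\pi^{-1}({\mathbb{O}}_e)$ via its $Z_{G^\vee}(e)$-equivariant structure, and then apply the shift $[-1]$. In types $D$ and $E$ the centralizer $Z_{G^\vee}(e)$ is unipotent, so each fiber irreducible lifts uniquely; after the shift, $\mathcal{O}_{\Pi_i}(-1)[1]$ becomes $\iota_* \mathcal{O}_{\Pi_i}(-1)$ and $\mathcal{O}_{\pi^{-1}(e)}$ becomes $\iota_* \mathcal{O}_{\pi^{-1}({\mathbb{O}}_e)}[-1]$. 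In type $A_{n-1}$ the reductive part of $Z_{G^\vee}(e)$ is $\mathbb{C}^\times$, so the equivariant lifts form a $\mathbb{Z}$-family parametrized by the twists $\langle k\rangle$, producing the $\mathbb{Z}$-indexed list.

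The genuine technical input is Lemma \ref{MacKay}, already in hand. What remains is bookkeeping of the codimension shift and of the equivariant structure; the one point I expect to require care is verifying that the fiber-irreducible $\mathcal{O}_{\pi^{-1}(e)}$ really extends $G^\vee$-equivariantly to the structure sheaf of the reduced orbit preimage $\pi^{-1}({\mathbb{O}}_e)$, so that the shift $[-1]$ delivers exactly $\iota_*\mathcal{O}_{\pi^{-1}({\mathbb{O}}_e)}[-1]$. This should reduce to smooth base change along the bundle $\pi^{-1}({\mathbb{O}}_e) \to {\mathbb{O}}_e$ with fiber $\pi^{-1}(e)$.
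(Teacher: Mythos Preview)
Your proposal is correct and follows essentially the same route as the paper: both arguments combine Theorem \ref{NC_S}(2)--(3) with Lemma \ref{MacKay}, identify $\CO_{\widetilde{U}}$ as the canonical basis element attached to the open orbit, and obtain the remaining elements by applying the codimension shift $[-1]$ to the fiber irreducibles and promoting them to $G^\vee$-equivariant objects on $\pi^{-1}({\mathbb{O}}_e)$ via the $Z_{G^\vee}(e)$-equivariant structure (unique in types $D,E$, indexed by $\BZ$ in type $A$). The technical point you flag about extending $\CO_{\pi^{-1}(e)}$ equivariantly over the orbit is exactly what the paper handles in the paragraph immediately preceding the proposition.
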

\begin{proof}
We use the description of the canonical basis provided by the Theorem~\ref{NC_S}. It is easy to see that
${\mathcal{O}}_{{\widetilde{\cal N}}}$ satisfies the properties in Theorem~\ref{NC_S} (2), so it corresponds to an element of the canonical basis
(this is just the image of the
unit element in the affine Hecke algebra).

Also, Theorem~\ref{NC_S} together with Lemma~\ref{MacKay} show the existence
of irreducible objects in the heart of the exotic $t$-structure whose restriction
to $\widetilde{U}$ coincides with the other objects listed in the Proposition
(note that the objects that appear in Lemma~\ref{MacKay} should be shifted
by $[{-}\frac{\operatorname{codim}{\mathbb{O}}_{e}}{2}]=[-1]$). Thus
the statement follows from Theorem~\ref{NC_S} (3).
\end{proof}

\begin{Rem}
The canonical basis in the corresponding module over the affine Hecke algebra
was computed by other methods in \cite{lu_notes_aff} for type $A$ and in
\cite{lu_subreg} for types $D$ and $E$. It is not hard to check that the
basis in Proposition~\ref{descr_canon} after applying Grothendieck-Serre
duality to it (see \cite[Sections 6.10, 6.11, 6.12]{lu_bases_K}) agrees
with those earlier results.
\end{Rem}

\section{The subregular type $D,\, E$ case}
\label{subreg_section}

In this section we assume that $\mathfrak{g}$ is of type $D$ or $E$. Since
$\mathfrak{g}$ is simply-laced, we have
$\mathfrak{g}=\mathfrak{g}^{\vee}$. Recall that $e \in \mathfrak{g}$ is
the subregular nilpotent element and $c \subset \widehat{W}$ is the corresponding
two-sided cell. Recall the $\widehat{W}$-module
$K^{G^{\vee}}(\widetilde{U})$, and that this module has a (canonical) basis
$\bar{C}_{\nu}$ parametrized by $\nu $ such that the corresponding
$w_{\nu}$ lies in $c \cup \{1\}$. Let us describe such $\nu $,
$w_{\nu}$ explicitly.

The following proposition follows from
\cite[Proposition 3.8]{lu_subreg_crit}, see also
\cite[Proposition 3.6]{xu}.
%
\begin{Prop}%
\label{descr_cell_subreg}
The cell $c$, corresponding to the subregular nilpotent $e$, consists of
all nonidentity elements $w \in \widehat{W}$ that have unique reduced decomposition.
\end{Prop}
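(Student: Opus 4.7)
The plan is to deduce the proposition from the cited work of Lusztig--Xi \cite{lu_subreg_crit} and Xu \cite{xu}; I sketch the conceptual structure rather than reproducing their combinatorial arguments.

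Recall Lusztig's a-function $a \colon \widehat{W} \to \BZ_{\geqslant 0}$, which is constant on two-sided cells. Under the bijection between cells and nilpotent $G^\vee$-orbits, one has $a(c) = \dim \CB - \tfrac{1}{2} \dim \mathbb{O}_c$. Since $\dim \mathbb{O}^{\mathrm{reg}} = 2 \dim \CB$ and $\dim \mathbb{O}_e = \dim \mathbb{O}^{\mathrm{reg}} - 2$, the regular orbit corresponds to the cell $\{1\}$ where $a = 0$, and the subregular orbit corresponds to the unique cell on which $a$ takes the value $1$. Thus the proposition reduces to the equality
\[
\{w \in \widehat{W} \setminus \{1\} : w \text{ has unique reduced decomposition}\} \;=\; \{w \in \widehat{W} : 1 \leqslant a(w) \leqslant 1\}.
\]

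The combinatorial side is straightforward. Since $\widehat{W}$ is simply laced in our setting, every nontrivial braid move is either a commutation $s_i s_j = s_j s_i$ (when $i \not\sim j$ in the affine Dynkin diagram) or a length-three braid $s_i s_j s_i = s_j s_i s_j$ (when $i \sim j$). By Matsumoto's theorem, $w$ has a unique reduced decomposition iff no reduced expression of $w$ contains a substring of either of these two types; equivalently, the sequence of simple reflections in its reduced word defines a non-backtracking walk on the affine Dynkin diagram whose consecutive vertices are adjacent. First I would record this reformulation and observe, as an immediate consequence, that the set of such $w$ is closed under taking inverses and under the star operations, so that it is a union of two-sided cells.

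The Hecke-algebraic step, carried out in \cite[Proposition 3.8]{lu_subreg_crit} and reproved in \cite[Proposition 3.6]{xu}, identifies this union with $\{w : a(w) \leqslant 1\}$. The key estimate is an induction on $\ell(w)$ using the Kazhdan--Lusztig multiplication rule $C_{s_i} C_w$: when $w$ is described by a non-backtracking walk, the induction shows that all structure constants arising in the multiplication have $v$-degree at most $1$, giving $a(w) \leqslant 1$; conversely, if some reduced expression of $w$ contains a commutation $s_i s_j$ or a braid triple $s_i s_j s_i$, a direct calculation produces a structure constant of $v$-degree at least $2$, forcing $a(w) \geqslant 2$. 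The main obstacle is this Hecke-theoretic bookkeeping, which is precisely the content of the cited propositions; combining it with the a-function dictionary above yields the statement.
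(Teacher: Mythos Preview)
Your proposal is correct and takes essentially the same approach as the paper: the paper itself gives no argument beyond the citation to \cite[Proposition 3.8]{lu_subreg_crit} and \cite[Proposition 3.6]{xu}, and you invoke exactly these references while additionally sketching the $a$-function dictionary and the combinatorial characterization that underlie them. Your added explanation is accurate and more informative than the paper's bare citation.
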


\begin{Cor}%
\label{descr_nu_i_subreg}
Let $c$ be as in Proposition~\ref{descr_cell_subreg}. The elements
$\nu \in Q^{\vee}$ such that $w_{\nu }\in c$ can be described as follows.
The set of possible $\nu $ is parametrized by $\widehat{I}$. For
$i \in \widehat{I}$ let us connect $i$ with $0 \in \widehat{I}$ by the
segment:
\begin{equation*}
0=j_{0},\,j_{1},\ldots ,\,j_{l-1},\,j_{l}=i.
\end{equation*}
Then the element $\nu _{i}$ is equal to
%
\begin{equation}
\label{nu_i_expl}
\nu _{i}=s_{i}s_{j_{l-1}}\ldots s_{j_{1}}(\theta ) = \theta -\alpha _{j_{1}}^{
\vee}-\cdots -\alpha _{j_{l-1}}^{\vee}-\alpha _{i}^{\vee}
\end{equation}
and the corresponding $w_{\nu _{i}}$ is
\begin{equation*}
w_{\nu _{i}}=w_{i}=s_{i}s_{j_{l-1}}\ldots s_{j_{1}}s_{0}.
\end{equation*}
\end{Cor}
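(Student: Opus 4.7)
The plan is to combine Proposition \ref{descr_cell_subreg} with a tree-path characterization of unique reduced decompositions, and then read off the translation part via the semidirect product structure of $\widehat{W}$.

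Since $w_\nu$ is the shortest element of the right coset $t_\nu W$, we have $w_\nu s_i > w_\nu$ for $i = 1, \ldots, r$; equivalently, for $\nu \neq 0$ every reduced expression of $w_\nu$ ends in $s_0$. By Proposition \ref{descr_cell_subreg}, I therefore need to classify $w \in \widehat{W}$ that (i) admit a unique reduced decomposition and (ii) end in $s_0$.

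For (i), recall that in the simply-laced case (which holds for $\widehat{\mathfrak{g}}$ of type $D$ or $E$) the only nontrivial relations in $\widehat{W}$ are the commutations $s_i s_j = s_j s_i$ for $i \not\sim j$ and the braid relations $s_i s_j s_i = s_j s_i s_j$ for $i \sim j$. Uniqueness of a reduced word $s_{i_k} \cdots s_{i_1}$ is therefore equivalent to $(i_1, \ldots, i_k)$ being a \emph{non-backtracking walk} in the affine Dynkin diagram: consecutive letters must be adjacent (else they commute), and no subword $s_a s_b s_a$ with $a \sim b$ may occur. Because the affine Dynkin diagram is a \emph{tree} in types $D$ and $E$, non-backtracking walks coincide with non-repeating paths, and such a path automatically yields a reduced expression, since no braid or commutation move shortens it. Imposing (ii) forces the path to terminate at $0$, so it is uniquely determined by its other endpoint $i = j_l \in \widehat{I}$, giving both the bijection $\widehat{I} \leftrightarrow \{\nu \in Q^\vee : w_\nu \in c\}$ and the formula $w_{\nu_i} = s_i s_{j_{l-1}} \cdots s_{j_1} s_0$.

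To extract $\nu_i$, I will use $s_0 = t_{\theta^\vee} s_\theta$ (valid in the simply-laced case) together with the semidirect product relation $u\, t_\gamma = t_{u(\gamma)}\, u$, pushing the translation to the left:
\[
w_i = s_i \cdots s_{j_1}\, t_{\theta^\vee}\, s_\theta = t_{s_i \cdots s_{j_1}(\theta^\vee)}\cdot \bigl(s_i \cdots s_{j_1} s_\theta\bigr),
\]
so $\nu_i = s_i s_{j_{l-1}} \cdots s_{j_1}(\theta^\vee)$, which under the simply-laced identification matches (\ref{nu_i_expl}). The additive expansion $\nu_i = \theta^\vee - \alpha_{j_1}^\vee - \cdots - \alpha_i^\vee$ then follows by induction on $l$, using that in types $D$, $E$ the vertex $0$ has a unique neighbour $j_1$ in the affine Dynkin diagram (so $\langle \alpha_{j_1}, \theta^\vee \rangle = 1$ and $\langle \alpha_{j_k}, \theta^\vee \rangle = 0$ for $k \geq 2$), and that $\langle \alpha_{j_{k+1}}, \alpha_{j_k}^\vee \rangle = -1$ for consecutive, adjacent $j_k, j_{k+1}$.

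The main obstacle I anticipate is the characterization in the third paragraph: one must carefully verify that any non-backtracking path in the tree yields a genuinely reduced word with unique reduced expression, and conversely that every element of $\widehat{W}$ with a unique reduced expression arises in this way. Once this is established, the remaining steps are a bookkeeping computation inside $Q^\vee \rtimes W$.
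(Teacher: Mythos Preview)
Your proposal is correct and follows essentially the same approach as the paper: identify $\{w_\nu\in c\}$ with paths in the affine Dynkin tree ending at $0$ (the paper states this step ``easily follows'' from Proposition~\ref{descr_cell_subreg} without spelling out the tree-walk argument you give), then push the translation to the left via $s_0=s_\theta t_{-\theta}=t_{\theta^\vee}s_\theta$. The only minor difference is in the final additive expansion: the paper writes $\theta=\delta-\al_0$ and computes $s_i\cdots s_{j_1}(\al_0)=\al_0+\al_{j_1}+\cdots+\al_i$ directly, which is a bit cleaner than your induction using the pairings $\langle \al_{j_k},\theta^\vee\rangle$, but the two computations are equivalent.
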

\begin{proof}
It easily follows from Proposition~\ref{descr_cell_subreg} that the elements
of $c$ of the form $w_{\nu}$ are precisely the elements $w_{i}$,
$i \in \widehat{I}$. Recall now that
\begin{equation*}
w_{i}=s_{i}s_{j_{l-1}}\ldots s_{j_{1}}s_{0}= s_{i}s_{j_{l-1}}\ldots s_{j_{1}}s_{
\theta }t_{-\theta}=t_{s_{i}s_{j_{l-1}}\ldots s_{j_{1}}(\theta )}s_{
\theta }s_{j_{1}}\ldots s_{j_{l-1}}s_{i}.
\end{equation*}
We conclude that $\nu $ that corresponds to $w_{i}$ is equal to
$s_{i}s_{j_{l-1}}\ldots s_{j_{1}}(\theta )$. It remains to note that
\begin{align*}
s_{i}s_{j_{l-1}}\ldots s_{j_{1}}(\theta )
&=s_{i}s_{j_{l-1}}\ldots s_{j_{1}}(\delta -\alpha _{0})
\\
&=\delta -s_{i}s_{j_{l-1}}\ldots s_{j_{1}}(\alpha _{0})=\delta -(\alpha _{0}+\alpha _{j_{1}}+\cdots +\alpha _{j_{l-1}}+\alpha _{i})
\\
&=\theta -\alpha _{j_{1}}-\cdots -\alpha _{j_{l-1}}-\alpha _{i}.
\end{align*}\vskip-22pt
\end{proof}

\begin{Rem}
Note that $\nu _{0}=\theta $, $w_{0}=s_{0}$.
\end{Rem}

Our goal is to identify $\widehat{W}$-module
$K^{G^{\vee}}(\widetilde{U}) \otimes {\mathbb{Z}}_{\mathrm{sign}}$ with
$\widehat{\mathfrak{h}}_{{\mathbb{Z}}}={\mathbb{Z}}Q^{\vee }\oplus {
\mathbb{Z}}K \oplus {\mathbb{Z}}d$.

\subsection{Structure of $K({\cal B}_{e})$}
\label{subsec_str_B_e_DE}

Let us first of all describe the geometry of the variety
${\cal B}_{e}$. Recall that ${\cal B}_{e}$ is the fiber over $e$ of the
Springer resolution
$\pi \colon \widetilde{{\mathcal{N}}} \rightarrow {\mathcal{N}}$. Directly
from the definitions we have
\begin{equation*}
{\mathcal{B}}_{e}=\{\mathfrak{b}' \in {\mathcal{B}}\mid  e \in
\mathfrak{n}_{\mathfrak{b}'}\},
\end{equation*}
where $\mathfrak{n}_{\mathfrak{b'}} \subset \mathfrak{b} $ is the unipotent
radical of the Borel subalgebra $\mathfrak{b}'$.

The following lemma is standard (see \cite{sl}).
%
\begin{Lem}
For every $i=1,\ldots ,r$ there exists the unique parabolic subalgebra
$\mathfrak{p}_{e,i}$ such that
\begin{enumerate}
\item[$(1)$] $\mathfrak{p}_{e,i}$ is conjugate to the standard minimal parabolic
subalgebra, corresponding to $\alpha _{i}^{\vee}$,

\item[$(2)$] the nilradical of $\mathfrak{p}_{e,i}$ contains $e$.
\end{enumerate}
We denote the corresponding parabolic subgroup by
$P_{e,i} \subset G^{\vee}$.
\end{Lem}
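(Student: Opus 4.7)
The plan is to recast the statement in terms of the Springer fiber $\B_e$ and invoke Slodowy's description of its components. For any minimal parabolic $\mathfrak{p}$ of type $i$ one has $\mathfrak{n}_\mathfrak{p}=\bigcap_{\mathfrak{b}\subset \mathfrak{p}} \mathfrak{n}_\mathfrak{b}$, so the condition $e\in \mathfrak{n}_\mathfrak{p}$ is equivalent to demanding that $e\in \mathfrak{n}_\mathfrak{b}$ for \emph{every} Borel $\mathfrak{b}\subset \mathfrak{p}$, i.e.\ that the whole fiber $\pi_i^{-1}(\mathfrak{p})\cong \mathbb{P}^1$ of $\pi_i\colon \B\to G^\vee/P_i$ be contained in $\B_e$. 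Hence the set of minimal parabolics of type $i$ whose nilradical contains $e$ is in natural bijection with the set of $\mathbb{P}^1$-fibers of $\pi_i|_{\B_e}$, i.e.\ with components of $\B_e$ that happen to be contracted by $\pi_i$.

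Next I would invoke Slodowy's structure theorem for the subregular Springer fiber in simply-laced type (see \cite{sl}): $\B_e=\bigcup_{j\in I}\Pi_j$ with each $\Pi_j\cong \mathbb{P}^1$, the dual graph of the configuration is the Dynkin diagram of $\mathfrak{g}$, and the labeling is set up so that $\Pi_j$ is a fiber of $\pi_j$ while $\pi_k|_{\Pi_j}$ is a closed embedding for $k\ne j$. Granting this matching, existence of $\mathfrak{p}_{e,i}$ is immediate: take $\mathfrak{p}_{e,i}:=\pi_i(\Pi_i)$, a minimal parabolic of type $i$ whose fiber $\pi_i^{-1}(\mathfrak{p}_{e,i})=\Pi_i$ lies in $\B_e$, so by the opening paragraph $e\in \mathfrak{n}_{\mathfrak{p}_{e,i}}$.

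For uniqueness, suppose $\mathfrak{p}'$ were another minimal parabolic of type $i$ with $e\in \mathfrak{n}_{\mathfrak{p}'}$. Then $\pi_i^{-1}(\mathfrak{p}')\subset \B_e$ is a $\mathbb{P}^1$ distinct from $\Pi_i$, hence must meet some other component $\Pi_k$ ($k\ne i$) in a $\mathbb{P}^1$; but since $\Pi_k\cong \mathbb{P}^1$ is irreducible, this forces $\pi_i^{-1}(\mathfrak{p}')=\Pi_k$, contradicting the fact that $\pi_i|_{\Pi_k}$ is a closed embedding (and in particular not constant) for $k\ne i$.

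The only nontrivial input is the Slodowy matching \textquotedblleft$\Pi_j$ is a fiber of $\pi_j$\textquotedblright, which is the main obstacle but is well-established: it comes from realizing the germ of $\CN$ transverse to $\mathbb{O}_e$ as a Kleinian surface singularity whose minimal resolution has exceptional divisor a chain of $\mathbb{P}^1$'s dual to the Dynkin diagram, with the identification of the dual graph compatible with the projections $\pi_j$. Once this input is granted, the lemma reduces to counting components as above.
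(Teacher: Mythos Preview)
Your argument is correct, and the paper itself offers no proof beyond the sentence ``The following lemma is standard (see \cite{sl}).'' Your unpacking --- reducing the condition $e\in\mathfrak{n}_{\mathfrak{p}}$ to the inclusion $\pi_i^{-1}(\mathfrak{p})\subset\B_e$ via $\mathfrak{n}_{\mathfrak{p}}=\bigcap_{\mathfrak{b}\subset\mathfrak{p}}\mathfrak{n}_{\mathfrak{b}}$, and then reading off existence and uniqueness from Slodowy's description of the components --- is exactly how one extracts the statement from \cite{sl}.

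One presentational remark: the Slodowy input you invoke (``$\Pi_j$ is a fiber of $\pi_j$, and $\pi_k|_{\Pi_j}$ is an embedding for $k\ne j$'') is precisely the content of the Proposition that the paper states \emph{after} this Lemma, and that Proposition is phrased using the parabolics $\mathfrak{p}_{e,i}$ furnished by the Lemma. There is no logical circularity, since in \cite{sl} the component description and the identification with fibers of the $\pi_j$ are established together; but if you were writing this out you would want to cite Slodowy directly rather than the paper's subsequent Proposition. Also, in your uniqueness paragraph the phrase ``must meet some other component $\Pi_k$ in a $\mathbb{P}^1$'' is slightly garbled: the clean statement is that $\pi_i^{-1}(\mathfrak{p}')$, being an irreducible curve inside $\B_e=\bigcup_j\Pi_j$, must equal some $\Pi_k$, and then $k\ne i$ forces a contradiction since $\pi_i|_{\Pi_k}$ is nonconstant.
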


For $i \in 1,\ldots ,r$ let ${\mathcal{P}}_{i}=G^{\vee}/P_{e,i}$ be the variety
of parabolic subalgebras of $\mathfrak{g}^\vee$ of type
$\mathfrak{p}_{e,i}$. The following proposition is standard (see
\cite{sl}).
%
\begin{Prop}
The variety $\mathcal{B}_{e}$ has $r$ irreducible components
$\Pi _{i}$, $i=1,\ldots ,r$. The component $\Pi _{i}$, corresponding to
$i \in I$, is the fiber of the morphism
$\mathcal{B} \rightarrow \mathcal{P}_{i}$ over the point
$\mathfrak{p}_{e,i}$ (in particular,
$\Pi _{i} \simeq {\mathbb{P}}^{1}$). For $i \neq j$ components
$\Pi _{i}$, $\Pi _{j}$ intersect iff $(\alpha _{i},\alpha _{j})=-1$. If
this is the case, then $\Pi _{i}$, $\Pi _{j}$ intersect transversally at
one point.
\end{Prop}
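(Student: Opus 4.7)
The plan is to establish the three claims in the order stated: (i) exhibit each $\Pi_i$ as a fiber of $\mathcal{B}\to\mathcal{P}_i$ lying in $\mathcal{B}_e$; (ii) show these are exactly the irreducible components; (iii) determine the intersection pattern. I expect (iii) to be the main obstacle: it is essentially the statement that the dual graph of $\mathcal{B}_e$ is the Dynkin diagram of $\mathfrak{g}$, which is the heart of the Brieskorn--Grothendieck--Slodowy theorem on simultaneous resolutions.

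First, I would observe that the fiber of the smooth projective morphism $\mathcal{B}\to\mathcal{P}_i$ over any point $\mathfrak{p}$ is isomorphic to $P/B\cong\mathbb{P}^1$, where $P$ is any parabolic of type $\mathfrak{p}_{e,i}$ containing a chosen Borel $B\subset P$. Specializing to $\mathfrak{p}=\mathfrak{p}_{e,i}$, the fiber $\Pi_i$ parametrizes Borel subalgebras $\mathfrak{b}'\subset\mathfrak{p}_{e,i}$. Since $e$ lies in the nilradical of $\mathfrak{p}_{e,i}$, which is contained in the nilradical of every such $\mathfrak{b}'$, we get $\Pi_i\subset\mathcal{B}_e$, giving a $\mathbb{P}^1$ inside $\mathcal{B}_e$ for each $i=1,\ldots,r$.

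Next, I would show that the $\Pi_i$ exhaust the irreducible components. For the subregular orbit in a simple Lie algebra, $\dim\mathcal{B}_e=1$, so every irreducible component of $\mathcal{B}_e$ is a (rational) curve. A component $\Pi$ projects to a single point in each $\mathcal{P}_i$: indeed, the image in $\mathcal{P}_i$ consists of parabolics of type $\mathfrak{p}_{e,i}$ whose nilradical contains $e$, and by the defining uniqueness property of $\mathfrak{p}_{e,i}$ this image is a single point, necessarily $\mathfrak{p}_{e,i}$. Hence $\Pi$ is contained in the intersection of the fibers over $\mathfrak{p}_{e,i}$ of the various projections; choosing any $i$ for which $\Pi$ is not a point, we get $\Pi\subset\Pi_i$, and equality follows by irreducibility and dimension. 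To see the $\Pi_i$ are pairwise distinct and really $r$ in number, I would invoke Springer's count of components of the subregular Springer fiber (or equivalently the rank of $H_*(\mathcal{B}_e)$, which in the simply laced subregular case equals $r$, see \cite{sl}).

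Finally, for the intersection pattern I would appeal to Slodowy's theorem: a transverse slice $S_e$ to the orbit of $e$ intersected with $\mathcal{N}$ is a Kleinian singularity of the same type $\mathrm{ADE}$ as $\mathfrak{g}$, and the preimage $\widetilde{S}_e:=\pi^{-1}(S_e)$ is its minimal resolution, with $\mathcal{B}_e\subset\widetilde{S}_e$ as the exceptional divisor. The classical theorem of Du Val/Brieskorn then identifies the dual graph of the exceptional divisor with the Dynkin diagram of $\mathfrak{g}$: the components $\Pi_i$ are $(-2)$-curves meeting transversally at a single point exactly when the corresponding nodes $i$, $j$ of the Dynkin diagram are joined by an edge, i.e.\ when $(\alpha_i,\alpha_j)=-1$. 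The labeling of components by simple roots is compatible with the one chosen above because in both cases the label is determined by the type of the conjugacy class of the minimal parabolic containing the Borel $\mathfrak{b}'$ and nilradical containing $e$. The hardest input here is Slodowy's identification $\widetilde{S}_e\cong\widetilde{S}_e^{\min}$ together with the Du Val classification; I would cite \cite{sl} rather than reprove it.
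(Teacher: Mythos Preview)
The paper does not prove this proposition; it simply declares it standard and cites Slodowy \cite{sl}. Your approach is the same in substance: you sketch the construction and then invoke \cite{sl} for the count of components and for the intersection pattern via the Kleinian-singularity picture. So the overall strategy matches.

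One step in your sketch of (ii) is incorrect, however. You assert that the image of a component $\Pi$ under $\mathcal{B}\to\mathcal{P}_i$ consists of parabolics whose nilradical contains $e$, and hence is the single point $\mathfrak{p}_{e,i}$. This is false: if $\mathfrak{b}'\in\mathcal{B}_e$ and $\mathfrak{p}\supset\mathfrak{b}'$ is the minimal parabolic of type $i$, then $e$ lies in $\mathfrak{n}_{\mathfrak{b}'}$ but need not lie in the smaller nilradical $\mathfrak{n}_{\mathfrak{p}}$ (only the Levi component of $e$ in $\mathfrak{p}$ is constrained). In fact, for $i\neq j$ the projection $\Pi_j\to\mathcal{P}_i$ is \emph{not} constant, so a given component does not map to a single point in every $\mathcal{P}_i$. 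The argument you want is the converse: each $\Pi_i$ is a $\mathbb{P}^1$ in $\mathcal{B}_e$, the $\Pi_i$ are pairwise distinct (since $\Pi_i\to\mathcal{P}_i$ is constant while $\Pi_j\to\mathcal{P}_i$ is not for $j\neq i$), and Slodowy's description gives exactly $r$ irreducible components, so the $\Pi_i$ exhaust them. Your citation of the Springer/Slodowy count already supplies this, so the gap is not fatal, but the intermediate justification should be replaced.
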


We pick any point $p \in {\cal B}_{e}$ and denote by
$[{\mathbb{C}}_{p}]$ the class in $K({\cal B}_{e})$ of the skyscraper sheaf
${\mathbb{C}}_{p}$.

\begin{Rem}
Note that $[{\mathbb{C}}_{p}]$ does not depend on the point $p$. Indeed,
it is enough to check this for ${\mathbb{P}}^{1}$ where every skyscraper
sheaf is equal to
$[{\mathcal{O}}_{{\mathbb{P}}^{1}}]-[{\mathcal{O}}_{{\mathbb{P}}^{1}}(-1)]$.
\end{Rem}

For $m \in {\mathbb{Z}}$ we denote by $O_{i}^{m}$ the class of
${\mathcal{O}}_{\Pi _{i}}(m)$ in $K(\Pi _{i})$. We denote by
$o_{i}^{m} \in K(\mathcal{B}_{i})$ the direct image of $O_{i}^{m}$ under
the closed embedding $\Pi _{i} \subset \mathcal{B}_{i}$.

The following lemma holds by \cite[Section 3.4]{lu_subreg}.
%
\begin{Lem}%
\label{gen_K_ADE_fiber}
The ${\mathbb{Z}}$-module $K(\mathcal{B}_{e})$ is spanned by
$o_{i}^{m}$, $i=1,\ldots ,r$, $m \in {\mathbb{Z}}$, subject to relations
\begin{equation*}
o_{i}^{m}-o_{i}^{m-1}=o_{j}^{m}-o_{j}^{m-1},\,o_{i}^{m+1}-o_{i}^{m}=o_{i}^{m}-o_{i}^{m-1},~i,j=1,
\ldots ,r.
\end{equation*}
\end{Lem}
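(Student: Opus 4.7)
The plan is to verify separately that the stated relations hold in $K(\B_e)$, that the classes $o_i^m$ span, and finally that no further relations can occur, by matching ranks.

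\textbf{Relations.} On each component $\Pi_i \cong \BP^1$ one has the identity $[\CO_{\BP^1}(m{+}1)] - 2[\CO_{\BP^1}(m)] + [\CO_{\BP^1}(m{-}1)] = 0$ in $K(\BP^1)$, since both halves equal the class of a skyscraper sheaf. Pushing forward along the closed embedding $\iota_i\colon\Pi_i\hookrightarrow \B_e$ yields the second relation $o_i^{m+1} - 2o_i^m + o_i^{m-1} = 0$. For the first relation, observe that $O_i^m - O_i^{m-1} = [\BC_p]$ in $K(\Pi_i)$ for any $p\in \Pi_i$, independently of $p$. When $i$ and $j$ are adjacent in the Dynkin diagram, taking $p = \Pi_i\cap\Pi_j$ gives $\iota_{i*}[\BC_p] = \iota_{j*}[\BC_p]$ in $K(\B_e)$, hence $o_i^m - o_i^{m-1} = o_j^m - o_j^{m-1}$. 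Since the Dynkin diagram of type $D$ or $E$ is connected, this identity propagates to all pairs $i,j$.

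\textbf{Spanning.} Order the components so that each $\Pi_{k+1}$ meets $Z_k := \Pi_1\cup\cdots\cup\Pi_k$ (possible by connectedness of the diagram). The excision sequence
\begin{equation*}
K(Z_k)\longrightarrow K(Z_{k+1})\longrightarrow K(\Pi_{k+1}\smallsetminus Z_k)\longrightarrow 0
\end{equation*}
together with the fact that $\Pi_{k+1}\smallsetminus Z_k$ is $\BP^1$ minus finitely many points (so $K(\Pi_{k+1}\smallsetminus Z_k)$ is a quotient of $K(\Pi_{k+1})$, which is $\BZ$-spanned by the $O_{k+1}^m$) shows inductively that $K(\B_e)$ is spanned by the collection $o_i^m$.

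\textbf{Rank count.} From the relations, the abstract module $F$ they present is generated by $o_1^0,\ldots,o_r^0$ together with the common difference $\delta := o_i^1 - o_i^0$, i.e.\ by $r+1$ elements, and it is straightforward to check that $F$ is freely generated by these (the assignment $o_i^m \mapsto o_i^0 + m\delta$ exhibits an isomorphism to $\BZ^{r+1}$). On the other hand, by the Baum-Fulton-MacPherson Riemann-Roch theorem one has $K(\B_e)\otimes \BQ \iso A_*(\B_e)\otimes \BQ$, and the Chow group of the tree of $\BP^1$'s $\B_e$ has $\BZ$-rank $r+1$ (the $r$ fundamental classes $[\Pi_i]$ together with a point class). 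Thus the surjection $F\twoheadrightarrow K(\B_e)$ is a surjection of free $\BZ$-modules of the same rank $r+1$; to conclude it is an isomorphism one verifies $\BZ$-linear independence of the images of $o_1^0,\ldots,o_r^0,\delta$ in $K(\B_e)$, for instance by pairing against the restriction maps $K(\B_e)\to K(\Pi_i)$ and using that $\delta$ restricts to a skyscraper on each $\Pi_i$.

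\textbf{Main obstacle.} The delicate step is ruling out torsion in $K(\B_e)$ and thus establishing that the surjection $F\twoheadrightarrow K(\B_e)$ is injective. The cleanest route is via Riemann-Roch together with the explicit linear independence check above; an alternative would use the normalization sequence $\bigsqcup_i \Pi_i \to \B_e$ and bookkeeping of the conductor at the $r-1$ intersection points, but this requires more care.
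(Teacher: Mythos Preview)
Your proof is essentially correct and more detailed than what the paper provides: the paper does not prove this lemma but simply cites \cite[Section 3.4]{lu_subreg}, adding only a remark that the relations follow from the standard short exact sequences on $\BP^1$. Your arguments for the relations and for spanning via excision are fine and match in spirit what is implicit in the citation.

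One point in your rank count deserves tightening. You write that $F\twoheadrightarrow K(\B_e)$ is a surjection ``of free $\BZ$-modules of the same rank'', but freeness of $K(\B_e)$ is exactly what is at stake. The clean argument is: you have a surjection from $F\cong\BZ^{r+1}$ onto $K(\B_e)$, and Riemann--Roch gives $\dim_\BQ\bigl(K(\B_e)\otimes\BQ\bigr)=r+1$; hence the kernel has $\BQ$-rank zero, so is torsion, but it lies in the torsion-free group $\BZ^{r+1}$, so it vanishes. This bypasses the linear-independence check via restriction maps $\iota_i^*$ that you propose, which is awkward here because $\B_e$ is singular at the nodes, so the closed embeddings $\Pi_i\hookrightarrow\B_e$ are not regular and computing $\iota_i^*\iota_{j*}$ directly requires care.
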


\begin{Rem}
The relations follow from the standard exact sequences on
${\mathbb{P}}^{1}$ (and their twistings by
${\mathcal{O}}_{{\mathbb{P}}^{1}}(m)$):
\begin{gather}
\begin{gathered}
0 \rightarrow {\mathcal{O}}_{{\mathbb{P}}^{1}}(-1) \rightarrow {
\mathcal{O}}_{{\mathbb{P}}^{1}} \rightarrow {\mathbb{C}}_{p}
\rightarrow 0,%
\\
\label{seq_p_1_eul}
0 \rightarrow {\mathcal{O}}_{{\mathbb{P}}^{1}}(-1) \rightarrow {
\mathcal{O}}_{{\mathbb{P}}^{1}}^{\oplus 2} \rightarrow {\mathcal{O}}_{{
\mathbb{P}}^{1}}(1) \rightarrow 0.%
\end{gathered}
\end{gather}
\end{Rem}

\begin{Cor}[{\cite[Section 3.4]{lu_subreg}}]
The ${\mathbb{Z}}$-module
$K({\cal B}_{e})$ has a basis $o_{i}^{-1}$, $i=1,\ldots ,r$,
$[{\mathbb{C}}_{p}]$.
\end{Cor}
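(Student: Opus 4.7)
The plan is to combine the relations from Lemma \ref{gen_K_ADE_fiber} to reduce all generators $o_i^m$ to the proposed basis, and then verify linear independence by a rank count using Borel--Moore homology.

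First, I would observe that both relations in Lemma \ref{gen_K_ADE_fiber} assert that the differences $o_i^m - o_i^{m-1}$ are independent of the choice of both $i$ and $m$. So there is a single element $\xi \in K(\CB_e)$ with $\xi = o_i^m - o_i^{m-1}$ for every $i,m$. Using the standard short exact sequence $0\to \CO_{\BP^1}(-1)\to \CO_{\BP^1}\to \BC_p\to 0$ on any $\Pi_i$ (together with the independence of $[\BC_p]$ on the point $p$ noted in the Remark), I would identify $\xi = [\BC_p]$. Iterating then yields $o_i^m = o_i^{-1} + (m+1)[\BC_p]$ for all $m\in\BZ$ and $i=1,\dots,r$, so the proposed set $\{o_1^{-1},\dots,o_r^{-1},[\BC_p]\}$ spans $K(\CB_e)$.

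For linear independence I would argue by a rank comparison. Since $\CB_e$ is a proper nodal curve whose dual graph is the Dynkin diagram of $\g$ (a tree, because $\g$ is of type $D$ or $E$), with $r$ components each isomorphic to $\BP^1$ meeting transversally at single points, the Borel--Moore (equivalently, singular) homology has $H_0(\CB_e,\BQ)=\BQ$, $H_1(\CB_e,\BQ)=0$, and $H_2(\CB_e,\BQ)=\BQ^r$, so $\dim_\BQ H_*(\CB_e,\BQ)=r+1$. The Chern character $K(\CB_e)\otimes\BQ \to H_*(\CB_e,\BQ)$ is an isomorphism (for a proper variety whose components are rational, by Riemann--Roch), giving $\mathrm{rk}\,K(\CB_e) = r+1$, matching the number of proposed basis elements.

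The main (minor) obstacle is making the rank computation fully rigorous: one either invokes the Grothendieck--Riemann--Roch isomorphism for nodal curves, or proves independence by exhibiting $r+1$ separating functionals (for instance, the global Euler characteristic $\chi\colon K(\CB_e)\to \BZ$ sends $[\BC_p]\mapsto 1$ and $o_i^{-1}\mapsto 0$, and the degree maps obtained by restricting to the open subsets $\Pi_i\setminus\bigcup_{j\neq i}\Pi_j$ separate the $o_i^{-1}$ from each other modulo $[\BC_p]$). Either route closes the argument quickly once the spanning statement is in hand.
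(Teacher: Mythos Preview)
Your proof is correct. The spanning argument is exactly what the paper has in mind, and your identification of $o_i^m - o_i^{m-1}$ with $[\BC_p]$ matches the remark following the corollary.

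For linear independence, however, you work harder than necessary. Lemma \ref{gen_K_ADE_fiber} does not merely give a spanning set: the phrase ``spanned by $o_i^m$ subject to relations'' gives a \emph{presentation} of $K(\CB_e)$. Once you have rewritten every $o_i^m$ as $o_i^{-1}+(m+1)[\BC_p]$, the two families of relations become tautologies (both sides equal $[\BC_p]$), so the presented module is \emph{free} on $o_1^{-1},\dots,o_r^{-1},[\BC_p]$. This is presumably why the paper says only ``Easily follows from Lemma \ref{gen_K_ADE_fiber}.'' Your route via the Chern character or via separating functionals is perfectly valid and has the advantage of not relying on the completeness of the relation list in Lemma \ref{gen_K_ADE_fiber}, but it is an external argument where none is needed.
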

\begin{proof}
Easily follows from Lemma~\ref{gen_K_ADE_fiber}.
\end{proof}

\begin{Rem}
\emph{Note that for every $i \in I$ we have
$[{\mathbb{C}}_{p}]=o_{i}^{0}-o_{i}^{-1}$ and, more generally,
$o_{i}^{m}-o_{i}^{m-1}=[{\mathbb{C}}_{p}]$ for every
$m \in {\mathbb{Z}}$.}
\end{Rem}

The following lemma holds by \cite[Lemma 3.6]{lu_subreg}.
%
\begin{Lem}
For $\gamma \in Q^{\vee}$ we have
\begin{enumerate}
\item[$(a)$] $t_{\gamma}([{\mathbb{C}}_{p}])=[{\mathbb{C}}_{p}]$,

\item[$(b)$]
$t_{\gamma}(o_{i}^{m})=o_{i}^{m+\langle \alpha _{i},\gamma \rangle}$.
\end{enumerate}
\end{Lem}

The following lemma holds by \cite[Lemmas 3.7, 3.8]{lu_subreg}.
%
\begin{Lem}
For $i \in I$ we have
\begin{enumerate}
\item[$(a)$] $s_{i}([{\mathbb{C}}_{p}])=-[{\mathbb{C}}_{p}] $,

\item[$(b)$] $s_{i}(o_{i}^{-1})=o_{i}^{-1}$.
\end{enumerate}
\end{Lem}

The following lemma holds by \cite[Lemma 3.12]{lu_subreg}.
%
\begin{Lem}
If $i,j \in I$ are such that $(\alpha _{i},\alpha _{j})=0$, then
$s_{i}(o_{j}^{-1})=-o_{j}^{-1}$.
\end{Lem}

The following lemma holds by \cite[Lemma 3.11]{lu_subreg}.
%
\begin{Lem}
If $i,j \in I$ are such that $(\alpha _{i},\alpha _{j})=-1$, then
$s_{i}(o_{j}^{-1})=-o_{j}^{-1}-\nobreak o_{i}^{-1}$.
\end{Lem}

Recall that ${\mathbb{Z}}_{\mathrm{sign}}$ is the one dimensional sign
representation of $\widehat{W}$. We finally obtain the following proposition.
%
\begin{Prop}%
\label{descr_of_k_Spring}
There is an isomorphism of $\widehat{W}$-modules
\begin{equation*}
K({{\mathcal{B}}}_{e}) \simeq (\mathfrak{h}_{{\mathbb{Z}}} \oplus {
\mathbb{Z}}K) \otimes {\mathbb{Z}}_{\mathrm{sign}},
\end{equation*}
given by
\begin{equation*}
K \otimes 1 \mapsto [{\mathbb{C}}_{p}],\, \alpha ^{\vee}_{i} \otimes 1
\mapsto -o_{i}^{-1},\,i=1,\ldots ,r.
\end{equation*}
\end{Prop}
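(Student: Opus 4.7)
The plan is to prove the proposition by a direct computation that splits into two parts: establishing the $\BZ$-linear isomorphism and then checking $\widehat{W}$-equivariance on a convenient generating set.

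First I would observe that since $\mathfrak{g}$ is simply laced, $\mathfrak{h}_{\BZ} = Q^{\vee}$ admits $\{\al_i^{\vee}\}_{i=1}^{r}$ as a $\BZ$-basis, so $\{\al_i^{\vee}\otimes 1\}_{i=1}^{r} \cup \{K\otimes 1\}$ is a $\BZ$-basis of the source. By the corollary recalled just above, $\{-o_i^{-1}\}_{i=1}^{r} \cup \{[\BC_p]\}$ is a $\BZ$-basis of $K(\CB_e)$, so the assignment sends one basis to the other and extends to a $\BZ$-linear isomorphism.

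For $\widehat{W}$-equivariance, since $\widehat{W} = W \ltimes Q^{\vee}$ and $s_0 = t_\theta s_\theta$, it suffices to verify the action of the simple reflections $s_i$, $i = 1,\ldots,r$, and of the translations $t_\gamma$, $\gamma \in Q^{\vee}$. Recall that on $(\mathfrak{h}_{\BZ}\oplus \BZ K)\otimes \BZ_{\mathrm{sign}}$ an element $w \in \widehat{W}$ acts by $v\otimes 1 \mapsto \varepsilon(w)w(v)\otimes 1$, with $\varepsilon(t_\gamma)=1$ and $\varepsilon(s_i)=-1$. The verification then reduces to four bookkeeping checks: (i) $t_\gamma(K)=K$ on the Cartan side matches $t_\gamma([\BC_p])=[\BC_p]$; (ii) the formula $t_\gamma(\al_i^{\vee}) = \al_i^{\vee}-(\al_i^{\vee},\gamma)K = \al_i^{\vee}-\langle \al_i,\gamma\rangle K$ (using $\eta(\al_i^{\vee})=\al_i$ in the simply-laced case) maps to $-o_i^{-1}-\langle \al_i,\gamma\rangle [\BC_p]$, which by the relation $o_i^{m}-o_i^{m-1}=[\BC_p]$ equals $-o_i^{-1+\langle \al_i,\gamma\rangle}=t_\gamma(-o_i^{-1})$; (iii) $s_i(K)=K$ combined with the sign twist produces $-K\otimes 1 \mapsto -[\BC_p]=s_i([\BC_p])$; (iv) the Cartan formula $s_i(\al_j^{\vee}) = \al_j^{\vee}-\langle \al_i,\al_j^{\vee}\rangle\al_i^{\vee}$, after applying the sign twist and the map, matches the three stated formulas for $s_i(o_j^{-1})$ in the three cases $j=i$, $(\al_i,\al_j)=0$, and $(\al_i,\al_j)=-1$. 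In types $D$ and $E$ these exhaust all possibilities, finishing the proof.

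There is no genuine obstacle here; the proof is essentially an exercise in matching conventions. The one point that demands care is the interplay between the sign twist by $\BZ_{\mathrm{sign}}$ and the sign that appears in $\al_i^{\vee}\otimes 1 \mapsto -o_i^{-1}$: the sign twist is precisely what converts the Lusztig formulas (which carry natural minus signs on $[\BC_p]$ and on the individual $o_j^{-1}$) into the plain Cartan action, and conversely. Once this is absorbed, together with the simply-laced identification $(\al_i^{\vee},\gamma)=\langle \al_i,\gamma\rangle$, the tabulation is immediate.
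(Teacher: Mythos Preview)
Your proof is correct and follows essentially the same approach as the paper's. The paper regards checks (i), (iii), (iv) as immediate from the lemmas just stated and only writes out check (ii), which it does by specializing to $\gamma=\al_j^{\vee}$ and splitting into the three cases $(\al_i,\al_j)=0,-1,2$; your general-$\gamma$ computation via $o_i^{m}-o_i^{m-1}=[\BC_p]$ amounts to the same thing.
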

\begin{proof}
The only nontrivial part is to compare the action of
$t_{\alpha _{j}^{\vee}}$ on $o_{i}^{-1}$ with the action of
$t_{\alpha _{j}^{\vee}}$ on $\alpha _{i}^{\vee}$. If
$(\alpha _{i},\alpha _{j})=0$, then we have
\begin{equation*}
t_{\alpha _{j}^{\vee}}(o_{i}^{-1})=o_{i}^{-1}
\end{equation*}
as desired. If $(\alpha _{i},\alpha _{j})=-1$, then we have
\begin{equation*}
t_{\alpha _{j}^{\vee}}(o_{i}^{-1})=o_{i}^{-2}=o_{i}^{-1}-[{\mathbb{C}}_{p}].
\end{equation*}
Finally, if $i=j$ then we have
\begin{equation*}
t_{\alpha _{i}^{\vee}}(o_{i}^{-1})=o_{i}^{1}=o_{i}^{0}+{\mathbb{C}}_{p}=o_{i}^{-1}+2[{
\mathbb{C}}_{p}].\qedhere
\end{equation*}
\end{proof}

\subsection{Structure of $K^{G^{\vee}}(\widetilde{U})$}

Let us describe the $\widehat{W}$-module
$K^{G^{\vee}}(\widetilde{U})$ (recall that $\mathfrak{g}$ is of type
$D$, $E$). Recall that we have the closed embedding
$\iota \colon \widetilde{{\mathbb{O}}}_{e} \subset \widetilde{U}$ (where $\widetilde{{\mathbb{O}}}_{e}$ is the schematic preimage of ${\mathbb{O}}_e \subset U$). It induces
the homomorphism
$\iota _{*}\colon K^{G^{\vee}}(\widetilde{{\mathbb{O}}}_{e})
\rightarrow K^{G^{\vee}}(\widetilde{U})$ of $\widehat{W}$-modules. Note
also that we have the natural identification
$K^{G^{\vee}}(\widetilde{{\mathbb{O}}}_{e})=K({\cal B}_{e})$.

Recall that by Theorem~\ref{cells} and Corollary~\ref{ker_open} we have
an exact sequence of $\widehat{W}$-modules
\begin{equation*}
0 \rightarrow K({\cal B}_{e}) \xrightarrow{\iota _{*}} K^{G^{\vee}}(
\widetilde{U}) \xrightarrow{} K^{G^{\vee}}({\mathbb{O}}^{\mathrm{reg}})
\rightarrow 0.
\end{equation*}

\begin{Prop}%
\label{ident_K_U_DE}
There is an isomorphism of $\widehat{W}$-modules
\begin{equation*}
K^{G^{\vee}}(\widetilde{U}) \simeq (\mathfrak{h}_{{\mathbb{Z}}}
\oplus {\mathbb{Z}}K \oplus {\mathbb{Z}}d) \otimes {\mathbb{Z}}_{
\mathrm{sign}} = \widehat{\mathfrak{h}}_{{\mathbb{Z}}} \otimes {
\mathbb{Z}}_{\mathrm{sign}},
\end{equation*}
given by ($i \in \{1,2,\ldots ,r\}$)
\begin{equation*}
K \mapsto \iota _{*}[{\mathbb{C}}_{p}],\,\alpha _{0}^{\vee} \mapsto
\iota _{*}[{\mathcal{O}}_{\pi ^{-1}(e)}],\, \alpha ^{\vee}_{i}
\mapsto -\iota _{*}o_{i}^{-1},\, d \mapsto [{\mathcal{O}}_{
\widetilde{U}}].
\end{equation*}

The images of the elements of the canonical basis under this isomorphism
are as follows:
\begin{equation*}
\bar{C}_{0}=\bar{T}_{0}\mapsto d,\, \bar{C}_{\nu _{i}} \mapsto -
\alpha _{i}^{\vee}, \, i=0,1,\ldots ,r,
\end{equation*}
where the $\nu _{i}$ are defined by (\ref{nu_i_expl}).
\end{Prop}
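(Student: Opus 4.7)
The plan is to verify that the map $\phi\colon \widehat{\mathfrak h}_{\BZ}\otimes\BZ_{\mathrm{sign}}\to K^{G^\vee}(\widetilde{U})$ defined by the displayed formulas is a well-defined $\widehat W$-equivariant isomorphism of $\BZ$-modules; once this is done, the canonical basis statement follows directly from Proposition~\ref{descr_canon}. The setup is the short exact sequence of $\widehat W$-modules
$$0\to K(\B_e)\xrightarrow{\iota_*} K^{G^\vee}(\widetilde{U})\to K^{G^\vee}(\mathbb{O}^{\mathrm{reg}})\to 0$$
from Corollary~\ref{ker_open}. Since $G^\vee$ is adjoint and $\mathfrak g$ is of type $D$ or $E$, the centralizer of a regular nilpotent is unipotent, so $K^{G^\vee}(\mathbb{O}^{\mathrm{reg}})\simeq\BZ$ is generated by the image of $[\CO_{\widetilde{U}}]$. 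The sub-$\widehat W$-module $(\mathfrak h_{\BZ}\oplus\BZ K)\otimes\BZ_{\mathrm{sign}}$ is identified $\widehat W$-equivariantly with $K(\B_e)$ via Proposition~\ref{descr_of_k_Spring}, matching $\phi$ on $K$ and on $\al_i^\vee$ for $i\ge 1$. What remains is (a) to verify that $\iota_*[\CO_{\pi^{-1}(e)}]$ equals $\al_0^\vee\otimes 1=(K-\theta^\vee)\otimes 1$ inside $K(\B_e)$, and (b) to check $\widehat W$-equivariance of $\phi$ on the extra generator $d\otimes 1\mapsto[\CO_{\widetilde{U}}]$.

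For (a) I would pass to the transverse Slodowy slice $\widetilde{S}_e$, which is the minimal resolution of the Kleinian singularity of type $\mathfrak g$. The exact sequence~(\ref{ex_pi_e}) gives $[\CO_{\pi^{-1}(e)}]=[\CO_{\widetilde{S}_e}]-[\CO_{\widetilde{S}_e}(-\pi^{-1}(e))]$; the scheme-theoretic fiber is the fundamental cycle $\pi^{-1}(e)=\sum_i a_i^\vee\Pi_i$ with $\theta^\vee=\sum_i a_i^\vee\al_i^\vee$, and the intersection pairing on the $\Pi_i$ is minus the Cartan matrix. Chasing these data through Lemma~\ref{gen_K_ADE_fiber} yields $[\CO_{\pi^{-1}(e)}]=[\BC_p]+\sum_i a_i^\vee o_i^{-1}$, which is exactly the image of $(K-\theta^\vee)\otimes 1$ under Proposition~\ref{descr_of_k_Spring}.

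For (b), since equivariance on $K(\B_e)$ is already known, it suffices to check generators of $\widehat W$ acting on $d\otimes 1$. For $s_i$ with $i\ge 1$, the identity $s_i\cdot T_0=-T_0$ in $M=\BZ\widehat W\otimes_{\BZ W}\BZ_{\mathrm{sign}}$ is immediate and matches $s_i(d)=d$ combined with the sign twist. For $s_0$, the standard decomposition $s_0=t_{\theta^\vee}s_\theta$ in $\widehat W=W\ltimes Q^\vee$ yields $s_0\cdot T_0=-T_{\theta^\vee}$, and $s_0(d)=d-\al_0^\vee=d-K+\theta^\vee$ together with $|\theta^\vee|^2=2$ reduces this case to the translation case. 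The translation identity asserts
$$[\CO_{\widetilde{U}}(\gamma)]-[\CO_{\widetilde{U}}]=\iota_*\!\Bigl(-\sum_i\gamma_i\,o_i^{-1}-\tfrac{|\gamma|^2}{2}[\BC_p]\Bigr),\qquad \gamma=\sum_i\gamma_i\al_i^\vee.$$
By the group law $t_{\gamma_1+\gamma_2}=t_{\gamma_1}t_{\gamma_2}$ and the quadratic identity $|\gamma_1+\gamma_2|^2=|\gamma_1|^2+2(\gamma_1,\gamma_2)+|\gamma_2|^2$, this reduces to the single case $\gamma=\al_j^\vee$, which I would prove by restricting to $\widetilde{S}_e$: the class $[\CO_{\widetilde{S}_e}(\al_j^\vee)]-[\CO_{\widetilde{S}_e}]$ can be computed directly from the degrees $\langle\al_i,\al_j^\vee\rangle$ of $\CO(\al_j^\vee)$ on each $\Pi_i$ together with the intersection geometry of $\B_e$, and after reading off the coefficients in the basis of Lemma~\ref{gen_K_ADE_fiber} one finds precisely $-o_j^{-1}-[\BC_p]$ (the scalar term arising from $|\al_j^\vee|^2/2=1$).

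Once $\phi$ is known to be an equivariant isomorphism, the canonical basis statement follows from Proposition~\ref{descr_canon}: $[\CO_{\widetilde{U}}]\mapsto d$, $[\iota_*\CO_{\pi^{-1}(e)}[-1]]=-\iota_*[\CO_{\pi^{-1}(e)}]\mapsto-\al_0^\vee$, and $[\iota_*\CO_{\Pi_i}(-1)]=\iota_*o_i^{-1}\mapsto-\al_i^\vee$ for $i=1,\dots,r$. The main obstacle is the coupled pair of geometric $K$-theory identities in (a) and the reduced translation identity in (b); both are local statements on the resolved Kleinian surface $\widetilde{S}_e$ whose exceptional fiber is the Dynkin curve of $\mathfrak g$, and neither uses anything beyond Lemma~\ref{gen_K_ADE_fiber} and standard surface $K$-theory. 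Everything else is bookkeeping with the $\widehat W$-action on the submodule, for which the equivariance is already encoded in Proposition~\ref{descr_of_k_Spring}.
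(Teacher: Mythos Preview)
Your proposal is essentially correct and follows the same skeleton as the paper's proof: establish the isomorphism on the submodule $K(\B_e)$ via Proposition~\ref{descr_of_k_Spring}, then check the $\widehat W$-action on $[\CO_{\widetilde U}]$, then read off the canonical basis via Proposition~\ref{descr_canon}. There are two points where you diverge from the paper worth noting.

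For the translation action on $[\CO_{\widetilde U}]$, the paper does not restrict to the Slodowy slice at all. It uses Lemma~\ref{short_exact_div_on_tilde_N} directly on $\widetilde\CN$: the exact sequence~(\ref{ex_seq_O_al_N_tilde}) gives $[\CO_{\widetilde U}(\al_i^\vee)]=[\CO_{\widetilde U}]-\iota_*o_i^0=[\CO_{\widetilde U}]-\iota_*o_i^{-1}-\iota_*[\BC_p]$ in one line, and this matches $t_{\al_i^\vee}(d)=d+\al_i^\vee-K$. Your route through $\widetilde S_e$ reaches the same formula but needs an extra compatibility check (that restricting the equivariant class to the slice and identifying the supported-on-$\B_e$ part agrees with the $\iota_*$-preimage), which the direct use of Lemma~\ref{short_exact_div_on_tilde_N} avoids. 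Relatedly, the paper does not carry out your step~(a) at all: it never separately computes $[\CO_{\pi^{-1}(e)}]$ via the fundamental cycle. The identification $\al_0^\vee\mapsto\iota_*[\CO_{\pi^{-1}(e)}]$ is obtained \emph{a posteriori} from the canonical basis matching.

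There is one small gap in your final paragraph. Proposition~\ref{descr_canon} lists the canonical basis elements geometrically but says nothing about which $\bar C_{\nu_i}$ corresponds to which geometric object, so it alone does not give the labeling $\bar C_{\nu_i}\mapsto -\al_i^\vee$ for the \emph{specific} index $i$. The paper pins this down by computing $t_{\nu_i}(d)=d+\nu_i-K=d-\al_0^\vee-\al_{j_1}^\vee-\cdots-\al_i^\vee$ and invoking Remark~\ref{normal_coeff} ($\mathbf m_{w_{\nu_i}}^{w_{\nu_i}}=1$); an easy induction along the segment from $0$ to $i$ then forces $\bar C_{\nu_i}=-\al_i^\vee$. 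You should add this argument (or your direct computation of $[\CO_{\pi^{-1}(e)}]$ handles $i=0$, but you still need the index-matching for $i\geq 1$).
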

\begin{proof}
Recall that $K^{G^{\vee}}(\widetilde{U})$ has a basis
\begin{equation*}
\iota _{*}[{\mathbb{C}}_{p}], \iota _{*}o_{1}^{-1}, \ldots , \iota _{*}
o_{r}^{-1}, [{\mathcal{O}}_{\widetilde{U}}].
\end{equation*}
Moreover,
$\iota _{*}[{\mathbb{C}}_{p}], \iota _{*}o_{1}^{-1}, \ldots , \iota _{*}
o_{r}^{-1}$ form the submodule isomorphic to
$K(\mathcal{B}_{e}) \simeq (\mathfrak{h}_{{\mathbb{Z}}} \oplus {
\mathbb{C}}K) \otimes {\mathbb{Z}}_{\mathrm{sign}}$ (see Proposition~\ref{descr_of_k_Spring}).

It remains to compute the action of $\widehat{W}$ on
$[{\mathcal{O}}_{\widetilde{U}}]$. The module
$K^{G^{\vee}}(\widetilde{U})$ is the quotient of
$K^{G^{\vee}}(\widetilde{{\mathcal{N}}})$, where the surjection
$K^{G^{\vee}}(\widetilde{{\mathcal{N}}}) \twoheadrightarrow K^{G^{
\vee}}(\widetilde{U})$ is induced by the restriction
$\widetilde{{\mathcal{N}}} \supset \widetilde{U}$. Moreover, we have the
identification of $\widehat{W}$-modules
\begin{equation*}
K^{G^{\vee}}(\widetilde{{\mathcal{N}}}) \simeq {\mathbb{Z}}
\widehat{W} \otimes _{{\mathbb{Z}}W} {\mathbb{Z}}_{\mathrm{sign}},\, [{
\mathcal{O}}(\gamma )] \mapsto T_{\gamma}.
\end{equation*}
In particular,
$[{\mathcal{O}}_{\widetilde{{\mathcal{N}}}}] \in K^{G^{\vee}}(
\widetilde{{\mathcal{N}}})$ identifies with
$1 \in {\mathbb{Z}}\widehat{W} \otimes _{{\mathbb{Z}}W} {\mathbb{Z}}_{
\mathrm{sign}}$, so $W$ acts on
$[{\mathcal{O}}_{\widetilde{{\mathcal{N}}}}]$ via the sign representation.
It remains to compute the action of $Q^{\vee}$ on
$[{\mathcal{O}}_{\widetilde{U}}]$. In other words we need to compute the
action of the elements $t_{\alpha ^{\vee}_{i}}$, $i=1,\ldots ,r$ on
$[{\mathcal{O}}_{\widetilde{U}}]$. By the definitions and Lemma~\ref{short_exact_div_on_tilde_N} we have
\begin{equation*}
t_{\alpha ^{\vee}_{i}} \cdot [{\mathcal{O}}_{\widetilde{U}}]=[{
\mathcal{O}}_{\widetilde{U}}(\alpha _{i}^{\vee})]=[{\mathcal{O}}_{
\widetilde{U}}]-\iota _{*}o^{0}_{i}=[{\mathcal{O}}_{\widetilde{U}}]-
\iota _{*}o_{i}^{-1}-\iota _{*}[{\mathbb{C}}_{p}].
\end{equation*}
It remains to recall that the action of $t_{\alpha _{i}^{\vee}}$ on
$d$ is given by
\begin{equation*}
t_{\alpha _{i}^{\vee}}(d)=d+\alpha _{i}^{\vee}-K.
\end{equation*}
The isomorphism of $\widehat{W}$-modules
$K^{G^{\vee}}(\widetilde{U}) \simeq \widehat{\mathfrak{h}}_{{
\mathbb{Z}}} \otimes {\mathbb{Z}}_{\mathrm{sign}}$ follows.

The claim about the canonical basis follows from Proposition~\ref{descr_canon}. The fact that the element $C_{\nu _{i}}$ is equal to
$-\alpha _{i}^{\vee}$ but not $-\alpha _{j}^{\vee}$ for some other
$j$ can be easily seen from the equality (use (\ref{nu_i_expl})):
\begin{equation*}
t_{\nu _{i}}(d)=d+\nu _{i}-K=d-\alpha _{0}^{\vee}-\alpha ^{\vee}_{j_{1}}-
\cdots -\alpha ^{\vee}_{i}
\end{equation*}
together with Remark~\ref{normal_coeff}.
\end{proof}

\begin{Rem}
One can avoid the use of Proposition~\ref{descr_canon} in the proof of
Proposition~\ref{ident_K_U_DE} and instead use results of
\cite{lu_subreg} on the canonical basis in
$K^{G^{\vee}}({\cal B}_{e})$ together with \cite[Theorem 5.3.5]{bm}. Recall
that the canonical basis from \cite{lu_subreg} differs from the one in
Proposition~\ref{descr_canon} by applying Grothendieck-Serre duality to
it.
\end{Rem}

\subsection{Computation of ${\bf{m}}^{w_{\gamma}}_{w_{i}}$ and the proof of Theorem~\ref{main_th_formulation}}
\label{comp_KL_values_DE}

Recall now that by Equation (\ref{decomp_that_we_use_VTEX1}) we have
\begin{equation*}
T_{\gamma}=\sum {\bf{m}}^{w_{\gamma}}_{w_{\nu}} C_{\nu}.
\end{equation*}
Taking the image of this equality in
$K^{G^{\vee}}(\widetilde{U}) \simeq \widehat{\mathfrak{h}}_{{
\mathbb{Z}}} \otimes {\mathbb{Z}}_{\mathrm{sign}}$ and using Proposition~\ref{ident_K_U_DE}, we see that
\begin{equation*}
t_{\gamma}(d)=d-\sum _{i=0,1,\ldots ,r} {\bf{m}}^{w_{\gamma}}_{w_{i}}
\alpha _{i}^{\vee}.
\end{equation*}

So we can compute ${\bf{m}}^{w_{\gamma}}_{w_{i}}$. Indeed, this is the
coefficient in front of $-\alpha _{i}^{\vee}$ in
\begin{equation*}
t_{\gamma}(d)-d=\gamma -\frac{1}{2}|\gamma |^{2}K.
\end{equation*}
We conclude that
%
\begin{equation}
\label{comp_m}
{\bf{m}}^{w_{\gamma}}_{{w_{i}}}=\biggl\langle -\Lambda _{i}, \gamma -
\frac{1}{2}|\gamma |^{2}K \biggr\rangle =
\biggl\langle \Lambda _{i},-\gamma +
\frac{|\gamma |^{2}}{2}K \biggr\rangle .
\end{equation}
We are now ready to prove Theorem~\ref{main_th_formulation}.
\begin{proof}[Proof of Theorem~\ref{main_th_formulation}]
Let $i$, $\lambda $, $\Lambda $ be as in Theorem~\ref{main_th_formulation}.
Combining (\ref{our_kl_form_convenient}) and (\ref{comp_m}), we conclude that
\begin{align*}
\widehat{R} \operatorname{ch}L(\Lambda )
&=\sum _{\gamma \in Q^{\vee}}
\sum _{u \in W}\varepsilon (uw_{i}) {\bf{m}}^{w_{-\gamma}}_{w_{i}} e^{ut_{
\gamma}(\lambda +\widehat{\rho})}
\\
&=\sum _{\gamma \in Q^{\vee}}\sum _{u \in W}\varepsilon (uw_{i})
\biggl\langle \Lambda _{i},\gamma +\frac{|\gamma |^{2}}{2}K\biggr\rangle e^{ut_{\gamma}w_{i}(\Lambda +\widehat{\rho})},
\end{align*}
which is precisely the statement of Theorem~\ref{main_th_formulation}.
\end{proof}

\section{The subregular type $A$ case}
\label{sect_subreg_A}

In this section we assume that $\mathfrak{g}=\mathfrak{sl}_{n}$ for some
$n \in {\mathbb{Z}}_{\geqslant 3}$. Recall that our goal is to describe
the $\widehat{W}$-module $K^{\operatorname{PGL}_{n}}(\widetilde{U})$ and
the canonical basis in it.

\subsection{Structure of $\widehat{W}$, its extended version $\widehat{W}^{\mathrm{ext}}$}

Recall that $Q^{\vee}$ is the cocharacter lattice of
$\operatorname{SL}_{n}$ that is equal to the character lattice of
$\operatorname{PGL}_{n}$. We can identify $Q^{\vee}$ with the following
sublattice of ${\mathbb{Z}}^{n}$:
\begin{equation*}
Q^{\vee}=\{(a_{1},\ldots ,a_{n}) \in {\mathbb{Z}}^{n}\mid  a_{1}+ \cdots +a_{n}=0\}.
\end{equation*}
Recall that $\epsilon _{1},\ldots ,\epsilon _{n}$ is the standard basis
of ${\mathbb{Z}}^{n}$. Simple coroots
$\alpha _{i}^{\vee} \in Q^{\vee}$ are
$ \alpha _{i}^{\vee}=\epsilon _{i}-\epsilon _{i+1}$,
$i=1,\ldots ,n-1$, and the Weyl group of $\mathfrak{sl}_{n}$ is
$S_{n}$. The action of $S_{n}$ on $Q^{\vee}$ is the standard action via
permutations. Recall that
\begin{equation*}
\widehat{W} = Q^{\vee} \rtimes S_{n}.
\end{equation*}

The group $\widehat{W}$ can be described as follows. It is the group of
all permutations
$\sigma \colon {\mathbb{Z}}\rightarrow {\mathbb{Z}}$ such that
$\sigma (i+n)=\sigma (i)+n$ and $\sum _{i=1}^{n}(\sigma (i)-i)=0$. The
group of all permutations
$\sigma \colon {\mathbb{Z}}\rightarrow {\mathbb{Z}}$ such that
$\sigma (i+n)=\sigma (i)+n$ is isomorphic to
$S_{n} \ltimes {\mathbb{Z}}^{n}$.

Let us recall the description of the elements
$s_{0},s_{1},\ldots ,s_{n-1}$, $t_{\gamma}$ in these terms. Element
$s_{i}$ is given by
%
\begin{equation}
s_{i}(j)=
\begin{cases}
j+1&\text{for}~j \equiv i ~(\operatorname{mod}~n),
\\
j-1&\text{for}~j \equiv i+1 ~(\operatorname{mod}~n),
\\
j&\text{otherwise}.
\end{cases}
\end{equation}

For a lattice element
$\gamma =(a_{0},\ldots ,a_{n-1}) \in {\mathbb{Z}}^{n}$ the corresponding
element $t_{\gamma}$ of ${\mathbb{Z}}^{n} \rtimes S_{n}$ is given by
\begin{equation*}
t_{\gamma}(k)=k+a_{[k]}n,
\end{equation*}
where $[k] \in {\mathbb{Z}}/n{\mathbb{Z}}\simeq \{0,1,\ldots ,n-1\}$ is
the class of $k$ modulo $n$. So the element $t_{\epsilon _{k}}$ is given
by
\begin{equation*}
t_{\epsilon _{k}}(j)=
\begin{cases}
j+n&\text{for}~j \equiv k~(\text{mod}~n),
\\
j&\text{otherwise}.
\end{cases}
\end{equation*}

Let $P^{\vee}$ be the cocharacter lattice of
$\operatorname{PGL}_{n}$. We can identify $P^{\vee}$ with the following
quotient of ${\mathbb{Z}}^{n}$:
\begin{equation*}
P^{\vee}={\mathbb{Z}}^{\oplus n}/{\mathbb{Z}}(\epsilon _{1}+\cdots +
\epsilon _{n}).
\end{equation*}
We have a natural embedding $Q^{\vee} \subset P^{\vee}$. It will be useful
to consider the the {\textit{extended}} affine Weyl group
\begin{equation*}
\widehat{W}^{\mathrm{ext}}:= P^{\vee} \rtimes S_{n}.
\end{equation*}
For $\gamma \in {\mathbb{Z}}^{n}$ we denote by
$t_{\gamma} \in \widehat{W}^{\mathrm{ext}}$ the corresponding element of
$\widehat{W}^{\mathrm{ext}}$. The group $\widehat{W}^{\mathrm{ext}}$ is
generated by $t_{\epsilon _{1}},\ldots ,t_{\epsilon _{n}}$,
$s_{1},\ldots ,s_{n-1}$.

\subsection{Parametrization of the canonical basis of $K^{\operatorname{PGL}_{n}}(\widetilde{U})$}

Recall that $e \in \mathfrak{sl}_{n}$ is the subregular nilpotent element
and $c \subset \widehat{W}$ is the corresponding two-sided cell. Recall
that the module $K^{\operatorname{PGL}_{n}}(\widetilde{U})$ has a canonical
basis $\bar{C}_{\nu}$ parametrized by $\nu $ such that
$w_{\nu }\in c \cup \{1\}$. It follows from Proposition~\ref{descr_cell_subreg} that the set of $w_{\nu}$ as above is given by

\begin{Lem}%
\label{descr_subreg_cell_type_A}
The elements $w_{\nu }\in c$ can be described as follows. The set of possible
$\nu $ is parametrized by ${\mathbb{Z}}$. For $i \in {\mathbb{Z}}$, the
corresponding element $w_{i}$ is equal to
\begin{equation*}
w_{i}=
\begin{cases}
s_{[i]}s_{[i-1]}\ldots s_{1}s_{0}&\text{for}~i > 0
\\
s_{0}&\text{for}~i=0,
\\
s_{[i]}s_{[i+1]}\ldots s_{[-1]}s_{0}&\text{for}~i < 0
\end{cases}
\end{equation*}
and $\nu _{i}$ is the image of $w_{i}$ in
$Q^{\vee }\simeq \widehat{W}/S_{n}$.
\end{Lem}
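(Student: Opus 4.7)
The plan is to combine Proposition~\ref{descr_cell_subreg} --- which identifies $c$ with the set of non-identity elements of $\widehat{W}$ having a unique reduced decomposition --- with an explicit analysis of such elements in the affine Weyl group of type $\tilde{A}_{n-1}$.

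First I would classify elements of $\widehat{W}$ admitting a unique reduced decomposition. The Dynkin diagram of $\widehat{\mathfrak{sl}}_n$ is a cycle on $\{0,1,\ldots,n-1\}$, so the only nontrivial braid/commutation relations among simple reflections are $s_is_j=s_js_i$ for distinct $i,j$ non-adjacent in the cycle and $s_is_js_i=s_js_is_j$ for $i,j$ adjacent. By Matsumoto's theorem, a reduced expression $s_{i_1}\cdots s_{i_k}$ is the unique reduced decomposition of its product iff no such relation is applicable to any consecutive pair or triple of letters. Avoiding commutations forces $i_{p+1}\equiv i_p \pm 1 \pmod n$, and avoiding braids then forces $i_{p+2}\ne i_p$, so all increments share a fixed sign $\varepsilon\in\{+1,-1\}$. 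Thus the sequence $(i_1,\ldots,i_k)$ is a strictly monotonic walk on the cycle $\BZ/n\BZ$.

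Second, I would cut down to the subset lying in $\widehat{W}^f$. If $w\in c$ has unique reduced decomposition $s_{i_1}\cdots s_{i_k}$, then $s_{i_k}$ is its unique right descent: any right descent $s_j$ gives a reduced expression of $w$ ending in $s_j$, which by uniqueness must coincide with the given one. Since $w\in\widehat{W}^f$ is characterized by having no right descent in $\{s_1,\ldots,s_{n-1}\}$, we conclude that $w\in c\cap\widehat{W}^f$ iff its unique reduced decomposition ends in $s_0$. Together with the first step, such $w$ are precisely the strictly monotonic walks on the $n$-cycle terminating at $0$. Parametrizing by $i\in\BZ$ --- with $i>0$ labeling the decreasing walk $s_{[i]}s_{[i-1]}\cdots s_1 s_0$ of length $i+1$, $i<0$ labeling the increasing walk $s_{[i]}s_{[i+1]}\cdots s_{[-1]}s_0$ of length $|i|+1$, and $i=0$ labeling $w=s_0$ --- one recovers exactly the $w_i$ of the lemma. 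Finally, $\nu_i$ is simply the image of $w_i$ under the bijection $\widehat{W}^f \iso \widehat{W}/S_n \simeq Q^\vee$.

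The main obstacle is the first step: some care is needed in invoking Matsumoto's theorem to argue that the absence of an applicable braid or commutation move on a single reduced expression already implies the a priori stronger property of having a unique reduced decomposition. Once this is handled, the enumeration of strictly monotonic walks on the cycle and the right-descent argument of the second step are both straightforward; in particular, no separate verification that $w_i\in \widehat{W}^f$ is required, since this is built into the right-descent characterization.
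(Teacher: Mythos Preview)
Your argument is correct and matches the paper's approach, which simply cites Proposition~\ref{descr_cell_subreg} without spelling out the type~$A$ combinatorics. One remark: the obstacle you flag is not one---Matsumoto's theorem says precisely that any two reduced expressions are connected by braid moves, so a reduced expression admitting no such move is automatically unique; the point that actually deserves a word is the converse, that every strictly monotonic walk on the cycle is reduced, and this follows from Tits' solution to the word problem (no braid move and no consecutive repetition is applicable, hence the word cannot be shortened).
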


\subsection{Description of ${\cal B}_{e}$ and ${\mathbb{C}}^{\times}$-equivariant line bundles on it}
\label{subsec_str_B_e_A}

Let us now recall the explicit description of $\mathcal{B}_{e}$.

The element $e$ can be described as follows. Let
$V={\mathbb{C}}^{n}$ be the standard representation of
$\mathfrak{sl}_{n}$ and let $v_{1},\ldots ,v_{n}$ be the standard basis
of ${\mathbb{C}}^{n}$. Then the element $e$ is:
\begin{equation*}
e(v_{n-1})=e(v_{n})=0,\, e(v_{i})=v_{i+1},\, i=1,\ldots ,n-2.
\end{equation*}
For every (ordered) basis $b_{1},\ldots ,b_{n}$ of $V$ let
${\mathcal{F}}(b_{1},\ldots ,b_{n})$ be the flag
\begin{equation*}
\{0\} \subset \operatorname{Span}_{{\mathbb{C}}}(b_{1}) \subset
\operatorname{Span}_{{\mathbb{C}}}(b_{1},b_{2}) \subset \cdots
\subset \operatorname{Span}_{{\mathbb{C}}}(b_{1},\ldots ,b_{n-1})
\subset V.
\end{equation*}
To every
$(k,a) \in (\{1,\ldots ,n-1\} \times {\mathbb{C}}) \cup \{(0,0)\}$ we associate
the flag
\begin{equation*}
{\mathcal{F}}_{k,a}={\mathcal{F}}(v_{n-1},v_{n-2},\ldots ,v_{n-k+1},v_{n-k}+av_{n},v_{n},v_{n-k-1},v_{n-k-2},
\ldots ,v_{1}).
\end{equation*}
The irreducible components of $\mathcal{B}_{e}$ are parametrized by
$k \in \{1,2,\ldots ,n-1\}$:
\begin{equation*}
\Pi _{k}=\{{\mathcal{F}}_{k,a}\mid  a \in {\mathbb{C}}\} \cup \{{
\mathcal{F}}_{k+1,0}\}.
\end{equation*}

For $k=1,2,\ldots ,n$ we set
\begin{equation*}
p_{k-1,k}:={\mathcal{F}}_{k-1,0}={\mathcal{F}}(v_{n-1},\ldots ,v_{n-k+1},v_{n},v_{n-k},v_{n-k-1},
\ldots ,v_{1}).
\end{equation*}
For $k=2,\ldots ,n-1$ we have
$p_{k-1,k}={\mathcal{F}}_{n-k,0}=\Pi _{k} \cap \Pi _{k-1}$.

Our first goal is to describe the action of $\widehat{W}$ on
$K^{\operatorname{PGL}_{n}}(\widetilde{{\mathbb{O}}}_{e})$. It will be
more convenient to describe the action of
$\widehat{W}^{\mathrm{ext}}$ on
$K^{\operatorname{SL}_{n}}(\widetilde{{\mathbb{O}}}_{e})$ first. Recall that
\begin{equation*}
K^{\operatorname{PGL}_{n}}(\widetilde{{\mathbb{O}}}_{e})=K^{Z_{e,
\operatorname{PGL}_{n}}}({\cal B}_{e}),\, K^{\operatorname{SL}_{n}}(
\widetilde{{\mathbb{O}}}_{e})=K^{Z_{e,\operatorname{SL}_{n}}}({\cal B}_{e}),
\end{equation*}
where $Z_{e,\operatorname{PGL}_{n}} \subset \operatorname{PGL}_{n}$,
$Z_{e,\operatorname{SL}_{n}} \subset \operatorname{SL}_{n}$ are reductive
parts of the centralizers of $e$.

We have the identifications (compare the second identification with
\cite[Section 5.1]{lu_notes_aff})
\begin{gather*}
{\mathbb{C}}^{\times }\iso Z_{e,\operatorname{PGL}_{n}} \subset
\operatorname{PGL}_{n},\, t \mapsto \operatorname{diag}(1,1,\ldots ,1,t),%
\\
{\mathbb{C}}^{\times }\iso Z_{e,\operatorname{SL}_{n}} \subset
\operatorname{SL}_{n},\, t \mapsto \operatorname{diag}(t^{-1},t^{-1},
\ldots ,t^{-1},t^{n-1}).%
\end{gather*}

We obtain two actions of ${\mathbb{C}}^{\times}$ on
$\mathcal{B}_{e}$. The first action sends a flag
${\mathcal{F}}_{k,a}$ to the flag ${\mathcal{F}}_{k,ta}$ and the second
action sends ${\mathcal{F}}_{k,a}$ to the flag
${\mathcal{F}}_{k,t^{n}a}$.

Fix $1 \leqslant k \leqslant n-1$ and consider the following action of
${\mathbb{C}}^{\times}$ on ${\mathbb{P}}^{1}$.
%
\begin{equation}
\label{act_P_1}
t \cdot [x:y]=[t^{n}x:y].
\end{equation}
The following lemma is straightforward (compare with
\cite[Section 5.4]{lu_notes_aff}, \cite[Section 3.6]{gr}).

\begin{Lem}
Let ${\mathbb{C}}^{\times}$ act on ${\mathbb{P}}^{1}$ via (\ref{act_P_1}).
For every collection of integers $i,j$, satisfying $i-j=nm$ for some
$m \in {\mathbb{Z}}$, there exists the unique
${\mathbb{C}}^{\times}$-equivariant line bundle on
${\mathbb{P}}^{1}$ such that $t \in {\mathbb{C}}^{\times}$ acts via
$t^{j}$ at the fiber over $[1:0]$ and acts via $t^{i}$ at the fiber over
$[0:1]$. Every ${\mathbb{C}}^{\times}$-equivariant line bundle on
${\mathbb{P}}^{1}$ can be obtained in this way. We denote the line bundle
above by ${\mathcal{O}}^{j,i}$.
\end{Lem}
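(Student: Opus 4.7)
The plan is to reduce the claim to the standard classification of line bundles on $\BP^1$ together with the classification of $\BC^\times$-equivariant structures on a fixed line bundle. Recall that every line bundle on $\BP^1$ is isomorphic to $\CO_{\BP^1}(m)$ for a unique $m\in\BZ$, and that two $\BC^\times$-equivariant structures on the same line bundle differ by a character $\BC^\times\to\BC^\times$, $t\mapsto t^k$ (because $\BP^1$ is connected and proper, so its global invertible regular functions are just $\BC^\times$, and equivariant structures on $\CO_{\BP^1}$ are classified by $\on{Hom}(\BC^\times,\BC^\times)=\BZ$). Hence equivariant line bundles are parametrized by $\BZ^2$ via (degree, twist), and the map to the pair of weights $(j,i)$ at the fixed points $[1:0], [0:1]$ is linear in this parameter.

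First I would exhibit a natural equivariant structure on $\CO_{\BP^1}(-1)$ using the tautological description: $\CO_{\BP^1}(-1)$ is the subbundle of the trivial rank two bundle whose fiber at $[x:y]$ is $\BC\cdot(x,y)$. Since the $\BC^\times$-action (\ref{act_P_1}) is induced by the linear action $t\cdot(x,y)=(t^n x,y)$ on $\BC^2$, this tautological description gives a canonical equivariant structure, and reading off the action on the fibers over the fixed points yields weight $n$ at $[1:0]$ and weight $0$ at $[0:1]$. Dualizing and tensoring, $\CO_{\BP^1}(m)$ acquires an equivariant structure with weights $(-nm,0)$ at $([1:0],[0:1])$, so in particular $i-j=nm$ holds for this bundle.

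Next I would tensor with the trivial equivariant line bundle corresponding to the character $t\mapsto t^k$; this shifts \emph{both} weights by $k$ without changing the underlying bundle. Choosing $k=j+nm$ we obtain an equivariant line bundle on $\BP^1$ of degree $m$ with weights exactly $(j,i)=(j,j+nm)$, proving existence. For uniqueness, suppose two equivariant line bundles $L$ and $L'$ have the same pair of weights $(j,i)$. Then $L\otimes (L')^{-1}$ has weights $(0,0)$; its degree equals $(i-j)/n - (i-j)/n=0$ by the computation above, so as a non-equivariant line bundle it is $\CO_{\BP^1}$, and its equivariant structure is the one of character $0$, hence trivial. Therefore $L\cong L'$ equivariantly. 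Finally, every equivariant line bundle arises this way, because its underlying bundle is $\CO_{\BP^1}(m)$ for some $m$, and the computation of the weight difference at the two fixed points forces $i-j=nm$. There is no real obstacle; the only point requiring a modicum of care is keeping track of signs in the conversion between the degree of the line bundle and the weight at the two fixed points, which is clarified by anchoring everything to the explicit tautological model of $\CO_{\BP^1}(-1)$.
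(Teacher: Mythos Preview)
Your argument is correct. The paper itself does not give a proof of this lemma; it simply declares it ``straightforward'' and points to \cite{lu_notes_aff} and \cite{gr} for comparison. Your write-up supplies exactly the standard details one would expect: classify the underlying bundle as $\CO_{\BP^1}(m)$, use the tautological model of $\CO_{\BP^1}(-1)$ under the linear action $t\cdot(x,y)=(t^nx,y)$ to pin down the weight difference as $nm$, and then twist by characters to hit any pair $(j,i)$ with $i-j=nm$. The uniqueness and exhaustiveness arguments are fine. The only cosmetic point is that in the uniqueness step, the phrase ``its degree equals $(i-j)/n-(i-j)/n=0$'' would read more cleanly as: the weight difference of $L\otimes(L')^{-1}$ is $0$, and since the weight difference equals $n$ times the degree (which you have already shown is independent of the character twist), the degree is $0$.
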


\begin{Rem}
The Euler characteristic of ${\mathcal{O}}^{j,i}$ is equal to $m+1$, so
${\mathcal{O}}^{j,i}$ is isomorphic to
${\mathcal{O}}_{{\mathbb{P}}^{1}}(m)$ as a line bundle.
\end{Rem}

Recall again that we have the action of
${\mathbb{C}}^{\times }\simeq Z_{e,\operatorname{SL}_{n}}$ on
$\mathcal{B}_{e}$, which acts on every $\Pi _{k}$ via (\ref{act_P_1}).
We identify
$K^{Z_{e,\operatorname{SL}_{n}}}(\operatorname{pt})={\mathbb{Z}}[\xi ^{
\pm 1}]$,
$K^{Z_{e,\operatorname{PGL}_{n}}}={\mathbb{Z}}[\xi ^{\pm n}]$. For
$k \in \{1,\ldots ,n-1\}$ and $a,b \in {\mathbb{Z}}$ such that
$a-b \in n{\mathbb{Z}}$ we denote by $O_{k}^{b,a}$ the line bundle on
$\Pi _{k}$ whose fiber over $p_{k-1,k}$ is ${\xi}^{b}$ and the fiber over
$p_{k,k+1}$ is ${\xi}^{a}$.

\begin{Lem}%
\label{eq_O_Euler}
For every $a,\, b \in {\mathbb{Z}}$ such that
$a-b \in n{\mathbb{Z}}$ we have
\begin{equation*}
O^{b,a}_{k}+O_{k}^{a,b}=({\xi}^{a}+{\xi}^{b})O^{0,0}_{k}.
\end{equation*}
\end{Lem}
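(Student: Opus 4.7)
The plan is to prove the identity in $K^{Z_{e,\on{SL}_n}}(\Pi_k)=K^{\BC^\times}(\Pi_k)$ by equivariant localization at the two $\BC^\times$-fixed points $p_{k-1,k}$ and $p_{k,k+1}$ of $\Pi_k\simeq\BP^1$ under the action (\ref{act_P_1}). By the very definition of the line bundles $O_k^{b,a}$, the restrictions to fixed points are the tautological things $\xi^b, \xi^a$, so almost no computation will be required once the localization framework is in place.

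The key input I would establish is that the restriction-to-fixed-points map
\[
\rho\colon K^{\BC^\times}(\Pi_k)\To K^{\BC^\times}(p_{k-1,k})\oplus K^{\BC^\times}(p_{k,k+1})\cong \BZ[\xi^{\pm 1}]^{\oplus 2}
\]
is injective. This is standard: the Bialynicki--Birula decomposition $\Pi_k=\{p_{k-1,k}\}\sqcup\BA^1$ combined with excision in equivariant $K$-theory shows that $K^{\BC^\times}(\Pi_k)$ is a free $\BZ[\xi^{\pm 1}]$-module (of rank $2$), while the equivariant localization theorem guarantees that $\rho$ becomes an isomorphism after inverting $\xi^n-1$. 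Injectivity of $\rho$ on the integral level then follows from the absence of $\BZ[\xi^{\pm 1}]$-torsion.

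With $\rho$ injective it suffices to verify the identity after restriction to each fixed point. By the defining property of $O_k^{b,a}$ and $O_k^{a,b}$, both sides restrict to $\xi^b+\xi^a$ at $p_{k-1,k}$ and to $\xi^a+\xi^b$ at $p_{k,k+1}$, and the claim follows. The only step with any content is the injectivity of $\rho$; the fixed-point verification is tautological. An alternative, more concrete route would induct on $|m|$ (where $a-b=nm$) starting from the equivariant Euler-type sequence
\[
0\to O_k^{0,n}\to (\xi^n+1)\,O_k^{0,0}\to O_k^{n,0}\to 0
\]
obtained from the tautological subbundle of $V\otimes\CO$ on $\BP(V)$ for $V=\BC^2$ of weights $(n,0)$, and then tensoring and iterating to reach general $m$; however the localization argument is cleaner and uniform in $m$.
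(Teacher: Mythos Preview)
Your localization argument is correct. The freeness of $K^{\BC^\times}(\Pi_k)$ over $\BZ[\xi^{\pm1}]$ (coming from the two-cell decomposition of $\BP^1$) together with the equivariant localization theorem does give injectivity of the restriction-to-fixed-points map, and the verification at $p_{k-1,k}$ and $p_{k,k+1}$ is then immediate from the very definition of $O_k^{b,a}$.

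The paper takes exactly the route you sketch as your ``alternative'': its one-line proof is ``use the Euler sequence for $\Pi_k\simeq\BP^1$'', i.e.\ the equivariant version of~(\ref{seq_p_1_eul}), which gives the case $|m|=1$ directly; the general case then follows by twisting and a short induction on $|m|$ (and in fact only the case $|m|=1$ is ever invoked later in the paper). Your localization argument is uniform in $m$ and avoids any induction, at the price of appealing to the localization theorem; the paper's approach stays entirely within the language of explicit short exact sequences used throughout the surrounding computations. Both proofs are valid and short.
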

\begin{proof}
Use the Euler sequence for $\Pi _{k} \simeq {\mathbb{P}}^{1}$ (see (\ref{seq_p_1_eul})).
\end{proof}

For $k = 1,\ldots ,n-1$ we set
\begin{equation*}
{\bf{O}}_{k}:=[O^{0,-n}_{k}].
\end{equation*}

\begin{Rem}
{\textit{Note that the line bundle $O^{0,-n}_{k}$ has degree $m=-1$, i.e. is
isomorphic to ${\mathcal{O}}_{\Pi _{k}}(-1)$.}}
\end{Rem}

Let ${\mathbb{C}}_{p_{0,1}}$ be the skyscraper sheaf of the point
$p_{0,1} \in {\cal B}_{e}$.

\begin{Lem}
We have
\begin{equation*}
[{\mathcal{O}}_{{\cal B}_{e}}]=[{\mathbb{C}}_{p_{01}}]+\sum _{k=1}^{n-1}
\xi ^{n}{\bf{O}}_{k}.
\end{equation*}
\end{Lem}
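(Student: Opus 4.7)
The plan is to view $\B_e$ as a chain of projective lines $\Pi_1,\ldots,\Pi_{n-1}$ meeting transversally at the nodes $p_{k,k+1}$ ($k=1,\ldots,n-2$), and to use the normalization short exact sequence
\begin{equation*}
0 \to \CO_{\B_e} \to \bigoplus_{k=1}^{n-1} \iota_{k*}\CO_{\Pi_k} \to \bigoplus_{k=1}^{n-2} \BC_{p_{k,k+1}} \to 0,
\end{equation*}
which is manifestly $\BC^\times$-equivariant with trivial equivariant structure on each term (since $\CO_{\B_e}$ is trivially equivariant and all nodes are fixed points). Taking classes in $K^{\BC^\times}(\B_e)$ then yields $[\CO_{\B_e}] = \sum_{k=1}^{n-1}[O_k^{0,0}] - \sum_{k=1}^{n-2}[\BC_{p_{k,k+1}}]$.

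Next, for each $k$ I would apply the tautological sequence $0 \to \CO_{\Pi_k}(-p_{k-1,k}) \to \CO_{\Pi_k} \to \BC_{p_{k-1,k}} \to 0$, the key point being to identify the equivariant structure on the ideal sheaf $\CO_{\Pi_k}(-p_{k-1,k})$. Using the parametrization $\Pi_k=\{\CF_{k,a}\,|\, a \in \BC\} \cup \{\CF_{k+1,0}\}$, the fixed point $p_{k-1,k}=\CF_{k,0}$ sits at $a=0$, where the action $a \mapsto t^n a$ has weight $n$ on the local coordinate and hence weight $n$ on the cotangent space; the other fixed point $p_{k,k+1}=\CF_{k+1,0}$ lies outside the support of the divisor $p_{k-1,k}$, so the fiber there remains $\xi^0$. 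Thus $\CO_{\Pi_k}(-p_{k-1,k}) \simeq O_k^{n,0}$ as $\BC^\times$-equivariant line bundles, and I obtain $[O_k^{0,0}] = [O_k^{n,0}] + [\BC_{p_{k-1,k}}]$.

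Substituting this into the previous formula and telescoping the skyscraper contributions, the sum $\sum_{k=1}^{n-1}[\BC_{p_{k-1,k}}]$ cancels $\sum_{k=1}^{n-2}[\BC_{p_{k,k+1}}]$ except for the boundary term $[\BC_{p_{0,1}}]$, which yields the claimed identity once we note that $\xi^n {\bf O}_k = \xi^n[O_k^{0,-n}] = [O_k^{n,0}]$. The main delicate point is the tangent-weight computation at $p_{k-1,k}$: getting the correct sign of the weight (and hence identifying the endpoint at which the divisor sits) is what makes the telescoping leave $[\BC_{p_{0,1}}]$ on the $k=1$ end rather than $[\BC_{p_{n-1,n}}]$ on the other end, matching the asymmetry of the formula to be proved.
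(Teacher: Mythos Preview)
Your proof is correct and takes a genuinely different route from the paper's. The paper argues by induction on the length of the chain: it uses the short exact sequence
\[
0 \ra \xi^n O_{n-1}^{0,-n} \ra \CO_{\B_e} \ra \CO_{\cup_{k=1}^{n-2}\Pi_k} \ra 0
\]
(the ideal sheaf of the sub-chain in $\B_e$, supported on $\Pi_{n-1}$) to peel off the last component and reduce to the shorter chain $\Pi_1\cup\cdots\cup\Pi_{n-2}$. Your approach instead handles all components simultaneously via the normalization sequence, expresses each $[O_k^{0,0}]$ through the ideal-sheaf sequence at $p_{k-1,k}$, and telescopes. Both arguments ultimately rest on the same identification $\CO_{\Pi_k}(-p_{k-1,k})\simeq O_k^{n,0}$ (this is exactly the content of the paper's Lemma~\ref{id_sky_vs_str}), so the difference is organizational rather than conceptual. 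Your version is arguably cleaner in that it avoids an explicit induction, while the paper's version makes the recursive structure of the chain more visible.

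One minor remark: your phrase ``weight $n$ on the cotangent space'' is convention-dependent (some conventions would call this weight $-n$), but your conclusion $\CO_{\Pi_k}(-p_{k-1,k})\simeq O_k^{n,0}$ is the correct one in the paper's normalization, and the telescoping then goes through exactly as you describe.
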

\begin{proof}
Recall that $O_{k}^{0,-n}$ is a line bundle whose fiber over
$p_{k-1,k}$ is $1$ and the fiber over $p_{k,k+1}$ is $\xi ^{-n}$. Clearly
we have an exact sequence
\begin{equation*}
0 \rightarrow \xi ^{n}O_{n-1}^{0,-n} \rightarrow {\mathcal{O}}_{{
\cal B}_{e}} \rightarrow {\mathcal{O}}_{\cup _{k=1}^{n-2}\Pi _{k}}
\rightarrow 0.
\end{equation*}
The claim follows by induction.
\end{proof}

We set
\begin{equation*}
{\bf{O}}_{0}:=-[{\mathcal{O}}_{{\cal B}_{e}}]=-[{\mathbb{C}}_{p_{01}}]-
\sum _{k=1}^{n-1}\xi ^{n}{\bf{O}}_{k}.
\end{equation*}

We finally extend ${\bf{O}}_{k}$ to every $k \in {\mathbb{Z}}$ in such a way
that ${\bf{O}}_{k}=\xi ^{n}{\bf{O}}_{k+n}$ for every $k \in {\mathbb{Z}}$.
The set $\{{\bf{O}}_{k}\mid  k \in {\mathbb{Z}}\}$ forms a basis of the
${\mathbb{Z}}$-module $K^{Z_{e,\operatorname{PGL}_{n}}}(\mathcal{B}_{e})$.

\subsection{Modules $\mathfrak{h}_{\infty ,{\mathbb{Z}}}$, $\widehat{\mathfrak{h}}_{\infty ,{\mathbb{Z}}}$ over $S_{n} \ltimes {\mathbb{Z}}^{n}$, $\widehat{W}$}

\subsubsection{Module $\mathfrak{h}_{\infty ,{\mathbb{Z}}}$}

Using the identification of ${\mathbb{Z}}^{n} \rtimes S_{n}$ with the permutations
${\mathbb{Z}}\rightarrow {\mathbb{Z}}$, we obtain the action of
${\mathbb{Z}}^{n} \rtimes S_{n} \curvearrowright {\mathbb{Z}}^{
\oplus {\mathbb{Z}}}$ that sends $\epsilon _{i}$ to
$\epsilon _{\sigma (i)}$. Consider now the action of
${\mathbb{Z}}[\xi ^{\pm n}]$ on ${\mathbb{Z}}^{\oplus {\mathbb{Z}}}$ given
by $\xi ^{n} \cdot \epsilon _{i}=\epsilon _{i-n}$.

\begin{Rem}
Note that ${\mathbb{Z}}^{n} \rtimes S_{n}$-action on
${\mathbb{Z}}^{\oplus {\mathbb{Z}}}$ commutes with the
${\mathbb{Z}}[\xi ^{\pm n}]$-action.
\end{Rem}

Let
$\mathfrak{h}_{\infty ,{\mathbb{Z}}} \subset {\mathbb{Z}}^{\oplus {
\mathbb{Z}}}$ be the submodule, consisting of elements
$(a_{i})_{i \in {\mathbb{Z}}}$ such that
$\sum _{i \in {\mathbb{Z}}}a_{i}=0$. We obtain the action
$\widehat{W} \curvearrowright \mathfrak{h}_{\infty ,{\mathbb{Z}}}$. Module
$\mathfrak{h}_{\infty ,{\mathbb{Z}}}$ has a ${\mathbb{Z}}$-basis
$\{\alpha _{i}^{\vee},\, i \in {\mathbb{Z}}\}$, where
$\alpha _{i}^{\vee}=\epsilon _{i}-\epsilon _{i+1}$.

\begin{Rem}
Recall that $\mathfrak{h}_{\infty ,{\mathbb{Z}}}$ is a $\widehat{W}$-module
over ${\mathbb{Z}}[\xi ^{\pm n}]$. We can consider the quotient
$\mathfrak{h}_{\infty ,{\mathbb{Z}}}/(\xi ^{n}-1)\mathfrak{h}_{
\infty ,{\mathbb{Z}}}$. It is easy to see that $\widehat{W}$-module
$\mathfrak{h}_{\infty ,{\mathbb{Z}}}/(\xi ^{n}-1)\mathfrak{h}_{
\infty ,{\mathbb{Z}}}$ is isomorphic to
$\mathfrak{h}_{{\mathbb{Z}}} \oplus {\mathbb{Z}}K$ via
$[\alpha ^{\vee}_{i}] \mapsto \alpha ^{\vee}_{i}$,
$i=0,1,\ldots ,n-1$.
\end{Rem}

\begin{Rem}
Let us define the symmetric bilinear form $(\,,\,)$ on
$\mathfrak{h}_{\infty ,{\mathbb{Z}}}$. Consider the symmetric bilinear
form $(\,,\,)$ on ${\mathbb{Z}}^{\oplus {\mathbb{Z}}}$ given by
$(\epsilon _{i},\epsilon _{j})=\delta _{[i],[j]}$. We denote by
$(\,,\,)$ its restriction to $\mathfrak{h}_{\infty ,{\mathbb{Z}}}$. It
is clear that $(\,,\,)$ is ${\mathbb{Z}}^{n} \rtimes S_{n}$-invariant.
It is also clear that
\begin{equation*}
(\alpha _{i}^{\vee},\alpha _{j}^{\vee})=
\begin{cases}
2&\text{if}~[i]=[j],
\\
-1&\text{if}~[i],[j] \in {\mathbb{Z}}/n{\mathbb{Z}}~\text{are adjacent},
\\
0&\text{otherwise}.
\end{cases}
\end{equation*}
\end{Rem}

\subsubsection{Module $\widehat{\mathfrak{h}}_{\infty ,{\mathbb{Z}}}$}
\label{ext_h_infty}

Recall the $\widehat{W}$-module
$\mathfrak{h}_{\infty ,{\mathbb{Z}}}$. Set
\begin{equation*}
\widehat{\mathfrak{h}}_{\infty ,{\mathbb{Z}}}:=\mathfrak{h}_{\infty ,{
\mathbb{Z}}} \oplus {\mathbb{Z}}d
\end{equation*}
and define the $\widehat{W}$-module structure on it by ($i=1,\ldots ,n-1$)
\begin{equation*}
t_{\alpha _{i}^{\vee}}(d)=d+\epsilon _{i}-\epsilon _{i+1-n},\, w(d)=d,
\, w \in S_{n}.
\end{equation*}

Clearly, we have an exact sequence of $\widehat{W}$-modules
\begin{equation*}
0 \rightarrow \mathfrak{h}_{\infty ,{\mathbb{Z}}} \rightarrow
\widehat{\mathfrak{h}}_{\infty ,{\mathbb{Z}}} \rightarrow {\mathbb{Z}}_{
\mathrm{triv}} \rightarrow 0.
\end{equation*}

\begin{Rem}
Recall the bilinear form $(\,,\,)$ on
$\mathfrak{h}_{\infty ,{\mathbb{Z}}}$. It can be extended to the bilinear
from on $\widehat{\mathfrak{h}}_{\infty ,{\mathbb{Z}}}$ by
$(d,\alpha _{i})=0$ for $i \notin n{\mathbb{Z}}$ and
$(d,\alpha _{i})=1$ for $i \in n{\mathbb{Z}}$, $(d,d)=0$. It is easy to
see that this form is $\widehat{W}$-invariant.
\end{Rem}

We can extend the action of $\widehat{W}$ on
$\widehat{\mathfrak{h}}_{\infty ,{\mathbb{Z}}}$ to the action of
${\mathbb{Z}}^{n} \rtimes S_{n}$ on
${\mathbb{Z}}^{\oplus {\mathbb{Z}}} \oplus {\mathbb{Z}}d$ via
\begin{equation*}
t_{\epsilon _{i}}(d)=d+\epsilon _{i}.
\end{equation*}
For $\gamma \in {\mathbb{Z}}^{n}$ let us describe explicitly the element
$t_{\gamma}(d)$. For $k \in {\mathbb{Z}}$ we set
\begin{equation*}
\overline{\Lambda }^{\infty}_{k}:=\sum _{l \leqslant k}\epsilon _{l}^{*}
\in \mathfrak{h}_{\infty}^{*}.
\end{equation*}
Clearly
\begin{equation*}
\langle \overline{\Lambda }^{\infty}_{i}, \alpha ^{\vee}_{k} \rangle =
\delta _{i,k}.
\end{equation*}

For $a \in {\mathbb{Z}}_{>0}$ we have
\begin{equation*}
t_{a\epsilon _{k}}(d)=d+\epsilon _{k}+\cdots +\epsilon _{k+(a-1)n},
\end{equation*}
so
\begin{equation*}
\langle \overline{\Lambda }^{\infty}_{i},t_{a\epsilon _{k}}(d)
\rangle =|{\mathbb{Z}}_{\leqslant i} \cap [k,k+{(a-1)n}] \cap (k+n{
\mathbb{Z}})|.
\end{equation*}

For $a < 0$ we have
\begin{equation*}
t_{a\epsilon _{k}}(d)=d-\epsilon _{k-n}-\cdots -\epsilon _{k+an},
\end{equation*}
so
\begin{equation*}
\langle \overline{\Lambda }^{\infty}_{i},t_{a\epsilon _{k}}(d)
\rangle =-|{\mathbb{Z}}_{\leqslant i} \cap [k+an,k-n] \cap (k+n{
\mathbb{Z}})|.
\end{equation*}

For $i \in {\mathbb{Z}}$, $k=1,\ldots ,n$, and $a \in {\mathbb{Z}}$ we
set
%
\begin{equation}
\label{z_def!}
z_{i}(a\epsilon _{k}):=
\begin{cases}
|{\mathbb{Z}}_{\leqslant i} \cap [k,k+{(a-1)n}] \cap (k+n{\mathbb{Z}})|&
\text{for}~ a \in {\mathbb{Z}}_{\geqslant 0},
\\
-|{\mathbb{Z}}_{\leqslant i} \cap [k+an,k-n] \cap (k+n{\mathbb{Z}})|&
\text{for}~a \in {\mathbb{Z}}_{\leqslant 0}.
\end{cases}
\end{equation}

We conclude that for
$\gamma =a_{1}\epsilon _{1}+a_{2}\epsilon _{2}+\cdots +a_{n}\epsilon _{n}
\in Q^{\vee}$ we have
\begin{equation*}
t_{\gamma}(d)=d+\sum _{i \in {\mathbb{Z}}}\Big(\sum _{k=1}^{n}z_{i}(a_{k}
\epsilon _{k})\Big)\alpha _{i}^{\vee}.
\end{equation*}

\subsection{Structure of modules $K^{Z_{e,\operatorname{SL}_{n}}}({\cal B}_{e})$, $K^{Z_{e,\operatorname{PGL}_{n}}}({\mathcal{B}}_{e})$}
\label{type_A_fib_module}

The main reference for this section is \cite{lu_notes_aff}. The goal of
this section is to construct an isomorphism of $\widehat{W}$-modules
$\mathfrak{h}_{\infty ,{\mathbb{Z}}} \simeq K^{Z_{e,
\operatorname{PGL}_{n}}}({\cal B}_{e})$. We describe the action of
$\widehat{W}^{\mathrm{ext}}$ on
$K^{Z_{e,\operatorname{SL}_{n}}}({\cal B}_{e})$ first.

The set
$\{{\bf{O}}_{k}\}_{k=1,\ldots ,n-1} \cup \{[{\mathbb{C}}_{p_{0,1}}]\}$
forms a basis of the ${\mathbb{Z}}[{\xi}^{\pm 1}]$-module
$K^{Z_{\operatorname{SL}_{n}}(e)}(\mathcal{B}_{e})$. The following lemma
holds by \cite[Section 5.11]{lu_notes_aff}.
%
\begin{Lem}
We have ($k=1,2,\ldots ,n-1$)
\begin{gather*}
s_{l}({\bf{O}}_{l-1})=-{\bf{O}}_{l-1}-{\bf{O}}_{l}~\text{for}~l=2,
\ldots ,n-1,%
\\
s_{l-1}({{\bf{O}}_{l}})=-{\bf{O}}_{l}-{\bf{O}}_{l-1}~\text{for}~l=2,
\ldots ,n-1,%
\\
s_{k}({\bf{O}}_{k})={\bf{O}}_{k} ,%
\\
s_{l}({\bf{O}}_{k})=-{\bf{O}}_{k}~\text{for}~l\neq k-1,k,k+1,%
\\
s_{l}([{\mathbb{C}}_{p_{0,1}}])=-[{\mathbb{C}}_{p_{0,1}}]~\text{for}~l=2,
\ldots ,n-1,%
\\
s_{1}([{\mathbb{C}}_{p_{0,1}}])=-[{\mathbb{C}}_{p_{0,1}}]+(1-{\xi}^{n}){
\bf{O}}_{1}.%
\end{gather*}
\end{Lem}

Let $L_{k-1,k}$ be the line bundle on ${\cal B}_{e}$ whose fiber over
$(\{0\} \subset D_{1} \subset \cdots \subset D_{n-1} \subset V)$ is
$D_{k}/D_{k-1}$ (see \cite[Section 5.1]{lu_notes_aff}). By definition,
the action of $t_{-\epsilon _{k}}$ on
$K^{Z_{\operatorname{SL}_{n}}(e)}({\cal B}_{e})$ is given by the tensor
product with the line bundle $L_{k-1,k}$. The fiber of $L_{k-1,k}$ over
$p_{l-1,l}$ is equal to ${\xi}^{-1}$ if $k \neq l$ and ${\xi}^{n-1}$ if
$k=l$. We conclude that
%
\begin{gather}
\label{eq_1!}
t_{-\epsilon _{k}}( [O^{b,a}_{l}])=[O^{b-1,a-1}_{l}]={\xi}^{-1}[O_{l}^{b,a}]~
\text{if}~l \neq k-1, k,%
\\
\label{eq_2!}
t_{-\epsilon _{k}}([O^{b,a}_{k-1}])= [O^{b-1,a+n-1}_{k-1}],%
\\
\label{eq_3!}
t_{-\epsilon _{k}}([O^{b,a}_{k}])=[O^{b+n-1,a-1}_{k}].%
\end{gather}

\begin{Lem}%
\label{id_sky_vs_str}
We have
\begin{gather*}
[{\mathbb{C}}_{p_{k-1,k}}]=[O^{0,0}_{k}]-[O^{n,0}_{k}],\, [{
\mathbb{C}}_{p_{k,k+1}}]=[O^{0,0}_{k}]-[O^{0,-n}_{k}],%
\\
[{\mathbb{C}}_{p_{k-1,k}}]=[{\mathbb{C}}_{p_{k,k+1}}]+(1-{\xi}^{n}){
\bf{O}}_{k},%
\\
[{\mathbb{C}}_{p_{01}}]=[{\mathbb{C}}_{p_{k-1,k}}]+\sum _{l=1}^{k-1}(1-{
\xi}^{n}){\bf{O}}_{l}.%
\end{gather*}
\end{Lem}
\begin{proof}
All of these equalities directly follow from the standard exact sequences
on ${\mathbb{P}}^{1}$.
\end{proof}

\begin{Cor}\label{O_00_via_p}
We have
\begin{equation*}
[O_{k}^{0,0}]=[{\mathbb{C}}_{p_{01}}]+{\bf{O}}_{k}+\sum _{l=1}^{k}({
\xi}^{n}-1){\bf{O}}_{l}=-\sum _{l=0}^{k-1}{\bf{O}}_{l}-\xi ^{n}\sum _{l=k+1}^{n-1}{
\bf{O}}_{l}.
\end{equation*}
\end{Cor}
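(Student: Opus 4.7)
The plan is to obtain both equalities directly from Lemma \ref{id_sky_vs_str} and the definition of ${\bf{O}}_0$, with no input beyond elementary bookkeeping. First I would apply the second identity of Lemma \ref{id_sky_vs_str}, namely $[\BC_{p_{k,k+1}}]=[O^{0,0}_k]-[O^{0,-n}_k]=[O_k^{0,0}]-{\bf{O}}_k$, to get
\begin{equation*}
[O_k^{0,0}] = [\BC_{p_{k,k+1}}] + {\bf{O}}_k.
\end{equation*}

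Next I would iterate the relation $[\BC_{p_{l-1,l}}]=[\BC_{p_{l,l+1}}]+(1-\xi^n){\bf{O}}_l$ from $l=1$ up to $l=k$, to rewrite $[\BC_{p_{k,k+1}}]$ in terms of $[\BC_{p_{0,1}}]$:
\begin{equation*}
[\BC_{p_{k,k+1}}] = [\BC_{p_{0,1}}] - \sum_{l=1}^{k}(1-\xi^n){\bf{O}}_l = [\BC_{p_{0,1}}] + \sum_{l=1}^{k}(\xi^n-1){\bf{O}}_l.
\end{equation*}
Plugging this back in yields the first asserted equality. (Alternatively, this identity is immediate from the third formula of Lemma \ref{id_sky_vs_str}.)

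For the second equality, I would use the defining relation
\begin{equation*}
{\bf{O}}_0 = -[\BC_{p_{0,1}}] - \sum_{l=1}^{n-1}\xi^n{\bf{O}}_l,
\end{equation*}
i.e. $[\BC_{p_{0,1}}] = -{\bf{O}}_0 - \sum_{l=1}^{n-1}\xi^n{\bf{O}}_l$, and substitute into the expression from the first equality. Collecting the coefficient of each ${\bf{O}}_l$: for $l=0$ it is $-1$; for $1 \leqslant l \leqslant k-1$ it is $-\xi^n+(\xi^n-1)=-1$; for $l=k$ it is $-\xi^n+(\xi^n-1)+1=0$; and for $k+1 \leqslant l \leqslant n-1$ it is $-\xi^n$. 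This gives the second equality, and the corollary follows.

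There is no significant obstacle: the entire argument is a bookkeeping exercise in the relations already recorded in Lemma \ref{id_sky_vs_str} and in the definition of ${\bf{O}}_0$.
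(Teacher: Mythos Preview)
Your proof is correct and follows exactly the same route as the paper's: start from $[O_k^{0,0}]=[\BC_{p_{k,k+1}}]+{\bf{O}}_k$ via Lemma \ref{id_sky_vs_str}, telescope to replace $[\BC_{p_{k,k+1}}]$ by $[\BC_{p_{0,1}}]$, and then substitute the definition of ${\bf{O}}_0$ to obtain the second expression. You have simply spelled out the coefficient bookkeeping for the last step, which the paper leaves implicit.
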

\begin{proof}
By Lemma~\ref{id_sky_vs_str}
\begin{align*}
[O_{k}^{0,0}]
&=[{\mathbb{C}}_{p_{k,k+1}}]+[O_{k}^{0,-n}]=[{\mathbb{C}}_{p_{k,k+1}}]+{
\bf{O}}_{k}
\\
&=[{\mathbb{C}}_{p_{01}}]+{\bf{O}}_{k}+\sum _{l=1}^{k}({\xi}^{n}-1){
\bf{O}}_{l}=-\sum _{l=0}^{k-1}{\bf{O}}_{l}-\xi ^{n}\sum _{l=k+1}^{n-1}{
\bf{O}}_{l}.
\end{align*}\vskip-20pt
\end{proof}

\begin{Lem}
We have
\begin{gather*}
t_{-\epsilon _{k}} ([{\mathbb{C}}_{p_{0,1}}])={\xi}^{-1}[{\mathbb{C}}_{p_{0,1}}],
\, k=2,\ldots ,n-1,%
\\
t_{-\epsilon _{1}}([{\mathbb{C}}_{p_{0,1}}])={\xi}^{n-1}[{\mathbb{C}}_{p_{0,1}}],%
\\
t_{-\epsilon _{k}}({\bf{O}}_{l})={\xi}^{-1}{\bf{O}}_{l}~\text{if}~l
\neq k-1,k,%
\\
t_{-\epsilon _{k}}({\bf{O}}_{k-1})={\xi}^{-1}([{\mathbb{C}}_{p_{k-1,k}}]+{
\bf{O}}_{k-1})=-\xi ^{-1}\Big(\sum _{l=0}^{k-2}{\bf{O}}_{l}+\xi ^{n}
\sum _{l=k}^{n-1}{\bf{O}}_{l}\Big),%
\\
t_{-\epsilon _{k}}({\bf{O}}_{k})= {\xi}^{-1}(-[{\mathbb{C}}_{p_{k-1,k}}]+{
\bf{O}}_{k})=\xi ^{-1}\Big(\sum _{l=0}^{k} {\bf{O}}_{l} + \xi ^{n}
\sum _{l=k}^{n-1}{\bf{O}}_{l}\Big).%
\end{gather*}
\end{Lem}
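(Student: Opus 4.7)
The plan is to verify each of the five identities by combining the fact that $t_{-\epsilon_k}$ acts by tensor product with the line bundle $L_{k-1,k}$, the explicit formulas (\ref{eq_1!})--(\ref{eq_3!}) for the action of $t_{-\epsilon_k}$ on the classes $[O_l^{b,a}]$, and the relations in Lemma \ref{id_sky_vs_str} and Corollary \ref{O_00_via_p} that let one re-express the output in the basis $\{\mathbf{O}_l\}_{l \in \BZ}$.

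For the skyscraper $[\BC_{p_{0,1}}]$ the argument is purely local: tensoring with $L_{k-1,k}$ multiplies the class by the $\BC^\times$-weight of the fiber $(D_k/D_{k-1})|_{p_{0,1}}$. Reading off the flag $p_{0,1}=\CF(v_n,v_{n-1},\ldots,v_1)$, this fiber equals $\BC v_n$ of weight $\xi^{n-1}$ when $k=1$ and $\BC v_{n-k+1}$ of weight $\xi^{-1}$ when $k=2,\ldots,n-1$, which gives the first two identities. The identity for $\mathbf{O}_l = [O_l^{0,-n}]$ with $l \neq k-1,k$ follows immediately from (\ref{eq_1!}). For $l=k-1$, formula (\ref{eq_2!}) applied at $(b,a)=(0,-n)$ yields $t_{-\epsilon_k}(\mathbf{O}_{k-1}) = [O_{k-1}^{-1,-1}] = \xi^{-1}[O_{k-1}^{0,0}]$; the special case $[O_{k-1}^{0,0}] = [\BC_{p_{k-1,k}}] + \mathbf{O}_{k-1}$ of Lemma \ref{id_sky_vs_str} gives the first asserted form, and Corollary \ref{O_00_via_p} applied at index $k-1$ gives the expansion in the basis.

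The main obstacle is the case $l=k$: by (\ref{eq_3!}) one arrives at $\xi^{-1}[O_k^{n,-n}]$, but $[O_k^{n,-n}]$ has degree $-2$ on $\Pi_k$ and is not one of the basis generators. To identify it, I would iterate two evaluation-type short exact sequences generalizing those underlying Lemma \ref{id_sky_vs_str}: namely $0 \to O_k^{b,a-n} \to O_k^{b,a} \to \xi^a \BC_{p_{k,k+1}} \to 0$ (evaluation at $p_{k,k+1}$) and its analogue at $p_{k-1,k}$. These give in turn $[O_k^{n,-n}] = [O_k^{n,0}] - [\BC_{p_{k,k+1}}]$ and $[O_k^{n,0}] = [O_k^{0,0}] - [\BC_{p_{k-1,k}}]$. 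Combining with the identity $[O_k^{0,0}] = [\BC_{p_{k,k+1}}] + \mathbf{O}_k$ from Lemma \ref{id_sky_vs_str} collapses everything to the clean expression $[O_k^{n,-n}] = \mathbf{O}_k - [\BC_{p_{k-1,k}}]$, which after multiplication by $\xi^{-1}$ gives the first asserted form. The final basis expansion then follows by substituting $[\BC_{p_{k-1,k}}] = [\BC_{p_{0,1}}] - \sum_{l=1}^{k-1}(1-\xi^n)\mathbf{O}_l$ from Lemma \ref{id_sky_vs_str} and absorbing the $[\BC_{p_{0,1}}]$ term using the definition $\mathbf{O}_0 = -[\BC_{p_{0,1}}] - \sum_{l=1}^{n-1}\xi^n \mathbf{O}_l$.
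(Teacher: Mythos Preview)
Your proof is correct and follows essentially the same approach as the paper: both rely on the fiber weights of $L_{k-1,k}$ for the skyscraper, on \eqref{eq_1!}--\eqref{eq_3!} for the action on $[O_l^{b,a}]$, and on Lemma~\ref{id_sky_vs_str} and Corollary~\ref{O_00_via_p} to rewrite in the basis $\{\mathbf{O}_l\}$. The only difference is in the fifth identity: the paper first uses the Euler relation (Lemma~\ref{eq_O_Euler}) to rewrite $\mathbf{O}_k = -[O_k^{-n,0}] + (1+\xi^{-n})[O_k^{0,0}]$ and then applies \eqref{eq_3!} term by term, thereby never meeting the degree~$-2$ class $[O_k^{n,-n}]$; you instead apply \eqref{eq_3!} directly and then reduce $[O_k^{n,-n}]$ via two evaluation sequences. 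Both routes are equally valid and of the same length; yours avoids invoking Lemma~\ref{eq_O_Euler} at the cost of introducing one evaluation sequence slightly beyond those recorded in Lemma~\ref{id_sky_vs_str}, which is harmless.
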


\begin{proof}
The first three equalities follow from the fact that the fiber of
$L_{k-1,k}$ over $p_{l-1,l}$ is $\xi ^{-1}$ if $k \neq l$ and
$\xi ^{n-1}$ if $k=l$.

Since $[O_{k-1}^{-1,-1}]=\xi ^{-1}[O^{0,0}_{k-1}]$ and using Corollary~\ref{O_00_via_p}, we conclude that
\begin{equation*}
[O_{k-1}^{-1,-1}]={\xi}^{-1}[O^{0,0}_{k-1}]=-\xi ^{-1}\sum _{l=0}^{k-2}{
\bf{O}}_{l}-\xi ^{n-1}\sum _{l=k}^{n-1}{\bf{O}}_{l}.
\end{equation*}

We see that
\begin{equation*}
t_{-\epsilon _{k}}([O_{k-1}^{0,-n}])=[O_{k-1}^{-1,-1}]=-\xi ^{-1}
\Big(\sum _{l=0}^{k-2}{\bf{O}}_{l}+\xi ^{n}\sum _{l=k}^{n-1}{\bf{O}}_{l}
\Big).
\end{equation*}

By Lemma~\ref{eq_O_Euler}
$[O^{0,-n}_{k}]+[O_{k}^{-n,0}]=(1+{\xi}^{-n})[O_{k}^{0,0}]$, so
\begin{equation*}
[O_{k}^{0,-n}]= -[O_{k}^{-n,0}]+(1+{\xi}^{-n})[O_{k}^{0,0}].
\end{equation*}
Using (\ref{eq_3!}) we see that
\begin{align*}
t_{-\epsilon _{k}}([O_{k}^{0,-n}])
&=t_{-\epsilon _{k}}(-[O_{k}^{-n,0}]+(1+\xi ^{-n})[O_{k}^{0,0}])
\\
&=\xi ^{-1}(-[O_{k}^{0,0}]+(1+\xi ^{-n})[O_{k}^{n,0}])
\\
&=\xi ^{-1}({\bf{O}}_{k}+\xi ^{n}{\bf{O}}_{k}-[O_{k}^{0,0}])=\xi ^{-1}
\Big(\sum _{l=0}^{k}{\bf{O}}_{l}+\xi ^{n}\sum _{l=k}^{n-1}{\bf{O}}_{l}
\Big).
\end{align*}\vskip-20pt
\end{proof}

Combining all the relations, we get the $\widehat{W}^{\mathrm{ext}}$-representation
$K^{Z_{e,\operatorname{SL}_{n}}}({\cal B}_{e})$ with
${\mathbb{Z}}[\xi ^{\pm 1}]$-basis
${\bf{O}}_{1},\ldots ,{\bf{O}}_{n-1}, [{\mathbb{C}}_{p_{0,1}}]$ and the
following action of $\widehat{W}^{\mathrm{ext}}$ (here
$k \in \{1,\ldots ,n-1\}$):
\begin{gather*}
s_{k}({\bf{O}}_{k-1})=-{\bf{O}}_{k-1}-{\bf{O}}_{k},%
\\
s_{k-1}{\bf{O}}_{k}=-{\bf{O}}_{k}-{\bf{O}}_{k-1},%
\\
s_{k}({\bf{O}}_{k})={\bf{O}}_{k} ,%
\\
s_{l}({\bf{O}}_{k})=-{\bf{O}}_{k}~\text{for}~l\neq k-1,k,k+1,%
\\
s_{k}([{\mathbb{C}}_{p_{0,1}}])=-[{\mathbb{C}}_{p_{0,1}}],\, k=2,
\ldots ,n-1,%
\\
s_{1}([{\mathbb{C}}_{p_{0,1}}])=-[{\mathbb{C}}_{p_{0,1}}]+(1-{\xi}^{n}){
\bf{O}}_{1},%
\\
t_{-\epsilon _{k}}([{\mathbb{C}}_{p_{0,1}}])={\xi}^{-1}[{\mathbb{C}}_{p_{0,1}}],
\, k=2,\ldots ,n-1,%
\\
t_{-\epsilon _{1}}([{\mathbb{C}}_{p_{0,1}}])={\xi}^{n-1}[{\mathbb{C}}_{p_{0,1}}],%
\\
t_{-\epsilon _{k}}({\bf{O}}_{l})={\xi}^{-1}{\bf{O}}_{l}~\text{if}~l
\neq k-1,k,%
\\
t_{-\epsilon _{k}}({\bf{O}}_{k-1})=-\xi ^{-1}\Big(\sum _{l=0}^{k-2}{
\bf{O}}_{l}+\xi ^{n}\sum _{l=k}^{n-1}{\bf{O}}_{l}\Big)=-\xi ^{-1}
\sum _{l=k-n}^{k-2}{\bf{O}}_{l},%
\\
t_{-\epsilon _{k}}({\bf{O}}_{k})=\xi ^{-1}\Big(\sum _{l=0}^{k} {\bf{O}}_{l}
+ \xi ^{n}\sum _{l=k}^{n-1}{\bf{O}}_{l}\Big)=\xi ^{-1}\sum _{l=k-n}^{k}{
\bf{O}}_{l}.%
\end{gather*}

\begin{Prop}%
\label{iso_B_e_mod_type_A}
There is an isomorphism of $\widehat{W}$-modules over
${\mathbb{Z}}[\xi ^{\pm n}]$:
\begin{equation*}
\mathfrak{h}_{\infty ,{\mathbb{Z}}} \otimes {\mathbb{Z}}_{
\mathrm{sign}} \simeq K^{Z_{e,\operatorname{PGL}_{n}}}({\cal B}_{e}),
\end{equation*}
given by
\begin{equation*}
\alpha ^{\vee}_{k} \otimes 1 \mapsto -{\bf{O}}_{k},\, k \in {
\mathbb{Z}},\, [{\mathbb{C}}_{p_{0,1}}] \mapsto \alpha _{1-n}^{\vee}+ \cdots +\alpha _{-1}^{\vee}+\alpha _{0}^{\vee}=\epsilon _{1-n}-
\epsilon _{1}.
\end{equation*}
\end{Prop}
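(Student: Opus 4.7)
The plan is to define $\phi$ on the $\BZ$-basis $\{\al_k^\vee : k \in \BZ\}$ of $\mathfrak{h}_{\infty,\BZ}$ by $\phi(\al_k^\vee \otimes 1) = -{\bf{O}}_k$, and then verify in turn that $(i)$ $\phi$ is a well-defined $\BZ[\xi^{\pm n}]$-module isomorphism and $(ii)$ $\phi$ is $\widehat{W}$-equivariant. For $(i)$, well-definedness reduces to compatibility of the two $\BZ[\xi^{\pm n}]$-structures: on the source one has $\xi^n \al_{k+n}^\vee = \al_k^\vee$ from $\xi^n \epsilon_i = \epsilon_{i-n}$, and on the target the extension ${\bf{O}}_k = \xi^n {\bf{O}}_{k+n}$ was imposed by construction. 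For bijectivity I would observe that $\{\al_0^\vee, \ldots, \al_{n-1}^\vee\}$ is a $\BZ[\xi^{\pm n}]$-basis of the source with images $\{-{\bf{O}}_0, \ldots, -{\bf{O}}_{n-1}\}$; since ${\bf{O}}_0 = -[\BC_{p_{0,1}}] - \sum_{k=1}^{n-1}\xi^n{\bf{O}}_k$ by definition, these form a $\BZ[\xi^{\pm n}]$-basis of the target (the change-of-basis matrix to $\{{\bf{O}}_1, \ldots, {\bf{O}}_{n-1}, [\BC_{p_{0,1}}]\}$ has unit determinant). The prescribed image $\phi^{-1}([\BC_{p_{0,1}}]) = \al_{1-n}^\vee + \ldots + \al_0^\vee$ then drops out of a short telescoping: $\phi(\al_{1-n}^\vee + \ldots + \al_0^\vee) = -\sum_{k=1-n}^{0}{\bf{O}}_k = -\xi^n\sum_{k=1}^{n-1}{\bf{O}}_k - {\bf{O}}_0 = [\BC_{p_{0,1}}]$.

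For $(ii)$, since $\widehat{W}$ is generated by $s_0, s_1, \ldots, s_{n-1}$, it suffices to check that $\phi$ intertwines each simple reflection. For $s_i$ with $i \in \{1, \ldots, n-1\}$, I would carry out a case analysis based on the residue of $k$ modulo $n$, with four subcases $k \equiv i$, $k \equiv i \pm 1$, or none of these. In each case I compute $s_i(\al_k^\vee)$ as a $\BZ$-combination of $\al_j^\vee$'s via the permutation action of $s_i$ on $\BZ$, apply the sign twist from $\BZ_{\mathrm{sign}}$, and match the result against Lusztig's formulas for $s_i({\bf{O}}_k)$. After reducing ${\bf{O}}_k$ to a fundamental-domain ${\bf{O}}_{k \bmod n}$ by the $\BZ[\xi^{\pm n}]$-action, each case is a one-line verification; the sign twist is exactly what converts $s_i({\bf{O}}_i) = {\bf{O}}_i$ into the corresponding fixed-point equation $s_i(\al_i^\vee \otimes 1) = \al_i^\vee \otimes 1$.

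For $s_0$, rather than deriving an ad hoc $K$-theoretic formula, I would use the factorization $s_0 = s_\theta t_{-\theta^\vee}$ with $\theta^\vee = \epsilon_1 - \epsilon_n$, reducing to $s_1, \ldots, s_{n-1}$ (already treated) together with the translations $t_{-\epsilon_1}$ and $t_{\epsilon_n} = t_{-\epsilon_n}^{-1}$, for which explicit formulas on $K^{Z_{e,\on{SL}_n}}(\B_e)$ were recorded in Section \ref{type_A_fib_module}. I expect the main obstacle to be the bookkeeping near the ``boundary'' indices $k \equiv 0, 1, n-1 \pmod n$, where the formulas for $t_{-\epsilon_k}({\bf{O}}_{k-1})$ and $t_{-\epsilon_k}({\bf{O}}_k)$ a priori involve the skyscraper class $[\BC_{p_{0,1}}]$. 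The crucial identity unclogging these cases is Corollary \ref{O_00_via_p}, whose consequences $t_{-\epsilon_k}({\bf{O}}_{k-1}) = -\xi^{-1}\sum_{l=k-n}^{k-2}{\bf{O}}_l$ and $t_{-\epsilon_k}({\bf{O}}_k) = \xi^{-1}\sum_{l=k-n}^{k}{\bf{O}}_l$ express the translations as pure combinations of ${\bf{O}}_l$'s; these match, term by term, the telescoping expansions of $t_{-\epsilon_k}(\al_j^\vee) = \epsilon_{j-n} - \epsilon_{j+1}$ and $\epsilon_j - \epsilon_{j+1-n}$ for $j \equiv k$ and $j \equiv k-1$ respectively, giving the last identifications needed.
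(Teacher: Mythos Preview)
Your overall strategy is sound and essentially matches the paper's: define $\phi$ on the $\BZ$-basis, verify $\BZ[\xi^{\pm n}]$-linearity and bijectivity, then check equivariance on generators. The paper's one-line proof simply points to the displayed formulas for $s_l$ and $t_{-\epsilon_k}$ acting on $\{{\bf{O}}_k, [\BC_{p_{0,1}}]\}$ and invokes $t_{\al_k^\vee} = t_{\epsilon_k} \circ t_{-\epsilon_{k+1}}$; thus it uses $\{s_1,\ldots,s_{n-1}\} \cup \{t_{\al_k^\vee}\}$ as the generating set for $\widehat{W}$ rather than $\{s_0,s_1,\ldots,s_{n-1}\}$.

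There is, however, a genuine gap in your last paragraph. You assert that the formulas $t_{-\epsilon_k}({\bf{O}}_{k-1}) = -\xi^{-1}\sum_{l=k-n}^{k-2}{\bf{O}}_l$ and $t_{-\epsilon_k}({\bf{O}}_k) = \xi^{-1}\sum_{l=k-n}^{k}{\bf{O}}_l$ ``match, term by term'' the telescoping expansions of $t_{-\epsilon_k}(\al_j^\vee)$ on the source. They do not: the map $\phi$ fails to intertwine the individual $t_{-\epsilon_k}$. Already in the trivial case $[l] \notin \{k-1, k\}$ one has $t_{-\epsilon_k}(\al_l^\vee \otimes 1) = \al_l^\vee \otimes 1$ on the source (the permutation fixes both $\epsilon_l$ and $\epsilon_{l+1}$), whereas $t_{-\epsilon_k}(-{\bf{O}}_l) = -\xi^{-1}{\bf{O}}_l$ on the target. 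Every $t_{-\epsilon_k}$-formula on the $K$-theory side carries a global $\xi^{-1}$ factor (the generic weight of the line bundle $L_{k-1,k}$) with no counterpart on $\mathfrak{h}_{\infty,\BZ}$; indeed $\xi^{-1} \notin \BZ[\xi^{\pm n}]$, so $t_{-\epsilon_k}$ is not even $\BZ[\xi^{\pm n}]$-linear on the target.

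This is precisely why the paper checks $t_{\al_k^\vee} = t_{\epsilon_k}\, t_{-\epsilon_{k+1}}$ rather than individual $t_{\epsilon_k}$: in the composition the $\xi$ from $t_{\epsilon_k}$ cancels the $\xi^{-1}$ from $t_{-\epsilon_{k+1}}$, and only then does the match hold. Your reduction of $s_0$ to $s_\theta t_{-\theta^\vee}$ can be salvaged, but you must compute $t_{-\theta^\vee} = t_{-\epsilon_1} t_{\epsilon_n}$ as a single operator on both sides (where the $\xi^{\pm 1}$ factors cancel since $-\theta^\vee \in Q^\vee$), not verify each factor separately. Alternatively, bypass $s_0$ entirely: since $s_1,\ldots,s_{n-1}$ together with $t_{\al_1^\vee},\ldots,t_{\al_{n-1}^\vee}$ already generate $\widehat{W}$, checking these suffices, and this is exactly what the paper does.
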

\begin{proof}
Directly follows from the formulas for the action of
$\widehat{W}^{\mathrm{ext}}$ on
$K^{Z_{e,\operatorname{SL}_{n}}}({\cal B}_{e})$ above together with the
fact that
$t_{\alpha _{k}^{\vee}}=t_{\epsilon _{k}} \circ t_{-\epsilon _{k+1}}$.
\end{proof}

\subsection{Structure of $K^{\operatorname{PGL}_{n}}(\widetilde{U})$}

Recall that by Theorem~\ref{cells} we have an exact sequence of
$\widehat{W}$-modules
\begin{equation*}
0 \rightarrow K^{Z_{e,\operatorname{PGL}_{n}}}(\mathcal{B}_{e})
\rightarrow K^{\operatorname{PGL}_{n}}(\widetilde{U}) \rightarrow {
\mathbb{Z}}_{\mathrm{sign}} \rightarrow 0.
\end{equation*}
We have already described $\widehat{W}$-module
$K^{Z_{e,\operatorname{PGL}_{n}}}(\mathcal{B}_{e})$ explicitly. So in order
to describe the action of $\widehat{W}$ on
$K^{\operatorname{PGL}_{n}}(\widetilde{U})$ we just need to compute the
action of $\widehat{W}$ on $[{\mathcal{O}}_{\widetilde{U}}]$. Since
$K^{\operatorname{PGL}_{n}}(\widetilde{U})$ is the quotient of
$K^{\operatorname{PGL}_{n}}(\widetilde{{\mathcal{N}}})$, we have
$w([{\mathcal{O}}_{\widetilde{U}}])=\varepsilon (w)[{\mathcal{O}}_{
\widetilde{U}}]$ for $w \in S_{n}$. It remains to determine the action
of $t_{\alpha _{k}^{\vee}}$, $k=1,\ldots ,n-1$, on
$[{\mathcal{O}}_{\widetilde{U}}]$.

\begin{Lem}%
\label{act_t_al_O}
We have
\begin{equation*}
t_{\alpha _{k}^{\vee}}([{\mathcal{O}}_{\widetilde{U}}])=[{\mathcal{O}}_{
\widetilde{U}}]+\sum _{l=k+1-n}^{k-1}{\bf{O}}_{l}.
\end{equation*}
\end{Lem}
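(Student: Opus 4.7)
The plan is to apply the ideal-sheaf sequence (\ref{ex_seq_O_al_N_tilde}) from Lemma~\ref{short_exact_div_on_tilde_N} to the divisor $\widetilde{\CN}_{\al_k^\vee}$ and then to perform a geometric identification of its restriction $D_k := \widetilde{U} \cap \widetilde{\CN}_{\al_k^\vee}$ to $\widetilde{U}$. Restricting (\ref{ex_seq_O_al_N_tilde}) to $\widetilde{U}$ and invoking Theorem~\ref{CGThm} (the action of $t_\gamma$ is tensor product with $\CO_{\widetilde{\CN}}(\gamma)$), we obtain
\begin{equation*}
t_{\al_k^\vee}[\CO_{\widetilde{U}}] = [\CO_{\widetilde{U}}] - [\CO_{D_k}].
\end{equation*}
The divisor $\widetilde{\CN}_{\al_k^\vee}$ is smooth and irreducible (being the total space of the vector sub-bundle $T^*(G^\vee/P_{\al_k^\vee}) \times_{G^\vee/P_{\al_k^\vee}} \CB$ of $T^*\CB$ over $\CB$), hence so is $D_k$.

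The key geometric claim is that $D_k = \widetilde{\Pi}_k$ as reduced subschemes of $\widetilde{U}$, where $\widetilde{\Pi}_k := G^\vee \cdot \Pi_k \subset \widetilde{{\mathbb{O}}}_e$. On one hand, $\widetilde{\CN}_{\al_k^\vee} \cap \widetilde{{\mathbb{O}}}^{\mathrm{reg}} = \emptyset$: for a regular nilpotent $e'$ lying in the unique Borel $B'$ containing it, the classical criterion of regularity forces every simple-root component $e'_{\al_j}$ to be nonzero, so in particular $e'_{\al_k} \neq 0$ and hence $e' \notin \on{nilrad}(P_{\al_k^\vee}(B'))$. Therefore $D_k \subset \widetilde{{\mathbb{O}}}_e = \bigcup_{l=1}^{n-1} \widetilde{\Pi}_l$. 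On the other hand, $\Pi_k \subset \widetilde{\CN}_{\al_k^\vee}$ by the very definition of $P_{e,k}$ (for every $B' \in \Pi_k$ one has $P_{\al_k^\vee}(B') = P_{e,k}$, whose nilradical contains $e$), so by $G^\vee$-invariance $\widetilde{\Pi}_k \subset D_k$. Since $D_k$ is irreducible with $\dim D_k = \dim \widetilde{U} - 1 = \dim \widetilde{\Pi}_k$, these two inclusions force $D_k = \widetilde{\Pi}_k$.

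Consequently $[\CO_{D_k}] = \iota_*[\CO_{\Pi_k}]$ in $K^{\on{PGL}_n}(\widetilde{U})$, where $\CO_{\Pi_k}$ carries the trivial $\BC^\times$-equivariant structure at the torus fixed points; in the notation of Section~\ref{type_A_fib_module} this is $\iota_*[O_k^{0,0}]$. Applying Corollary~\ref{O_00_via_p} and using ${\bf{O}}_l = \xi^n {\bf{O}}_{l+n}$ to rewrite $\xi^n \sum_{l=k+1}^{n-1} {\bf{O}}_l = \sum_{l=k+1-n}^{-1}{\bf{O}}_l$, we find
\begin{equation*}
[\CO_{D_k}] = -\iota_* \sum_{l=k+1-n}^{k-1} {\bf{O}}_l,
\end{equation*}
and substituting back into the first display yields the claimed formula.

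The main obstacle is the geometric identification $D_k = \widetilde{\Pi}_k$: it combines the classical criterion that a regular nilpotent has all simple-root components nonzero (ruling out intersection with $\widetilde{{\mathbb{O}}}^{\mathrm{reg}}$) with the built-in incidence $\widetilde{\Pi}_k \subset \widetilde{\CN}_{\al_k^\vee}$ (from the definition of $P_{e,k}$), plus a dimension/irreducibility count that excludes the other components $\widetilde{\Pi}_l$, $l \neq k$. Once this is in hand, the final $K$-theoretic manipulation is purely bookkeeping on the basis ${\bf{O}}_l$.
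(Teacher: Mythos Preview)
Your proof is correct and follows essentially the same line as the paper's: apply the exact sequence of Lemma~\ref{short_exact_div_on_tilde_N}, identify the resulting class as $\iota_*[O_k^{0,0}]$, and then invoke Corollary~\ref{O_00_via_p}. The paper is terser, writing directly $t_{\al_k^\vee}([\CO_{\widetilde{U}}])=[\CO_{\widetilde{U}}]-[\iota_*O_k^{0,0}]$ without spelling out the geometric identification $D_k=\widetilde{\Pi}_k$ or the equivariant structure; your argument supplies exactly those details.
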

\begin{proof}
Indeed, using Lemma~\ref{short_exact_div_on_tilde_N} and Corollary~\ref{O_00_via_p} we obtain
\begin{align*}
t_{\alpha ^{\vee}_{k}}([{\mathcal{O}}_{\widetilde{U}}])
&=[{\mathcal{O}}_{\widetilde{U}}(\alpha _{k}^{\vee})]=[{\mathcal{O}}_{\widetilde{U}}]-[\iota _{*}O_{k}^{0,0}]
\\
&=[{\mathcal{O}}_{\widetilde{U}}]-[{\mathbb{C}}_{p_{01}}]-{\bf{O}}_{k}+
\sum _{l=1}^{k} (1-\xi ^{n}){\bf{O}}_{l}=[{\mathcal{O}}_{\widetilde{U}}]+\sum _{l=k+1-n}^{k-1}{\bf{O}}_{l}.
\end{align*}
\end{proof}

\begin{Prop}%
\label{descr_K_U_A}
We have an isomorphism of $\widehat{W}$-modules
$K^{\operatorname{PGL}_{n}}(\widetilde{U}) \simeq
\widehat{\mathfrak{h}}_{\infty ,{\mathbb{Z}}} \otimes {\mathbb{Z}}_{
\mathrm{sign}}$. This isomorphism is given by
\begin{equation*}
\bar{C}_{0} \mapsto d \otimes 1,\, \bar{C}_{\nu _{i}} \mapsto -
\alpha _{i}^{\vee},\, i \in {\mathbb{Z}}.
\end{equation*}
\end{Prop}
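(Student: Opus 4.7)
\textbf{Proposal for the proof of Proposition \ref{descr_K_U_A}.}
The plan is to build the claimed isomorphism by extending the one on $K^{Z_{e,\on{PGL}_n}}(\CB_e)$ provided by Proposition \ref{iso_B_e_mod_type_A}. Concretely, I would define
\[
\phi\colon \widehat{\mathfrak{h}}_{\infty,\BZ} \otimes \BZ_{\mathrm{sign}} \longrightarrow K^{\on{PGL}_n}(\widetilde{U})
\]
by $\al_k^\vee \otimes 1 \mapsto -{\bf O}_k$ for $k \in \BZ$ (using the iso of Proposition \ref{iso_B_e_mod_type_A} composed with the pushforward $\iota_*$) and $d \otimes 1 \mapsto [\CO_{\widetilde{U}}]$. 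Since the exact sequence
\[
0 \to K^{Z_{e,\on{PGL}_n}}(\CB_e) \to K^{\on{PGL}_n}(\widetilde{U}) \to \BZ_{\mathrm{sign}} \to 0
\]
is split as a sequence of $\BZ$-modules with $[\CO_{\widetilde{U}}]$ lifting the generator, and the analogous sequence on the left-hand side is split by $d \otimes 1$, the map $\phi$ is automatically a bijection of underlying abelian groups. So the only nontrivial point is to check that $\phi$ is $\widehat{W}$-equivariant; the iso on the subrepresentation is already handled by Proposition \ref{iso_B_e_mod_type_A}, so everything comes down to verifying the action on $d \otimes 1$.

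For this I would check separately on the generators $s_j$ ($j=1,\dots,n-1$) and $t_{\al_k^\vee}$ ($k=1,\dots,n-1$) of $\widehat W$. For $w \in S_n$, the action on the sign twist gives $w(d \otimes 1) = \varepsilon(w)(d \otimes 1)$ by the definition of $\widehat{\mathfrak{h}}_{\infty,\BZ}$, while on the target $w([\CO_{\widetilde{U}}]) = \varepsilon(w)[\CO_{\widetilde{U}}]$ since $[\CO_{\widetilde{U}}]$ is the image of the identity under $K^{G^\vee}(\widetilde{\CN}) \simeq \BZ\widehat W \otimes_{\BZ W} \BZ_{\mathrm{sign}}$. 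For $t_{\al_k^\vee}$, using $\varepsilon(t_{\al_k^\vee})=1$ by (\ref{length_lattice_even}), the left-hand side equals
\[
t_{\al_k^\vee}(d\otimes 1) = (d + \epsilon_k - \epsilon_{k+1-n})\otimes 1 = d\otimes 1 - \sum_{l=k+1-n}^{k-1} \al_l^\vee \otimes 1,
\]
so $\phi$ sends it to $[\CO_{\widetilde{U}}] + \sum_{l=k+1-n}^{k-1}{\bf O}_l$, which matches Lemma \ref{act_t_al_O} exactly. This shows $\phi$ is a $\widehat W$-module map, hence an isomorphism.

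It remains to identify the canonical basis. By Proposition \ref{descr_canon}, the canonical basis of $K^{\on{PGL}_n}(\widetilde{U})$ consists of $[\CO_{\widetilde{U}}]$ together with $\iota_* \CO_{\Pi_j}(-1)\langle k\rangle$ and $\iota_*\CO_{\pi^{-1}({\mathbb{O}}_e)}[-1]\langle k\rangle$ for appropriate $j,k$. The first element lives in the $\BZ_{\mathrm{sign}}$-quotient and is the unique lift there (up to the subrepresentation), so it must be $\bar C_0$, matching $d \otimes 1$. The remaining basis elements lie in $K^{Z_{e,\on{PGL}_n}}(\CB_e)$, and under Proposition \ref{iso_B_e_mod_type_A} they correspond, up to relabelling by $\langle k\rangle$-shifts and the identification ${\bf O}_k = \xi^n {\bf O}_{k+n}$, to the full collection $\{-\al_i^\vee \otimes 1 : i \in \BZ\}$. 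To pin down which $\bar C_{\nu_i}$ matches which $-\al_i^\vee$, I would use the distinguishing property $\bm^{w_{\nu_i}}_{w_{\nu_i}}=1$ (Remark \ref{normal_coeff}) together with the explicit computation $t_{\nu_i}(d\otimes 1)$ in $\widehat{\mathfrak{h}}_{\infty,\BZ} \otimes \BZ_{\mathrm{sign}}$, much as in the proof of Proposition \ref{ident_K_U_DE}. The main technical point, and probably the only delicate one, is tracking the $\BC^\times$-weight shifts so that $\bar C_{\nu_i}$ for $i \not\in n\BZ$ indeed matches $-\al_i^\vee$ on the nose (not merely up to a twist), and that $\bar C_{\nu_0}$ corresponds to $-\al_0^\vee = [\BC_{p_{0,1}}] - {\bf O}_0$-type expression; this amounts to a careful comparison of the $\xi$-action with the periodicity $\al_{i+n}^\vee \leftrightarrow \al_i^\vee$ built into $\mathfrak{h}_{\infty,\BZ}$.
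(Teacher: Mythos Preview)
Your proposal is correct and follows essentially the same approach as the paper: the paper's proof simply reads ``Follows from Proposition \ref{iso_B_e_mod_type_A}, Lemma \ref{act_t_al_O} and Proposition \ref{descr_canon},'' and you have spelled out precisely how these three ingredients combine. Your verification of the $t_{\al_k^\vee}$-action on $d\otimes 1$ via the telescoping sum $\epsilon_k-\epsilon_{k+1-n}=-\sum_{l=k+1-n}^{k-1}\al_l^\vee$ is exactly the check that makes Lemma \ref{act_t_al_O} match the module structure on $\widehat{\mathfrak{h}}_{\infty,\BZ}$, and your plan to pin down the labeling $\bar C_{\nu_i}\leftrightarrow -\al_i^\vee$ using Remark \ref{normal_coeff} and the computation of $t_{\nu_i}(d)$ parallels what the paper does explicitly in the $D,E$ case (Proposition \ref{ident_K_U_DE}) but leaves implicit here.
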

\begin{proof}
Follows from Proposition~\ref{iso_B_e_mod_type_A}, Lemma~\ref{act_t_al_O} and Proposition~\ref{descr_canon}.
\end{proof}

\begin{Rem}
One can avoid the use of Proposition~\ref{descr_canon} in the proof of
Proposition~\ref{descr_K_U_A} and instead use results of
\cite{lu_notes_aff} on the canonical basis in
$K^{G^{\vee}}({\cal B}_{e})$ together with \cite[Theorem 5.3.5]{bm}, see
also \cite{gr}.
\end{Rem}

\subsection{Computation of ${\bf{m}}^{w_{\gamma}}_{w_{i}}$ and the proof of Theorem~\ref{main_th_formulation_A}}
\label{m_th_A}

Recall now that
\begin{equation*}
t_{\gamma }\cdot 1=T_{\gamma}=\sum _{\nu }{\bf{m}}^{w_{\gamma}}_{w_{
\nu}} C_{\nu}.
\end{equation*}

Taking the image of this equality in
$K^{\operatorname{PGL}_{n}}(\widetilde{U}) \simeq
\widehat{\mathfrak{h}}_{\infty ,{\mathbb{Z}}} \otimes {\mathbb{Z}}_{
\mathrm{sign}}$ and using Proposition~\ref{descr_K_U_A}, we see that
\begin{equation*}
t_{\gamma}(d)=d-\sum _{i \in {\mathbb{Z}}} {\bf{m}}^{w_{\gamma}}_{w_{i}}
\alpha _{i}^{\vee}.
\end{equation*}

So we can compute ${\bf{m}}^{w_{\gamma}}_{w_{i}}$. Indeed, this is just
the coefficient in front of $-\alpha _{i}^{\vee}$ in
\begin{equation*}
t_{\gamma}(d)-d=\sum _{i \in {\mathbb{Z}}}\Big(\sum _{k=1}^{n}z_{i}(
\langle \epsilon _{k},\gamma \rangle \epsilon _{k})\Big)\alpha _{i}^{
\vee}.
\end{equation*}
We conclude that
%
\begin{equation}
\label{comp_m_A}
{\bf{m}}^{w_{\gamma}}_{{w_{i}}}= -\sum _{k=1}^{n}z_{i}(\langle
\epsilon _{k},\gamma \rangle \epsilon _{k}).
\end{equation}
We are now ready to prove Theorem~\ref{main_th_formulation_A}.

\begin{proof}[Proof of Theorem~\ref{main_th_formulation_A}]
Let $i$, $\lambda $, $\Lambda $ be as in Theorem~\ref{main_th_formulation}. Combining
(\ref{our_kl_form_convenient}) and (\ref{comp_m_A}), we conclude that
\begin{align*}
\widehat{R} \operatorname{ch}L(\Lambda )
&=\sum _{\gamma \in Q^{\vee}}\sum _{u \in W}\varepsilon (uw_{i}) {\bf{m}}^{w_{-\gamma}}_{w_{i}} e^{ut_{\gamma}(\lambda +\widehat{\rho})}
\\
&=-\sum _{\gamma \in Q^{\vee}}\sum _{u \in W}\varepsilon (uw_{i})
\Big(\sum _{k=1}^{n} z_{i}(-\langle \epsilon _{k},\gamma \rangle
\epsilon _{k})\Big) e^{ut_{\gamma}w_{i}(\Lambda +\widehat{\rho})}
\end{align*}
that is precisely the statement of Theorem~\ref{main_th_formulation_A}.
\end{proof}

\section{Possible generalizations}
\label{poss_gener}

\subsection{Non-simply laced case}

Recall that in this paper we restrict ourselves to the simply laced case.
One can consider arbitrary simple Lie algebra $\mathfrak{g}$. Using an
approach similar to the one in this paper it should be possible to obtain
explicit formulas for characters of certain $\widehat{\mathfrak{g}}$-modules
$L(\Lambda )$ (``corresponding'' to the subregular cell in
$\widehat{W}$). We plan to return to this in the future.\footnote{This will
be done in the joint paper \cite{KS} of the third author and Kenta Suzuki.}
The relevant question here is the explicit description of the
$\widehat{W}$-module $K^{Z_{e}}({\cal B}_{e})$ ($e$ is a subregular nilpotent
of $\mathfrak{g}^{\vee}$). Let us describe the conjectural answer.

\begin{Rem}
Note that the $\widehat{W}$-module $K({\cal B}_{e})$ is described in
\cite[Section 6]{lu_notes_aff}.
\end{Rem}

Consider the affine Lie algebra $\widehat{\mathfrak{g}}$ and the corresponding
(affine) Weyl group $\widehat{W}$ (see Section~\ref{aff_Weyl_grp_g} or
\cite{ka}, \cite[Section 1.6]{lu_monod}). Our goal is to describe the
$\widehat{W}$-module structure on $K^{Z_{e}}({\cal B}_{e})$. The Dynkin
diagram of $(\widehat{\mathfrak{g}})^{\vee}$ can be obtained from a simply
laced affine Dynkin diagram by folding (see for example
\cite[Section 14.1.5]{lu_quant_book}). We denote the simply laced affine
Lie algebra above by $\mathfrak{k}$ and denote by $W(\mathfrak{k})$ its
Weyl group.

\begin{Rem}
Note that $(\widehat{\mathfrak{g}})^{\vee}$ is a {\textit{twisted}} affine Lie
algebra (see Example~\ref{ex_g_to_k}).
\end{Rem}

Assume for simplicity that $\mathfrak{g}$ is $B_{n}$ or $F_{4}$. By
\cite[Corollary 3.3]{lu_subreg_crit} there is an embedding
$\widehat{W} \subset W(\mathfrak{k})$ that sends a simple reflection of
$\widehat{W}$ to the product of simple reflections over the corresponding
orbit of folding. Let
$\widetilde{\mathfrak{t}}_{{\mathbb{Z}}} \subset \mathfrak{k}$ be the (integral
form of the) ``reflection'' representation of $W(\mathfrak{k})$ and
$\widehat{\mathfrak{t}}_{{\mathbb{Z}}}=\widetilde{\mathfrak{t}}
\oplus {\mathbb{Z}}d$ be the ``Cartan'' representation. Using the embedding
$\widehat{W} \subset W(\mathfrak{k})$ we obtain the action of
$\widehat{W}$ on $\widetilde{\mathfrak{t}}_{{\mathbb{Z}}}$,
$\widehat{\mathfrak{t}}_{{\mathbb{Z}}}$. The following conjecture will
be proven (and generalized to other types) in \cite{KS}.

\begin{Conj}
Assume that $\mathfrak{g}$ is $B_{n}$ $(n \geqslant 3)$, $F_{4}$. We have
isomorphisms of $\widehat{W}$-modules
\begin{equation*}
K^{Z_{e}}({\cal B}_{e}) \simeq \widetilde{\mathfrak{t}}_{{\mathbb{Z}}}
\otimes {\mathbb{Z}}_{\mathrm{sign}},\, K^{G^{\vee}}(\widetilde{U})
\simeq \widehat{\mathfrak{t}}_{{\mathbb{Z}}} \otimes {\mathbb{Z}}_{
\mathrm{sign}}.
\end{equation*}
\end{Conj}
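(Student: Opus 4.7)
The strategy parallels the simply-laced case treated in Sections~\ref{subreg_section} and \ref{sect_subreg_A}, replacing the rank~$r$ Cartan data by the unfolded one. Let $\mathfrak{k}$ be the simply laced affine Lie algebra whose Dynkin diagram unfolds $(\widehat{\mathfrak g})^\vee$, and let $\sigma$ be the diagram automorphism with $(\mathfrak{k})^\sigma = \widehat{\mathfrak{g}}^\vee$ (so $\mathfrak{k}$ is of type $D_{n+1}^{(1)}$ for $\mathfrak{g}=B_n$ and $E_6^{(1)}$ for $\mathfrak{g}=F_4$). The plan is to use the known $W$-module structure on $K(\B_e)$ described in \cite[Section 6]{lu_notes_aff} as the input, upgrade it to a $\widehat{W}$-equivariant statement via equivariant line bundles, and compare the resulting action on $K^{Z_e}(\B_e)$ with the action of $\widehat{W}\subset W(\mathfrak{k})$ on $\widetilde{\mathfrak{h}}_{\BZ}\otimes\BZ_{\mathrm{sign}}$.

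First I would make precise the embedding $\widehat{W}\hookrightarrow W(\mathfrak{k})$: for each vertex $i\in\widehat I$ of the Dynkin diagram of $\widehat{\mathfrak{g}}$, let $\Omega_i$ be the corresponding $\sigma$-orbit of vertices of the Dynkin diagram of $\mathfrak{k}$, and send $s_i$ to $\prod_{j\in\Omega_i} s_j$ (this product is unambiguous because the vertices in each orbit are mutually non-adjacent). Pulling back the natural $W(\mathfrak{k})$-action on $\widetilde{\mathfrak{h}}_{\BZ}$ gives a $\widehat{W}$-module which, after twisting by $\BZ_{\mathrm{sign}}$, is our candidate for $K^{Z_e}(\B_e)$. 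Next, the geometry of $\B_e$ for subregular $e\in\mathfrak{g}^\vee$ should be described explicitly (the components $\Pi_j$ of $\B_e$ are $\P^1$'s indexed by the unfolded set $\widetilde I$, and the dual graph is the finite Dynkin diagram of $\mathfrak{k}$, as in \cite{sl}); in particular $Z_e$ has reductive part equal to a twist of $\BC^\times$ encoding the diagram involution, and this twist is precisely what allows the $W$-module of \cite{lu_notes_aff} to be upgraded to a $\widehat{W}$-module with the prescribed $W(\mathfrak{k})$-combinatorics.

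With the parametrization of classes $\mathbf O_j\in K^{Z_e}(\B_e)$, $j\in\widetilde I$, and the skyscraper $[\BC_p]$ in hand, I would define the isomorphism by sending the negatives of the simple coroots of $\mathfrak{k}$ to the classes $-\mathbf O_j$ and the imaginary coroot $K$ of $\mathfrak{k}$ to $[\BC_p]$. The action of the simple reflections $s_i$ of $\widehat{W}$ is checked directly: Lusztig's formulas in \cite[Section 6]{lu_notes_aff} show that $s_i$ acts on the ${\bf O}_j$ by a matrix identical to the action of $\prod_{j\in\Omega_i}s_j\in W(\mathfrak{k})$ on the simple coroots, and that it fixes $[\BC_p]$ up to sign in the expected way. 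The action of the lattice part $Q^\vee\subset\widehat W$ is encoded by tensor product with the equivariant line bundles $\CO_{\widetilde{\CN}}(\gamma)|_{\B_e}$, and these tensor products are computable using the exact sequences analogous to Lemma~\ref{short_exact_div_on_tilde_N}; the resulting formulas match the translation action of $Q^\vee\subset\widehat{W}$ on $\widetilde{\mathfrak{h}}_{\BZ}$ after identifying $Q^\vee$ with the $\sigma$-invariants of the coroot lattice of $\mathfrak{k}$. The canonical basis statement then follows by combining Proposition~\ref{descr_canon} (the description in terms of the exotic $t$-structure, which is $\sigma$-equivariant) with Lemma~\ref{MacKay}.

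Finally, the second isomorphism $K^{G^\vee}(\widetilde U)\simeq\widehat{\mathfrak{h}}_{\BZ}\otimes\BZ_{\mathrm{sign}}$ is obtained exactly as in Proposition~\ref{ident_K_U_DE}: one uses the short exact sequence
\begin{equation*}
0\to K^{Z_e}(\B_e)\to K^{G^\vee}(\widetilde U)\to K^{G^\vee}({\mathbb O}^{\mathrm{reg}})\cong\BZ_{\mathrm{sign}}\to 0
\end{equation*}
coming from Corollary~\ref{ker_open}, sends $d$ to $[\CO_{\widetilde U}]$, and checks the $t_{\alpha_i^\vee}$-action on $[\CO_{\widetilde U}]$ using Lemma~\ref{short_exact_div_on_tilde_N} together with the already-computed formulas for $\mathbf O_j$. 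The main obstacle, and the reason we leave this as a conjecture, is to verify rigorously that the $\widehat{W}$-action obtained from Lusztig's tables in the non-simply-laced setting coincides on the nose with the $W(\mathfrak{k})$-action restricted along the folding embedding; the combinatorial matching of the two actions of the lattice generators $t_\gamma$ involves the non-trivial identification of the $\sigma$-fixed line bundles on $\mathfrak{k}$-side with the $G^\vee$-equivariant line bundles on $\widetilde{\CN}$, and a careful check is needed for the short roots of $\mathfrak{g}$ whose coroots are long in $\mathfrak{g}^\vee$.
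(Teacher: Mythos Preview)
The statement is a \emph{Conjecture} in the paper, not a theorem; the paper gives no proof, only a heuristic rank count in the subsequent Remark. So there is no ``paper's own proof'' to compare against, and your proposal is really an outline of an approach to an open problem.

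That said, your outline contains two concrete errors that would derail the argument before it starts. First, you identify the unfolded algebra $\mathfrak{k}$ incorrectly: you take $\mathfrak{k}=D_{n+1}^{(1)}$ for $\mathfrak{g}=B_n$ and $\mathfrak{k}=E_6^{(1)}$ for $\mathfrak{g}=F_4$, apparently by unfolding $\widehat{\mathfrak{g}^\vee}$ (the affinization of the finite dual). But the paper's construction unfolds $(\widehat{\mathfrak{g}})^\vee$, the \emph{twisted} affine algebra dual to $\widehat{\mathfrak{g}}$, and Example~\ref{ex_g_to_k} gives $\mathfrak{k}=\widetilde{D}_{2n}$ and $\mathfrak{k}=\widetilde{E}_7$ respectively. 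The distinction matters: your $D_{n+1}^{(1)}$ has reflection representation of rank $n+2$, whereas the Remark following the Conjecture computes $\on{rk}_{\BZ} K^{Z_e}(\B_e)=2n+1$ for $B_n$ (three basis elements for each long simple root of $\widehat{\mathfrak g}$ minus overlap, matching the $2n+1$ nodes of $\widetilde{D}_{2n}$), and $8$ for $F_4$ (matching $\widetilde{E}_7$). So your candidate module simply has the wrong rank.

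Second, you assert that $Z_e$ has reductive part ``a twist of $\BC^\times$''. In fact for subregular $e$ in $\mathfrak g^\vee=C_n$ or $\mathfrak g^\vee=F_4$ one has $Z_e\simeq S_2$ (see the same Remark). This changes the entire structure of $K^{Z_e}(\B_e)$: instead of an infinite-rank module over $\BZ[\xi^{\pm 1}]$ as in type $A$, one gets a finite-rank $\BZ$-module with an $S_2$-grading, and the extra basis elements come from the two distinct $S_2$-equivariant structures on $\CO_{\Pi_i}(-1)$ for each component $\Pi_i$ fixed by the involution. Your proposed map sending simple coroots of $\mathfrak{k}$ to classes $-{\bf O}_j$ would therefore need to be rethought entirely, with the $\sigma$-orbits of coroots of $\mathfrak{k}$ matched not to components of $\B_e$ but to pairs (component, $S_2$-character).
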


\begin{Example}%
\label{ex_g_to_k}
Assume for example that $\mathfrak{g}=B_{n}$. Then
$\widehat{\mathfrak{g}}=\widetilde{B}_{n}$, hence,
$(\widehat{\mathfrak{g}})^{\vee}=A^{(2)}_{2n+1}$. We conclude that
$\mathfrak{k}=\widetilde{D}_{2n}$. For $\mathfrak{g}=F_{4}$ we have
$(\widehat{\mathfrak{g}})^{\vee}=E_{6}^{(2)}$ so
$\mathfrak{k}=\widetilde{E}_{7}$.
\end{Example}

\begin{Rem}
For $\mathfrak{g}=B_{n}, F_{4}$ it is easy to see that the rank of
$K^{Z_{e}}({\cal B}_{e})$ (over ${\mathbb{Z}}$) is equal to the rank of
$\widetilde{\mathfrak{t}}_{{\mathbb{Z}}}$. Indeed, recall that by
\cite[Section 6.2]{sl} variety ${\cal B}_{e}$ can be identified with the
fiber of the Springer resolution over a subregular nilpotent element of
the unfolding of $\mathfrak{g}$. It follows that $K({\cal B}_{e})$ has
a basis, consisting of $[{\mathcal{O}}_{\Pi _{i}}(-1)]$,
$[{\mathcal{O}}_{{\cal B}_{e}}]$, where $i$ runs through the set of simple
roots of the unfolding of $\mathfrak{g}$ (recall that to every {\textit{long}}
simple root $\alpha _{i}$ of $\mathfrak{g}$ corresponds the unique simple
root of the unfolding of $\mathfrak{g}$ and to every {\textit{short}} root
$\alpha _{i}$ correspond two simple roots $\alpha _{i^{\pm}}$ of the unfolding
of $\mathfrak{g}$). Recall also that $Z_{e} \simeq S_{2}$. Using the same
argument as in \cite[Section 1.25]{lu_subreg}, one can show that the basis
of $K^{Z_{e}}({\cal B}_{e})$ can be described as follows: for every long
simple root $\alpha _{i}$ of $\mathfrak{g}$ we consider
$[{\mathcal{O}}_{\Pi _{i}}(-1)^{+}]$,
$[{\mathcal{O}}_{\Pi _{i}}(-1)^{-}]$, where $\pm $ correspond to two different
$Z_{e}$-equivariant structures on ${\mathcal{O}}_{\Pi _{i}}(-1)$, we also
have $[{\mathcal{O}}_{{\cal B}_{e}}^{\pm}]$ and finally for every short
simple root $\alpha _{i}$ (of $\mathfrak{g}$) we have
$[{\mathcal{O}}_{\Pi _{i^{+}}}(-1) \sqcup {\mathcal{O}}_{\Pi _{i^{-}}}(-1)]$
(this is the class supported on $\Pi _{i^{+}} \sqcup \Pi _{i^{-}}$). So
we see that to every {\textit{long}} root of $\widehat{\mathfrak{g}}$ (in particular,
to the affine root) we associate two basis elements of
$K^{Z_{e}}({\cal B}_{e})$ and every {\textit{short}} root corresponds to the
unique basis element of $K^{Z_{e}}({\cal B}_{e})$. It follows that the
rank of $K^{Z_{e}}({\cal B}_{e})$ is equal to the number of vertices of
the unfolding of the Dynkin diagram of
$(\widehat{\mathfrak{g}})^{\vee}$ i.e. the number of simple roots of
$\mathfrak{k}$ (the rank of
$\widetilde{\mathfrak{t}}_{{\mathbb{Z}}}$).
\end{Rem}

\subsection{Case of arbitrary nilpotent $e$}

One can try to use a similar approach to the one in this paper to compute
characters of more general $\widehat{\mathfrak{g}}$-modules
$L(\Lambda )$ such that the level of $\Lambda $ is greater than
$-h^{\vee}$ and $\Lambda +\widehat{\rho}$ is integral quasi-dominant. Let
$w \in \widehat{W}$ be the longest element such that
$w(\lambda +\widehat{\rho})$ is dominant. Let
$c \subset \widehat{W}$ be the two-sided cell that contains $w$. Let
$e \in \mathfrak{g}^{\vee}$ be the corresponding nilpotent element (not
necessarily subregular). For $e' \in {\mathcal{N}}$ we say that {\textit{$e'$
is over $e$}} if $e$ is contained in the closure of the orbit
${\mathbb{O}}_{e'}=G^{\vee }\cdot e'$. Let $U \subset {\mathcal{N}}$ be the union of the
${\mathbb{O}}_{e'}$ such that $e'$ is over $e$; this is an open subset
of ${\mathcal{N}}$. Set $\widetilde{U}:=\pi ^{-1}(U)$.

It follows from the above that the character of $L(\Lambda )$ can be extracted
from the $\widehat{W}$-module $K^{G^{\vee}}(\widetilde{U})$ and the canonical
basis in it. Recall that $\widetilde{U}$ was constructed starting from
a nilpotent element $e \in \mathfrak{g}^{\vee}$.

Recall that an element $e\in {\cal N}$ is called {\textit{distinguished}} if
it is not contained in a proper Levi subalgebra. Apparently the simplest
case to consider is the case when $e$ is {\textit{very distinguished}} i.e.
if every element $e'\in {\cal N}$ over $e$ is distinguished. If this is
the case then the module $K^{G^{\vee}}(\widetilde{U})$ is clearly finite
dimensional. It follows that the function
$\gamma \mapsto {\bf{m}}_{w_{\nu}}^{w_{\gamma}}$ is a \emph{quasi-polynomial}
in this case. Degrees and periods of these quasi-polynomials will be estimated
in the Appendix in \cite{KS} (written by first and third authors joint
with Kenta Suzuki), the main technical tool is the localization theorem
in equivariant algebraic $K$-theory.

\begin{Example}%
\label{vdist}
A regular element is always very distinguished. A subregular element is
very distinguished except in types $A_{n}$ and $B_{n}$, when it is not
distinguished. There are also other examples: one in $F_{4}$, two in
$E_{7}$, three in $E_{8}$, etc., see e.g. \cite{A}.
\end{Example}

\begin{Rem}
Recall (see Proposition~\ref{descr_cell_subreg}) that the cell $c$ such
that ${\mathbb{O}}_{c}$ is subregular consists of elements $w\ne 1$ with
a unique minimal decomposition (see Proposition~\ref{descr_cell_subreg} below). A similar (but more complicated) description
of the next case, which includes most very distinguished examples, should
follow from \cite{GX}.
\end{Rem}

Another interesting case to consider is the case of
$\mathfrak{g}=\mathfrak{sl}_{n}$ and $e$ being the two-block nilpotent,
see \cite{AV} for the parametrization and description of the irreducible
objects in the heart of the exotic $t$-structure in this case.

\appendix
\section{Basic facts about Kazhdan-Lusztig polynomials}
\label{basic_KL}

\subsection{Canonical and standard bases in Hecke algebra ${\mathcal{H}}_{q}(\widehat{W})$}

The Hecke algebra ${\mathcal{H}}_{q}={\mathcal{H}}_{q}(\widehat{W})$ over
${\mathbb{Z}}[q^{\pm 1}]$ is an ${\mathbb{Z}}[q^{\pm 1}]$-algebra with
free ${\mathbb{Z}}[q^{\pm 1}]$-basis $\{H_{w}\}_{w \in \widehat{W}}$ whose
multiplication is determined by the following:
\begin{gather*}
H_{w}H_{v}=H_{wv}~\text{if}~\ell (wv)=\ell (w)+\ell (v),%
\\
(H_{s}+1)(H_{s}-q)=0~\text{for}~s \in \{s_{0},s_{1},\ldots ,s_{r}\}.%
\end{gather*}
We can define the Hecke algebra ${\mathcal{H}}_{q}(W)$ similarly.

We define an involutive ring endomorphism
${\mathcal{H}}_{q}(\widehat{W}) \ni h \mapsto \overline{h} \in {
\mathcal{H}}_{q}(\widehat{W})$ by
\begin{equation*}
\overline{\sum _{w \in \widehat{W}} a_{w} H_{w}}=\sum _{w \in
\widehat{W}} \overline{a}_{w} H_{w^{-1}}^{-1},
\end{equation*}
where $\overline{q}=q^{-1}$. Let $\preccurlyeq $ be the Bruhat order on
$\widehat{W}$.

\begin{Prop}[\cite{kl0}]\label{canon_in_Hecke}
For any $v \in \widehat{W}$ there exists a unique
$C_{v} \in {\mathcal{H}}_{q}(\widehat{W})$, satisfying the following conditions:
\begin{equation*}
C_{v}=\sum _{w \preccurlyeq v} P_{w,v}(q)H_{w}
\end{equation*}
with $P_{v,v}(q)=1$ and $P_{w,v}(q) \in {\mathbb{Z}}[q]$ of degree
$\leqslant (\ell (v)-\ell (w)-1)/2$ for $w \prec v$,
\begin{equation*}
\overline{C}_{v}=q^{-\ell (v)}C_{v}.
\end{equation*}
\end{Prop}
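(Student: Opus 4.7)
The plan is to prove both uniqueness and existence by induction on the Bruhat length $\ell(v)$, following the classical Kazhdan--Lusztig argument.

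For uniqueness, suppose $C_v$ and $C_v'$ both satisfy the two conditions. Their difference $D = C_v - C_v' = \sum_{w \prec v} d_w(q) H_w$ has zero coefficient at $H_v$, coefficients $d_w(q) \in \BZ[q]$ satisfying the stated strict degree bound, and transforms as $\bar D = q^{-\ell(v)} D$. Let $w_0$ be a Bruhat-maximal element with $d_{w_0} \neq 0$. Expanding each $\bar H_w$ via the $R$-polynomial formula $\bar H_w = \sum_{u \preccurlyeq w} R_{u,w}(q) q^{-\ell(w)} H_u$ and matching the coefficient of $H_{w_0}$ on both sides of $\bar D = q^{-\ell(v)} D$ produces the equation $\bar d_{w_0}(q) = q^{\ell(w_0) - \ell(v)} d_{w_0}(q)$. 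Combined with the degree constraint on $d_{w_0}$, this has only the zero solution, contradicting $d_{w_0}\neq 0$.

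For existence, the base case $v = e$ is handled by $C_e = H_e$. For the inductive step pick a simple reflection $s$ with $sv \prec v$, so $\ell(v) = \ell(sv)+1$. Set $C_s := H_s + 1$; using $(H_s+1)(H_s-q)=0$ one computes $H_s^{-1} = q^{-1}(H_s+1) - 1$, whence $\bar C_s = q^{-1} C_s$. By induction $C_{sv}$ exists, so the product $X := C_s \cdot C_{sv}$ satisfies
\begin{equation*}
\bar X \;=\; \bar C_s \cdot \bar C_{sv} \;=\; q^{-1}C_s \cdot q^{-\ell(sv)}C_{sv} \;=\; q^{-\ell(v)} X.
\end{equation*}
Expand $X = \sum_{w \preccurlyeq v} a_w(q) H_w$ using the multiplication rules $H_s H_w = H_{sw}$ when $sw \succ w$ and $H_s H_w = q H_{sw} + (q-1) H_w$ when $sw \prec w$; the coefficient at $H_v$ is $1$, so $X$ has the correct leading term.

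It remains to enforce the strict degree bound. Define
\begin{equation*}
C_v \;:=\; X \;-\; \sum_{\substack{w \prec v \\ sw \prec w}} \mu(w,sv)\, C_w,
\end{equation*}
where $\mu(w,sv) \in \BZ$ is the integer detecting whether $a_w(q)$ violates the required degree bound (it can be read off as the appropriate top-degree coefficient of the polynomial $P_{w,sv}$ predicted by the multiplication formulas). Since each $C_w$ satisfies $\bar C_w = q^{-\ell(w)} C_w$ and the constants $\mu(w,sv)$ are integers (hence bar-invariant), the correction preserves the transformation law $\bar C_v = q^{-\ell(v)} C_v$. The hard part, and the main technical obstacle, is verifying that after this single round of subtractions every remaining coefficient $P_{w,v}(q) \in \BZ[q]$ now lies strictly below the degree threshold $\ell(v)-\ell(w)-1$; this is a combinatorial argument using the Bruhat interaction of $s$ with elements below $v$, and is precisely the content of the original Kazhdan--Lusztig construction. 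Uniqueness, established above, then guarantees that $C_v$ so constructed is independent of the auxiliary choice of $s$.
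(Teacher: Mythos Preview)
The paper does not give its own proof of this proposition; it is quoted from \cite{kl0}. Your outline follows the classical Kazhdan--Lusztig construction, but it contains two genuine errors.

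First, the uniqueness step does not go through with the degree bound as printed. You assert that $\overline{d_{w_0}}(q)=q^{\ell(w_0)-\ell(v)}d_{w_0}(q)$ together with $\deg d_{w_0}<\ell(v)-\ell(w_0)-1$ forces $d_{w_0}=0$. This is false: take $N:=\ell(v)-\ell(w_0)=4$ and $d_{w_0}=q^{2}$; then $\deg d_{w_0}=2<3$ while $\overline{q^{2}}=q^{-2}=q^{-4}\cdot q^{2}$, so the equation is satisfied by a nonzero polynomial. The argument only succeeds under the sharper bound $\deg d_{w_0}\le\tfrac12(N-1)$, which is the bound actually established in \cite{kl0}; the inequality in the statement is evidently a slip (already for $\ell(v)-\ell(w)=1$ it would force $P_{w,v}=0$, contradicting $C_s=H_s+H_e$).

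Second, and independently, your existence step breaks the bar-transformation law. You subtract integer multiples $\mu(w,sv)\,C_w$ from $X$ and claim the result still satisfies $\overline{(\cdot)}=q^{-\ell(v)}(\cdot)$. But $\overline{\mu(w,sv)\,C_w}=\mu(w,sv)\,q^{-\ell(w)}C_w$, which equals $q^{-\ell(v)}\mu(w,sv)\,C_w$ only when $\ell(w)=\ell(v)$. In this normalisation the correct correction term is $\mu(w,sv)\,q^{(\ell(v)-\ell(w))/2}\,C_w$; this is an integral power of $q$ because $\mu(w,sv)\neq 0$ forces $\ell(sv)-\ell(w)$ to be odd, and it is precisely this rescaled subtraction that simultaneously preserves the bar law and pushes the coefficients below the Kazhdan--Lusztig degree threshold.
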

The polynomials $P_{w,v}(q)$ are called Kazhdan-Lusztig polynomials. Let
us now introduce {\textit{inverse}} Kazhdan-Lusztig polynomials
${\bf{m}}^{w}_{v}(q)$ (see \cite[Section~2]{kl01} where they are denoted
by $Q_{v,w}(q)$). These polynomials are determined by:
\begin{equation*}
H_{w}=\sum _{v \preccurlyeq w}\varepsilon (wv^{-1}){\bf{m}}^{w}_{v}(q)
C_{v}.
\end{equation*}

\begin{Rem}
Note that in \cite{kl01} the polynomials ${\bf{m}}^{w}_{v}(q)$ are denoted
by $Q_{v,w}(q)$. We change this notation since we have already reserved
``$Q$'' for the coroot lattice $Q^{\vee}$.
\end{Rem}

\begin{Lem}%
\label{symm_m_inv}
For $v, w \in \widehat{W}$ we have
${\bf{m}}^{w}_{v}(q)={\bf{m}}^{w^{-1}}_{v^{-1}}(q)$.
\end{Lem}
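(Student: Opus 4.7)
The plan is to invoke the anti-involution $\iota \colon \CH_q(\widehat{W}) \to \CH_q(\widehat{W})$ defined on the standard basis by $\iota(H_w) = H_{w^{-1}}$, and to show it sends $C_v$ to $C_{v^{-1}}$. The statement then follows by applying $\iota$ to the defining expansion of ${\bf m}^w_v(q)$ and comparing coefficients.

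First I would verify that $\iota$ is a well-defined $\BZ[q^{\pm 1}]$-linear anti-homomorphism: since $\ell(w^{-1})=\ell(w)$ and reduced expressions are reversed under inversion, we have $\iota(H_w H_v) = H_{(wv)^{-1}} = H_{v^{-1}}H_{w^{-1}} = \iota(H_v)\iota(H_w)$ whenever $\ell(wv)=\ell(w)+\ell(v)$, and this case determines multiplication by induction on length. Next I would check that $\iota$ commutes with the bar involution; this is a one-line computation on the standard basis, since
\begin{equation*}
\overline{\iota(H_w)} = \overline{H_{w^{-1}}} = H_w^{-1}, \quad \iota(\overline{H_w}) = \iota(H_{w^{-1}}^{-1}) = \iota(H_{w^{-1}})^{-1} = H_w^{-1},
\end{equation*}
using that an anti-homomorphism sends inverses to inverses.

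Now apply $\iota$ to $C_v$. Since the Bruhat order is preserved by inversion (via the subword characterization), $\iota(C_v)$ is triangular with respect to $\preccurlyeq$ with leading term $H_{v^{-1}}$ and off-diagonal coefficients in $q\BZ[q]$ of degree $<\ell(v^{-1})-\ell(u)-1$. Since it is also bar-invariant up to the scalar $q^{-\ell(v)}=q^{-\ell(v^{-1})}$, the uniqueness clause of Proposition~\ref{canon_in_Hecke} forces $\iota(C_v) = C_{v^{-1}}$.

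Finally, applying $\iota$ to $H_w = \sum_{v \preccurlyeq w} \varepsilon(wv^{-1}) {\bf m}^w_v(q)\, C_v$ yields
\begin{equation*}
H_{w^{-1}} = \sum_{v} \varepsilon(wv^{-1})\, {\bf m}^w_v(q)\, C_{v^{-1}}.
\end{equation*}
Reindexing $v \mapsto v^{-1}$ and using $\varepsilon(wv) = \varepsilon(w)\varepsilon(v) = \varepsilon(w^{-1}v^{-1})$, this becomes $H_{w^{-1}} = \sum_v \varepsilon(w^{-1}v^{-1})\, {\bf m}^w_{v^{-1}}(q)\, C_v$. Comparing with the defining expansion of $H_{w^{-1}}$ and using linear independence of the $C_v$ gives ${\bf m}^{w^{-1}}_v(q) = {\bf m}^w_{v^{-1}}(q)$, which after relabeling is the claim. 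The only obstacle is pure bookkeeping with the sign character; there is no deeper technical step beyond the uniqueness of the canonical basis.
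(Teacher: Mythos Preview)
Your proof is correct and follows exactly the same approach as the paper: both use the $\BZ[q^{\pm 1}]$-linear anti-automorphism $H_x \mapsto H_{x^{-1}}$, observe that it commutes with the bar involution, and deduce the claim. The paper compresses this into three lines, while you have spelled out the verification that $\iota(C_v)=C_{v^{-1}}$ via the uniqueness in Proposition~\ref{canon_in_Hecke} and the coefficient comparison; one small slip is that the off-diagonal $P_{w,v}(q)$ lie in $\BZ[q]$ with the stated degree bound rather than in $q\BZ[q]$, but this does not affect the argument.
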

\begin{proof}
Consider the anti-automorphism $i$ of ${\mathcal{H}}_{q}(\widehat{W})$ given
by $i(H_{x})=H_{x^{-1}}$, $i(q)=q$. Map $i$ commutes with the involution
$\overline{\bullet }$. The claim follows.
\end{proof}

The following Theorem holds by \cite[Section 0.3]{KT} together with Lem\-ma~\ref{symm_m_inv}:

\begin{Thm}%
\label{char_dom_via_inverse}
For a dominant integral weight
$\lambda \in \widehat{\mathfrak{h}}^{*}$ we have
\begin{equation*}
\operatorname{ch} L(v^{-1} \circ \lambda )=\sum _{w \in \widehat{W}}
\varepsilon (wv^{-1}){\bf{m}}^{w}_{v}(1)\operatorname{ch}M(w^{-1}
\circ \lambda ).
\end{equation*}
\end{Thm}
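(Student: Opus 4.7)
The plan is to invoke the main result of Kashiwara--Tanisaki \cite[Section 0.3]{KT}, which establishes the affine analogue of the Kazhdan--Lusztig conjecture for integral blocks of $\CO_\kappa$ at level $\kappa > -h^\vee$. That theorem, proved geometrically by realizing $\CO_\kappa$ via twisted $\CD$-modules on the Kashiwara affine flag manifold and identifying irreducible $\CD$-modules with intersection cohomology sheaves, expresses the character of a simple highest weight module $L(v^{-1} \circ \la)$ for dominant integral $\la$ as a signed sum of Verma characters $\on{ch} M(w^{-1} \circ \la)$, the coefficients being evaluations at $q = 1$ of inverse affine Kazhdan--Lusztig polynomials.

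The remaining task is a notational reconciliation. The formula stated in \cite{KT} naturally produces the coefficient ${\bf m}^{w^{-1}}_{v^{-1}}(1)$ rather than ${\bf m}^w_v(1)$, reflecting their choice of conventions (opposite Iwahori, or equivalently left- versus right-coset conventions on the affine flag ind-scheme). I would appeal to Lemma \ref{symm_m_inv}, which identifies ${\bf m}^w_v(q) = {\bf m}^{w^{-1}}_{v^{-1}}(q)$ via the anti-involution $H_x \mapsto H_{x^{-1}}$ of $\CH_q(\widehat{W})$ that commutes with the bar involution, hence preserves the Kazhdan--Lusztig basis up to this reindexing. Since $\ell(w^{-1}) = \ell(w)$, the sign $\varepsilon(wv^{-1})$ is unchanged under $w, v \mapsto w^{-1}, v^{-1}$, so direct substitution yields precisely the stated formula.

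The main obstacle is purely organizational: one must carefully align the conventions of \cite{KT} with the present setup, in particular the choice of positive system for $\widehat{\mathfrak{g}}$, the use of the shifted Weyl group action $w \circ \la = w(\la + \widehat{\rho}) - \widehat{\rho}$, the parametrization of irreducibles in a given block by longest-length representatives of cosets $\widehat{W}_\la \bs \widehat{W}$, and the sign conventions in the definition of inverse Kazhdan--Lusztig polynomials (denoted $Q_{v,w}(q)$ in some sources, cf.\ Proposition \ref{canon_in_Hecke}). Once these are matched, the theorem follows with no additional representation-theoretic input; the deep content is entirely carried by \cite{KT}, and the present statement is merely its translation into the indexing conventions used throughout this paper.
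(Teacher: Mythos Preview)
Your proposal is correct and matches the paper's approach exactly: the paper simply states that the theorem ``holds by \cite[Section 0.3]{KT} together with Lemma \ref{symm_m_inv},'' which is precisely the combination of the Kashiwara--Tanisaki result and the symmetry ${\bf m}^w_v(q)={\bf m}^{w^{-1}}_{v^{-1}}(q)$ that you invoke.
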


\subsection{Canonical and standard bases in the anti-spherical module}

Define the algebra homomorphism
$\chi \colon {\mathcal{H}}_{q}(W) \rightarrow {\mathbb{Z}}[q]$ by
$\chi (H_{w})=\varepsilon (w)$. We define the induced module
${\mathcal{M}}$ (anti-spherical module over
${\mathcal{H}}_{q}(\widehat{W})$) by
\begin{equation*}
{\mathcal{M}}:= {\mathcal{H}}_{q}(\widehat{W}) \otimes _{{\mathcal{H}}_{q}(W)}
{\mathbb{Z}}[q]
\end{equation*}
and define
$\varphi \colon {\mathcal{H}}_{q} \twoheadrightarrow {\mathcal{M}}$ by
$\varphi (h)=h \otimes 1$.

It is easily checked that
${\mathcal{M}}\ni m \mapsto \overline{m} \in {\mathcal{M}}$ is well defined
by
\begin{equation*}
\overline{\varphi (m)}=\varphi (\overline{m}).
\end{equation*}

For $\gamma \in Q^{\vee}$ set
$H'_{w_{\gamma}}:=\varphi (H_{w_{\gamma}})$. It is easily seen that
${\mathcal{M}}$ is a free ${\mathbb{Z}}[q^{\pm 1}]$-module with basis
$\{H'_{w_{\gamma}}\}_{\gamma \in Q^{\vee}}$.

\begin{Prop}[\cite{deo}]\label{canon_in_antisph}
For any $\nu \in Q^{\vee}$ there exists a unique
$C_{w_{\nu}}' \in {\mathcal{M}}$, satisfying the following conditions.
\begin{equation*}
C'_{w_{\nu}}=\sum _{\gamma \in Q^{\vee},\, w_{\gamma }\preccurlyeq w_{
\nu}} \tilde{P}_{w_{\gamma},w_{\nu}}(q) H'_{w_{\gamma}}
\end{equation*}
with $\tilde{P}_{w_{\nu},w_{\nu}}(q)=1$ and
$\tilde{P}_{w_{\gamma},w_{\nu}}(q) \in {\mathbb{Z}}[q]$ of degree
$\leqslant (\ell (w_{\nu})-\ell (w_{\gamma})-1)/2$ for
$w_{\gamma }\prec w_{\nu}$.
\begin{equation*}
\overline{C'_{w_{\nu }}}=q^{-\ell (w_{\nu})}C'_{w_{\nu}}.
\end{equation*}
\end{Prop}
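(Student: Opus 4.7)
The plan is to follow the Kazhdan-Lusztig method in the parabolic/anti-spherical setting of Deodhar, reducing existence to Proposition \ref{canon_in_Hecke} and arguing uniqueness by the standard degree-bound method.

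For existence I would set $C'_{w_\nu} := \varphi(C_{w_\nu})$ with $C_{w_\nu} \in \CH_q(\widehat{W})$ from Proposition \ref{canon_in_Hecke}. Bar-invariance is immediate from $\overline{\varphi(C_{w_\nu})} = \varphi(\overline{C_{w_\nu}}) = q^{-\ell(w_\nu)} \varphi(C_{w_\nu})$. Triangularity rests on two standard facts: every $v \in \widehat{W}$ decomposes uniquely as $v = w_\mu u$ with $w_\mu \in \widehat{W}^f$, $u \in W$, and $\ell(v) = \ell(w_\mu) + \ell(u)$, yielding $\varphi(H_v) = \chi(H_u) H'_{w_\mu} = \varepsilon(u) H'_{w_\mu}$; and Deodhar's parabolic Bruhat lemma: if $v \preccurlyeq w_\nu$ with $w_\nu \in \widehat{W}^f$, then the minimal coset representative of $v$ is also $\preccurlyeq w_\nu$. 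Applying $\varphi$ to $C_{w_\nu} = \sum_{v \preccurlyeq w_\nu} P_{v, w_\nu}(q) H_v$ and regrouping gives
\begin{equation*}
\varphi(C_{w_\nu}) = \sum_{w_\mu \preccurlyeq w_\nu} \tilde{P}_{w_\mu, w_\nu}(q) H'_{w_\mu}, \qquad \tilde{P}_{w_\mu, w_\nu}(q) = \sum_{u \in W,\, w_\mu u \preccurlyeq w_\nu} \varepsilon(u) P_{w_\mu u, w_\nu}(q).
\end{equation*}
When $w_\mu = w_\nu$, length additivity forces $u = 1$, so $\tilde{P}_{w_\nu, w_\nu}(q) = 1$. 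When $w_\mu \prec w_\nu$, each summand has degree strictly less than $\ell(w_\nu) - \ell(w_\mu u) - 1 \leq \ell(w_\nu) - \ell(w_\mu) - 1$, giving the required bound.

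For uniqueness, suppose $C', C''$ both satisfy the conditions and let $D := C' - C'' = \sum_{w_\gamma \prec w_\nu} d_\gamma(q) H'_{w_\gamma}$ with $d_\gamma \in \BZ[q]$, $\deg d_\gamma < \ell(w_\nu) - \ell(w_\gamma) - 1$, and $\overline{D} = q^{-\ell(w_\nu)} D$. The same reasoning as in the existence step expresses $\overline{H'_{w_{\gamma'}}}$ as $q^{-\ell(w_{\gamma'})} H'_{w_{\gamma'}}$ plus lower-Bruhat terms. Taking $w_\gamma$ Bruhat-maximal among those with $d_\gamma \neq 0$ and extracting the $H'_{w_\gamma}$-coefficient of $\overline{D} = q^{-\ell(w_\nu)} D$ yields
\begin{equation*}
q^{\ell(w_\nu) - \ell(w_\gamma)} \overline{d_\gamma(q)} = d_\gamma(q).
\end{equation*}
Writing $d_\gamma = \sum a_k q^k$, the palindrome relation $a_k = a_{\ell(w_\nu) - \ell(w_\gamma) - k}$ combined with $a_k = 0$ for $k < 0$ and for $k \geq \ell(w_\nu) - \ell(w_\gamma) - 1$ forces $d_\gamma = 0$, a contradiction.

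The only nontrivial input is Deodhar's parabolic Bruhat compatibility, used to control the supports of $\overline{H'_{w_{\gamma'}}}$ and $\varphi(C_{w_\nu})$; once this is invoked, the proof is a routine specialization of the original Kazhdan-Lusztig argument, with no further conceptual obstacle.
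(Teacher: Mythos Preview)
The paper does not give its own proof of this proposition; it simply cites Deodhar.  Your argument is the standard one, and in fact the identity $C'_{w_\nu}=\varphi(C_{w_\nu})$ that you use for existence is exactly what the paper records (in the reverse logical direction) in the sentence immediately following the proposition.  So on the existence side there is nothing to compare and your reasoning is fine.

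There is, however, a real gap in the uniqueness step as you have written it, and it comes from taking the degree bound in the paper at face value.  From $q^{N}\,\overline{d_\gamma(q)}=d_\gamma(q)$ with $N=\ell(w_\nu)-\ell(w_\gamma)$ you correctly obtain the palindrome $a_k=a_{N-k}$.  But the bound ``$\deg d_\gamma < N-1$'' only kills $a_k$ for $k\geqslant N-1$; together with $a_k=0$ for $k<0$ and the palindrome this forces $a_0=a_N=0$ and $a_1=a_{N-1}=0$, and nothing more.  For $N=4$ the polynomial $d_\gamma=q^{2}$ satisfies all three constraints (degree $2<3$, nonnegative exponents, and $q^{4}\cdot q^{-2}=q^{2}$), so your sentence ``forces $d_\gamma=0$'' is not justified.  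The argument only goes through with the genuine Kazhdan--Lusztig bound $\deg P_{w,v}\leqslant \tfrac{1}{2}(\ell(v)-\ell(w)-1)$, under which the palindrome center $N/2$ lies strictly above the allowed degrees and the contradiction follows.  The bound printed in the paper is evidently a slip (already for $v=s_i$, $w=e$ it would require $\deg P_{e,s_i}<0$ while $P_{e,s_i}=1$); once you replace it by the standard one, your proof is complete.
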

It easily follows from Proposition~\ref{canon_in_antisph} and Theorem~\ref{canon_in_Hecke} that
\begin{equation}
\label{prop_varphi_canon}
C'_{w_{\nu}}=\varphi (C_{w_{\nu}})~\text{and}~\varphi (C_{w})=0~
\text{if}~w \neq w_{\nu}~\text{for any}~\nu \in Q^{\vee},
\end{equation}
so
\begin{equation*}
{\mathcal{M}}={\mathcal{H}}_{q}/\langle C_{w}\mid  w \notin \{w_{\nu}
\mid  \nu \in Q^{\vee}\}\rangle .
\end{equation*}
Polynomials $\tilde{P}_{w_{\gamma},w_{\nu}}(q)$ are called parabolic Kazhdan-Lusztig
polynomials. Let us now define the parabolic inverse Kazhdan-Lusztig polynomials
${\tilde{{\bf{m}}}^{w_{\gamma }}_{w_{\nu}}}(q)$. Following
\cite[Equation (2.40)]{KT}, we define them by
\begin{equation*}
H_{w_{\gamma}}'=\sum _{\nu \in Q^{\vee},\, w_{\nu }\preccurlyeq w_{
\gamma}} \varepsilon (w_{\gamma }w_{\nu}^{-1}) {{\tilde{\bf{m}}}^{w_{
\gamma}}_{w_{\nu}}}(q) C'_{w_{\nu}}.
\end{equation*}

The following proposition holds by \cite{S0} (see also
\cite[Proposition 2.7]{KT}).

\begin{Prop}\label{eq_m_m_prime}
For $\nu , \gamma \in Q^{\vee}$ we have
${\tilde{\bf{m}}^{w_{\gamma}}_{w_{\nu}}}(q)={\bf{m}}^{w_{\gamma}}_{w_{
\nu}}(q)$.
\end{Prop}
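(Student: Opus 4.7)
The plan is to deduce the equality by applying the surjection $\varphi\colon \CH_q(\widehat{W})\twoheadrightarrow \CM$ to the defining expansion of ${\bf{m}}^{w_\gamma}_{w_\nu}(q)$ and comparing coefficients in the canonical basis of $\CM$. The essential input is already recorded in~\eqref{prop_varphi_canon}: $\varphi(C_{w_\nu})=C'_{w_\nu}$, while $\varphi(C_w)=0$ whenever $w\notin\{w_\nu\mid \nu\in Q^\vee\}$.

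First I would start from the definition
\begin{equation*}
H_{w_\gamma}=\sum_{v\preccurlyeq w_\gamma}\varepsilon(w_\gamma v^{-1})\,{\bf{m}}^{w_\gamma}_v(q)\,C_v,
\end{equation*}
and apply $\varphi$ to both sides. By~\eqref{prop_varphi_canon}, only the terms indexed by $v=w_\nu$ for some $\nu\in Q^\vee$ survive, so
\begin{equation*}
H'_{w_\gamma}=\varphi(H_{w_\gamma})=\sum_{\nu\in Q^\vee,\,w_\nu\preccurlyeq w_\gamma}\varepsilon(w_\gamma w_\nu^{-1})\,{\bf{m}}^{w_\gamma}_{w_\nu}(q)\,C'_{w_\nu}.
\end{equation*}

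Next I would compare this with the defining expansion of the parabolic inverse polynomials,
\begin{equation*}
H'_{w_\gamma}=\sum_{\nu\in Q^\vee,\,w_\nu\preccurlyeq w_\gamma}\varepsilon(w_\gamma w_\nu^{-1})\,\tilde{{\bf{m}}}^{w_\gamma}_{w_\nu}(q)\,C'_{w_\nu}.
\end{equation*}
Since $\{C'_{w_\nu}\}_{\nu\in Q^\vee}$ is a $\BZ[q^{\pm 1}]$-basis of $\CM$ (Proposition~\ref{canon_in_antisph}), the coefficients must coincide term by term, giving ${\bf{m}}^{w_\gamma}_{w_\nu}(q)=\tilde{{\bf{m}}}^{w_\gamma}_{w_\nu}(q)$.

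There is really no obstacle once~\eqref{prop_varphi_canon} is in hand; the entire content is that the canonical basis of $\CM$ is the image of a subset of the canonical basis of $\CH_q(\widehat{W})$, so passing to the antispherical quotient does not alter the inverse Kazhdan--Lusztig coefficients at indices where both bases are defined. In writing up I would simply point back to~\eqref{prop_varphi_canon} and cite Proposition~\ref{canon_in_antisph} for the basis property, keeping the argument to a few lines.
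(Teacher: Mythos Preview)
Your proof is correct and follows essentially the same approach as the paper's: apply $\varphi$ to the expansion of $H_{w_\gamma}$ in the canonical basis, use~\eqref{prop_varphi_canon} to kill all $C_w$ with $w\notin\widehat{W}^f$, and compare coefficients in the basis $\{C'_{w_\nu}\}$. The paper's proof sketch additionally mentions the relation $\varphi(H_{w_\gamma u})=\varepsilon(u)H'_{w_\gamma}$ for $u\in W$, but this is not needed for your argument, which only uses $\varphi(H_{w_\gamma})=H'_{w_\gamma}$ (the definition); your write-up is in fact slightly cleaner.
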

\begin{proof}
Easily follows from the fact that
$\varphi (H_{w_{\gamma }u})=\varepsilon (u)H_{w_{\gamma}}',\, u \in W$
together with (\ref{prop_varphi_canon}).
\end{proof}

So we conclude that
\begin{equation*}
H'_{w_{\gamma}}=\sum _{\nu \in Q^{\vee}} \varepsilon (w_{\gamma }w_{
\nu}^{-1}){\bf{m}}^{w_{\gamma}}_{w_{\nu}}(q)C'_{w_{\nu}}.
\end{equation*}
Let us now modify bases $H'_{w_{\gamma}}$, $C'_{w_{\nu}}$ as follows:
\begin{equation*}
T_{\gamma}:=\varepsilon (w_{\gamma})H'_{w_{\gamma}},~C_{\nu}:=
\varepsilon (w_{\nu})C'_{w_{\nu}}.
\end{equation*}

We have
\begin{equation*}
T_{\gamma}=\sum _{\nu \in Q^{\vee}} {\bf{m}}^{w_{\gamma}}_{w_{\nu}}(q)
C_{\nu}.
\end{equation*}

\begin{Rem}
The numbers ${\tilde{\bf{m}}^{w_{\gamma}}_{w_{\nu}}}(1)={\bf{m}}^{w_{\gamma}}_{w_{
\nu}}(1)={\bf{m}}^{w_{\gamma}}_{w_{\nu}}$ are matrix coefficients of the
transition matrix from classes of standard sheaves on the affine Grassmannian
of $G$ to classes of irreducible objects ($IC$ sheaves) (see \cite[Corollary~5.5]{KT} for details). Set $M={\mathcal{M}}/(q-1)={\mathbb{Z}}\widehat{W}
\otimes _{{\mathbb{Z}}W} {\mathbb{Z}}_{\mathrm{sign}}$. After the
identification $K^{G^{\vee}}(\widetilde{{\mathcal{N}}}) \simeq M$ elements
$T_{\gamma}$ become $[{\mathcal{O}}_{\widetilde{{\mathcal{N}}}}(\gamma )]$
and $C_{\gamma}$ are classes of irreducible objects in the heart of the ``exotic'' $t$-structure on
$D^{b}(\operatorname{Coh}^{G^\vee}(\widetilde{{\mathcal{N}}}))$ (see \cite{AB,BHum,BHe} for details).
\end{Rem}

\end{document}